
\documentclass[a4paper, 12pt]{article}

\usepackage{amsfonts}
\usepackage {amssymb}
\usepackage {amsmath}
\usepackage {amsmath}
\usepackage {amsthm}
\usepackage{graphicx}
\usepackage{multirow}
\usepackage{xypic}
\usepackage {amscd}
\usepackage{mathrsfs}
\usepackage[colorlinks, linkcolor=blue, anchorcolor=black, citecolor=red]{hyperref}
\usepackage{enumerate}

\usepackage{geometry}  \geometry{a4paper, total={180mm,230mm}, }

\setlength{\parindent}{0pt}

\newcommand{\Fut}{{\rm Fut}}

\newcommand{\mcY}{{\mathcal{Y}}}
\newcommand{\mcL}{{\mathcal{L}}}
\newcommand{\mcX}{{\mathcal{X}}}

\newcommand{\mcM}{{\mathcal{M}}}

\newcommand{\ord}{{\rm ord}}
\newcommand{\lct}{{\rm lct}}
\newcommand{\vphi}{\varphi}

\newcommand{\bC}{{\mathbb{C}}}

\newcommand{\FS}{{\rm FS}}
\newcommand{\bP}{{\mathbb{P}}}

\newcommand{\fb}{\mathfrak{b}}
\newcommand{\mcW}{\mathcal{W}}
\newcommand{\chw}{{\rm CW}}

\newcommand{\mcQ}{{\mathcal{Q}}}

\newcommand{\sslash}{{/\!/}}
\newcommand{\bG}{\mathbb{G}}
\newcommand{\bK}{\mathbb{K}}
\newcommand{\bT}{\mathbb{T}}

\newcommand{\NA}{{\rm NA}}
\newcommand{\MA}{{\rm MA}}

\newcommand{\mcJ}{{\mathcal{J}}}
\newcommand{\mcZ}{\mathcal{Z}}

\newcommand{\triv}{{\rm triv}}
\newcommand{\bQ}{{\mathbb{Q}}}

\newcommand{\bR}{{\mathbb{R}}}

\newcommand{\mcH}{{\mathcal{H}}}
\newcommand{\mcO}{{\mathcal{O}}}
\newcommand{\mcF}{{\mathcal{F}}}
\newcommand{\bZ}{{\mathbb{Z}}}

\newcommand{\red}{{\rm red}}

\newcommand{\bN}{{\mathbb{N}}}

\newcommand{\la}{\langle}
\newcommand{\ra}{\rangle}
\newcommand{\mcE}{\mathcal{E}}

\newcommand{\mcI}{\mathcal{I}}

\newcommand{\mcU}{{\mathcal{U}}}

\newcommand{\bfM}{{\bf M}}
\newcommand{\bfH}{{\bf H}}
\newcommand{\bfE}{{\bf E}}
\newcommand{\bfJ}{{\bf J}}

\newcommand{\bfF}{{\bf F}}

\newcommand{\Aut}{{\rm Aut}}
\newcommand{\PSH}{{\rm PSH}}

\newcommand{\Lam}{{\bf \Lambda}}
\newcommand{\cF}{\mathcal{F}}
\newcommand{\bfI}{{\bf I}}
\newcommand{\udS}{\underline{S}}
\newcommand{\bB}{\mathbb{B}}
\newcommand{\ud}{\underline}

\newcommand{\bfR}{{\bf R}}

\newcommand{\fMO}{\mathfrak{MO}}
\newcommand{\fDM}{\mathfrak{DMO}}

\newcommand{\cE}{\mathcal{E}}

\newcommand{\fM}{\mathfrak{M}}

\newcommand{\bfG}{{\bf G}}

\newcommand{\cJ}{\mathcal{J}}
\newcommand{\bfS}{{\bf S}}

\newcommand{\cQ}{\mathcal{Q}}

\newcommand{\fSN}{\mathfrak{SN}}
\newcommand{\fDS}{\mathfrak{DSN}}
\newcommand{\ddc}{{\rm dd^c}}
\newcommand{\qV}{}

\newtheorem{thm}{Theorem}[section]
\newtheorem{prop}[thm]{Proposition}
\newtheorem{defn}[thm]{Definition}

\newtheorem{cor}[thm]{Corollary}
\newtheorem{rem}[thm]{Remark}
\newtheorem{conj}[thm]{Conjecture}

\newtheorem{exmp}[thm]{Example}
\newtheorem{lem}[thm]{Lemma}

\newtheorem{defn-prop}[thm]{Definition-Proposition}

\begin{document}

\title{Geodesic rays and stability in the cscK problem}
\author{Chi Li}
\date{}

\maketitle

\abstract{
We prove that any finite energy geodesic ray with a finite Mabuchi slope is maximal in the sense of Berman-Boucksom-Jonsson, and reduce the proof of the uniform Yau-Tian-Donaldson conjecture for constant scalar curvature K\"{a}hler metrics to Boucksom-Jonsson's regularization conjecture about the convergence of non-Archimedean entropy functional. As further applications, we show that a uniform K-stability condition for model filtrations and the $\mathcal{J}^{K_X}$-stability are both sufficient conditions for the existence of cscK metrics. The first condition is also conjectured to be necessary. Our arguments also produce a different proof of the toric uniform version of YTD conjecture for all polarized toric manifolds. 
Another result proved here is that the Mabuchi slope of a geodesic ray associated to a test configuration is equal to the non-Archimedean Mabuchi invariant. 
}

\tableofcontents

\section{Introduction and main results}

Let $(X, L)$ be a polarized algebraic manifold and $[\omega]=c_1(L)>0$ be the Hodge class. The Yau-Tian-Donaldson (YTD) conjecture aims to give a sufficient and necessary algebraic condition for the existence of constant scalar curvature K\"{a}hler (cscK) metric in the K\"{a}hler class $[\omega]$. Recently there have been significant progresses towards this conjecture, especially on the analytic part (see \cite{BDL17, BDL18, CC17,CC18a,CC18b, DL19}) and the Fano case (\cite{CDS15, Tia15, DaS16}). On the other hand, Berman-Boucksom-Jonsson \cite{BBJ15, BBJ18, Bo18} proposed a variational approach for attacking this conjecture, which has been successfully carried out in the Fano case, even for singular Fano varieties (see \cite{BBJ18, His16b, Li19, LTW19} and section \ref{sec-YTD2}).

Geodesic rays play important roles in the recent study of cscK problem. Indeed, it has been shown that the non-existence of cscK metric is equivalent to the existence of non-trivial destabilizing geodesic rays (see \cite{CC18b, BBJ18, DL19}). 
We recall the following definition and refer to section \ref{sec-rays} for definition of geodesic rays and \eqref{eq-Mabuchi} for the expression of Mabuchi energy $\bfM$.
\begin{defn}
For a finite energy geodesic ray $\Phi=\{\vphi(s)\}: \bR_{\ge 0}\rightarrow \cE^1(L)$, its Mabuchi slope is:
\begin{equation}
\bfM'^\infty(\Phi):=\lim_{s\rightarrow+\infty}\frac{\bfM(\vphi(s))}{s}.
\end{equation}
We say that $\Phi$ is destabilizing if $\bfM'^\infty(\Phi)\le 0$.
\end{defn}

The existence of the above limit, which may be $+\infty$, follows from the convexity of $\bfM$ along $\Phi$ as proved in \cite{BDL17} based on the convexity along $C^{1,\bar{1}}$-geodesics proved earlier in \cite{BB17}. 

To make contact with the YTD conjecture, we would like to know whether destabilizing geodesic rays are algebraically approximable, meaning that whether they can be approximated by a decreasing sequence of geodesic rays associated to test configurations. Such geodesic rays are maximal in the sense of Berman-Boucksom-Jonsson (\cite{BBJ18}). 
It was shown by Berman-Boucksom-Jonsson (\cite{BBJ18}) that there is a one-to-one correspondence between finite energy non-Archimedean metrics and maximal geodesic rays (see Theorem \ref{thm-BBJ}). 

Our first result says that any destabilizing geodesic rays are automatically algebraically approximable from above, i.e. maximal.
\begin{thm}\label{thm-maximal}
Let $\Phi: \bR_{\ge 0}\rightarrow \cE^1(L)$ be a geodesic ray. Assume that $\bfM'^\infty(\Phi)<\infty$. Then $\Phi$ is maximal. As a consequence, we have the identity:
\begin{equation}
\bfE'^\infty(\Phi)=\bfE^\NA(\Phi_\NA).
\end{equation}
\end{thm}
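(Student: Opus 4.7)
My plan is to reduce maximality to a linear entropy bound via the Chen-Tian decomposition, then rule out strict subdominance by the maximal envelope. First, I would invoke the Chen-Tian formula to decompose the Mabuchi functional as
$$\bfM(\vphi) = \bfH(\vphi) + \bar S\cdot(-\bfE)(\vphi) + \bfE^{K_X}(\vphi),$$
where $\bfH$ denotes the entropy and $\bfE^{K_X}$ is the Monge-Amp\`ere energy twisted by $K_X$. Along a finite energy geodesic ray each term is convex or affine (by Berman-Berndtsson and Berman-Darvas-Lu), so its asymptotic slope exists in $\R\cup\{+\infty\}$. Since the pluripotential slopes $\bfE'^\infty(\Phi)$ and $(\bfE^{K_X})'^\infty(\Phi)$ are automatically finite for $\Phi\in\cE^1$, the hypothesis $\bfM'^\infty(\Phi)<\infty$ yields the linear entropy growth
$$\bfH(\vphi(s))\le C\,s+C'\qquad\text{for all }s\ge 0.$$

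Next, I would bring in the maximal geodesic ray $\Phi^{\max}$ with $(\Phi^{\max})_\NA=\Phi_\NA$, constructed via the Berman-Boucksom-Jonsson procedure as the decreasing envelope of rays associated to test configurations approximating $\Phi_\NA$ from above. By construction $\vphi(s)\le\vphi^{\max}(s)$ pointwise, and by Theorem \ref{thm-BBJ} the ray $\Phi$ is maximal if and only if $\Phi=\Phi^{\max}$, equivalently if and only if $\bfE'^\infty(\Phi)=\bfE^\NA(\Phi_\NA)$. Thus the identity in the statement is automatic from maximality, and it suffices to upgrade the pointwise inequality $\vphi\le\vphi^{\max}$ to equality.

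The technical heart is to establish $\Phi=\Phi^{\max}$ using the linear entropy bound. I would argue by contradiction: if $\vphi(s_0)<\vphi^{\max}(s_0)$ on a set of positive measure for some $s_0>0$, the gap must propagate along the geodesic and force $\bfH'^\infty(\Phi)=+\infty$, contradicting the bound above. The underlying mechanism I expect to exploit is that such a gap forces $\MA(\vphi(s))$ to develop singular concentration relative to $\MA(\vphi^{\max}(s))$, and via the representation $\bfH(\vphi)=\int_X\log(\MA(\vphi)/\omega^n)\,\MA(\vphi)/V$ this concentration drives entropy to grow super-linearly in $s$. The main obstacle is making this quantitative: one has to convert a qualitative gap $\vphi<\vphi^{\max}$ into a concrete super-linear lower bound on $\bfH(\vphi(s))$. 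I expect this to require fine pluripotential tools from the Darvas-Di Nezza-Lu program on geodesic rays, sub/super-additivity of entropy under rooftop operations $P(\vphi,\vphi^{\max})$, and a $d_1$-compactness argument to control the approximation of $\Phi^{\max}$ by its test-configuration approximants.
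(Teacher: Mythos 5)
Your overall architecture matches the paper's: decompose $\bfM$ by the Chen--Tian formula, observe that finiteness of $\bfM'^\infty(\Phi)$ forces $\bfH(\vphi(s))$ to grow at most linearly, compare $\Phi$ with the maximal ray $\hat\Phi=\rho^{\Phi_\NA}$ satisfying $\Phi\le\hat\Phi$, and derive a contradiction from non-maximality. However, the step you flag as ``the technical heart'' and leave unresolved is exactly where the proof lives, and the mechanism you guess at (concentration of $\MA(\vphi(s))$ relative to $\MA(\hat\vphi(s))$, rooftop envelopes, $d_1$-compactness) is not the one that works and is not obviously implementable. The actual engine is an equisingularity statement: since $\Phi_\NA=\hat\Phi_\NA$, the two rays have the same multiplier ideals $\mcJ(m\Phi)=\mcJ(m\hat\Phi)$ by the valuative description of multiplier ideals, and a Demailly-type argument (Ohsawa--Takegoshi/regularization applied to the Multiplier Approximation test configurations) then yields
\begin{equation*}
\int_{X\times\bB}e^{\alpha(\hat\Phi-\Phi)}\,\Omega\wedge\sqrt{-1}\,dt\wedge d\bar t<+\infty\qquad\text{for every }\alpha>0.
\end{equation*}
Writing this in the variable $s=-\log|t|^2$ produces a sequence $s_j\to+\infty$ with $\int_X e^{\alpha u(s_j)}\Omega\le e^{s_j}$ where $u=\hat\vphi-\vphi\ge 0$, and Jensen's inequality for the probability measure $V^{-1}(\ddc\vphi(s_j))^n$ converts this into
\begin{equation*}
\bfH_\Omega(\vphi(s_j))\ \ge\ \alpha\,\bfE_{\vphi(s_j)}(\hat\vphi(s_j))-s_jV .
\end{equation*}
Dividing by $s_j$ gives $\bfH'^\infty(\Phi)\ge\alpha\bigl(\bfE'^\infty(\hat\Phi)-\bfE'^\infty(\Phi)\bigr)-V$ for every $\alpha$.

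The second place where your route diverges unfavorably is the source of strict positivity. You propose to propagate a pointwise gap $\vphi(s_0)<\hat\vphi(s_0)$ on a set of positive measure; the paper instead works at the level of energy slopes: since $\bfE$ is affine along geodesics, $\Phi\ne\hat\Phi$ together with $\Phi\le\hat\Phi$ forces $\bfE'^\infty(\hat\Phi)>\bfE'^\infty(\Phi)$ by the domination principle (Dinew's uniqueness, via Lemma \ref{lem-altern}). With that strict inequality in hand, letting $\alpha\to+\infty$ in the displayed bound contradicts $\bfH'^\infty(\Phi)<+\infty$. So to complete your argument you would need to supply the exponential integrability of $e^{\alpha(\hat\Phi-\Phi)}$ (or an equivalent quantitative equisingularity of $\Phi$ and $\hat\Phi$ coming from $\Phi_\NA=\hat\Phi_\NA$); without it, the passage from ``$\Phi$ is not maximal'' to ``$\bfH$ grows super-linearly'' remains a genuine gap.
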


To solve the YTD conjecture it remains to show that Mabuchi slopes of destabilizing (maximal) geodesic rays are algebraically approximable, which means that they can be approximated by Mabuchi slopes of geodesic rays associated to test configurations. By Chen-Tian's formula in \eqref{eq-Mabuchi}, the Mabuchi energy has a decomposition into the entropy part and the energy part, and the energy part can be decomposed into the sum of Monge-Amp\`{e}re and twisted Monge-Amp\`{e}re energy. The slopes of Monge-Amp\`{e}re energy for maximal geodesics can be algebraically approximated (see Theorem \ref{thm-BBJ}).  
The following result shows that the twisted Monge-Amp\`{e}re slopes of maximal geodesic rays are also algebraically approximable. In this paper $\mcH^\NA(L)$ always denotes the set of 
(smooth positive) non-Archimedean metrics that are associated to semiample test configurations of $(X, L)$. 
\begin{thm}\label{thm-Echislope}
Let $(Q, \psi_Q)$ be a line bundle over $X$ with a smooth Hermitian metric $e^{-\psi_Q}$. 
Assume that $\Phi: \bR_{\ge 0}\rightarrow \cE^1(L)$ is a maximal geodesic ray. 
If $\{\phi_m\}\subset\mcH^\NA(L)$ is any sequence that converges strongly to $\Phi_\NA$, and $\Phi_m$ is the (maximal) geodesic ray associated to $\phi_m$, then we have the convergence:
\begin{equation}\label{eq-convEchiNA}
(\bfE^{\ddc \psi_Q} )'^\infty(\Phi)=\lim_{m\rightarrow+\infty}(\bfE^{\ddc \psi_Q})'^\infty(\Phi_m).
\end{equation}
As a consequence 
we always have the following identity for any maximal geodesic ray (see \eqref{eq-EcQNA}):
\begin{equation}\label{eq-EQslope}
(\bfE^{\ddc \psi_Q})'^\infty(\Phi)=(\bfE^{Q_\bC})^\NA(\Phi_\NA).
\end{equation}
\end{thm}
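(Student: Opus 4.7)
The plan is to deduce \eqref{eq-convEchiNA} from three pieces: (a) a reduction to the semipositive case, (b) the test-configuration version of the slope identity, and (c) a monotonicity-based limit interchange. Since $L$ is ample, for $k \gg 1$ both $kL + Q$ and $kL - Q$ are ample and admit strictly positively curved smooth Hermitian metrics; writing $\psi_Q = \psi_Q^+ - \psi_Q^-$ with $\ddc \psi_Q^\pm \ge 0$ and using $\bR$-linearity of both $\bfE^{\ddc \psi_Q}$ and $(\bfE^{Q_\bC})^\NA$ (see \eqref{eq-EcQNA}) in the twist, I reduce to the case $\ddc \psi_Q \ge 0$.

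For $\phi_m \in \mcH^\NA(L)$ coming from a semiample test configuration $(\mcX_m, \mcL_m)$, pulling back $(Q, \psi_Q)$ to $\mcX_m$ and extending smoothly across the central fibre yields a Hermitian bundle $\bar Q_m$ on the compactification $\bar\mcX_m$. A Duistermaat-Heckman computation along the $\bC^*$-action, parallel to the untwisted BBJ identity $\bfE'^\infty(\Phi_m) = \bfE^\NA(\phi_m)$, then establishes $(\bfE^{\ddc \psi_Q})'^\infty(\Phi_m) = (\bfE^{Q_\bC})^\NA(\phi_m)$ for test-configuration rays. The right hand side is obtained from the untwisted mixed non-Archimedean energy by replacing one factor of $\bar\mcL$ with $\bar Q$, so $\bR$-multilinearity together with continuity of the mixed non-Archimedean Monge-Amp\`ere energy under strong convergence (the main NA continuity input of BBJ) transfers to $(\bfE^{Q_\bC})^\NA(\phi_m) \to (\bfE^{Q_\bC})^\NA(\Phi_\NA)$.

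It remains to match the Archimedean slope at the limit. By Theorem \ref{thm-BBJ}, strong NA convergence $\phi_m \to \Phi_\NA$ lifts to $d_1$-convergence $\Phi_m(s) \to \Phi(s)$ at each fixed $s$, so continuity of $\bfE^{\ddc \psi_Q}$ in the strong $\cE^1$-topology gives pointwise convergence $\bfE^{\ddc \psi_Q}(\Phi_m(s)) \to \bfE^{\ddc \psi_Q}(\Phi(s))$. Since $\ddc \psi_Q \ge 0$, a Berndtsson-type subharmonicity argument applied to the twisted weight $\vphi(s) + \psi_Q$ on the total space shows that $s \mapsto \bfE^{\ddc \psi_Q}(\vphi(s))$ is convex along any finite-energy geodesic ray, so the difference quotient $\bfE^{\ddc \psi_Q}(\vphi(s))/s$ is non-decreasing in $s$ and the slope equals its supremum. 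Combining pointwise convergence at each fixed $s$, monotonicity in $s$, and the uniform bound on $\sup_m (\bfE^{\ddc \psi_Q})'^\infty(\Phi_m)$ from the previous step permits interchanging $\lim_m$ with $\lim_{s\to\infty}$, giving \eqref{eq-convEchiNA}; the consequent identity \eqref{eq-EQslope} then follows immediately from the two convergences.

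The main obstacle is precisely this last limit exchange: strong $d_1$-convergence at each fixed time $s$ does not by itself control the slope at infinity, because the remainder $\bfE^{\ddc \psi_Q}(\Phi(s)) - s \cdot (\bfE^{\ddc \psi_Q})'^\infty(\Phi)$ could in principle be unbounded in $s$ and non-uniform in $m$. The monotonicity made available by the reduction to semipositive twist is what renders the supremum characterization of the slope usable and validates the interchange; without it, one would need either a sign-by-sign decomposition with uniform cancellation control, or a direct quantitative rate of $\cE^1$-convergence in $s$ that is uniform in $m$.
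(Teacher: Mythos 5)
Your overall architecture (reduce to a semipositive twist, use the Deligne-pairing/equivariant slope identity for test-configuration rays, use strong continuity of $(\bfE^{Q_\bC})^\NA$, then match the Archimedean slope at the limit) is the same as the paper's, and the first three pieces are fine. The problem is the last step, and you have in fact diagnosed it correctly before talking yourself out of it: pointwise convergence $\bfE^{\chi}(\vphi_m(s))\to\bfE^{\chi}(\vphi(s))$ at each fixed $s$, convexity in $s$, and a uniform bound on the slopes $(\bfE^{\chi})'^\infty(\Phi_m)$ do \emph{not} justify interchanging $\lim_m$ with $\lim_{s\to\infty}$. The convex functions $f_m(s)=\max(\alpha s,\beta(s-m))$ with $0\le\alpha<\beta$ decrease pointwise to $\alpha s$, have uniformly bounded slopes $\beta$ at infinity, yet $\lim_m f_m'^\infty=\beta>\alpha$. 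The supremum characterization $a=\sup_s f(s)/s$ only yields the easy inequality $\liminf_m a_m\ge a$ (which in this setting also follows directly from $\Phi_m\ge\Phi$ and monotonicity of $\bfE^\chi$); the hard direction $\limsup_m a_m\le a$ is exactly where the sup is only lower semicontinuous under pointwise convergence. So "the monotonicity \ldots validates the interchange" is not a proof, and the gap you name in your final paragraph is real and unfilled.

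What the paper does instead is supply precisely the "direct quantitative rate, linear in $s$," that you say would otherwise be needed. After reducing to $\chi=\omega=\ddc\psi$ and to a decreasing sequence $\Phi_m\downarrow\Phi$, the maximality of $\Phi$ gives $\bfE(\vphi_m(s))-\bfE(\vphi(s))=\epsilon_m s$ with $\epsilon_m\to0$ (both sides are linear in $s$ and the slopes converge by Theorem \ref{thm-BBJ}), whence $\bfI(\vphi(s),\vphi_m(s))\le\bfE_{\vphi(s)}(\vphi_m(s))=\epsilon_m s$ by \eqref{eq-Isimple2}. Feeding this, together with the linear bounds $\bfI(\psi,\vphi(s))\le Cs$ etc., into the H\"older-type estimate \eqref{eq-BBJest} from \cite[Appendix A]{BBJ18} yields
\begin{equation*}
0\le \bfE^{\omega}_{\psi}(\vphi_m(s))-\bfE^{\omega}_\psi(\vphi(s))\le C\,\epsilon_m^{1/2^n}\, s ,
\end{equation*}
uniformly in $s$, which is what forces $(\bfE^\omega)'^\infty(\Phi_m)\to(\bfE^\omega)'^\infty(\Phi)$. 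Your proposal cannot be completed without this (or an equivalent) quantitative input; I would rework the final step around the estimate \eqref{eq-BBJest} and the identity $\bfE(\vphi_m(s))-\bfE(\vphi(s))=\epsilon_m s$, which is where the hypothesis that $\Phi$ is maximal actually enters.
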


\begin{rem}\label{rem-strong}
Boucksom and Jonsson told me that the result in Theorem \ref{thm-Echislope} is independently known to them. Indeed, our proof of this result depends on some estimates that first appeared in \cite{BBGZ13} and later refined in \cite{BBEGZ, BBJ18}.
\end{rem}

In fact, a more general result is true (see Theorem \ref{thm-EcQslopeconv}), which will be used in our study of YTD conjecture in section \ref{sec-YTD}.
The above two results, combined with the recent progresses mentioned above, essentially reduce the proof of (the $\bG$-uniform version of) YTD conjecture for cscK metrics to proving that the entropy slope is algebraically approximable. 
More precisely 
it now suffices to prove the following result:
\begin{conj}\label{conj-entropy} 
Let $\Phi$ be a maximal geodesic ray. Then there is a sequence $\phi_{m}\in \mcH^{\NA}(L)$ converging to $\Phi_\NA$ in the strong topology such that (see \eqref{eq-HNAH} for $\bfH^\NA$)
\begin{equation}\label{eq-conjH}
\bfH'^\infty(\Phi)\ge 
\lim_{m\rightarrow+\infty} \bfH^\NA(\phi_m).
\end{equation}
\end{conj}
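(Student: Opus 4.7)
The plan is to construct the approximating sequence $\{\phi_m\}\subset\mcH^\NA(L)$ by a non-Archimedean regularization procedure that preserves the energy pieces already controlled by Theorems \ref{thm-maximal} and \ref{thm-Echislope}, while keeping the non-Archimedean entropy $\bfH^\NA(\phi_m)$ below the Archimedean entropy slope of $\Phi$. Since Chen-Tian's decomposition writes $\bfM$ as an entropy piece plus an energy piece of the type handled by Theorem \ref{thm-Echislope}, and the latter piece converges exactly along any strong approximation, the entire content of the conjecture is concentrated in the entropy term.

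First I would exploit that by Theorem \ref{thm-BBJ} the maximal ray $\Phi$ is the Archimedean psh envelope of a decreasing family of test-configuration rays dominating $\Phi_\NA$. A concrete candidate for $\phi_m$ is the Fubini-Study type non-Archimedean metric associated to a Bergman-type filtration of $H^0(X,mL)$ determined by $\Phi$ (equivalently, by the Duistermaat-Heckman data of $\Phi_\NA$). Such filtrations are finitely generated in a suitable graded sense by standard Okounkov-body arguments, hence come from test configurations, and strong convergence $\phi_m\to\Phi_\NA$ should follow from quantization together with the continuity of the Monge-Amp\`{e}re energy along such sequences.

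Next, for each $m$ the quantity $\bfH^\NA(\phi_m)$ is, on a log resolution of the test configuration associated to $\phi_m$, a sum of log discrepancies weighted by intersection numbers; by the Archimedean--NA matching for smooth test configurations, this equals the entropy slope $\bfH'^\infty(\Phi_m)$ of the associated geodesic ray. The inequality \eqref{eq-conjH} then reduces to the upper-semicontinuity statement $\bfH'^\infty(\Phi)\ge \limsup_m \bfH'^\infty(\Phi_m)$. A natural attack is to combine $C^{1,\bar 1}$-convexity of the Mabuchi functional (\cite{BB17, BDL17}) to compare Mabuchi slopes along $\Phi_m$ and $\Phi$, and then to subtract off the energy pieces which, by Theorems \ref{thm-maximal} and \ref{thm-Echislope}, converge exactly, isolating the entropy contribution.

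The main obstacle lies precisely in this last upper-semicontinuity step. The entropy $\bfH$ controls the regularity of the Monge-Amp\`{e}re measure $\MA(\vphi_m(s))$, and along a regularizing family of geodesic rays this measure can concentrate on strata of the central fibers of the approximating test configurations, producing entropy contributions that do not descend to $\Phi$. The estimates of \cite{BBGZ13, BBEGZ, BBJ18} used in the proof of Theorem \ref{thm-Echislope} require a uniform Lipschitz or linear-growth hypothesis on the twisting potential, whereas the relevant density $\log(\omega^n/\omega_\vphi^n)$ is at best of $L\log L$ type. Upgrading these estimates to a uniform $L\log L$ control of $\MA(\vphi_m(s))$ along the approximation is essentially Boucksom-Jonsson's regularization conjecture, and overcoming it appears to need genuinely new analytic input beyond what is developed here.
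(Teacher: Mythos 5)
The statement you are addressing is stated in the paper as Conjecture \ref{conj-entropy}: the paper does not prove it, and your proposal does not prove it either, as you concede in your final paragraph. What the paper does establish is the surrounding structure that you correctly reconstruct: Theorem \ref{thm-grslope}.1 gives $\bfH'^\infty(\Phi)\ge \bfH^\NA(\Phi_\NA)$ for any geodesic ray, and $\bfH^\NA$ is lower semicontinuous along strong convergence, so \eqref{eq-conjH} would follow from producing a sequence $\phi_m\rightarrow\Phi_\NA$ strongly with $\bfH^\NA(\phi_m)\rightarrow \bfH^\NA(\Phi_\NA)$. That is exactly Conjecture \ref{conj-reg} (see Lemma \ref{lem-conjrel}), which is open. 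Your proposal correctly isolates the entropy term as the entire difficulty, but the remaining step is not a technical upgrade of the estimates of \cite{BBGZ13, BBEGZ, BBJ18} from Lipschitz to $L\log L$ control; it is the regularization conjecture itself, so no proof has been given.

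Two of your intermediate steps are also problematic as stated. First, the reduction to the ``upper semicontinuity'' $\bfH'^\infty(\Phi)\ge \limsup_m \bfH'^\infty(\Phi_m)$ via convexity of the Mabuchi functional runs in the wrong direction: convexity together with lower semicontinuity of $\bfM$ yields $\bfM'^\infty(\Phi)\le \liminf_m \bfM^\NA(\phi_m)$ (this is how \eqref{eq-3Mslope} in Lemma \ref{lem-conjrel} is derived), which after subtracting the convergent energy pieces recovers only the lower semicontinuity of the entropy --- the inequality opposite to \eqref{eq-conjH} and already known. Second, for your Bergman/Okounkov-type candidate $\phi_m$ there is no argument that $\bfH^\NA(\phi_m)$ does not jump up in the limit; the possible concentration of $\MA^\NA(\phi_m)$ on high-discrepancy strata of the approximating test configurations is precisely the failure mode that keeps the conjecture open. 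Your write-up should therefore be read as a correct diagnosis of why the statement is difficult, not as a proof.
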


One difficulty in proving \eqref{eq-conjH} is that there is not yet an explicit formula for $\bfH'^\infty$ for general maximal geodesic rays.
On the other hand, for any finite energy $\phi\in \mcE^{1,\NA}(L)$, Boucksom-Jonsson in \cite{BoJ18b} defined the following non-Archimedean entropy in their study of K-stability (see \eqref{eq-AXsup}-\eqref{eq-HNAphi1}):
\begin{equation}
\bfH^\NA(\phi)=\qV \int_{X^\NA} A_X(x)\MA^\NA(\phi)(x).
\end{equation} 
Here we conjecture
\begin{conj}\label{conj-Hslope}
For any maximal geodesic ray $\Phi$, $\bfH'^\infty(\Phi)=\bfH^\NA(\Phi_\NA)$.
\end{conj}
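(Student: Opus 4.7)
My plan is to reduce the entropy identity to a Mabuchi slope identity via the Chen-Tian decomposition, then approximate from test configurations and analyze the limit. Applying Theorem \ref{thm-maximal} and Theorem \ref{thm-Echislope} (with $\psi_Q$ a smooth Hermitian metric on $K_X$) to the formula $\bfM = \bfH + \bfE^{K_X} - \bar{S}\bfE$ shows that for any maximal $\Phi$ the slopes of $\bfE$ and of $\bfE^{K_X}$ already agree with their non-Archimedean counterparts, so the conjecture is equivalent to $\bfM'^\infty(\Phi) = \bfM^\NA(\Phi_\NA)$, where $\bfM^\NA(\Phi_\NA) := \bfH^\NA(\Phi_\NA) + (\bfE^{K_X})^\NA(\Phi_\NA) - \bar{S}\bfE^\NA(\Phi_\NA)$.

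By maximality and Theorem \ref{thm-BBJ}, pick a decreasing sequence $\phi_m \in \mcH^\NA(L)$ converging strongly to $\Phi_\NA$ and let $\Phi_m = \{\vphi_m(s)\}$ be the associated test-configuration geodesic rays, so $\Phi_m \searrow \Phi$. For each $m$ the identity $\bfM'^\infty(\Phi_m) = \bfM^\NA(\phi_m)$ is classical, and by convexity of $\bfM$ along finite-energy geodesics (Berman-Berndtsson, Berman-Darvas-Lu) together with its lower semicontinuity in the $d_1$-topology, a $\sup$-$\liminf$ exchange gives $\bfM'^\infty(\Phi) \le \liminf_m \bfM'^\infty(\Phi_m) = \liminf_m \bfM^\NA(\phi_m)$. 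Combining this with the continuity of the non-Archimedean energy pieces under strong convergence (Theorems \ref{thm-maximal}, \ref{thm-Echislope}) yields $\bfM'^\infty(\Phi) \le \bfM^\NA(\Phi_\NA)$, provided the regularization can be chosen so that $\bfH^\NA(\phi_m) \to \bfH^\NA(\Phi_\NA)$; the latter is the non-Archimedean regularization statement studied by Boucksom-Jonsson.

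The main obstacle is the reverse inequality $\bfM'^\infty(\Phi) \ge \bfM^\NA(\Phi_\NA)$, equivalently $\bfH'^\infty(\Phi) \ge \bfH^\NA(\Phi_\NA)$. This direction is precisely the substance of Conjecture \ref{conj-entropy} and cannot be produced by Archimedean convexity or semicontinuity alone: it demands that the asymptotic entropy production along $\Phi$ recover the full log-discrepancy mass $\int_{X^\NA} A_X\, \MA^\NA(\Phi_\NA)$, including contributions from non-divisorial points of the Berkovich space. A plausible first step is, for each divisorial valuation $v$ with positive $\MA^\NA(\Phi_\NA)$-mass, to concentrate a portion of the sub-geodesic $[\vphi(0),\vphi(s)]$ along a log resolution making $v$ visible and to extract $A_X(v)$ via a Demailly-Lelong-type analysis of the Monge-Amp\`ere masses there; however, extending such a control to general (e.g.\ non-quasi-monomial) points of $X^\NA$ appears to require genuinely new tools, tightly linked to the open non-Archimedean regularization problem highlighted throughout this paper.
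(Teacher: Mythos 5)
The statement you are addressing is an open conjecture in the paper, so no blind proof could be complete; the paper's own contribution to it consists of the unconditional inequality $\bfH'^\infty(\Phi)\ge\bfH^\NA(\Phi_\NA)$ (Theorem \ref{thm-grslope}.1) together with the observation that Conjecture \ref{conj-reg} supplies the remaining inequality (Lemma \ref{lem-conjrel}). Your first two paragraphs essentially reproduce the latter half: the reduction via Theorems \ref{thm-maximal} and \ref{thm-Echislope} to a Mabuchi slope identity, the approximation by test-configuration rays, the convexity/lower-semicontinuity argument giving $\bfM'^\infty(\Phi)\le\liminf_m\bfM^\NA(\phi_m)$, and the correct identification of non-Archimedean regularization of $\bfH^\NA$ as the missing ingredient for $\bfM'^\infty(\Phi)\le\bfM^\NA(\Phi_\NA)$. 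That part is sound and matches Lemma \ref{lem-conjrel}.

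The genuine error is in your third paragraph: you have the two directions reversed. The inequality you declare to be ``the main obstacle,'' namely $\bfH'^\infty(\Phi)\ge\bfH^\NA(\Phi_\NA)$, is not open; it is proved unconditionally for every geodesic ray in Theorem \ref{thm-grslope}.1 (= Theorem \ref{thm-HNA}.1), and the proof needs none of the new tools you invoke. The mechanism is: (i) rewrite $\bfH^\NA(\phi)=\sup_{\mcY\in\fSN}\bfH^\NA(\phi;\mcY)$ as a supremum of intersection-theoretic quantities over SNC models (Proposition \ref{prop-intA}, using $A_X=\sup_{\mcY}A_X\circ r_{\mcY}$ and monotone convergence); (ii) for each fixed $\mcY$, choose a smooth metric $\nu$ on $-K^{\log}_{\bar{\mcY}/\bP^1}$, split $\bfH_\Omega(\vphi(s))$ into three terms, bound the first from below by Jensen's inequality together with the volume growth $\int_{X_t}\nu=O(s^d)$ from \cite{BHJ19}, bound the second by a constant, and identify the slope of the third with $\bfH^\NA(\phi;\mcY)$ via Theorem \ref{thm-EcQslopeconv}; (iii) take the supremum over $\mcY$. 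Your proposed alternative, extracting $A_X(v)$ at individual divisorial valuations by a Lelong-number analysis and then worrying about non-divisorial points, attacks the problem pointwise on $X^\NA$, which is precisely what the supremum-over-models formulation is designed to avoid. The genuinely open half of the conjecture is the opposite inequality $\bfH'^\infty(\Phi)\le\bfH^\NA(\Phi_\NA)$, which your own conditional argument already handles modulo Conjecture \ref{conj-reg}; your assessment of what remains to be done should therefore be inverted.
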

This is partially answered by the following results:
\begin{thm}\label{thm-grslope}
\begin{enumerate}
\item For any geodesic ray $\Phi$, we always have the inequality:
\begin{equation}\label{eq-slopebigger}
\bfH'^\infty(\Phi)\ge \bfH^\NA(\Phi_\NA).
\end{equation}
\item
Given an ample test configuration $\pi: (\mcX, \mcL)\rightarrow \bC$, let $\Phi=\{\vphi(s)\}$ be a geodesic ray associated to $(\mcX, \mcL)$. Then we have the following slope formula:
\begin{eqnarray}
\bfH'^\infty(\Phi)&=&\bfH^\NA(\mcX, \mcL)=\qV K^{\log}_{\bar{\mcX}/X_{\bP^1}}\cdot \bar{\mcL}^n, \nonumber \\
\bfM'^\infty(\Phi)&=&\bfM^\NA(\mcX, \mcL)=\qV  K_{\bar{\mcX}/\bP^1}^{\rm log}\cdot \bar{\mcL}^n+\qV \frac{\ud{S}}{n+1}\bar{\mcL}^{n+1}. \label{eq-Mslope}
\end{eqnarray}
\end{enumerate}
\end{thm}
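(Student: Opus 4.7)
The plan is to establish part (2) by direct intersection-theoretic computation on the compactified test configuration, and then deduce part (1) by a Donsker-Varadhan/variational comparison combined with the reduction to maximal rays afforded by Theorem \ref{thm-maximal}.

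For part (2), I would first take an equivariant log-resolution $\bX$ of the compactification of $(\mcX,\mcL)$ so that the central fiber has simple normal crossings, and solve the homogeneous Monge-Amp\`ere equation on $\bX\setminus\mcX_0$ with smooth positive boundary data coming from a reference metric on $\bar{\mcL}$, producing the geodesic ray $\Phi=\{\vphi(s)\}$ with $C^{1,\bar 1}$ regularity (Phong-Sturm). Decomposing the Mabuchi functional via Chen-Tian's formula as $\bfM=\bfH-\bfE^{\rm Ric}+\udS\,\bfE$, each piece is handled separately. The Monge-Amp\`ere energy slope $\bfE'^\infty(\Phi)=\bar{\mcL}^{n+1}/(n+1)$ is the classical formula, and the twisted Monge-Amp\`ere slope is given by Theorem \ref{thm-Echislope} applied with $Q=K_X$, producing the corresponding non-Archimedean intersection contribution to $\bfM^\NA(\mcX,\mcL)$. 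The novel piece is the entropy slope formula $\bfH'^\infty(\Phi)=K^{\log}_{\bX/X_{\bP^1}}\cdot\bar{\mcL}^n$, which I would compute directly: after log-resolution the geodesic is locally modeled by tropical potentials $\max_i\{c_i\,s\log|z_i|^2\}+O(1)$ in coordinates adapted to the central fiber $\sum b_iE_i$, and as $s\to\infty$ the density of $\MA(\vphi(s))$ against a fixed smooth volume form on $X$ has logarithmic singularities matching the log discrepancies $A_X(\ord_{E_i})$ of the components, which assemble via a Fubini argument on the projection $\bX\to X\times\bP^1$ into the stated intersection number.

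For part (1), I would first reduce to the case when $\Phi$ is maximal. If $\bfH'^\infty(\Phi)=+\infty$ the inequality \eqref{eq-slopebigger} is trivial, and otherwise $\bfM'^\infty(\Phi)$ is finite (the energy slopes are always finite along finite-energy rays), so Theorem \ref{thm-maximal} applies and $\Phi$ is maximal. The inequality $\bfH'^\infty(\Phi)\ge\bfH^\NA(\Phi_\NA)$ then follows by combining the Donsker-Varadhan variational representation of the relative entropy $\bfH(\MA(\vphi(s))\,|\,\mu_0)$ with the integral formula $\bfH^\NA(\Phi_\NA)=\int_{X^\NA}A_X\,d\MA^\NA(\Phi_\NA)$: for strong approximations $\phi_m\to\Phi_\NA$ in $\mcE^{1,\NA}(L)$ whose $\MA^\NA(\phi_m)$ are finitely supported on divisorial valuations, one chooses test functions on $X$ with logarithmic singularities adapted to those valuations so that the test functions realize the log-discrepancy lower bound in the Donsker-Varadhan inequality, yielding $\bfH(\vphi(s))\ge s\cdot\bfH^\NA(\phi_m)+o_m(s)$ as $s\to\infty$; dividing by $s$ and then taking $m\to\infty$, together with lower semicontinuity of $\bfH^\NA$ on $\mcE^{1,\NA}(L)$ (Boucksom-Jonsson), gives the desired inequality.

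The main obstacle is the entropy slope computation in part (2). The $C^{1,\bar 1}$ regularity of the test-configuration geodesic is not enough to compute $\int_X\log(\MA(\vphi(s))/\mu_0)\MA(\vphi(s))$ naively in $s$, so one must localize near the components of the resolved central fiber, compare $\vphi(s)$ with the tropical model potentials described above, and control the error terms uniformly in $s$. A further delicate point is that the relative log canonical divisor appears over $X_{\bP^1}$ rather than over $\bP^1$, reflecting that the entropy is measured against a smooth volume form pulled back from $X$ (the base), not from the total space $\bX$; this requires a careful bookkeeping of the logarithmic Jacobian of the projection $\bX\to X\times\bP^1$ along the exceptional divisors of the resolution.
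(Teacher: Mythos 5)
Your part (1) is in essence the paper's own argument: the ``Donsker--Varadhan'' step is precisely the Jensen/Gibbs variational inequality, and the log-singular test functions you describe are realized in the paper as $\log(\nu/\Omega)$ where $\nu$ is a smooth metric on $-K^{\log}_{\bar{\mcY}/\bP^1}$ over an SNC model $\mcY$; the normalizing term $\log\int_{X_t}\nu=O(\log s)$ is harmless, and the slope of the correction term $\bfE^{-Ric(\nu)}-\bfE^{-Ric(\Omega)}$ is identified with $K^{\log}_{\bar{\mcY}/X_{\bP^1}}\cdot\phi^{\cdot n}=\int_{X^\NA}A_X(r_\mcY(v))\,\MA^\NA(\phi)$ via Theorem \ref{thm-EcQslopeconv}, after which one takes the supremum over $\mcY\in\fSN$ using \eqref{eq-AXsup}. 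Your variant, which approximates $\Phi_\NA$ by $\phi_m$ with finitely supported $\MA^\NA(\phi_m)$, has a bookkeeping problem: the test functions adapted to $\phi_m$ are paired against $\MA(\vphi(s))$ for the ray of $\phi$, not of $\phi_m$, so the claimed bound $\bfH(\vphi(s))\ge s\,\bfH^\NA(\phi_m)+o_m(s)$ is not what the variational inequality produces; approximating the integrand $A_X$ from below by $A_X\circ r_\mcY$ (rather than approximating the metric) avoids this and makes the lower semicontinuity step unnecessary. Your reduction to maximal rays via Theorem \ref{thm-maximal} is fine.

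The genuine gap is in part (2), namely the inequality $\bfM'^\infty(\Phi)\le\bfM^\NA(\mcX,\mcL)$, equivalently the upper bound on the entropy slope. You propose to compute $\bfH'^\infty(\Phi)$ directly by localizing near the resolved central fiber and reading off logarithmic singularities of the density of $\MA(\vphi(s))$ from tropical model potentials; but knowing $\vphi(s)=\max_i\{c_i s\log|z_i|^2\}+O(1)$ with $C^{1,\bar 1}$ regularity gives no pointwise control on the density $(\ddc\vphi(s))^n/\Omega$ (second derivatives of a $C^{1,\bar1}$ potential are only in $L^\infty$, and an $O(1)$ perturbation of the potential can change the Monge--Amp\`ere density arbitrarily), so the claim that this density has ``logarithmic singularities matching the log discrepancies'' is exactly the hard statement you would need to prove, and no mechanism is offered for the uniform-in-$s$ error control you yourself flag as the obstacle. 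The paper does not attempt this computation at all: it obtains the upper bound indirectly (following Xia) by taking a smooth $S^1$-invariant positively curved metric $\tilde\Phi$ on $\mcL$, for which $\bfM'^\infty(\tilde\Phi)=\bfM^\NA(\mcX,\mcL)$ is the classical slope formula of Tian and Boucksom--Hisamoto--Jonsson, using convexity of $\bfM$ along the $C^{1,1}$ geodesic segments from $\psi$ to $\tilde\vphi(s)$, the $d_1$-convergence of these segments to $\Phi$ (via Theorem \ref{thm-d1convex} and $|\Phi-\tilde\Phi|\le C$), and lower semicontinuity of $\bfM$ to get $\bfM(\vphi(r))\le r\,\bfM^\NA(\mcX,\mcL)$. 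Combining this upper bound with part (1) and the energy slope identities of Theorems \ref{thm-maximal} and \ref{thm-Echislope} then yields both identities in \eqref{eq-Mslope} by subtraction, with no direct analysis of the entropy integrand. You should replace your direct computation by this squeeze argument; as written, part (2) is not proved.
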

For the second statement, when $\Phi$ is replaced by a smooth $S^1$-invariant subgeodesic ray induced by a smooth positively curved metric on $\mcL$, the identity \eqref{eq-Mslope} was proved by Boucksom-Hisamoto-Jonsson (\cite{BHJ19}) and Tian (\cite{Tia97}, \cite[Lemma 2.1, 2.2]{Tia14}). 

Our results also highlight the importance of the following conjecture due to Boucksom-Jonsson.
\begin{conj}[Regularization Conjecture, \cite{BoJ18b}]\label{conj-reg}
For any $\phi\in \cE^{1,\NA}(L)$, there exists a sequence $\{\phi_m\}\subset \mcH^\NA(L)$ converging to $\phi$ in the strong topology such that:
\begin{equation}\label{eq-intAconv}
\int_{X^\NA}A_X(v)\MA^\NA(\phi)(v)=
\lim_{m\rightarrow+\infty} \int_{X^\NA}A_X(v)\MA^\NA(\phi_m)(v).
\end{equation}
\end{conj}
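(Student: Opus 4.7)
The plan is to produce, for any $\phi\in\cE^{1,\NA}(L)$, a strongly convergent sequence $\phi_m\in\mcH^\NA(L)$ for which the integral \eqref{eq-intAconv} converges, by separating the two inequalities and identifying which one is the real content. First I would observe that the direction
$$\int_{X^\NA} A_X\,\MA^\NA(\phi)\;\le\;\liminf_{m\to\infty}\int_{X^\NA} A_X\,\MA^\NA(\phi_m)$$
is automatic for \emph{any} strongly convergent approximation: $A_X$ is lower semicontinuous on $X^\NA$, while $\MA^\NA$ is weakly continuous along strong convergence on $\cE^{1,\NA}(L)$ (a fact already heavily exploited in the paper). Consequently the whole substance of the conjecture is to construct a sequence realizing the opposite $\limsup$ bound.

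To produce such a sequence I would use the dual-complex approximation scheme of Boucksom--Jonsson. Pick an increasing tower of snc log-resolutions $(\mcX_m,\mcL_m)$ whose retractions $p_m:X^\NA\to\Delta(\mcX_m)$ satisfy $p_m^*\phi\nearrow\phi$ pointwise and in strong topology, and set $\phi_m:=p_m^*\phi$ (possibly followed by a small perturbation into $\mcH^\NA(L)$ via a standard density argument). For such $\phi_m$, $\MA^\NA(\phi_m)$ is a finite atomic measure on the vertices $\{v_i^{(m)}\}$ of $\Delta(\mcX_m)$, and the integral reduces to a finite sum
$$\int_{X^\NA} A_X\,\MA^\NA(\phi_m)=\sum_i A_X(v_i^{(m)})\,\MA^\NA(\phi_m)(\{v_i^{(m)}\}).$$
Since $A_X$ is affine on each face of $\Delta(\mcX_m)$ with respect to the monomial parametrization, one has the pointwise monotone convergence $A_X\circ p_m \nearrow A_X$ on divisorial valuations, which is exactly the structure that would make the $\limsup$ inequality tractable by monotone convergence, \emph{provided} no $\MA^\NA$-mass escapes to higher and higher log discrepancies as $m\to\infty$.

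The principal obstacle, which is essentially the whole content of the conjecture, is thus a uniform tail estimate for $\MA^\NA(\phi)$ on the sublevel sets $\{A_X>T\}$. What one would want is a non-Archimedean analogue of the $L^p$-bound on $\MA$-densities against the entropy proved on the Archimedean side in \cite{BBEGZ}, of the schematic form $\MA^\NA(\phi)(\{A_X>T\})\le C\cdot e^{-\alpha T}$ with constants depending only on $\bfE^\NA(\phi)$ and $\bfH^\NA(\phi)$. Without such an a priori tail bound one cannot rule out the approximants $\phi_m$ concentrating mass on divisorial valuations with $A_X(v_i^{(m)})\to\infty$ while the masses remain bounded away from zero. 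Establishing this tail estimate without already presupposing the regularity we are trying to prove is, in my view, the hardest step; in the Fano case it is furnished by the Ding energy framework, and I would try to import it in general by combining the slope inequality of Theorem~\ref{thm-grslope}(1) with a compactness argument on maximal geodesic rays, aiming to reduce the general polarized case to the case of test configurations where Theorem~\ref{thm-grslope}(2) already gives the required identity.
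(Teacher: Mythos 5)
First, a point of orientation: the statement you were asked to prove is stated in the paper as an open \emph{conjecture} (Boucksom--Jonsson's Regularization Conjecture), and the paper explicitly says it is not known in general. The closest the paper comes is Proposition \ref{prop-regularize}, which produces approximants only in the larger class $\PSH^{\fM,\NA}$ of metrics associated to model filtrations (by pushing $\MA^\NA(\phi)$ forward to dual complexes and solving the resulting non-Archimedean Monge--Amp\`ere equations), together with the toric case, where model filtrations happen to coincide with test-configuration filtrations. So there is no complete argument in the paper to compare yours against, and a correct proof would be new.

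Second, your proposal is not a proof, and you essentially acknowledge this yourself. The easy half is fine: $A_X$ is lower semicontinuous (it is the increasing supremum of the continuous functions $A_X\circ r_{\mcY}$ by \eqref{eq-AXsup}), so weak convergence of $\MA^\NA(\phi_m)$ gives $\bfH^\NA(\phi)\le\liminf_{m}\bfH^\NA(\phi_m)$ for \emph{any} strongly convergent sequence --- the paper records exactly this right after Proposition \ref{prop-intA}. But the entire content of the conjecture is the reverse inequality for \emph{some} sequence, and your argument for it rests on a uniform tail estimate of the form $\MA^\NA(\phi)(\{A_X>T\})\le C e^{-\alpha T}$ that you do not establish and only say you would "try to" import from the Fano/Ding-functional setting; that estimate (or any substitute controlling escape of Monge--Amp\`ere mass to valuations of large log discrepancy) is precisely the open problem. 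The proposed reduction via Theorem \ref{thm-grslope} cannot close it: part (1) gives $\bfH'^\infty(\Phi)\ge\bfH^\NA(\Phi_\NA)$, which points in the wrong direction for bounding $\limsup_m\bfH^\NA(\phi_m)$ from above. There are also technical slips in the construction itself: by Theorem \ref{thm-phidec} the net $(\phi-\phi_\triv)\circ r_{\mcX}$ is \emph{decreasing} (not increasing) to $\phi-\phi_\triv$; the composition $\phi\circ r_{\mcX}$ is in general not psh; and the "small perturbation into $\mcH^\NA(L)$" is not a routine density statement --- passing from a continuous metric whose Monge--Amp\`ere measure is supported on a dual complex to genuine semiample test configurations, while keeping control of the entropy, is exactly the step at which the paper is forced to stop at model filtrations.
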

Indeed, the results proved here 
can show that the conjecture \ref{conj-reg} implies both Conjecture \ref{conj-entropy} and Conjecture \ref{conj-Hslope}, and hence also the uniform version of YTD conjecture (see Lemma \ref{lem-conjrel} and Proposition \ref{prop-YTDconj}). Although Conjecture \ref{conj-reg} is not known in general yet, we can nevertheless use Boucksom-Jonsson's non-Archimedean approach to K-stability 
(\cite{BoJ18a, BoJ18b}) and Boucksom-Favre-Jonsson's foundational work on non-Archimedean Monge-Amp\`{e}re equations (\cite{BFJ15, BoJ18a, BoJ18b}) to get an existence result involving K-stability for model filtrations. 


To state a general existence statement, let $\bG$ be a reductive complex Lie group in $\Aut(X, L)_0$ with a maximal compact subgroup $\bK$ such that $\bK^\bC=\bG$.
Let $\bT=((S^1)^r)^\bC$ be the identity component of the center of $\bG$. 
Set $N_\bZ={\rm Hom}(\bC^*, \bT)$ and $N_\bR=N_\bZ\otimes_\bZ \bR$. 
\begin{defn}
$(X, L)$ is called $\bG$-uniformly K-stable for models if there exists $\gamma>0$ such that for any normal model  $(\mcX, \mcL)$ of $(X, L)$, we have (see Definition \ref{def-model} and Definition \ref{def-modfil})
\begin{equation}\label{eq-Mmodel}
\bfM^\NA(\phi_{(\mcX, \mcL)}) \ge \gamma \inf_{\xi\in N_\bR} \bfJ^\NA(\phi_{(\mcX, \mcL),\xi}).
\end{equation}
\end{defn}
The K-stability notion in the above definition strengthens the usual definition of K-stability via test configurations. Our main existence result is:
\begin{thm}\label{thm-existmodel}
If $(X, L)$ is $\bG$-uniformly $K$-stable for models, then the Mabuchi functional of $(X, L)$ is proper and hence there is a cscK metric. 
\end{thm}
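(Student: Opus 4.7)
My plan is to carry out a variational argument in the spirit of Berman-Boucksom-Jonsson, replacing test configurations throughout by normal models and feeding in the slope identities proved in Theorems \ref{thm-maximal}, \ref{thm-Echislope} and \ref{thm-grslope}. By the analytic works cited in the introduction, it is enough to show that the Mabuchi functional is $\bG$-coercive on $\cE^1(L)^\bK$; that is, there exist $\gamma'>0$ and $C>0$ with $\bfM(\vphi)\ge \gamma'\bfJ^\bT(\vphi)-C$. I argue by contradiction. If coercivity fails then, taking $\gamma$ strictly smaller than the uniformity constant in \eqref{eq-Mmodel}, one can find a $\bK$-invariant sequence $\vphi_j\in \mcH(L)^\bK$ with $\bfJ^\bT(\vphi_j)\to +\infty$ and $\bfM(\vphi_j)\le \gamma\bfJ^\bT(\vphi_j)$. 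Connecting each $\vphi_j$ to a fixed reference by a unit-speed Mabuchi geodesic segment and extracting a subsequential limit in $(\cE^1(L),d_1)$ produces a finite-energy $\bK$-invariant geodesic ray $\Phi$ with $(\bfJ^\bT)'^\infty(\Phi)=1$ and $\bfM'^\infty(\Phi)\le \gamma$.

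Since $\bfM'^\infty(\Phi)<+\infty$, Theorem \ref{thm-maximal} upgrades $\Phi$ to a maximal ray and produces a $\bT$-invariant $\Phi_\NA\in \cE^{1,\NA}(L)$. Combining Theorem \ref{thm-maximal} applied to $\bfE$, Theorem \ref{thm-Echislope} applied with $Q=K_X$, and the slope inequality $\bfH'^\infty(\Phi)\ge \bfH^\NA(\Phi_\NA)$ from Theorem \ref{thm-grslope}(1) via the Chen-Tian decomposition of $\bfM$ then yields the key inequality
\begin{equation*}
\bfM^\NA(\Phi_\NA) \ \le \ \bfM'^\infty(\Phi) \ \le \ \gamma (\bfJ^\bT)'^\infty(\Phi) \ = \ \gamma \bfJ^{\NA,\bT}(\Phi_\NA).
\end{equation*}

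To derive a contradiction I would strongly approximate $\Phi_\NA$ by a sequence of $\bT$-invariant model potentials $\phi_m=\phi_{(\mcX_m,\mcL_m)}$, constructed by combining Boucksom-Jonsson's non-Archimedean pluripotential theory with the Boucksom-Favre-Jonsson solvability of the non-Archimedean Monge-Amp\`ere equation: for instance, by solving $\MA^\NA(\phi_m)=\mu_m$ for suitable measures $\mu_m$ supported on model dual complexes and converging weakly to $\MA^\NA(\Phi_\NA)$. Strong continuity of $\bfE^\NA$ and of the twisted energies (Theorem \ref{thm-Echislope}) gives convergence of the energy part of $\bfM^\NA$ and of $\bfJ^{\NA,\bT}$; granted that the entropy part also converges, the stability hypothesis $\bfM^\NA(\phi_m)\ge \gamma' \bfJ^{\NA,\bT}(\phi_m)$ passes to the limit and produces $\bfM^\NA(\Phi_\NA)\ge \gamma'\bfJ^{\NA,\bT}(\Phi_\NA)$, which contradicts the displayed slope inequality since $\gamma'>\gamma$.

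The principal difficulty is the entropy convergence $\bfH^\NA(\phi_m)\to \bfH^\NA(\Phi_\NA)$ along the approximating model sequence: in general $\bfH^\NA$ is only strongly lower semi-continuous on $\cE^{1,\NA}(L)$ --- this upper semi-continuity is precisely what Conjecture \ref{conj-reg} asks for --- so such convergence is not automatic. The point of restricting to the model category is that model potentials admit an explicit divisorial description and $A_X$ can be controlled on dual complexes, so one has additional freedom to choose $\mu_m$ adapted to the log-discrepancy structure of $\Phi_\NA$. I expect the correct choice of $\mu_m$, building on \cite{BFJ15, BoJ18a, BoJ18b}, will yield the required entropy convergence for models without needing the full regularization conjecture, and this will be the main technical input of the proof.
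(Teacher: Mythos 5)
Your overall architecture is the same as the paper's: coercivity by contradiction, extraction of a destabilizing $\bK$-invariant geodesic ray $\Phi$ with $\inf_{\xi}\bfJ'^\infty(\Phi_\xi)=1$, the chain $\bfM^\NA(\Phi_\NA)\le\bfM'^\infty(\Phi)$ from Theorems \ref{thm-maximal}, \ref{thm-Echislope}, \ref{thm-grslope}, and then strong approximation of $\Phi_\NA$ by model potentials to feed in the stability hypothesis. But you have left the crux --- the entropy convergence $\bfH^\NA(\phi_m)\to\bfH^\NA(\Phi_\NA)$ along the approximating sequence --- as something you ``expect'' a good choice of measures to deliver. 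That is precisely the content of Proposition \ref{prop-regularize}, and without it the argument does not close: lower semicontinuity of $\bfH^\NA$ goes the wrong way here (you need $\limsup_m\bfH^\NA(\phi_m)\le\bfH^\NA(\Phi_\NA)$ to pass the stability inequality to the limit), so this is a genuine gap rather than a routine verification.

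The missing mechanism is the following. Set $\nu=\MA^\NA(\Phi_\NA)$ and take $\nu_j=(r_{\mcX_j})_*\nu$ for a suitable increasing sequence of SNC models $\mcX_j$, with $\phi_j=(\MA^\NA)^{-1}(\nu_j)$ via the non-Archimedean Calabi--Yau theorem (Theorem \ref{thm-NACY}). The point is that $\bfH^\NA(\phi_j)=\int_{X^\NA}A_X\,d\nu_j=\int_{X^\NA}(A_X\circ r_{\mcX_j})\,d\nu$ \emph{exactly}, and since $(A_X\circ r_{\mcX})_{\mcX}$ increases to $A_X$ (see \eqref{eq-AXsup}), monotone convergence gives $\bfH^\NA(\phi_j)\to\bfH^\NA(\Phi_\NA)$ for a well-chosen cofinal sequence; simultaneously $\bfE^{*\NA}(\nu_j)\to\bfE^{*\NA}(\nu)$ (using Theorem \ref{thm-phidec}), which yields strong convergence $\phi_j\to\Phi_\NA$. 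So the entropy convergence is built into the push-forward construction rather than requiring any adaptation to the ``log-discrepancy structure.'' There is then a second step you do not address at all: the $\phi_j$ produced this way are only continuous psh metrics with Monge--Amp\`ere measure on a dual complex, not yet metrics in $\PSH^{\fM,\NA}$, so one must further replace $\nu_j$ by divisorial (atomic) measures on $\Delta_{\mcX_j}$ and invoke the structure results of \cite{BFJ15} identifying the corresponding solutions as envelopes of (rational) model functions, i.e. genuine model filtration metrics; continuity of $A_X$ on the dual complex then preserves the entropy convergence. Finally, the passage to the limit in $\bfJ^\NA_\bT$ needs its own argument (the infimum over $\xi\in N_\bR$ must be localized to a bounded set uniformly in $j$, as in Lemma \ref{lem-JTNAconv}); continuity of the individual twisted energies is not by itself enough.
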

The converse direction, which we conjecture to be true when $\bG$ contains a maximal torus of $\Aut(X, L)_0$, would be implied by Conjecture \ref{conj-reg} for model filtrations.
 See Lemma \ref{lem-HisSze}, Proposition \ref{prop-YTDconj} and discussions there.
 
 \begin{rem}
In a following paper \cite{Li20b}, we will derive a movable intersection formula for the left-hand-side of \eqref{eq-Mmodel} which generalizes the formula \eqref{eq-MNAH} for test configurations:
\begin{equation}
\mcM^\NA(\phi_{(\mcX, \mcL)})=\la \bar{\mcL}_c^n\ra\cdot \left(K_{\bar{\mcX}/\bP^1}+\frac{\ud{S}}{n+1} \bar{\mcL}_c\right)
\end{equation}
where $\mcL_c=\mcL+c \mcX_0$ for $c\gg 1$ and $\la \bar{\mcL}_c^n\ra$ is the movable intersection product introduced in \cite{BDPP13}.
\end{rem}


For polarized toric manifolds, using the fact that toric model filtrations are the same as filtrations from toric test configurations, we see that Conjecture \ref{conj-reg} is true for toric model filtrations. As a consequence, we get another proof of the following result:
\begin{thm}[see \cite{His16b, CC18b}]\label{thm-toric}
Let $(X, L)$ be an $n$-dimensional polarized toric manifold. If $(X, L)$ is $(\bC^*)^n$-uniformly K-stable, then the Mabuchi functional of $(X, L)$ is proper, and hence there is a cscK metric. 
\end{thm}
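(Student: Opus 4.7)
The plan is to deduce Theorem~\ref{thm-toric} from the general existence result Theorem~\ref{thm-existmodel} applied to $\bG=(\bC^*)^n$. Concretely, the task is to upgrade the given $(\bC^*)^n$-uniform K-stability against toric test configurations to $(\bC^*)^n$-uniform K-stability for toric normal models, and this is where the parenthetical remark preceding the theorem — that toric model filtrations agree with filtrations arising from toric test configurations, so that Conjecture~\ref{conj-reg} is automatic in the toric setting — will do the real work.

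First I would recall the standard toric dictionary: let $P\subset\bR^n$ be the moment polytope of $(X,L)$, so that $H^0(X,kL)$ has a canonical weight basis indexed by $kP\cap\bZ^n$. A $\bT$-invariant filtration on $R(X,L)=\bigoplus_k H^0(X,kL)$ is then equivalent, via its concave/Okounkov transform, to a bounded-above, upper semicontinuous concave function $u:P\to\bR$; the filtration comes from a toric test configuration precisely when $u$ is rational piecewise-affine, and the associated non-Archimedean metric $\phi_u\in\mcE^{1,\NA}(L)^\bT$ has finite energy iff $u\in L^1(P)$. The non-Archimedean functionals $\bfE^\NA$, $\bfJ^\NA$, $\bfM^\NA$ and the entropy $\int_{X^\NA}A_X\,\MA^\NA(\phi_u)$ all admit explicit polytope integrals (a standard integration-by-parts calculation going back to Donaldson gives the Mabuchi functional as $\int_{\p P}u\,d\sigma-\int_P \udS\,u\,dx$, etc.), so every quantity on the model side is a continuous functional of $u$ against the natural $L^1(P)$-type topology.

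Next I would verify Conjecture~\ref{conj-reg} for toric models: given any toric normal model $(\mcX,\mcL)$ with concave transform $u_\mcL$, approximate $u_\mcL$ from below by a sequence $u_m$ of rational piecewise-affine concave functions on $P$ (e.g.\ take suprema of finitely many rational affine pieces through rational points of the graph). Each $u_m$ corresponds to an honest toric test configuration $\phi_m\in\mcH^\NA(L)^\bT$, and monotone/dominated convergence on $P$ gives $\phi_m\to\phi_{u_\mcL}$ in the strong topology together with convergence of the polytope-boundary integral computing $\int A_X\,\MA^\NA$. By Lemma~\ref{lem-conjrel} and Proposition~\ref{prop-YTDconj} (which the paper has already shown reduce the $\bG$-uniform YTD conjecture to Conjecture~\ref{conj-reg}), this upgrades the hypothesis of Theorem~\ref{thm-toric} to the hypothesis of Theorem~\ref{thm-existmodel}: indeed, for any toric model $(\mcX,\mcL)$ and any $\xi\in N_\bR$, the inequality $\bfM^\NA(\phi_{(\mcX,\mcL)})\ge\gamma\,\bfJ^\NA(\phi_{(\mcX,\mcL),\xi})$ follows by passing to the limit $m\to\infty$ from the corresponding inequality applied to each test configuration $\phi_m$ (with the same $\xi$-twist), using continuity of both sides in $u$. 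Theorem~\ref{thm-existmodel} then yields properness of $\bfM$ and hence a cscK metric.

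The main obstacle I anticipate is the rigorous identification of general toric model filtrations with concave functions $u:P\to\bR$ of finite energy, and in particular showing that the $\bT$-invariance together with normality of $\mcX$ forces $\phi_{(\mcX,\mcL)}$ to be of the form $\phi_u$ with $u$ a bounded concave function on $P$. Once this is set up, the lower-semicontinuity of $\bfM^\NA$ and continuity of $\bfJ^\NA$ under the polytope approximation $u_m\nearrow u$ is relatively soft, but care is needed for the entropy term because $A_X\circ\MA^\NA$ is supported on boundary valuations; here it is crucial that in the toric case this boundary integral is an honest Riemann integral on $\p P$ to which standard convergence theorems apply.
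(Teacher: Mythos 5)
Your overall strategy---reduce to Theorem \ref{thm-existmodel} (equivalently Proposition \ref{prop-YTDconj}) by verifying Conjecture \ref{conj-reg} for toric model filtrations---is the right one and is the paper's strategy too. The gap is in how you verify Conjecture \ref{conj-reg}. You propose to approximate the concave transform $u_{\mcL}$ of a toric model from below by rational piecewise-affine functions $u_m$ and then pass to the limit in the entropy. But the convergence $\bfH^\NA(\phi_{u_m})\to\bfH^\NA(\phi_{u_\mcL})$ is exactly the hard content of Conjecture \ref{conj-reg}: $\bfH^\NA$ is only lower semicontinuous under strong convergence, and what your argument needs is the opposite inequality $\limsup_m\bfH^\NA(\phi_{u_m})\le\bfH^\NA(\phi_{u_\mcL})$. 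Your justification---that $\int_{X^\NA}A_X\,\MA^\NA$ is ``an honest Riemann integral on $\p P$'' to which monotone/dominated convergence applies---is not correct as stated: for a toric metric the Monge--Amp\`{e}re measure is the real Monge--Amp\`{e}re measure of the associated convex function on $N_\bR$ (equivalently, the push-forward of Lebesgue measure on the polytope under the gradient of the Legendre transform), so $\bfH^\NA(\phi_u)$ depends on the full gradient distribution of $u$ in the interior of $P$, not on a boundary trace. It is the non-Archimedean Mabuchi/Donaldson--Futaki functional, not the entropy alone, that is a boundary-minus-interior integral. Controlling $A_X$ composed with the gradients of $u_m$ near $\p P$ along $u_m\nearrow u$ is not supplied by any convergence theorem you invoke, so the key analytic step is asserted rather than proved.

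Moreover, the approximation is unnecessary, and this is precisely the observation that makes the paper's proof short. In the toric dictionary (the paper works with convex functions on $N_\bR$ whose difference with the support function $g_\Delta$ extends continuously to a compactification of $N_\bR$, following \cite{BPS14}; this is Legendre-dual to your picture on $P$), the metric $\phi_{(\mcX,\mcL)}=\phi_\triv+P(f_{\mcL})$ of a toric model is the psh envelope of a piecewise $\bQ$-affine function, and the upper envelope of a family of piecewise $\bQ$-affine functions is again piecewise $\bQ$-affine (cf.\ \cite[Proof of Proposition 9.2]{BFJ15}). Hence $(\PSH^{\fM,\NA})^\bK=(\mcH^\NA)^\bK$: every $\bT$-invariant model-filtration metric already comes from a toric test configuration, Conjecture \ref{conj-reg} holds trivially (take the constant sequence), and the $(\bC^*)^n$-uniform K-stability hypothesis applies directly to the metrics produced by Proposition \ref{prop-regularize}. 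You flag ``the rigorous identification of general toric model filtrations with concave functions'' as the main obstacle, but the point you are missing is that the resulting function is automatically rational piecewise-affine; that dissolves the obstacle instead of forcing the entropy-convergence argument whose crucial step you have not established.
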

This result was known by combining the works of Zhou-Zhu \cite{ZZ08}, Chen-Li-Sheng \cite{CLS14}, Hisamoto \cite{His16b} and Chen-Cheng \cite{CC18a, CC18b}. This previous proof depends on Donaldson's deep analysis of K\"{a}hler geometry on toric manifolds (\cite{Don02}). 
On the other hand, the proof given combines our analysis with the fundamental works on non-Archimedean Monge-Amp\`{e}re equations in \cite{BFJ15, BoJ18a}, in addition to \cite{CC18a, CC18b}.

Our study of variational approach to YTD also leads to the following sufficient algebraic criterion for the existence of cscK metric, which is related to the study of the so-called $J$-equation in the literature (see e.g. \cite{CC18a, SW08}). The proof of this result is independent of the above conjectures and
depends on Theorem \ref{thm-maximal}, Theorem \ref{thm-Echislope} and the variational argument. See Theorem \ref{thm-cJstable2} and discussions there. 
\begin{thm}\label{thm-cJstable}
If $(X, L)$ is $\cJ^{K_X}$-semistable, then $X$ admits a cscK metric.
\end{thm}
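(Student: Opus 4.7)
The plan is to combine the variational strategy for the YTD conjecture with the slope formulas and maximality results proved here. By Chen-Cheng \cite{CC18a,CC18b} it suffices to show that the Mabuchi functional $\bfM$ is coercive modulo $\Aut(X,L)_0$. We argue by contradiction: assuming not, a standard minimization argument (as in \cite{DL19,BBJ18,CC18b}) produces a nontrivial destabilizing, $\Aut(X,L)_0$-minimizing, finite energy geodesic ray $\Phi:\bR_{\geq 0}\to\cE^1(L)$ with $\bfJ'^\infty(\Phi)=1$ and $\bfM'^\infty(\Phi)\leq 0$. Since the Mabuchi slope is finite, Theorem \ref{thm-maximal} shows that $\Phi$ is maximal, so $\Phi_\NA\in\cE^{1,\NA}(L)$ is a finite energy non-Archimedean potential with $\bfJ^\NA(\Phi_\NA)=1$.

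Next, Chen-Tian's formula decomposes
\begin{equation*}
\bfM'^\infty(\Phi) \;=\; \bfH'^\infty(\Phi) \;+\; (\bfE^{\ddc\psi_Q})'^\infty(\Phi) \;+\; \tfrac{\ud{S}}{n+1}\,\bfE'^\infty(\Phi),
\end{equation*}
where $(Q,\psi_Q)=(-K_X,\psi_{-K_X})$ for a chosen smooth Hermitian metric. Theorem \ref{thm-grslope}(1) gives $\bfH'^\infty(\Phi)\geq \bfH^\NA(\Phi_\NA)\geq 0$; Theorem \ref{thm-Echislope} identifies the twisted Monge-Amp\`ere slope with its non-Archimedean counterpart $(\bfE^{Q_\bC})^\NA(\Phi_\NA)$; and Theorem \ref{thm-BBJ} identifies $\bfE'^\infty(\Phi)=\bfE^\NA(\Phi_\NA)$. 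Regrouping the two energy terms into the non-Archimedean $\mcJ^{K_X}$-functional produces
\begin{equation*}
\bfM'^\infty(\Phi) \;\geq\; \bfH^\NA(\Phi_\NA) \;+\; (\mcJ^{K_X})^\NA(\Phi_\NA).
\end{equation*}

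Finally, $\mcJ^{K_X}$-semistability means $(\mcJ^{K_X})^\NA(\phi)\geq 0$ for every finite energy non-Archimedean potential $\phi$, so the right-hand side is non-negative; combined with $\bfM'^\infty(\Phi)\leq 0$, both $\bfH^\NA(\Phi_\NA)$ and $(\mcJ^{K_X})^\NA(\Phi_\NA)$ must vanish. The vanishing of the non-Archimedean entropy, together with the $\Aut(X,L)_0$-minimality normalization of $\Phi$, should force $\Phi_\NA$ to be induced by a one-parameter subgroup inside $\Aut(X,L)_0$, contradicting $\bfJ^\NA(\Phi_\NA)=1$.

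The main obstacle is this last strictness step: extracting a contradiction from the simultaneous vanishing $\bfH^\NA(\Phi_\NA)=0=(\mcJ^{K_X})^\NA(\Phi_\NA)$ under the minimality normalization. The natural route is to combine the structural characterization of zero-entropy non-Archimedean potentials (as arising from holomorphic vector fields) with the $\Aut$-minimization of the chosen ray to conclude $\bfJ^\NA(\Phi_\NA)=0$, which is the sought contradiction. All earlier steps are essentially bookkeeping once Theorems \ref{thm-maximal}, \ref{thm-Echislope} and \ref{thm-grslope}(1) are in hand.
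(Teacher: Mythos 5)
Your setup — reduction to coercivity via Chen--Cheng, extraction of a destabilizing ray with $\bfJ'^\infty(\Phi)=1$ and $\bfM'^\infty(\Phi)\le 0$, maximality via Theorem \ref{thm-maximal}, and the slope identifications from Theorems \ref{thm-Echislope} and \ref{thm-BBJ} — matches the paper's argument. But your concluding step is a genuine gap. You reduce to the simultaneous vanishing $\bfH^\NA(\Phi_\NA)=0=(\mcJ^{K_X})^\NA(\Phi_\NA)$ and then appeal to a ``structural characterization of zero-entropy non-Archimedean potentials as arising from holomorphic vector fields.'' No such rigidity statement is proved in the paper (or available in the literature at this level of generality for $\cE^{1,\NA}$), and it is precisely the kind of strictness/uniqueness assertion that one cannot expect to extract from semistability alone by soft arguments. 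As written, the proof does not close.

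The paper avoids this entirely by using a \emph{quantitative} lower bound on the entropy instead of mere non-negativity: Tian's $\alpha$-invariant together with Jensen's inequality gives $\bfH_\Omega(\vphi)\ge (\alpha-\epsilon)\,\bfI(\psi,\vphi)-C$ along the ray (this is the content of Lemma \ref{lem-J2Kst}), hence after taking slopes
\begin{equation*}
0\ \ge\ \bfM'^\infty(\Phi)\ \ge\ \alpha\,\bfI'^\infty(\Phi)+\mcJ'^\infty(\Phi).
\end{equation*}
By maximality and Theorem \ref{thm-Echislope}, $\mcJ'^\infty(\Phi)=\lim_m\mcJ^\NA(\phi_m)\ge 0$ by the semistability hypothesis (note the hypothesis is only stated for test configurations $\phi_m\in\mcH^\NA$, so one must pass through the approximating sequence rather than asserting $\mcJ^\NA\ge 0$ directly on $\cE^{1,\NA}$ as you do), while $\bfI'^\infty(\Phi)\ge\bfJ'^\infty(\Phi)=1>0$ by \eqref{eq-IJineq}. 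This yields $0\ge\alpha\,\bfI'^\infty(\Phi)>0$, a direct contradiction with no rigidity input needed. If you replace your final paragraph with this $\alpha$-invariant estimate, the rest of your argument stands; the paper in fact proves the slightly stronger Theorem \ref{thm-cJstable2}, allowing $\mcJ^\NA\ge-\delta\,\bfI^\NA$ for any $\delta<\alpha(X,L)$, by exactly this mechanism.
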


In the next section, we recall the non-Archimedean and Archimedean functionals, and explain they are related to each other. We state the basic correspondence between finite energy non-Archimedean metrics and maximal geodesic rays as established by Berman-Boucksom-Jonsson. We discuss the twist of non-Archimedean metrics that was introduced in \cite{His16b} for smooth ones and more generally developed in \cite{Li19}. We prove Theorem \ref{thm-maximal} in section \ref{sec-maximal} and Theorem \ref{thm-Echislope} in section \ref{sec-twistMA}. 
We prove Theorem \ref{thm-grslope} in section \ref{sec-YTD1}. 
In section \ref{sec-YTD2}, we prove the existence result Theorem \ref{thm-existmodel} (=Theorem  Theorem \ref{thm-existfiltr}) for cscK metric using $\bG$-uniform K-stability for model filtrations. This also allows us to prove the toric case
Theorem \ref{thm-toric}. The results about $\mcJ^{K_X}$-stability such as Theorem \ref{thm-cJstable} are dealt with in section \ref{sec-Jst}.

{\bf Acknowledgement:} 
The author is partially supported by NSF (Grant No. DMS-1810867) and an Alfred P. Sloan research fellowship. 
I am grateful to Sebastien Boucksom and Mattias Jonsson for many clarifying comments. 
In particular, the improvement of the presentation of the work here owes much to Boucksom's careful suggestions which also motivates me to incorporate some fundamental results from \cite{BFJ16, BoJ18a, BoJ18b}.
I would like to thank Chenyang Xu and Yuji Odaka for communications regarding Remark \ref{rem-XuOda}, Yuji Odaka for Remark \ref{rem-spherical} and bringing the paper \cite{Oda12} to his attention, Ruadha\'{i} Dervan for pointing out the reference \cite{DK19} and Mingchen Xia for helpful comments. The results in this paper was discussed at a work shop in American Institute of Mathematics (AIM) at the beginning of 2020, and the author would like to thank the organizers (Kento Fujita, Mattias Jonsson and Chenyang Xu) and AIM for creating a good environment of discussion. I also thank Yves de Cornulier for translating the title and abstract into French.

\section{Preliminaries}

\subsection{Non-Archimedean theory}\label{sec-NA}
\subsubsection{Non-Archimedean metrics and filtrations}
To fix the notations, we first recall some definitions. For more details, we refer to \cite{BoJ18a, BoJ18b}.
\begin{defn}\label{def-model}
\begin{enumerate}
\item
A model of $(X, L)$ is a flat family of projective varieties $\pi_\mcX: \mcX\rightarrow \bC$ together with a $\bC^*$-equivariant $\bQ$-line bundle $\mcL$ satisfying:
\begin{enumerate}
\item[(i)] There is a $\bC^*$-action on $(\mcX, \mcL)$ such that $\pi_\mcX$ is $\bC^*$-equivariant;
\item[(ii)] There is a $\bC^*$-equivariant isomorphism $(\mcX, \mcL)\times_{\bC}\bC^* \cong (X, L)\times\bC^*$.
\end{enumerate}
The trivial model of $(X, L)$ is given by $(X\times\bC, L\times\bC)=:(X_\bC, L_\bC)$.

Two models $(\mcX_i, \mcL_i), i=1,2$ are called equivalent if there exists a model $(\mcX_3, \mcL_3)$ and two $\bC^*$-equivariant birational morphisms $\mu_i: \mcX_3\rightarrow \mcX_i$ such that $\mu_1^*\mcL_1=\mu_2^*\mcL_2$.
\item
If we forget about the data $L$ and $\mcL$, then we say that $\mcX$ is a model of $X$.
We will denote by $\fMO$ the set of all models of $X$. 

If there is a $\bC^*$-equivariant birational morphism $r_{\mcX_1, \mcX_2}: \mcX_1\rightarrow \mcX_2$ of two models $\mcX_i, i=1,2\in \fMO$, then we say that $\mcX_1$ dominates $\mcX_2$ and write $\mcX_1\ge \mcX_2$. If $\mcX\ge X_\bC$, then we say that $\mcX$ is dominating, and we denote by $\fDM$ the set of all dominating models of $X$.

We say a model $\mcX\in \fMO$ is a SNC (i.e. simple normal crossing) model of $X$ if $(\mcX, \mcX^\red_0)$ is a simple normal crossing pair. We denote by $\fSN$ the set of all SNC models, and by $\fDS$ the set of dominating SNC models of $X$.

For any two models $\mcX_i, i=1,2$ of $X$, a common refinement of $\mcX_i, i=1,2$ is a model $\mcX_3$ such that $\mcX_3\ge  \mcX_i, i=1,2$. 
\end{enumerate}
\end{defn}
To any SNC model $\mcX$ of $X$ is associated a dual complex $\Delta_\mcX$, which is a simplicial complex whoses simplices are 1-1 correspondence with strata of $\mcX_0$. Let $X^\NA$ denote the analytification of $X$ with respect to the trivial norm. Then there is a retraction map $r_{\mcX}: X^\NA\rightarrow \Delta_{\mcX}$ such that the direct system $(r_{\mcX})_{\mcX\in \fSN}$ induces a homeomorphism $X^\NA\stackrel{\sim}{\rightarrow}  \varprojlim \Delta_{\mcX}$ (see \cite[4]{BoJ18a}). 

\begin{defn}\label{def-TC}
A normal semi-ample (resp. ample) test configuration of $(X, L)$, denoted by $(\mcX, \mcL, \eta)$ or simply by $(\mcX, \mcL)$ consists of the following data 
\begin{enumerate}
\item[(i)] $(\mcX, \mcL)$ is a model of $(X, L)$ with the $\bC^*$-action generated by a holomorphic vector field $\eta$;
\item[(ii')] The $\bC^*$-equivariant isomorphism $(\mcX, \mcL)\times_\bC\bC^*\cong (X, L)\times\bC^*$
is induced by $\eta$;
\item[(iii)]
$\mcX$ is normal and $\mcL$ is $\pi_\mcX$-semi-ample (resp. ample).
\end{enumerate}
For simplicity, we just call a normal semiample test configuration to be a test configuration.

Two test configurations $(\mcX_i, \mcL_i), i=1,2$ are called equivalent if there exists a test configuration $(\mcX_3, \mcL_3)$ and two $\bC^*$-equivariant birational morphisms $\mu_i: \mcX_3\rightarrow \mcX_i$ such that $\mu_1^*\mcL_1\cong \mu_2^*\mcL_2$.

$(X_\bC, L_\bC):=(X, L)\times\bC$ is the trivial test configuration.
($\mcX, \mcL)$ is called dominating if $\mcX\ge X_\bC$. 
\end{defn}
In this paper, for any $\bC^*$-equivariant datum $\bullet$ over $\bC$, we will use $\bar{\bullet}$ to denote its natural $\bC^*$-equivariant compactification over $\bP^1$.

Any semi-ample dominating test configuration $(\mcX, \mcL) \stackrel{p_{\mcX}}{\rightarrow} X_\bC$ defines a smooth psh non-Archimedean metric $\phi_{(\mcX, \mcL)}$ on $(X^\NA, L^\NA)$ that is represented as a function on $X^\NA$ as follows: for any $v\in X^\NA$, let $G(v)$ be the Gauss extension and set:
\begin{equation}\label{eq-phiH}
(\phi_{(\mcX, \mcL)}-\phi_\triv)(v)=G(v)(\mcL-\mu_{\mcX}^*L_{\bC}).
\end{equation}
Here $\phi_{\triv}$ is the psh non-Archimedean metric associated to the trivial test configuration $(X_\bC, L_\bC):=(X, L)\times\bC$. Note that any semi-ample test configuration is equivalent to a dominating one. Two equivalent test configurations define the same non-Archimedean metric (by definition). We always denote by $\mcH^\NA=\mcH^\NA(L)$ the set of (smooth and positive) non-Archimedean metrics coming from test configurations. By abuse of notations, we will interchangeably use the notation of test configurations and non-Archimedean metrics in $\mcH^\NA$. 
\begin{defn}[{see \cite{BFJ15, BoJ18b}}]\label{def-psh}
A psh metric on $L^\NA$ is a function $\phi: X^{\NA}\rightarrow \bR\cup \{-\infty\}$, not identically $-\infty$ that can be written as the limit of a decreasing sequence in $\mcH^\NA$. Denote by $\PSH^\NA(L)$ the set of (non-Archimedean) psh metrics on $L^\NA$. For simplicity, we also denote by $\PSH^{0,\NA}$ the space of continuous psh metrics.
\end{defn}
Non-Archimedean psh metrics were originally characterized as limits of decreasing nets of metrics from $\mcH^\NA$ (see \cite{BFJ16}). Thanks to the countable regularization result in \cite[Proposition 4.7]{BFJ15}, we can indeed use decreasing sequences as in the above definition. 

We will need the following basic property of psh metrics:
\begin{thm}[{\cite[Theorem 5.29]{BoJ18a}}]\label{thm-phidec}
For any $\phi\in \PSH^\NA(L)$, $((\phi-\phi_\triv)\circ r_\mcX)_{\mcX\in \fDS}$ is decreasing net of continuous functions, with limit $\phi-\phi_\triv$.
\end{thm}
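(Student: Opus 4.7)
The plan is to verify both assertions (monotonicity of the net and pointwise convergence to $\phi-\phi_\triv$) first for $\phi\in\mcH^\NA(L)$, and then upgrade to general psh metrics via the defining decreasing approximation of Definition~\ref{def-psh}. Throughout, I exploit the cofinality of $\fDS$ inside $\fMO$ and the homeomorphism $X^\NA\stackrel{\sim}{\rightarrow}\varprojlim_{\mcX\in\fSN}\Delta_\mcX$.

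For $\phi\in\mcH^\NA(L)$ induced by a dominating semiample test configuration $(\mcX_0,\mcL_0)$, formula \eqref{eq-phiH} expresses $(\phi-\phi_\triv)(v)=G(v)(\mcL_0-\mu_{\mcX_0}^*L_\bC)$. This value depends on $v\in X^\NA$ only through the center of the Gauss extension $G(v)$ on $\mcX_0$ and through the vanishing orders along the divisorial components of the central fiber, so $\phi-\phi_\triv$ is continuous and factors as $\phi-\phi_\triv=(\phi-\phi_\triv)\circ r_{\mcX_0}$. Combined with the compatibility $r_{\mcX_0}=r_{\mcX_0}\circ r_\mcX$ for every $\mcX\in\fDS$ with $\mcX\geq\mcX_0$, one gets $(\phi-\phi_\triv)\circ r_\mcX=\phi-\phi_\triv$ on the cofinal subsystem $\{\mcX\in\fDS:\mcX\geq\mcX_0\}$, so for $\phi\in\mcH^\NA$ the net is eventually constant, equal to the desired limit.

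The substantive step is monotonicity for arbitrary $\phi\in\PSH^\NA(L)$: for $\mcX\leq\mcX'$ in $\fDS$, show $(\phi-\phi_\triv)\circ r_{\mcX'}\leq (\phi-\phi_\triv)\circ r_\mcX$. Using the identity $r_\mcX=r_\mcX\circ r_{\mcX'}$, this reduces to proving $\psi(w)\leq \psi(r_\mcX(w))$ for $\psi:=\phi-\phi_\triv$ and every $w\in\Delta_{\mcX'}$. The retraction $r_\mcX$ restricted to each simplex of $\Delta_{\mcX'}$ is affine onto a face of $\Delta_\mcX$, and its fibers are sub-polytopes of $\Delta_{\mcX'}$; the desired inequality asserts that $\psi$ attains its maximum on each such fiber at the image point of $r_\mcX$. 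This is the simplicial/affine counterpart of the classical sub-mean-value property for psh functions along monomial families: for $\phi\in\mcH^\NA$ it follows from a direct intersection-theoretic computation of $\psi$ on simplices of a common refinement of $\mcX_0$ and $\mcX'$, and for general $\phi$ it persists under the decreasing limit.

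Finally, for convergence: since $r_\mcX(v)\to v$ as $\mcX$ runs over cofinal SNC models, upper semi-continuity of $\phi-\phi_\triv$ yields $\limsup_\mcX (\phi-\phi_\triv)(r_\mcX(v))\leq (\phi-\phi_\triv)(v)$, while the monotonicity gives the opposite inequality in the limit. For a general $\phi=\inf_m\phi_m$ with $\phi_m\in\mcH^\NA$ decreasing, both limits (over $m$ and over $\mcX\in\fDS$) are monotone, so they may be exchanged by a Dini/monotone-convergence argument; continuity of each $(\phi_m-\phi_\triv)\circ r_\mcX$ transfers to upper semi-continuity of the limit and preserves the decreasing property. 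The main obstacle is the simplicial monotonicity in the third paragraph, which encodes the essential psh behavior of non-Archimedean metrics under simplicial retractions and is where the full force of Boucksom-Favre-Jonsson's analysis of psh envelopes and the local structure of dual complexes must be invoked.
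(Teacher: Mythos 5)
First, note that the paper does not prove this statement at all: it is quoted verbatim from Boucksom--Jonsson (\cite[Theorem 5.29]{BoJ18a}) and used as a black box, so there is no in-paper argument to compare against. Judged on its own, your outline has the right skeleton (reduce to $\mcH^\NA$, pass to general $\phi$ by decreasing limits, and get the convergence from $r_\mcX(v)\to v$ plus upper semicontinuity), but it has two genuine gaps.

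The first gap is the central monotonicity inequality $\psi\le\psi\circ r_\mcX$ for $\psi=\phi-\phi_\triv$ with $\phi\in\mcH^\NA$, to which (as you correctly observe) everything reduces via $r_\mcX=r_\mcX\circ r_{\mcX'}$. Your proposed mechanism is not right: $r_\mcX$ restricted to a simplex of $\Delta_{\mcX'}$ is in general only \emph{piecewise} integral affine (affine only after subdividing), and the fibers of $r_\mcX\colon X^\NA\to\Delta_\mcX$ are not sub-polytopes of $\Delta_{\mcX'}$ but large subsets of $X^\NA$; there is no ``max on fibers'' principle to invoke, and the phrase ``sub-mean-value property along monomial families'' does not correspond to an actual lemma. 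The actual mechanism is valuation-theoretic: for a semiample test configuration one writes $\psi$ as a max of functions of the form $\frac1m G(\cdot)(\ord \,s_i)$ over generating sections (equivalently $v\mapsto -\frac1m v(\fb_m)$ for base ideals), and the inequality follows from the fundamental comparison $r_\mcX(v)\le v$ on ideals (quasi-monomial valuations are minimal among valuations with prescribed values on the components of $\mcX_0$). Without isolating that lemma the ``substantive step'' of your proof is an assertion, not an argument. The second gap is continuity: for general $\phi\in\PSH^\NA(L)$ your decreasing-limit argument only yields that $(\phi-\phi_\triv)\circ r_\mcX$ is \emph{upper} semicontinuous (you say so yourself), whereas the theorem asserts continuity. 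This requires showing that $\phi-\phi_\triv$ restricted to the compact dual complex $\Delta_\mcX$ is continuous, which is a separate nontrivial input (it rests on uniform convexity/Lipschitz-type control of psh functions on faces of dual complexes in \cite{BFJ16, BoJ18a}) and is not supplied by Dini or monotone convergence, since a decreasing limit of continuous functions need not be lower semicontinuous.
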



 Through out this paper we use the following notations:
\begin{equation}
V=L^{\cdot n}, \quad K^{{\log}}_{\bar{\mcX}/\bP^1}=K_{\bar{\mcX}}+\mcX_{0,{\rm red}}-\pi^*(K_{\bP^1}+\{0\}), \quad \ud{S}=\frac{- nK_X\cdot L^{\cdot (n-1)}}{L^{\cdot n}}.
\end{equation}
For any $\phi=\phi_{(\mcX, \mcL)}\in \mcH^\NA(L)$, we follow the notations used in \cite{BHJ17} (see also \cite{Oda12}) to define:
\begin{eqnarray}
\bfE^\NA(\phi)
&:=&\frac{1}{n+1}\bar{\mcL}^{\cdot n+1}=:\bfE^\NA(\mcX, \mcL) \label{eq-ENAH}\\
\Lam^\NA(\phi)&:=&\qV \bar{\mcL} \cdot {L}_{\bP^1}^{\cdot n}
=:\Lam^\NA(\mcX, \mcL)\\
\bfJ^\NA(\phi)&:=&\Lam^\NA(\phi)-\bfE^\NA(\phi)=:\bfJ^\NA(\mcX, \mcL)\\
\bfI^\NA(\phi)&:=&\qV \bar{\mcL}\cdot L_{\bP^1}^{\cdot n}-\qV \bar{\mcL}^{\cdot n+1}+\qV \bar{\mcL}^{\cdot n}\cdot L_{\bP^1} \\
\bfR^\NA(\phi)&:=&\qV  K_{X_{\bP^1}/\bP^1}^{\log} \cdot \bar{\mcL}^{\cdot n}=:\bfR^\NA(\mcX, \mcL) \label{eq-RNA}\\
 \bfH^\NA(\phi)&:=& \qV  K^{\log}_{\bar{\mcX}/\bP^1}\cdot \bar{\mcL}^{\cdot n}-\qV  K^{\log}_{X_{\bP^1}/\bP^1}\cdot \bar{\mcL}^{\cdot n}=:\bfH^\NA(\mcX, \mcL) \label{eq-HNAH} \\
\bfS^\NA(\phi)&:=&\bfH^\NA+\bfR^\NA=\qV  K^{\log}_{\bar{\mcX}/\bP^1}\cdot \bar{\mcL}^{\cdot n}=:\bfS^\NA(\mcX, \mcL) \label{eq-SNA}\\
\cJ^\NA(\phi)&:=&(\cJ^{K_X})^\NA(\phi)=\bfR^\NA(\phi)+\ud{S}\bfE^\NA(\phi)=:\cJ^\NA(\mcX, \mcL) \label{eq-cJNA} \\
&&\nonumber\\
 \bfM^\NA(\phi)&:=& \bfH^\NA(\phi)+\bfR^\NA(\phi)+\ud{S}\bfE^\NA(\phi)\nonumber\\
 &=&\bfS^\NA(\phi)+\ud{S}\bfE^\NA(\phi)=\bfH^\NA(\phi)+\cJ^\NA(\phi)\nonumber\\
 &=&\qV K^{\log}_{\bar{\mcX}/\bP^1}\cdot \bar{\mcL}^{\cdot n}+\frac{\ud{S}}{n+1}\bar{\mcL}^{\cdot n+1}=:\bfM^\NA(\mcX, \mcL) \label{eq-MNAH}\\
\Fut(\phi)&=&\qV K_{\bar{\mcX}/\bP^1}\cdot \bar{\mcL}^{\cdot n}+\frac{\ud{S}}{n+1}\bar{\mcL}^{\cdot n+1}=:\Fut(\mcX, \mcL). \label{eq-FutNA}
\end{eqnarray}

Another important class of non-Archimedean metrics comes from bounded filtrations:
\begin{defn}\label{def-filtration}
A (graded) filtration $\mcF R_\bullet$ of the graded $\bC$-algebra $R=\bigoplus_{m=0}^{\infty}R_m=\bigoplus_{m=0}^{+\infty} H^0(X, mL)$ consists of a family of subspaces $\{\mcF^\lambda R_m\}_\lambda$ of $R_m$ for each $m\in\bZ_{\ge 0}$ satisfying:
\begin{itemize}
\item (decreasing) $\mcF^\lambda R_m\subseteq \mcF^{\lambda'}R_m$ if $\lambda\ge \lambda'$;
\item (left-continuous) $\mcF^\lambda R_m=\bigcap _{\lambda'<\lambda}\mcF^{\lambda'} R_m$;
\item (multiplicative) $\mcF^\lambda R_m\cdot \mcF^{\lambda'}R_{m'}\subseteq \mcF^{\lambda+\lambda'}R_{m+m'}$, for any $\lambda, \lambda'\in \bR$ and $m, m'\in \bZ_{\ge 0}$;
\item (linearly bounded) There exists $e_-, e_+\in \bZ$ such that $\mcF^{me_-}R_m=R_m$ and $\mcF^{me_+}R_m=0$ for all $m\in \bZ_{\ge 0}$.
\end{itemize}
\end{defn}
According to \cite[3.4]{BoJ18b}, graded filtrations are in bijection with bounded graded norms on $R$. 
Given such a filtration, for any $m\in \bZ_{\ge 0}$, $\mcF R_m$ generates a finitely generated filtrations that determines a metric $\check{\phi}_{\cF, m}=m^{-1}\FS(\mcF R_m)$. They form an increasing sequence converging to a bounded metric $\phi_{\cF}=\FS(\cF R_\bullet)$. For more details, see \cite{BoJ18b, Fuj18, Sze15}. 


\begin{thm}[\cite{BoJ18b}]\label{thm-lowerreg}
\begin{enumerate}
\item
A psh metric $\phi\in \PSH^\NA(L)$ is of the form $\phi_\cF$ for a filtration $\cF$ if and only if it is lower regularizable, which means that there exists an increasing net $\{\phi_m\}\subset \mcH^{\NA}(L)$ that converges to $\phi$.
\item
A continuous metric is always lower regularizable.
\end{enumerate}
\end{thm}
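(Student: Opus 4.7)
The ``if'' direction of (1) is essentially immediate from the construction recalled just above the statement: for a graded filtration $\cF$, each truncated Fubini--Study approximation $\check\phi_{\cF,m} = m^{-1}\FS(\cF R_m)$ comes from a finitely generated filtration and hence from a semiample test configuration, so it lies in $\mcH^\NA(L)$, and by construction these form an increasing sequence converging to $\phi_\cF$. This gives the required regularizing net with no extra work.

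For the converse in (1), the plan is to set up a Galois correspondence between bounded graded filtrations of $R = \bigoplus_m H^0(X, mL)$ and bounded non-Archimedean psh metrics, and then check that the round trip recovers $\phi$ when $\phi$ is already lower regularizable. Given $\phi \in \PSH^\NA(L)$, associate to it the norm filtration
\begin{equation*}
\cF_\phi^\lambda R_m := \{\, s \in R_m \ :\ v(s) + m\phi(v) \ge \lambda \ \text{ for all }\ v \in X^\NA \,\}.
\end{equation*}
I would verify the four axioms of Definition \ref{def-filtration}: decreasingness and left continuity are formal, multiplicativity follows from the fact that $\phi$ is a metric (so pointwise norms of sections are log-subadditive), and linear boundedness follows from $\phi$ being bounded on $X^\NA$ (a consequence of Theorem \ref{thm-phidec}). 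The Galois connection yields $\phi_{\cF_\phi}\le \phi$ for free. For the reverse inequality, exploit lower regularizability: pick an increasing net $\{\phi_m\}\subset \mcH^\NA(L)$ with $\sup_m \phi_m = \phi$. For a smooth $\phi_m$ coming from a test configuration one has $\phi_m = \phi_{\cF_{\phi_m}}$ directly from the Rees-algebra description of test configurations, and the monotonicity $\phi_m\le \phi \Rightarrow \cF_{\phi_m}\subseteq \cF_\phi$ then forces $\phi_m \le \phi_{\cF_\phi}$; taking the supremum yields $\phi \le \phi_{\cF_\phi}$, so the two are equal.

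For (2), given a continuous $\phi\in\PSH^\NA(L)$, take $\cF := \cF_\phi$. The aim is to show $\check\phi_{\cF,m}\nearrow \phi$. The bound $\check\phi_{\cF,m}\le \phi$ is Galois; the matching lower bound is the main obstacle and requires an asymptotic Riemann--Roch / volume estimate for the graded pieces $\cF_\phi^\lambda R_m$ as $m\to\infty$ --- the non-Archimedean analogue of Tian's Bergman-kernel expansion, which in the filtration setting is encoded by the concave transform on an Okounkov body (Boucksom--Chen). Granting this, pointwise convergence is obtained first on the dense set of quasi-monomial valuations; continuity of $\phi$ combined with monotonicity of $\check\phi_{\cF,m}$ then upgrades this to uniform convergence on each dual complex $\Delta_\mcX$ for $\mcX\in\fDS$ by Dini's theorem, and the identification $X^\NA \cong \varprojlim \Delta_\mcX$ together with Theorem \ref{thm-phidec} delivers lower regularization on all of $X^\NA$. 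The genuinely hard step is the Okounkov-body asymptotic for the filtration attached to a merely continuous metric: without assuming regularizability in advance one cannot reduce to the smooth case, so the volume estimate must be carried out directly in the concave-transform formalism; this is where I expect most of the technical effort to be concentrated.
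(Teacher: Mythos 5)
First, a point of comparison: the paper does not prove this statement at all --- it is quoted from \cite{BoJ18b} and used as a black box --- so there is no internal proof to measure you against. Judged on its own terms, your part (1) is essentially correct and is in fact the standard argument: the ``if'' direction is the construction recalled just above the statement, and the ``only if'' direction via the Galois adjunction between the sup-norm filtration $\cF_\phi=N(\phi)$ and the Fubini--Study operator, together with the identity $\FS(N(\psi))=\psi$ for $\psi\in\mcH^\NA(L)$ coming from a semiample test configuration (relative global generation, cf.\ \cite{BHJ17}), is exactly how Boucksom--Jonsson argue. (A cosmetic remark: $v(s)+m\phi(v)$ is not meaningful as written; you need $(\phi-\phi_\triv)(v)$ there. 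The four axioms then do check out as you describe.)

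The genuine gap is in part (2). You reduce everything to the pointwise lower bound $\check\phi_{\cF_\phi,m}\ge\phi-\epsilon_m$ and propose to obtain it from an ``asymptotic Riemann--Roch / Okounkov-body'' estimate which you explicitly leave unproved. This is not merely a deferred computation: the Boucksom--Chen concave transform controls integrated quantities (dimensions of graded pieces, the limit Duistermaat--Heckman measure), not the value of $\FS(\cF_\phi R_m)$ at an individual valuation, so it is not the right tool for a pointwise statement, and I do not see how the needed bound would follow from it. The missing step has a much softer proof requiring no Okounkov bodies. Since $X^\NA$ is compact and $\phi$ is continuous, Dini's theorem applied to the decreasing net from $\mcH^\NA(L)$ defining $\phi$ (Definition \ref{def-psh}) produces, for every $\epsilon>0$, some $\psi\in\mcH^\NA(L)$ with $\phi\le\psi\le\phi+\epsilon$. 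The operator $\FS_m\circ N_m$ is monotone, commutes with adding constants, and satisfies $\FS_m(N_m(\psi))=\psi$ for $m$ sufficiently divisible; hence
\[
\phi\ \ge\ \FS_m\bigl(N_m(\phi)\bigr)\ \ge\ \FS_m\bigl(N_m(\psi-\epsilon)\bigr)\ =\ \psi-\epsilon\ \ge\ \phi-\epsilon
\]
for all such $m$. This gives uniform convergence of the increasing approximants to $\phi$ and makes the quasi-monomial/Dini/inverse-limit discussion at the end of your proposal unnecessary. Without this replacement (or an actual proof of the asymptotic you invoke), part (2) is not established.
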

By abuse of notations, we will also interchangeably use the notation $\cF$ and its associated non-Archimedean metric. 
For simplicity we will denote by $\PSH^{\cF,\NA}$ to denote the space of non-Archimedean psh metrics that are associated to filtrations, i.e. the space of lower regularizable non-Archimedean psh metrics.
We will obtain existence results for cscK metrics by using a special class of continuous metrics associated to the following class of filtrations. 
\begin{defn}\label{def-modfil}
For any normal model $(\mcX, \mcL)$ of $(X, L)$, we define its associated filtration $\mcF_{(\mcX, \mcL)}=\{\mcF^{\lambda}_{(\mcX, \mcL)}R_m\}$ as:
\begin{equation}
\mcF_{(\mcX, \mcL)}^\lambda R_m=\left\{s\in H^0(X, mL); t^{-\lceil \lambda \rceil} \bar{s} \in H^0(\mcX, \lfloor m\mcL\rfloor)\right\}.
\end{equation}
Filtrations obtained in this way will be called model filtrations. We will denote by $\phi_{(\mcX, \mcL)}$, or simply by $\phi_{\mcL}$, the non-Archimedean psh metrics associated to $\mcF_{(\mcX, \mcL)}$, and by $\PSH^{\fM, \NA}$ the space of non-Archimedean metrics associated to model filtrations.
\end{defn}
Note that here we do not require $\mcL$ to be semiample. Model filtrations can be described in a different way. First we can assume that $\pi: (\mcX, \mcL)\rightarrow \bC$ is a dominating model with a $\bC^*$-equivariant birational morphism $\rho: \mcX\rightarrow X_\bC$. Write $\mcL=\rho^*L_\bC+D$ and set $\mcI'_m=\rho_*(\mcO_{\mcX}(mD))$ 
$\mcI'_m$ is an integrally closed fractional ideal of $X_{\bC}$ which has the shape:
\begin{equation}
\mcI'_m=\sum_{\lambda\in \bZ} t^{-\lambda} I'_{m,\lambda}.
\end{equation}
Then we have the identity (see \cite[section 2.6]{BHJ17}):
\begin{equation}
\mcF^\lambda R_m=H^0\left(X, I'_{m, \lceil \lambda\rceil} \otimes_{\mcO_X}  mL \right).
\end{equation}
Note that $I_{m,\lceil \lambda\rceil}\otimes mL$ is however in general not globally generated since $\mcL$ is not assumed to be semiample.

If $f_\mcL$ denotes the model function that is defined by: for any $v\in X^{\rm div}_\bQ$,
\begin{equation}
f_\mcL(v)=G(v)(D).
\end{equation}
The $\phi_\triv$-psh upper envelope of $f_\mcL$ is defined as:
\begin{equation}\label{eq-upenv}
P(f_\mcL)(v)=\sup\left\{(\phi-\phi_{\triv})(v); \phi\in \PSH^\NA(L), \phi-\phi_\triv\le f_\mcL \right\}.
\end{equation}
By \cite[Theorem 8.5]{BFJ16} we have the equality $\phi_{(\mcX, \mcL)}=\phi_\triv+P(f_\mcL)$, which is always continuous by \cite[Theorem 8.3]{BFJ16}. More concretely, if $\fb_m$ denotes the $\pi$-relative base ideal of $m\mcL$ and
$\mu_m: \mcX_m\rightarrow \mcX$ is the normalized blowup of $\fb_m$ with the exceptional divisor denoted by $E_m$, then 
$(\mcX_m, \mcL_m=\mu_m^*\mcL-\frac{1}{m}E_m)$ is a semiample test configuration and we have
\begin{equation*}
\phi_{(\mcX, \mcL)}=\lim_{m\rightarrow+\infty} \phi_{(\mcX_m, \mcL_m)}.
\end{equation*} 
Moreover if we write
\begin{equation}
\mcI_m=(\rho\circ \mu_m)_*\mcO_{\mcX_m}(mD-E_m)=\sum_{\lambda\in \bZ}t^{-\lambda} I_{m,\lambda},
\end{equation}
with $I_{m,\lambda}\supseteq I_{m, \lambda'}$ if $\lambda'\ge \lambda$,
then $I_{m,\lambda}\otimes mL$ is globally generated and we also have:
\begin{equation}
\mcF^\lambda R_m=H^0(X, I_{m,\lceil\lambda\rceil}\otimes mL).
\end{equation}

The most well-studied model filtrations are those from test configurations.
\begin{exmp}[\cite{BHJ17, Sze15, WN12}]
For any dominating test configuration $(\mcX, \mcL)$ via dominating morphism $\rho: \mcX\rightarrow X_\bC$. Assume $\mcX_0=\sum_{i=1}^I b_i E_i$ and $\mcL=\rho^*L_\bC+D$. There is an associated model filtration (see \cite[Lemma 5.17]{BHJ17}):
\begin{eqnarray*}
\mcF^\lambda R_m&=&\left\{s\in H^0(X, mL); t^{-\lceil \lambda\rceil} \bar{s} \text{ extends to a holomorphic section of } m\mcL\right\}\\
&=&\left\{s\in H^0(X, mL);  r(\ord_{E_i})(s)+m b_i \cdot \ord_{E_i}D\ge b_i \lceil\lambda\rceil, i=1,\dots, I \right\}.
\end{eqnarray*}
\end{exmp}

\subsubsection{Finite energy metrics and measures}
Boucksom-Jonsson developed a non-Archimedean approach to K-stability, which is a natural set-up for studying and compactifying the space of (equivalent classes of) test configurations. In particular the space of non-Archimedean metrics with finite energy space, which is a natural compactification of $\mcH^\NA$, was introduced in \cite{BoJ18a}:
\begin{defn}\label{def-cE1NA}
For any $\phi\in {\rm PSH}^\NA(L)$, define
\begin{equation}
\bfE^\NA(\phi)=\inf\{\bfE^\NA(\tilde{\phi}); \tilde{\phi}\in \mcH^\NA(L) \text{ and } \tilde{\phi}\ge \phi\}.
\end{equation}
Set
\begin{equation}
\cE^{1,\NA}:=\cE^{1,\NA}(L)=\{\phi\in {\rm PSH}^\NA(L); \bfE^\NA(\phi)> -\infty\}.
\end{equation}
A sequence $\{\phi_m\}$ in $\mcE^{1,\NA}(L)$ converges to $\phi\in \mcE^{1,\NA}(L)$ in the strong topology if $\lim_{m\rightarrow+\infty} (\phi_m-\phi)=0$ on $X^{\rm qm}$ (set of quasi-monomial points in $X^\NA$) and $\lim_{m\rightarrow+\infty} \bfE^\NA(\phi_m)=\bfE^\NA(\phi)$. The strong topology on $\cE^{1,\NA}/\bR$ is by definition the quotient topology induced by the strong topology on $\cE^{1,\NA}(L)$.
\end{defn}

By the work of Boucksom-Jonsson \cite{BoJ18a}, we have the mixed Monge-Amp\`{e}re energy functional for finite energy non-Archimedean metrics. 
\begin{thm}[{\cite[Theorem 6.9]{BoJ18a}}]\label{thm-BoJ}
There exists a unique operator:
\begin{equation}
(\phi_1, \dots, \phi_n)\mapsto \MA^\NA(\phi_1, \dots, \phi_n).
\end{equation}
taking an $n$-tuple in $\mcE^{1,\NA}(L)$ to a Radon measure on $X^\NA$ such that
\begin{enumerate}
\item[(i)] If $\phi_i=\phi_{(\mcX, \mcL_i)}\in \mcH^\NA(L)$ where $(\mcX, \mcL_i)$ is a test configuration of $(X, L)$ and $\mcX_0=\sum_j b_j E_j$ is Cartier, then 
\begin{equation}\label{eq-MAmeas}
\MA^\NA(\phi_1,\dots, \phi_n)=\sum_j b_j \left(\mcL_1|_{E_j}\cdot \dots \cdot \mcL_n|_{E_j}\right)\delta_{x_j},
\end{equation}
where $x_j=b_j^{-1} r(\ord_{E_j})$ (called the Shilov point associated to $E_i$ in \cite[section 1.4]{BoJ18a}) and $r: (X\times\bC)^{\rm div}_\bQ \rightarrow X^{\rm div}_\bQ$ is the restriction map.
\item[(ii)] $\int_{X^\NA} (\phi-\phi_{\triv})\MA^\NA(\phi_1,\dots, \phi_n)>-\infty$ when $\phi, \phi_1,\dots, \phi_n\in \cE^{1,\NA}(L)$.
\item[(iii)] For any decreasing nets $\phi^j\rightarrow \phi$ and $\phi^j_i\rightarrow \phi_i$ in $\mcE^{1,\NA}(L)$, we have the convergence:
\begin{equation*}
\int_{X^\NA}(\phi^j-\phi_{\triv})\MA^\NA(\phi^j_1,\dots, \phi^j_n)\longrightarrow \int_{X^\NA}(\phi-\phi_{\triv})\MA^\NA(\phi_1,\dots, \phi_n).
\end{equation*}
\end{enumerate}
\end{thm}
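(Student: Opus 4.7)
The plan is to construct $\MA^\NA$ by monotone extension from the dense subspace $\mcH^\NA(L)$, where formula (i) is explicit. On $\mcH^\NA$, any $n$-tuple can be brought to a common dominating test configuration $\mcX$ by pullback, and formula (i) then defines a finite positive atomic measure supported on the Shilov points of $\Delta_\mcX$. Its total mass is $\qV(\mcX_0 \cdot \mcL_1 \cdots \mcL_n)$, which equals $1$ by flatness of $\pi_\mcX$ and our normalization. Multilinearity, symmetry in the $\phi_i$'s, and compatibility across pullbacks $\mcX'\to\mcX$ are all inherited from the intersection product.

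By Theorem \ref{thm-phidec}, every $\phi \in \PSH^{0,\NA}(L)$ is the decreasing limit along $\fDS$ of continuous model functions from $\mcH^\NA(L)$. I would extend the operator to continuous psh metrics as the weak-$*$ limit of the measures of Step 1 along such nets, using a polarization argument to reduce to the diagonal case together with Chern-Levine-Nirenberg-type estimates from \cite{BFJ15} to uniformly control pairings against continuous test functions on $X^\NA$ and ensure the limit is independent of the chosen approximation. For a general $\phi_i \in \mcE^{1,\NA}(L)$, approximate by canonical continuous psh truncations $\phi_i^k := P(\max(\phi_i, \phi_\triv - k))$ and define $\MA^\NA(\phi_1,\dots,\phi_n)$ as the weak-$*$ limit of $\MA^\NA(\phi_1^k,\dots,\phi_n^k)$ as $k \to \infty$. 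The central \emph{a priori} input is an energy estimate of the form
\begin{equation*}
\int_{X^\NA}(\phi - \phi_\triv)\,\MA^\NA(\phi_1,\dots,\phi_n) \ge -C\,\max_i \bigl(|\bfE^\NA(\phi_i)| + |\bfE^\NA(\phi)|\bigr),
\end{equation*}
proved first on $\mcH^\NA$ via Cauchy-Schwarz and integration-by-parts on dual complexes, and then transported through the approximation. This simultaneously yields property (ii), shows that the truncation limit is independent of $k$, and guarantees the limit measure still has total mass $1$ (so is genuinely a probability measure rather than merely a positive measure of mass $\le 1$).

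Property (iii) follows by reducing to varying one argument at a time and telescoping via the non-Archimedean integration-by-parts symmetry together with the continuity of the energy functional $\bfE^\NA(\phi^j_i) \to \bfE^\NA(\phi_i)$ along the decreasing nets. Uniqueness is then automatic: (i) pins the operator down on $\mcH^\NA(L)$, and the monotone continuity (iii) propagates this first to $\PSH^{0,\NA}(L)$ and finally to $\mcE^{1,\NA}(L)$ through the truncation procedure. The main obstacle is exactly the monotone continuity in the finite-energy regime: one must show that a decreasing net of finite-energy metrics produces a limit measure of full mass without mass escape to the boundary of $\mcE^{1,\NA}(L)$. In the Archimedean setting this is Bedford-Taylor theory, while in the non-Archimedean case it demands systematic use of the retraction $X^\NA = \varprojlim_{\mcX \in \fDS} \Delta_\mcX$, of piecewise-linear calculus on dual complexes, and of the foundational non-Archimedean pluripotential theory of \cite{BFJ15, BoJ18a}.
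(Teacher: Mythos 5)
This theorem is not proved in the paper at all: it is imported verbatim as \cite[Theorem 6.9]{BoJ18a}, so there is no in-paper argument to compare yours against. Judged on its own terms, your sketch follows the same strategy as the actual construction of Boucksom--Jonsson (definition on $\mcH^\NA$ via intersection numbers on a common model, extension to continuous psh metrics by decreasing nets using Theorem \ref{thm-phidec} and Dini-type uniform convergence on the compact space $X^\NA$, then passage to $\cE^{1,\NA}$ via an energy estimate), and the individual ingredients you name are the correct ones.

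The difficulty is that the two steps carrying essentially all of the content are asserted rather than proved. First, the monotone continuity (iii) in the finite-energy regime --- which you yourself flag as ``the main obstacle'' --- is exactly the non-Archimedean analogue of Bedford--Taylor continuity and cannot be dispatched by ``telescoping via integration-by-parts'': the integrand $\phi^j-\phi_\triv$ is only upper semicontinuous and unbounded below, the measures vary simultaneously, and controlling the pairing requires the full machinery of quasi-metric estimates on $\cE^{1,\NA}$ (the non-Archimedean versions of \eqref{eq-Itri} and \eqref{eq-BBJest}), not just the convergence $\bfE^\NA(\phi_i^j)\to\bfE^\NA(\phi_i)$. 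Second, your uniqueness argument is incomplete as stated: properties (i) and (iii) only determine the limit measure through its pairings against functions of the form $\phi-\phi_\triv$ with $\phi\in\cE^{1,\NA}$, so to conclude that the Radon measure itself is determined you need the additional fact that differences of continuous psh metrics span a dense subspace of $C^0(X^\NA)$; this density is a genuine theorem in \cite{BoJ18a} and should be cited or proved. With those two points filled in (in practice, by invoking \cite{BFJ15, BoJ18a}), the outline is sound.
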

We will write $\MA^\NA(\phi)$ for $\MA^\NA(\phi, \dots, \phi)$, and $\MA^\NA(\phi_1^{[k_1]},\dots, \phi_p^{[k_p]})$ for $$\MA^\NA(\overbrace{\phi_1, \dots, \phi_1}^{k_1},\dots, \overbrace{\phi_p, \dots, \phi_p}^{k_p}).$$

We recall the resolution of non-Archimedean Monge-Amp\`{e}re equations by Boucksom-Favre-Jonsson \cite{BFJ16} and extended by Boucksom-Jonsson \cite{BoJ18a}.
\begin{defn}[{\cite[7.1, 7.5]{BoJ18a}}]\label{def-M1NA}
The energy of a positive radon measure (see \cite[Chapter 7]{Fol99}) $\nu$ on $X^\NA$ (with mass $V$) is 
\begin{equation}\label{eq-E*NA}
\bfE^{*\NA}(\nu)=\sup_{\phi\in \cE^{1,\NA}(L)}\left(\bfE^\NA(\phi)-\qV \int_{X^\NA}(\phi-\phi_{\triv})\nu \right)\in \bR\cup \{+\infty\}.
\end{equation}
We say that $\nu$ has finite energy if $\bfE^{*\NA}(\nu)<+\infty$, and denote by $\mcM^{1,\NA}$ the set of (positive) radon measure (with mass $V$) of finite energy on $X^\NA$.

A net $\{\nu_j\}_j$ in $\mcM^{1,\NA}$ converges strongly to $\nu$ iff $\nu_j\rightarrow \nu$ weakly and $\lim_{j\rightarrow+\infty}\bfE^{*\NA}(\nu_j)=\bfE^{*\NA}(\nu)$.
\end{defn}

\begin{thm}[{non-Archimedean Calabi-Yau theorem, \cite[Theorem A]{BFJ16}, \cite[Theorem 7.3, 7.25]{BoJ18a}}]\label{thm-NACY}
The Monge-Amp\`{e}re operator defines a homeomorphism 
\begin{equation}
\MA^\NA: \cE^{1,\NA}(L)/\bR\rightarrow \mcM^{1,\NA}
\end{equation}
with respect the strong topology. Moreover, if $\nu$ is a Radon measure with mass $V$ supported on a dual complex $\Delta_\mcX$ for a SNC model $\mcX$, then $(\MA^\NA)^{-1}(\nu)$ is continuous.
\end{thm}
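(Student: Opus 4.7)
The plan is to solve $\MA^\NA(\phi) = \nu$ variationally, following the Archimedean template of Berman--Boucksom--Guedj--Zeriahi. By the very definition \eqref{eq-E*NA}, the hypothesis $\bfE^{*\NA}(\nu) < +\infty$ says exactly that the functional
\begin{equation*}
F_\nu(\phi) := \bfE^\NA(\phi) - \qV \int_{X^\NA}(\phi - \phi_\triv)\,\nu
\end{equation*}
is bounded above on $\cE^{1,\NA}(L)$, and formally its critical points (modulo the translation invariance $F_\nu(\phi+c)=F_\nu(\phi)$) satisfy $\MA^\NA(\phi) = \nu$, because the first variation of $\bfE^\NA$ is the Monge--Amp\`ere measure. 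First I would produce a maximizer by normalizing $\sup(\phi - \phi_\triv) = 0$, using (i) weak compactness of the space of normalized psh metrics realized through the retractions $r_\mcX$, (ii) upper semicontinuity of $\bfE^\NA$ for this topology, a consequence of Theorem \ref{thm-phidec} together with Theorem \ref{thm-BoJ}(iii), and (iii) uniform control on $\int(\phi - \phi_\triv)\,\nu$ over bounded-energy classes, which is exactly the finite-energy assumption on $\nu$. The direct method then yields a maximizer $\phi_\nu$ of $F_\nu$.

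The main obstacle is showing that this maximizer actually satisfies $\MA^\NA(\phi_\nu) = \nu$, since there is no non-Archimedean analogue of elliptic regularity to fall back on. I would perturb along paths $\phi_t := P(\phi_\nu + t\psi)$ for $\psi$ a bounded model function, where $P$ denotes the $\phi_\triv$-psh upper envelope of \eqref{eq-upenv}. Two technical ingredients are needed: a mixing/differentiation formula giving $\left.\frac{d}{dt}\right|_{t=0^+}\bfE^\NA(\phi_t) = \qV \int \psi\,\MA^\NA(\phi_\nu)$, and an orthogonality identity ensuring the nonlinearity of $P$ contributes nothing at first order because $\MA^\NA(\phi_\nu)$ is concentrated on the set where $\phi_t = \phi_\nu + t\psi$. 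Extremality then forces $\int \psi\,\MA^\NA(\phi_\nu) = \int \psi\,\nu$ for every model function $\psi$, and density of such functions in $C^0(X^\NA)$ upgrades this to the measure identity $\MA^\NA(\phi_\nu) = \nu$. Uniqueness modulo constants follows from a domination principle for finite-energy psh metrics, making $\MA^\NA$ well-defined and injective on $\cE^{1,\NA}(L)/\bR$.

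For the homeomorphism assertion, continuity of $\MA^\NA$ is essentially built into Theorem \ref{thm-BoJ}(iii), while continuity of the inverse follows from the stability of the variational problem: if $\nu_j \to \nu$ strongly, the variational identities combined with lower semicontinuity of $\bfE^{*\NA}$ force convergence of $\bfE^\NA(\phi_{\nu_j}) \to \bfE^\NA(\phi_\nu)$, together with pointwise convergence on quasi-monomial points, which is by Definition \ref{def-cE1NA} precisely strong convergence in $\cE^{1,\NA}(L)/\bR$. For the final regularity statement, when $\supp\nu \subseteq \Delta_\mcX$ for an SNC model $\mcX$, the measure $\nu$ is $r_\mcX$-invariant; one verifies that the retracted candidate $\phi_\triv + (\phi_\nu - \phi_\triv)\circ r_\mcX$ still lies in $\PSH^\NA(L)$ and again solves $\MA^\NA = \nu$, so by uniqueness $\phi_\nu$ agrees with its retraction up to an additive constant. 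This exhibits $\phi_\nu$ as the pullback of a psh-type function on the finite-dimensional compact simplicial complex $\Delta_\mcX$, where continuity is automatic from convex-analytic regularity.
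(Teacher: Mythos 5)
First, note that the paper does not prove this statement at all: Theorem \ref{thm-NACY} is imported as a black box from \cite{BFJ16} and \cite[Theorem 7.3, 7.25]{BoJ18a}, so your proposal can only be measured against the proofs in those references. Your variational skeleton (maximize $F_\nu$, compactness plus upper semicontinuity of $\bfE^\NA$, differentiate along envelopes, conclude via density of model functions and a domination principle) is indeed the strategy of Boucksom--Favre--Jonsson. But the step you label a ``technical ingredient'' --- the orthogonality identity $\int (P(f)-f)\,\MA^\NA(P(f))=0$ and the resulting differentiability of $t\mapsto \bfE^\NA(P(\phi_\nu+t\psi))$ --- is the mathematical heart of the theorem, not a routine lemma. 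In \cite{BFJ15} it is deduced from the differentiability of volumes of big divisors (Fujita approximation / Okounkov bodies, going back to \cite{BFJ09}), and the continuity of the envelope $P(f)$ itself requires resolution of singularities and multiplier-ideal arguments (\cite[Theorem 8.3]{BFJ16}). Postulating these leaves the proof essentially empty at its critical point.

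The final paragraph contains an actual error rather than an omission. For $\phi\in\PSH^\NA(L)$ the function $\phi_\triv+(\phi-\phi_\triv)\circ r_\mcX$ is in general \emph{not} psh: Theorem \ref{thm-phidec} only says that $((\phi-\phi_\triv)\circ r_{\mcX})_\mcX$ is a decreasing net of continuous functions converging to $\phi-\phi_\triv$, and equality $(\phi-\phi_\triv)\circ r_\mcX=\phi-\phi_\triv$ holds only for metrics determined on $\mcX$. A solution with $\supp\nu\subseteq\Delta_\mcX$ need not be determined on $\mcX$: as the paper itself records after Definition \ref{def-modfil}, even $\phi_\triv+P(f_\mcL)$ for a model function on $\mcX$ is a limit of metrics living on ever higher blow-ups $\mcX_m$. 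So your retracted candidate need not be psh, there is no reason it solves the same equation, and the solution does not descend to a convex function on the finite-dimensional complex $\Delta_\mcX$. The continuity assertion in \cite[Theorem A]{BFJ15} is instead proved by solving for finite atomic approximations of $\nu$ (where the solution is a continuous envelope of a model function) and establishing a uniform stability estimate forcing uniform convergence of the solutions; none of this is ``automatic from convex-analytic regularity.''
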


All the Archimedean energy functionals in \eqref{eq-EE1}-\eqref{eq-defI} can be defined by replacing the Archimedean integrals in section \ref{sec-Archrays} by the corresponding non-Archimedean integrals. 
For example we have
\begin{defn}
For any $\phi\in \cE^{1,\NA}(L)$, define 

\begin{equation}
\Lam^\NA(\phi)=\qV \int_{X^\NA} (\phi-\phi_\triv)\MA^\NA(\phi_\triv).
\end{equation}

\end{defn}

We will also consider a more general mixed Monge-Amp\`{e}re measure.
\begin{defn}
 Let $\pi_{\mcY}: \mcY\rightarrow \bC$ be a model of $X$ (see Definition \ref{def-TC}) and $\mcQ$ be a $\bC^*$-equivariant $\bQ$-line bundle over $\mcY$. We think of $\mcQ$ as a non-Archimedean metric $\phi_{\mcQ}$ on $(X^\NA, Q^\NA)$ where $Q=\cQ|_{X\times\{1\}}$. 
First assume that $\mcQ$ is $\pi_\mcY$-semiample, define for any $\phi_i\in \cE^{1,\NA}, 1\le i\le n$:
\begin{eqnarray}\label{eq-mixed}
\MA^\NA(\phi_\mcQ, \phi_1, \cdots, \phi_{n-1})&:=&\frac{1}{n!}\frac{\partial^{n-1}}{\partial t_1\cdots \partial t_{n-1}}\MA^\NA (\phi_{\mcQ}+\sum_{i=1}^{n-1} t_i \phi_i).
\end{eqnarray}
where we wrote $\phi_0=\phi_{\mcQ}$. 
In general, we write $\mcQ=\mcQ_1-\mcQ_2$ with $\mcQ_i$ being $\pi_\mcY$-semiample and define:
\begin{equation}
\MA^\NA(\phi_\mcQ, \phi_1, \cdots, \phi_{n-1}):=\MA^\NA(\phi_{\mcQ_1}, \phi_1,\dots, \phi_{n-1})-\MA^\NA(\phi_{\mcQ_2}, \phi_1,\dots, \phi_{n-1}).
\end{equation}
\end{defn}
Note that in \eqref{eq-mixed} we used the fact that the mixed non-Archimedean Monge-Amp\`{e}re operator is linear and symmetric with respect to the variables. Indeed, if $\phi_i\in \mcH^\NA$, the mixed Monge-Amp\`{e}re operator on the left-hand-side of \eqref{eq-mixed} can be defined using an intersection formula similar to \eqref{eq-MAmeas} and is easily shown to be equal to the right-hand-side. For general $\phi_i\in \cE^{1,\NA}$, we can choose decreasing sequences $\{\phi_{i,k}\}_k\subset \mcH^\NA$ converging to $\phi_i$ and show that both sides converge to the same limit as $k\rightarrow+\infty$.
In fact it is well known that one can also use a polarization formula to define the mixed Monge-Amp\`{e}re measure (setting $\phi_0=\phi_{\mcQ}$):
\begin{equation*}
\MA^\NA(\phi_\mcQ, \phi_1, \cdots, \phi_{n-1})=\frac{1}{n!}\sum_{k=1}^n\sum_{0\le i_1<\cdots<i_{k}\le n-1}(-1)^{n-k}\MA^\NA(\phi_{i_1}+\cdots+\phi_{i_k}). 
\end{equation*}

\begin{defn}\label{def-EchiNA}
Let $(\mcY, \mcQ)$ be the data as in the above definition.
For any $\phi\in \cE^{1,\NA}(L)$, define:
\begin{equation}\label{eq-EcQNA}
(\bfE^{\cQ})^\NA(\phi):=\qV \sum_{k=0}^{n-1}\int_{X^\NA}(\phi-\phi_{\triv})\MA^\NA(\phi_\cQ, \phi^{[k]}, \phi_{\triv}^{[n-1-k]}).
\end{equation}
In particular, when $(\mcY, \mcQ)=(X_\bC:=X\times\bC, K_{X_\bC/\bC}^{\log}=p_1^*K_X)$ we define:
\begin{equation}\label{eq-RNA2}
\bfR^\NA(\phi):=\bfE^{K^{\log}_{X_\bC/\bC}}(\phi).
\end{equation}
\end{defn}
The following lemma, which says that non-Archimedean energy functionals reduce to the corresponding intersection products, can be verified directly using Theorem \ref{thm-BoJ}.
\begin{lem}
With the notations in the above definition, if $\phi=\phi_{(\mcX, \mcL)}$ for $(\mcX, \mcL)\in \mcH^\NA(X, L)$ and $\mcX$, then 
\begin{eqnarray}
\bfE^\NA(\phi)&=&\qV \frac{1}{n+1} \sum_{k=0}^n\int_{X^\NA}(\phi-\phi_{\triv})\MA(\phi^{[k]},\phi_\triv^{n-k})=\qV \frac{\bar{\mcL}^{\cdot n+1}}{n+1}\\
 (\bfE^\cQ)^\NA(\phi)&=&\qV  \bar{\cQ}\cdot \left(\bar{\mcL}^{\cdot n}-L_{\bP^1}^{\cdot n}\right), \label{eq-intEcQNA}
\end{eqnarray}
where the intersection in \eqref{eq-EcQNA} is calculated on a common refinement of $\mcX$ and $\mcY$.
\end{lem}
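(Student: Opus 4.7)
The plan is to derive both identities by a direct intersection-theoretic computation based on Theorem \ref{thm-BoJ}(i), the Gauss-extension formula \eqref{eq-phiH}, and a telescoping sum. Throughout, I pass to a dominating model so that $\mu:\mcX\to X_\bC$ is a birational morphism, write $\mcX_0=\sum_i b_i E_i$, and set $D:=\mcL-\mu^*L_\bC=\sum_i d_i E_i$, a Cartier divisor supported on the central fibre.

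For the first identity, the defining property of the Gauss extension together with \eqref{eq-phiH} yields
$$(\phi-\phi_\triv)(x_j)=G(x_j)(D)=b_j^{-1}d_j$$
at each Shilov point $x_j=b_j^{-1}r(\ord_{E_j})$. Inserting this together with the explicit formula \eqref{eq-MAmeas} into the integrand gives
$$\int_{X^\NA}(\phi-\phi_\triv)\,\MA^\NA(\phi^{[k]},\phi_\triv^{[n-k]})=\sum_j d_j\,(\bar{\mcL}|_{E_j})^{k}(\bar L_{\bP^1}|_{E_j})^{n-k}=\bar D\cdot \bar{\mcL}^{k}\cdot \bar L_{\bP^1}^{n-k},$$
the last equality using that $\bar D$ is supported on $\bar\mcX_0$. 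Summing on $k$ and applying the telescoping identity
$$\sum_{k=0}^{n}\bar{\mcL}^{k}\cdot \bar L_{\bP^1}^{n-k}\cdot(\bar{\mcL}-\bar L_{\bP^1})=\bar{\mcL}^{n+1}-\bar L_{\bP^1}^{n+1}=\bar{\mcL}^{n+1},$$
where $\bar L_{\bP^1}^{n+1}=0$ because $\bar L_{\bP^1}$ is pulled back from $X$, collapses the double sum to $\bar{\mcL}^{n+1}$. Dividing by $(n+1)V$ yields the first formula.

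For the second identity, I first reduce to the case in which $\mcQ$ is $\pi_\mcY$-semiample: writing $\mcQ=\mcQ_1-\mcQ_2$, the linearity built into Definition \ref{def-EchiNA} together with the linearity of intersection products reduces the claim to the semiample setting. Passing to a common dominating refinement of $\mcX$ and $\mcY$, one applies the derivative prescription of Definition \ref{def-EchiNA} together with Theorem \ref{thm-BoJ}(i) to obtain
$$\MA^\NA(\phi_\cQ,\phi^{[k]},\phi_\triv^{[n-1-k]})=\sum_j b_j\,(\bar\cQ|_{E_j})(\bar{\mcL}|_{E_j})^{k}(\bar L_{\bP^1}|_{E_j})^{n-1-k}\,\delta_{x_j}.$$
Repeating the integration exactly as above and invoking the $n$-term analogue
$$\sum_{k=0}^{n-1}\bar{\mcL}^{k}\cdot \bar L_{\bP^1}^{n-1-k}\cdot(\bar{\mcL}-\bar L_{\bP^1})=\bar{\mcL}^n-\bar L_{\bP^1}^n$$
of the telescoping identity gives $V\cdot(\bfE^\cQ)^\NA(\phi)=\bar\cQ\cdot(\bar{\mcL}^n-\bar L_{\bP^1}^n)$, which is \eqref{eq-intEcQNA}.

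There is no substantial obstacle: both identities amount to elementary bookkeeping once \eqref{eq-MAmeas} and \eqref{eq-phiH} are combined. The only points requiring modest care are passing to a common dominating refinement so that all intersection numbers may be computed on a single birational model, and handling non-semiample $\mcQ$ via the decomposition $\mcQ=\mcQ_1-\mcQ_2$; both are direct consequences of the multilinearity built into Definition \ref{def-EchiNA} and the Shilov-point formula of Theorem \ref{thm-BoJ}(i).
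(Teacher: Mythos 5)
Your proof is correct and follows exactly the route the paper indicates: the paper offers no written argument beyond the remark that the lemma ``can be verified directly using Theorem \ref{thm-BoJ}'', and your computation via the Shilov-point formula \eqref{eq-MAmeas}, the Gauss-extension identity \eqref{eq-phiH}, and the telescoping sum is precisely that direct verification, including the reduction of a general $\mcQ$ to the semiample case by writing $\mcQ=\mcQ_1-\mcQ_2$. The only cosmetic point is the factor of $V$ in ``dividing by $(n+1)V$'' and in ``$V\cdot(\bfE^\cQ)^\NA(\phi)$'': with the paper's conventions the measures in \eqref{eq-MAmeas} have total mass $V=L^{\cdot n}$ and the stated intersection formulas carry no $V^{-1}$ (the normalizing factors are suppressed throughout), so the division should be by $n+1$ only.
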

Because of identity \eqref{eq-intEcQNA}, for any $\phi\in \mcE^{1,\NA}(L)$, we will simply write:
\begin{equation}\label{eq-Qdotphi1}
(\bfE^\mcQ)^\NA(\phi)=\bar{\mcQ}\cdot (\phi^{\cdot n}-\phi_{\triv}^{\cdot n}).
\end{equation}

Note that if $\mcQ=\pi^*Q$ for a $\bQ$-line bundle over $X$, then because $Q_{\bP^1}\cdot \phi_\triv^{\cdot n}=0$, we have:
\begin{equation}\label{eq-Qdotphi0}
(\bfE^Q)^\NA(\phi):=(\bfE^{\pi^*Q})^\NA(\phi)=Q_{\bP^1}\cdot \phi^{\cdot n}.
\end{equation}
Using the property of mixed Monge-Amp\`{e}re operators and the non-Archimedean estimates developed in \cite{BoJ18a}, it is easy to adapt the proof in the Archimedean case (see \cite{BBGZ13, DR17}) to prove the following useful result.
\begin{prop}(see \cite{BoJ18a})\label{prop-strongcont}
The energy functionals $\Lam^\NA$ and $(\bfE^\cQ)^\NA$ map $\mcE^{1,\NA}(L)$ to $\bR$ and are continuous with respect to the convergence in strong topology. 
\end{prop}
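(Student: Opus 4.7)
The plan is to adapt the Archimedean strategy of \cite{BBGZ13, DR17} to the non-Archimedean setting using the mixed-energy machinery of \cite{BoJ18a}. Finiteness of both functionals on $\cE^{1,\NA}(L)$ is essentially built into Theorem~\ref{thm-BoJ}(ii): for $\Lam^\NA$ one takes all arguments equal to $\phi_\triv$, and for $(\bfE^\mcQ)^\NA$ one writes $\mcQ = \mcQ_1 - \mcQ_2$ with each $\mcQ_i$ $\pi_\mcY$-semiample, so each $\phi_{\mcQ_i}$ is a bounded psh non-Archimedean metric on a line bundle and Theorem~\ref{thm-BoJ}(ii) applies termwise after unpacking the definition of the mixed measure.

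For continuity of $\Lam^\NA$, note that the trivial model $(X_\bC,L_\bC)$ has irreducible central fibre of multiplicity one with associated Shilov point $v_\triv$, so formula (i) of Theorem~\ref{thm-BoJ} gives $\MA^\NA(\phi_\triv) = V\,\delta_{v_\triv}$. Hence $\Lam^\NA(\phi) = (\phi-\phi_\triv)(v_\triv)$ reduces to evaluation at the quasi-monomial point $v_\triv$, and since strong convergence in $\cE^{1,\NA}(L)$ includes pointwise convergence on $X^{\rm qm}$, continuity is immediate.

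For continuity of $(\bfE^\mcQ)^\NA$, assume (after the decomposition above) that $\mcQ$ is semiample, so $\phi_\mcQ$ is continuous. The plan is to establish a Hölder-type bound
\begin{equation*}
|(\bfE^\mcQ)^\NA(\phi) - (\bfE^\mcQ)^\NA(\psi)| \le C(A)\,\bfI^\NA(\phi,\psi)^{1/2}
\end{equation*}
uniformly on sublevel sets $\{\bfE^\NA(\phi),\bfE^\NA(\psi) \ge -A\}$, by writing the difference as a telescoping sum of integrals $\int_{X^\NA}(\phi-\psi)\,\MA^\NA(\phi_\mcQ, \phi^{[a]}, \psi^{[b]}, \phi_\triv^{[c]})$ with $a+b+c=n-1$, and then estimating each summand via the non-Archimedean Cauchy--Schwarz and integration-by-parts inequalities of \cite{BoJ18a}. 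Combined with the standard fact, also from \cite{BoJ18a}, that strong convergence implies $\bfI^\NA(\phi_m,\phi) \to 0$ on bounded energy sets, continuity follows.

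The main obstacle is that $\phi_\mcQ$ is a metric on $Q^\NA$ rather than on $L^\NA$, so the Cauchy--Schwarz estimates of \cite{BoJ18a} (formulated for $n$-tuples of psh metrics on a common line bundle) do not apply verbatim to the $\mcQ$-twisted mixed measure. I would address this by a polarization trick: pick an auxiliary psh metric $\phi'$ on a suitable multiple of $L$ and write $\phi_\mcQ = (\phi_\mcQ + t\phi') - t\phi'$ with $t\gg 0$ chosen so that both terms are psh on line bundles comparable to multiples of $L$; expanding multilinearly in $t$ and reading off coefficients then reduces everything to mixed Monge--Amp\`ere estimates for finite-energy psh metrics on a single line bundle, where the machinery of \cite{BoJ18a} applies directly.
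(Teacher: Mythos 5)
Your proposal follows exactly the route the paper indicates: the paper gives no written proof of Proposition \ref{prop-strongcont}, saying only that one adapts the Archimedean arguments of \cite{BBGZ13, DR17} using the mixed Monge--Amp\`ere estimates of \cite{BoJ18a}, which is precisely your plan (for $\Lam^\NA$, the reduction to evaluation of $\phi-\phi_\triv$ at the trivial valuation, which is indeed a quasi-monomial point; for $(\bfE^\mcQ)^\NA$, the decomposition into semiample pieces, telescoping via the cocycle property, and iterated Cauchy--Schwarz against $\bfI^\NA$ on energy sublevel sets). Two cosmetic points: the H\"older exponent produced by the iterated Cauchy--Schwarz is $2^{-n}$ rather than $1/2$ (any positive exponent suffices for continuity, cf.\ \eqref{eq-BBJest}), and your polarization trick lands on semiample classes dominated by a constant times $c_1(L)$ rather than on literal multiples of $L$ --- which is all the estimates of \cite{BoJ18a} require, and is the same reduction the paper carries out explicitly in the proof of Theorem \ref{thm-EcQslopeconv}.
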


Recall that there is a log discrepancy function $A_X: X^{\NA}\rightarrow \bR_{\ge 0}\cup \{+\infty\}$ that extends the usual log discrepancy function for divisorial valuations. By \cite[Theorem 2.1]{BoJ18a} we have the identity:
\begin{equation}\label{eq-AXsup}
A_X=\sup_{\mcY\in \fSN}A_X\circ r_{\mcY}.
\end{equation}
Boucksom-Jonsson defined the following non-Archimedean entropy functional (\cite[2.4]{BoJ18b}): for any $\phi\in \mcE^{1,\NA}(L)$ set :
\begin{equation}\label{eq-HNAphi1}
\bfH^\NA(\phi)=\qV \int_{X^\NA}A_X(x)\MA^\NA(\phi)(x).
\end{equation}
So for any $\phi\in \mcE^{1,\NA}(L)$, we can also define:
\begin{equation}\label{eq-MNAE1}
\bfM^\NA=\bfH^\NA+\bfR^\NA+\frac{\ud{S}}{n+1}\bfE^\NA.
\end{equation}

So up to now all the non-Achimedean functionals in \eqref{eq-ENAH}-\eqref{eq-MNAH} have been defined for all $\phi\in \cE^{1,\NA}(L)$. However the key issue in the study of YTD conjecture is that the functional $\bfH^\NA$ is in general not continuous with respect to the convergence in the strong topology (see \cite{BoJ18b}).

\subsubsection{$\bG$-uniform K-stability}\label{sec-twist}

Let $\bG$ be a reductive complex Lie group in $\Aut(X, L)_0$ with a maximal compact subgroup $\bK$ such that $\bK^\bC=\bG$.
Let $\bT=((S^1)^r)^\bC$ be the identity component of the center of $\bG$. We set:
\begin{eqnarray*}
(\mcH^\NA)^\bK&:=&\{\phi_{(\mcX, \mcL)}; (\mcX, \mcL) \text{ is a $\bG$-equivariant  test configuration }\}\\
(\mcE^{1,\NA})^\bK&:=&\left\{\phi\in \cE^{1,\NA}; \phi=\lim_{m\rightarrow+\infty}\phi_m  \text{ for a decreasing sequence } \{\phi_m\}\in (\mcH^\NA)^\bK  \right\}
\end{eqnarray*}
Set $N_\bZ={\rm Hom}(\bC^*, \bT)$, $N_\bR=N_\bZ\otimes_\bZ \bR$, $M_\bZ={\rm Hom}(N_\bZ, \bZ)$ and $M_\bR=M_\bZ\otimes_\bZ \bR$. $N_\bR$ has a natural action on $(X^\NA)^\bT$. 
This can be studied using Berkovich's notion of peak points (see \cite[section 4.2, 4.3]{BPS14} for the toric case and the corresponding study of toric metrics). For our purpose, we just need the following description of the $N_\bR$-action on $X^{\rm div}_\bQ$.
By the structure theory of $\bT$-varieties (see \cite{AHS08}), $X$ is birationally a torus fibration over the Chow quotient of $X$ by $\bT$ which will be denoted by $X/\!/\bT$. 
As a consequence the function field $\bC(X)$ is the quotient field of the Laurent polynomial algebra:
\begin{equation}
\bC(X\sslash \bT)[M_\bZ]=\bigoplus_{\alpha\in M_\bZ}\bC(X\sslash \bT)\cdot 1^{\alpha}.
\end{equation}
Given a valuation $\nu$ of the functional field $\bC(X\sslash \bT)$ and a vector $\lambda\in N_\bR$, we obtain a valuation  (\cite[page 236]{AHS08}): 
\begin{equation}
v_{\nu, \lambda}: \bC[X\sslash \bT][M_\bZ]\rightarrow \bR, \quad \sum_{i} f_i\cdot 1^{\alpha_i}\quad \mapsto\quad \min\left(\nu(f_i)+\la \alpha_i, \lambda \ra \right).
\end{equation}
The vector space $N_\bR$ acts on $X^{\rm div}_\bQ$ in the following natural way. If $v=\nu_{\nu,\lambda}$, then
\begin{equation}
\xi\circ v= \xi \circ v_{\nu, \lambda}= v_{\nu, \lambda+\xi}=:v_\xi.
\end{equation}
On the other hand, we have an action of $N_\bR$ on the space of test configurations:
\begin{defn}[\cite{His16b}]
Let $(\mcX, \mcL, \eta)$ be a $\bG$-equivariant test configuration. For any $\xi\in N_\bR$, the $\xi$-twist of $(\mcX, \mcL, \eta)$, denoted also by $(\mcX, \mcL)_\xi$, is the data $(\mcX, \mcL, \eta+\xi)$. 
\end{defn}
\begin{lem}
If $(\mcX, \mcL)$ is a $\bG$-equivariant test configuration, then for any $\xi\in N_\bZ$, $(\mcX, \mcL)_\xi$ is a test configuration and $\phi_\xi:=\phi_{(\mcX, \mcL)_\xi}$ satisfies the identity:
\begin{equation}\label{eq-phixi}
(\phi_\xi-\phi_\triv)(v)=(\phi-\phi_\triv)(v_\xi)+\theta_\xi(v)
\end{equation}
where $\theta_\xi$ is equal to $\phi_{\triv, \xi}-\phi_\triv$. Moreover for any $\phi\in (\mcH^\NA)^\bK, 1\le i\le n$, we have the identity:
\begin{equation}\label{eq-ximeas}
\MA^\NA(\phi_{1,\xi},\dots, \phi_{n,\xi})=(-\xi)_* \MA^\NA(\phi_1,\dots, \phi_n).
\end{equation}
\end{lem}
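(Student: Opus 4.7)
The plan is to first observe that $(\mcX,\mcL)_\xi$ is indeed a test configuration. Since $\xi\in N_\bZ$ is an integral cocharacter of the central torus $\bT\subset\Aut(X,L)_0$, its induced holomorphic vector field on $\mcX$ commutes with $\eta$ by $\bG$-equivariance. Therefore $\eta+\xi$ generates a new $\bC^*$-action on the unchanged family $\pi_\mcX:\mcX\to\bC$, the $\bQ$-line bundle $\mcL$ stays $\pi_\mcX$-semiample, and normality is preserved; what changes is only the equivariant identification of $\mcX\times_\bC\bC^*$ with $X\times\bC^*$, which is modified by the $\xi$-action on $X$ in the second factor.

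Next, to prove \eqref{eq-phixi}, I would use the Gauss extension formula \eqref{eq-phiH} applied to a $\bG$-equivariant dominating model $\mu_\mcX:\mcX\to X_\bC$. The twisted test configuration is dominating with respect to $\mu_{\mcX,\xi}=\alpha_\xi\circ\mu_\mcX$, where $\alpha_\xi:X\times\bC^*\to X\times\bC^*$ is the automorphism $(x,t)\mapsto(\xi(t)\cdot x,t)$. Under the Gauss extension this transport sends $G(v)$ to $G(v_\xi)$, since the $N_\bR$-action on $X^{\rm div}_\bQ$ was defined precisely to model the torus translation appearing in $\alpha_\xi$. Substituting into the identity $(\phi_\xi-\phi_\triv)(v)=G(v)(\mcL-\mu_{\mcX,\xi}^*L_\bC)$ and comparing to the untwisted case, the difference decomposes into $(\phi-\phi_\triv)(v_\xi)$ plus an extra term that arises because even the \emph{trivial} test configuration is twisted ($\mu_\triv$ is replaced by $\mu_{\triv,\xi}$); that residual is by definition $\theta_\xi(v)=(\phi_{\triv,\xi}-\phi_\triv)(v)$.

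For \eqref{eq-ximeas}, I would apply Theorem \ref{thm-BoJ}(i) on a common dominating SNC model carrying all the $\phi_i$. The twist $\xi$ does not affect $\mcX$, the multiplicities $b_j$ in $\mcX_0=\sum_j b_j E_j$, or the intersection numbers $(\mcL_{1,\xi}|_{E_j}\cdot\cdots\cdot\mcL_{n,\xi}|_{E_j})=(\mcL_1|_{E_j}\cdot\cdots\cdot\mcL_n|_{E_j})$, as these are intrinsic to the underlying family and line bundles. What does change is the Shilov point $x_j=b_j^{-1}r(\ord_{E_j})$: the restriction map $r:(X\times\bC)^{\rm div}_\bQ\to X^{\rm div}_\bQ$ is mediated by the identification with $X\times\bC^*$, and under the twist it is precomposed with $(-\xi)$-translation on $X^{\rm div}_\bQ$, consistent with the sign convention emerging from \eqref{eq-phixi}. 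Hence each $\delta_{x_j}$ becomes $\delta_{(-\xi)\cdot x_j}=(-\xi)_*\delta_{x_j}$, and summing over $j$ yields the claim. The main obstacle I expect throughout is keeping a consistent sign convention---verifying that the twist acts as $v\mapsto v_\xi$ on metric values but by $(-\xi)_*$ on Shilov measures, and that the residual in \eqref{eq-phixi} is precisely $\theta_\xi$ rather than its negative; once conventions are pinned down, both formulas reduce to unwinding the definitions.
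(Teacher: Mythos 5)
Your proposal is correct and follows essentially the same route as the paper: both realize the twist as a change of the equivariant identification with $X_\bC$, compute that the Gauss extension transforms weight-$\alpha$ functions by $t^{\la \alpha,\xi\ra}$ so that $G(v)\mapsto G(v_\xi)$, and deduce \eqref{eq-ximeas} from the translation of the Shilov points with unchanged multiplicities $b_j$ and intersection numbers. The only (cosmetic) difference is that the paper passes to a common resolution $\mcU$ of the birational map $\bar{\sigma}_\xi$ to make the line-bundle decomposition precise, whereas you work with the composite $\alpha_\xi\circ\mu_\mcX$, which is only a rational map over $t=0$; since everything is evaluated on valuations this does not affect the substance.
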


\begin{proof}
We follow the similar proof as in \cite[Proof of Proposition 3.3]{Li19}. Consider the commutative diagram where $\bar{\sigma}_\xi$ is the $\bC^*$-action generated by $-\xi$ on $X\times\bC^*$.
\begin{equation}\label{eq-3morph}
\xymatrix{
& \ar_{q_1}[ld] \mcU \ar^>>>>>>>{\pi_\mcW}[dd]  \ar^{q_2}[rd] & \\
\mcX=\mcX^{(1)} \ar_{\pi_1}[dd] \ar@{-->}[rr] &   & \mcX=\mcX^{(2)} \ar^{\pi_2}[dd]   \\
 & \ar_{p_1}[ld] \mcW \ar^{p_2}[rd] & \\
X_\bC=X^{(1)}_\bC \ar^{\bar{\sigma}_\xi}@{-->}[rr]  &  & X_\bC=X^{(2)}_\bC 
}
\end{equation}
The map $\pi_1\circ q_1$ is $\eta$-equivariant. Moreover, the test configuration $(\mcX, \mcL)_\xi$ is equivalent to the test configuration $(\mcU, q_2^*\mcL, \eta)$.
We now decompose:
\begin{eqnarray}\label{eq-twistNA}
q_2^*\mcL-q_1^*\pi_1^*L_\bC&=&q_2^*\mcL-q_2^*\pi_2^*L_\bC+q_2^*\pi_2^*L_\bC-q_1^*\pi_1^*L_\bC\nonumber\\
&=& q_2^*(\mcL-\pi_2^*L_\bC)+\pi_{\mcW}^*(p_2^*L_\bC-p_1^*L_\bC). \label{eq-twistdec}
\end{eqnarray}
For any $w\in X^{\rm div}_\bQ$, for any $f\in \bC(X)_\alpha$, let $\bar{f}=p_1^*f$ denote the function on $X\times \bC^*$ via the projection $p_1$ to the first factor. Then $\bar{\sigma}_\xi^* \bar{f}=t^{\la \alpha, \xi\ra} \bar{f}$. By the definition of Gauss extension, we get:
\begin{eqnarray*}
(q_2)_*G(w)(\bar{f})&=&G(w)((q_2)^*\bar{f})=G(w)(t^{\la \alpha, \xi\ra} \bar{f})=\la \alpha, \xi\ra+G(w)(\bar{f})\\
&=&G(w_\xi)(\bar{f}).
\end{eqnarray*}
So $(q_2)_*G(w)=G(w_\xi)$. This together with with decomposition \eqref{eq-twistNA} gives the identity \eqref{eq-phixi}.

Let $v_j\in X^\NA$ be the point satisfying $b_j G(v_j)=\ord_{E_j}$. Then by the same calculation we have for $w\in X^{\rm div}_\bQ$,
\begin{eqnarray*}
G(w)(q_1^*\bar{f})=G(w_{-\xi})(\bar{f}),
\end{eqnarray*}
which means that the point associated to $E_j$ in the twisted test configuration is given by $v_{j,-\xi}$. 
So we use the formula \eqref{eq-MAmeas} for Monge-Amp\`{e}re measure to get:
\begin{equation}
\MA(\phi_\xi)=\sum_i b_i (\mcL|_{E_i})^n \delta_{v_{j,-\xi}}=(-\xi)_* \MA(\phi).
\end{equation}
Clearly, the case of mixed Monge-Amp\`{e}re measure can be proved in a similar way.
\end{proof}
\begin{defn}
For any $\phi\in \PSH^\NA(L)$,  $\phi_\xi\in \PSH^\NA(L)$ is defined by the formula \eqref{eq-phixi}. 
\end{defn}
\begin{exmp}\label{exm-cFtwist}
If $\cF$ is a $\bT$-equivariant filtration, then we have a weight decomposition 
\begin{equation}
\cF^x R_m=\bigoplus_{\alpha\in M_\bZ}(\cF^x R_m)_\alpha.
\end{equation}
The $\xi$-twist of $\cF$, denoted by $\cF_\xi$, is defined by the decomposition:
\begin{equation}
\cF^x_\xi R=\bigoplus_{\alpha\in M_\bZ} (\cF^x_\xi R_m)_\alpha, \quad (\cF_\xi^x R_m)_\alpha:=(\cF^{x-\la\alpha,\xi\ra}R_m)_\alpha.
\end{equation}
By \cite[3.2]{Li19}, we have the identity $\phi_{\cF_\xi}=(\phi_\cF)_\xi$.
\end{exmp}

By approximating $\phi$ by a decreasing sequence from $\mcH^\NA$, one can check that $\phi_\xi$ is indeed a well-defined metric. 
Moreover with the approximation argument, we also get:
\begin{cor}\label{cor-ximeas}
For any $\phi_i\in (\cE^{1,\NA})^\bK, 1\le i\le n$, we have 
\begin{equation}
\MA(\phi_{1,\xi}, \dots, \phi_{n,\xi})=(-\xi)_* \MA(\phi_1,\dots, \phi_n).
\end{equation}
\end{cor}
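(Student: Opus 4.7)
The plan is to deduce the corollary from the identity \eqref{eq-ximeas}, which is already available at the level of smooth non-Archimedean metrics, by a decreasing approximation argument that exploits the continuity of the mixed Monge--Amp\`ere operator recorded in Theorem \ref{thm-BoJ}(iii). Concretely, for each $\phi_i \in (\cE^{1,\NA})^\bK$ I choose a decreasing sequence $\{\phi_i^{(j)}\}_j \subset (\mcH^\NA)^\bK$ with $\phi_i^{(j)} \searrow \phi_i$ as $j\to \infty$; such a sequence exists by the very definition of $(\cE^{1,\NA})^\bK$.

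Next I pass the twist through the approximation. From the pointwise formula \eqref{eq-phixi}, namely $(\phi_\xi - \phi_\triv)(v) = (\phi - \phi_\triv)(v_\xi) + \theta_\xi(v)$, the twist is monotone in $\phi$ and continuous under pointwise decreasing limits in $\phi$. Hence $\phi_{i,\xi}^{(j)}$ is a decreasing net in $j$ and converges pointwise to $\phi_{i,\xi}$. In particular, this shows that $\phi_{i,\xi}$ (defined via \eqref{eq-phixi}) is a well-defined non-Archimedean psh metric and lies in $\cE^{1,\NA}$, since by \eqref{eq-ximeas} at the smooth level and the pushforward identity applied to the bounded linear functional $\bfE^\NA$, the energies stay bounded below. (Alternatively one invokes directly that $\xi$-twist preserves $\cE^{1,\NA}$, which follows formally from the smooth case and the definition of $\bfE^\NA$ as an infimum.)

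Then I apply Theorem \ref{thm-BoJ}(iii) to the two sides separately. On the left-hand side, the decreasing net $\phi_{i,\xi}^{(j)} \searrow \phi_{i,\xi}$ yields, against any continuous test function $f$ on $X^\NA$, the convergence $\int f\,\MA^\NA(\phi_{1,\xi}^{(j)},\dots,\phi_{n,\xi}^{(j)}) \to \int f\,\MA^\NA(\phi_{1,\xi},\dots,\phi_{n,\xi})$ (to be precise, the statement in Theorem \ref{thm-BoJ}(iii) is phrased in terms of testing against $\phi-\phi_\triv$ for $\phi \in \cE^{1,\NA}$, but once one has this one obtains weak convergence of the measures from the density of such test functions combined with uniform mass). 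On the right-hand side, the identity \eqref{eq-ximeas} at the smooth level gives
\begin{equation*}
\MA^\NA(\phi_{1,\xi}^{(j)},\dots,\phi_{n,\xi}^{(j)}) = (-\xi)_*\,\MA^\NA(\phi_1^{(j)},\dots,\phi_n^{(j)}),
\end{equation*}
and the pushforward $(-\xi)_*$ is continuous with respect to weak convergence since translation by $-\xi$ on $(X^\NA)^\bT$ is continuous. Taking weak limits on both sides gives the desired identity.

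The only delicate point is that Theorem \ref{thm-BoJ}(iii) is formulated with the specific test functions $\phi - \phi_\triv$; however this is exactly what is needed for the continuity of the integrals appearing in $\bfE^{*\NA}$ in Definition \ref{def-M1NA}, and upgrading it to weak convergence of the measures is standard once the masses are fixed. Thus the main technical point is checking that the decreasing approximation of the $\phi_i$ descends to a decreasing approximation of the twists with the correct limit, which is immediate from the explicit formula \eqref{eq-phixi} since $\theta_\xi$ does not depend on $\phi$.
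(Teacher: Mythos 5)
Your proof is correct and follows essentially the same route as the paper, which disposes of this corollary in one line (``with the approximation argument, we also get\dots'') by combining the smooth-level identity \eqref{eq-ximeas} with decreasing approximation from $(\mcH^\NA)^\bK$ and the continuity of the mixed Monge--Amp\`{e}re operator under decreasing nets; your write-up merely supplies the details (monotonicity of the twist via \eqref{eq-phixi} and weak convergence of the measures) that the paper leaves implicit. The only point neither you nor the paper spells out is the passage from $\xi\in N_\bZ$, where \eqref{eq-ximeas} is actually proved, to general $\xi\in N_\bR$, which requires the approximation-and-base-change step the paper invokes elsewhere.
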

Assume $R_m=H^0(X, mL)=\bigoplus_j (R_m)_{\alpha^{(m)}_j}$ is a decomposition into weight spaces with respect to the $\bT$-action.
For any $\xi\in N_\bR$, we define the Chow weight of $\xi$ as
\begin{equation}
\chw_L(\xi)=\lim_{m\rightarrow+\infty}\frac{1}{m^n/n!}\sum_{j}\frac{\la \alpha^{(m)}_j, \xi\ra}{m} \dim (R_m)_{\alpha^{(m)}_j}.
\end{equation}

The following lemma gives transformation formula of non-Archimedean functionals under the twists by elements from $N_\bR$.

\begin{lem}
For any $\phi\in \cE^{1,\NA}$ and $\xi\in N_\bR$, we have the following identities: 
\begin{eqnarray*}
\bfE^\NA(\phi_\xi)&=&\bfE^\NA(\phi)+\chw_L(\xi).
\end{eqnarray*}
Moreover for any $\phi_i\in \cE^{1,\NA}, i=1,2$, we have the identity:
\begin{equation}\label{eq-Ixisame}
\bfI(\phi_{1,\xi}, \phi_{2,\xi})=\bfI(\phi_1,\phi_2). 
\end{equation} 
\end{lem}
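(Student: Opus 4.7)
The plan is to prove both identities first on the dense subset $\mcH^\NA(L)$ of smooth non-Archimedean metrics coming from test configurations, and then extend to $\mcE^{1,\NA}(L)$ by approximation through decreasing nets.

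For the first identity, I would apply the standard telescoping/difference formula
$$\bfE^\NA(\phi) - \bfE^\NA(\psi) = \frac{1}{(n+1)V}\sum_{k=0}^{n}\int_{X^\NA}(\phi-\psi)\,\MA^\NA(\phi^{[k]},\psi^{[n-k]}),$$
not with $\psi = \phi_\triv$ but with $\psi = (\phi_\triv)_\xi$. The point is the cancellation of the $\theta_\xi$ term: since $(\phi_\triv)_\xi - \phi_\triv = \theta_\xi$, formula \eqref{eq-phixi} gives
$$(\phi_\xi - (\phi_\triv)_\xi)(v) = (\phi-\phi_\triv)(v_\xi),$$
i.e.\ no $\theta_\xi$ contribution survives in the difference. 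Combining this with Corollary \ref{cor-ximeas}, the integrand $\int(\phi_\xi-(\phi_\triv)_\xi)\,\MA^\NA(\phi_\xi^{[k]},(\phi_\triv)_\xi^{[n-k]})$ becomes, after the change of variable $w = v_\xi$ (a bijection on $X^\NA$), exactly $\int(\phi-\phi_\triv)\,\MA^\NA(\phi^{[k]},\phi_\triv^{[n-k]})$. Summing on $k$ yields
$$\bfE^\NA(\phi_\xi) - \bfE^\NA((\phi_\triv)_\xi) = \bfE^\NA(\phi) - \bfE^\NA(\phi_\triv) = \bfE^\NA(\phi),$$
so the identity holds with $\chw_L(\xi) := \bfE^\NA((\phi_\triv)_\xi)$. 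For $\xi \in N_\bZ$ this equals $\tfrac{1}{(n+1)V}\bigl(\overline{(L_\bC)_\xi}\bigr)^{n+1}$, recovering the classical Chow weight of the $1$-PS $\xi$ with respect to $L$.

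For the second identity, the argument is more direct because both $\theta_\xi$ contributions cancel in the difference $\phi_{1,\xi}-\phi_{2,\xi}$: one has $(\phi_{1,\xi}-\phi_{2,\xi})(v) = (\phi_1-\phi_2)(v_\xi)$. Writing $\bfI(\phi_1,\phi_2) = \qV\int(\phi_1-\phi_2)(\MA^\NA(\phi_2)-\MA^\NA(\phi_1))$ and applying Corollary \ref{cor-ximeas} to both mixed Monge-Amp\`{e}re measures, the same change of variable $v \mapsto v_\xi$ collapses $\bfI(\phi_{1,\xi},\phi_{2,\xi})$ onto $\bfI(\phi_1,\phi_2)$.

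The main technical point is the extension from $(\mcH^\NA)^\bK$ to $(\mcE^{1,\NA})^\bK$. Given $\phi \in (\mcE^{1,\NA})^\bK$, I take a decreasing sequence $\phi_m \searrow \phi$ in $(\mcH^\NA)^\bK$ (using $\bK$-equivariance so that the twists $\phi_{m,\xi}$ are well-defined). The explicit formula \eqref{eq-phixi} shows that $\phi_{m,\xi} \searrow \phi_\xi$, and the continuity of $\bfE^\NA$ along decreasing nets in $\mcE^{1,\NA}$ (from Definition \ref{def-cE1NA} and Theorem \ref{thm-BoJ}) together with that of $\bfI$ (which follows from the analogue of Proposition \ref{prop-strongcont}) transports both identities to the limit. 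The subtlest issue is that for $\xi \in N_\bR \setminus N_\bQ$ the twisted trivial configuration $(\phi_\triv)_\xi$ is not itself in $\mcH^\NA$; this is handled by approximating $\xi$ by rationals $\xi_j \to \xi$ and using continuity in $\xi$ of both sides, with the continuity of $\chw_L$ at irrational $\xi$ following from its expression as a limit of intersection numbers on the twisted trivial family. This $\bR$-continuity step is the main place where care is required.
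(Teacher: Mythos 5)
Your proposal is correct and follows essentially the same route as the paper: reduce to $\phi\in\mcH^\NA$ and $\xi\in N_\bZ$ by approximation, then use the cancellation of $\theta_\xi$ in differences together with the pushforward identity for Monge--Amp\`ere measures (Corollary \ref{cor-ximeas}) and a change of variables. The only difference is that you write out the first identity explicitly via the cocycle formula based at $(\phi_\triv)_\xi$ (identifying $\chw_L(\xi)$ with $\bfE^\NA((\phi_\triv)_\xi)$), whereas the paper simply cites \cite{Li19} for that step.
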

\begin{proof}
Using approximations by smooth decreasing sequences, we can assume $\phi\in \mcH^\NA$. Moreover, we can assume $\xi\in N_\bZ$ by approximation and base change (see \cite{Li19}). 
The first identity is proved in the same way as in \cite{Li19}. Next we use the identity \eqref{eq-phixi} and Corollary \ref{cor-ximeas}  to get:
\begin{eqnarray*}
\int_{X^\NA}(\phi_{2,\xi}-\phi_{1,\xi})\MA(\phi_{1,\xi}^{[k]}, \phi_{2,\xi}^{[n-k]})&=&\int_{X^\NA}\left((\phi_2-\phi_1)\circ \xi\right) \left[(-\xi)_*\MA(\phi_1^{[k]}, \phi_2^{[n-k]})\right]\\
&=&\int_{X^\NA}(\phi_2-\phi_1)\MA^\NA(\phi_1^{[k]}, \phi_2^{[n-k]}).
\end{eqnarray*}
By the formula for $\bfI$, this easily implies \eqref{eq-Ixisame}.
\end{proof}
\begin{lem}
For any $\phi\in (\mcH^\NA)^\bK$ and $\xi\in N_\bR$ we have the identity:
\begin{equation}
\bfM^\NA(\phi_\xi)=\bfM^\NA(\phi)+\Fut(\xi).
\end{equation}
\end{lem}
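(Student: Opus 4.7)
The plan is to reduce first to $\xi\in N_\bZ$ by approximation and base change, exactly as in the proof of the preceding lemma, so that $\phi_\xi$ is realized by a genuine twisted test configuration $(\mcU,q_2^*\mcL)$ (diagram (\ref{eq-3morph})) and every quantity becomes an intersection number on the compactification $\bar{\mcU}$.

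With this reduction in hand, I would invoke the decomposition $\bfM^\NA=\bfH^\NA+\bfR^\NA+\ud{S}\,\bfE^\NA$ from (\ref{eq-MNAE1}) and analyze each summand separately. The preceding lemma already supplies $\bfE^\NA(\phi_\xi)=\bfE^\NA(\phi)+\chw_L(\xi)$. For $\bfH^\NA$, Corollary~\ref{cor-ximeas} gives $\MA^\NA(\phi_\xi)=(-\xi)_*\MA^\NA(\phi)$, and since $\bT\subset\Aut(X,L)_0$ permutes SNC models of $X$ while preserving log discrepancies, (\ref{eq-AXsup}) forces $A_X\circ(-\xi)=A_X$ on $X^\NA$, whence
\[
\bfH^\NA(\phi_\xi)=\int_{X^\NA}A_X\circ(-\xi)\,d\MA^\NA(\phi)=\bfH^\NA(\phi).
\]
For $\bfR^\NA(\phi)=\qV K^{\log}_{X_{\bP^1}/\bP^1}\cdot\bar{\mcL}^n$, I would plug the decomposition (\ref{eq-twistdec}) of $q_2^*\mcL-q_1^*\pi_1^*L_\bC$ into the intersection on $\bar{\mcU}$: the first summand reproduces $\bfR^\NA(\phi)$ untouched, while the vertical contribution coming from $\pi_{\mcW}^*(p_2^*L_\bC-p_1^*L_\bC)$ should yield the $K_X$-weighted Chow weight $\chw_{K_X}(\xi)$, by the same type of calculation that produces the $\chw_L(\xi)$ correction in the $\bfE^\NA$ transformation formula.

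Combining the three pieces yields $\bfM^\NA(\phi_\xi)-\bfM^\NA(\phi)=\chw_{K_X}(\xi)+\ud{S}\,\chw_L(\xi)$, which is manifestly independent of the choice of $\phi$. To identify this combination with the classical Futaki invariant $\Fut(\xi)$, I would specialize to $\phi=\phi_\triv$: since $\bfM^\NA(\phi_\triv)=0$, the correction equals $\bfM^\NA((\phi_\triv)_\xi)$, and the intersection formula (\ref{eq-MNAH}) applied to the $\xi$-twist of the trivial test configuration $X_\bC$ recovers exactly the classical Chow/Futaki expression. The main obstacle is the $\bfR^\NA$ computation in the previous paragraph: verifying the $K_X$-twisted analog of the Chow-weight transformation formula requires delicate intersection-theoretic bookkeeping on $\bar{\mcU}$, effectively a $K_X$-weighted counterpart of the $\bfE^\NA(\phi_\xi)=\bfE^\NA(\phi)+\chw_L(\xi)$ identity, and it is there that care must be taken to pin down signs and normalizations so that the final expression matches $\Fut(\xi)$ on the nose.
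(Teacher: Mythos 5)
Your overall architecture (reduce to $\xi\in N_\bZ$, split $\bfM^\NA$ into pieces, identify the resulting constant by evaluating at $\phi_\triv$) is reasonable, but the treatment of the entropy term contains a genuine error. The identity $A_X\circ(-\xi)=A_X$ is false: the twist $v\mapsto v_\xi$ is \emph{not} pullback along an automorphism in $\bT$. It is defined through the Gauss extension and the non-regular birational self-map $\bar{\sigma}_\xi$ of $X_\bC$, and it genuinely degenerates valuations rather than transporting them. Concretely, the trivial valuation is sent to the monomial valuation $v_{\nu_\triv,\xi}$ attached to $\xi$, which has $A_X>0$ for $\xi\neq 0$; already on $X=\bP^1$ with the standard torus one has $A_X(v_{\nu,\lambda})=|\lambda|$, which is not invariant under $\lambda\mapsto\lambda+\xi$. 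Hence $\bfH^\NA(\phi_\xi)\neq\bfH^\NA(\phi)$ in general (e.g.\ $\bfH^\NA((\phi_\triv)_\xi)>0=\bfH^\NA(\phi_\triv)$). Worse for your strategy, the discrepancy $\int_{X^\NA}\bigl(A_X(v_{-\xi})-A_X(v)\bigr)\MA^\NA(\phi)$ depends on $\phi$ through the measure, so it cannot be absorbed into a $\phi$-independent constant and then read off at $\phi_\triv$; and since $\bfE^\NA$ does shift by the constant $\chw_L(\xi)$ while $\bfM^\NA$ shifts by the constant $\Fut(\xi)$, the $\bfR^\NA$ correction must be $\phi$-dependent as well, so your claimed $\chw_{K_X}(\xi)$ formula for it cannot be right either.

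The paper sidesteps exactly this by never separating $\bfH^\NA$ from $\bfR^\NA$: only the combination $\bfS^\NA=\bfH^\NA+\bfR^\NA=K^{\log}_{\bar{\mcX}/\bP^1}\cdot\bar{\mcL}^{\cdot n}$ transforms by a $\phi$-independent constant. It introduces $\mcL_b=\pi^*L+bE$ on the common model $\mcU$, sets $h(b)$ to be the twisted-minus-untwisted value of $K^{\log}_{\bar{\mcU}/\bP^1}\cdot\overline{q_i^*\mcL_b}^{\cdot n}+\frac{\ud{S}}{n+1}\overline{q_i^*\mcL_b}^{\cdot n+1}$, and shows $h'(b)=0$ by the projection formula; this is precisely the statement that the correction is independent of $\mcL$, after which $h(0)=\Fut(\xi)$ is read off from the product test configuration, as in your last step. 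To repair your argument you should either run this single computation for $\bfS^\NA$, or else prove the correct (non-trivial, $v$-dependent) transformation rule for $A_X(v_\xi)$ and track how its contribution cancels against the $\phi$-dependent part of the $\bfR^\NA$ correction.
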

We believe that this to be true for any $\phi\in \cE^{1,\NA}$ and this would follow from Conjecture \ref{conj-reg}.
\begin{proof}
For any $\bC^*\times\bT$-equivariant SNC model $\mcY$ of $X$ that equivariantly dominates $\mcX$, we set
\begin{equation}
f_{\mcY}(\xi):=K^{\log}_{\overline{\mcY_\xi}/\bP^1} \cdot (\bar{\mcL}_{\xi})^{\cdot n}+ \frac{\ud{S}}{n+1} (\bar{\mcL}_{\xi})^{\cdot n}.
\end{equation}
We claim that 
\begin{equation}\label{eq-difFut}
f_\mcY(\xi)-f_\mcY(0)=\Fut(\xi).
\end{equation} 
By projection formula for intersection numbers, we can assume $\mcX=\mcY$ and $\mcL$ is a $\bC^*\times \bT$-equivariant semiample line bundle. Then we use the commutative diagram \eqref{eq-3morph}. As in \cite[Proof of Proposition 3.3]{Li19}, we write:
$\mcL=\pi^*L+E$ and set $\mcL_b=\pi^*L+bE$.

Consider 
\begin{equation}
h(b):=(K^{\log}_{\bar{\mcU}/\bP^1}\cdot \overline{q_2^*\mcL_b}^{\cdot n}+\frac{\ud{S}}{n+1}\overline{q_2^*\mcL_b}^{\cdot n+1})
-(K^{\log}_{\bar{\mcU}/\bP^1}\cdot \overline{q_1^*\mcL_b}^{\cdot n}+\frac{\ud{S}}{n+1}\overline{q_1^*\mcL_b}^{\cdot n+1}), 
\end{equation}
where the compactifications we use are using the isomorphism induced by $\eta$. We calculate:
\begin{eqnarray*}
\frac{b}{db}h(b)&=&K^{\log}_{\bar{\mcU}/\bP^1}\cdot n q_2^*\mcL_b^{\cdot n-1}\cdot q_2^*E+\frac{\ud{S}}{n+1}q_2^*\mcL_b^{\cdot n}\cdot q_2^*E \\
&&-(K^{\log}_{\bar{\mcU}/\bP^1}\cdot n q_2^*\mcL_b^{\cdot n-1}\cdot q_2^*E+\frac{\ud{S}}{n+1}q_2^*\mcL_b^{\cdot n}\cdot q_2^*E) \\
&=&0.
\end{eqnarray*}
So we get $h(b)=h(0)=f_{L_\bC}(\xi)$. On the other hand, it is easy to see that when $(\mcX, \mcL)=(X_\bC, L_\bC)$, the test configuration $(X_\bC, L_\bC)_\xi$ is equivalent to the product test configuration 
induced by the holomorphic vector field corresponding to $\xi$. So we get in this case $h(0)=\Fut(\xi)=h(1)$ which verifies \eqref{eq-difFut}.
\end{proof}


For any $\phi\in \cE^{1,\NA}$, we set:
\begin{equation}
\bfJ^\NA_\bT(\phi)=\inf_{\xi\in N_\bR} \bfJ^\NA(\phi_\xi).
\end{equation}
\begin{defn}
\label{def-Kst}
Let $\bG$ be a connected reductive subgroup of $\Aut(X, L)_0$. 
$(X, L)$ is called $\bG$-uniformly K-stable over $\cE^{1,\NA}$ (resp. over $\PSH^{\fM,\NA}$ (see Definition \ref{def-modfil}), resp. over $\mcH^{\NA}$) if there exists $\gamma>0$ such that for any $\phi\in (\mcE^{1,\NA})^\bK$ (resp. $\phi\in (\PSH^{\fM,\NA})^\bK=\PSH^{\fM,\NA}\cap (\cE^{1,\NA})^\bK$, resp. $\phi\in (\mcH^\NA)^{\bK}$)
\begin{equation}
\bfM^\NA(\phi)\ge \gamma\cdot \bfJ^\NA_\bT(\phi)=\gamma \cdot \inf_{\xi\in N_\bR} \bfJ^\NA(\phi_\xi).
\end{equation}
To conform with the usual notations in the literature, we will also call ``$\bG$-uniformly K-stable over $\mcH^\NA$" simply as ``$\bG$-uniformly K-stable", and  ``$\bG$-uniformly K-stable over $\PSH^{\fM,\NA}$" as being ``$\bG$-uniformly K-stable for model filtrations". Moreover, if $\bfG=\{e\}$, we just call $\bG$-uniform stability to be uniform stability.
\end{defn}

It follows from the definitions that we have:
\begin{eqnarray*}
\text{$\bG$-uniformly K-stable over } \cE^{1,\NA} &\Longrightarrow&  \text{$\bG$-uniformly K-stable over } \PSH^{\fM, \NA}\\
& \Longrightarrow& \text{$\bG$-uniformly K-stable over } \mcH^\NA. 
\end{eqnarray*}
The above stability notions are conjectured to be equivalent to each other when the reductive group $\bG$ contains a maximal torus of $\Aut(X, L)_0$. $\bG$-uniform K-stability for filtrations can be seen a modified version of the stability condition for filtrations introduced by Sz\'{e}k{e}lyhidi (\cite{Sze15}).
On the other hand, the usual $\bG$-uniform K-stability was essentially introduced by Hisamoto in \cite{His16b} based Darvas-Rubinstein's principle for proving Tian's properness conjecture (\cite{DR17}).  It refines the (uniform) K-stability (see \cite{Tia97, Don02, BHJ17, Der16}). 
According to \cite[Lemma 2.2]{Tia14} and \cite{BHJ19}, by using base change to make the central fibre reduced, this definition is equivalent to the definition of $\bG$-uniform K-stability via the Futaki invariants for test configurations.
We state the $\bG$-uniform version of Yau-Tian-Donaldson conjecture.
\begin{conj}[YTD Conjecture]\label{conj-YTD}
A polarized manifold $(X, L)$ admits a cscK metric if and only if it is $\Aut(X,L)_0$-uniformly K-stable.
\end{conj}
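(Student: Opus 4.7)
\textbf{Proof plan for Conjecture \ref{conj-YTD}.} The approach is the variational/non-Archimedean strategy, combining the paper's slope theorems with the eventual resolution of Conjecture \ref{conj-reg}.

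\emph{Existence $\Rightarrow$ stability.} For any $\bG$-equivariant test configuration $(\mcX,\mcL)$, the convexity of the Mabuchi energy along weak geodesics (Berman-Berndtsson, Berman-Darvas-Lu) combined with the slope identity \eqref{eq-Mslope} of Theorem \ref{thm-grslope} gives $\bfM^\NA(\phi_{(\mcX,\mcL)})=\bfM'^\infty(\Phi)\geq 0$. To upgrade this to the uniform bound against $\bfJ^\NA_\bT$, one couples Chen-Cheng's a priori estimates for cscK metrics with the Darvas-Rubinstein principle, translating the resulting $\bG$-properness of the Archimedean Mabuchi energy on $\mcE^1(L)/\bG$ into the required $\bG$-uniform K-stability inequality.

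\emph{Stability $\Rightarrow$ existence.} Assume $(X,L)$ is $\bG$-uniformly K-stable but admits no cscK metric. Then the Mabuchi functional fails to be $\bG$-proper, and by a Darvas-He-type compactness argument one extracts a $\bK$-invariant finite energy geodesic ray $\Phi$ normalized by $\inf_{\xi\in N_\bR}\bfJ'^\infty(\Phi_\xi)=1$ with $\bfM'^\infty(\Phi)\leq 0$. Since $\bfM'^\infty(\Phi)<+\infty$, Theorem \ref{thm-maximal} shows $\Phi$ is maximal and corresponds to some $\Phi_\NA\in (\cE^{1,\NA})^\bK$. The Chen-Tian decomposition and Theorem \ref{thm-Echislope} (applied with $Q=K_X$) together with Theorem \ref{thm-maximal} identify the energy and twisted Monge-Amp\`ere slopes:
\[
\bfE'^\infty(\Phi)=\bfE^\NA(\Phi_\NA),\qquad \bfR'^\infty(\Phi)=\bfR^\NA(\Phi_\NA).
\]
For the entropy slope, Theorem \ref{thm-grslope}(1) supplies only the inequality $\bfH'^\infty(\Phi)\geq \bfH^\NA(\Phi_\NA)$, but this is the useful direction and yields
\[
\bfM^\NA(\Phi_\NA)\leq \bfM'^\infty(\Phi)\leq 0.
\]

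\emph{Closing the loop.} To contradict $\bG$-uniform K-stability we need to approximate $\Phi_\NA$ strongly by $\phi_m\in(\mcH^\NA)^\bK$ so that both $\bfM^\NA(\phi_m)\to\bfM^\NA(\Phi_\NA)$ and $\bfJ^\NA_\bT(\phi_m)\to \bfJ^\NA_\bT(\Phi_\NA)>0$ (positivity of the latter coming from $\inf_\xi\bfJ'^\infty(\Phi_\xi)=1$ and the energy-slope identity). Continuity of $\bfE^\NA$, $\bfR^\NA$, $\bfI^\NA$, $\bfJ^\NA$ under strong convergence is covered by Proposition \ref{prop-strongcont}; the single nontrivial ingredient is the strong continuity of the non-Archimedean entropy along a regularizing sequence from $\mcH^\NA$, which is precisely Conjecture \ref{conj-reg}. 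This is the main obstacle: $\bfH^\NA(\phi)=\qV\int_{X^\NA}A_X\,\MA^\NA(\phi)$ is only lower semicontinuous because $A_X$ is unbounded, so controlling how the Monge-Amp\`ere measures of smooth approximants charge the quasi-monomial points where $A_X$ is large is genuinely hard. In the toric setting this is tractable because toric model filtrations coincide with toric test-configuration filtrations and the $\MA^\NA$ measures live on $N_\bR$, which is exactly what drives Theorem \ref{thm-toric}; in general, one would have to construct Fekete-type or SNC model approximants adapted to $A_X$, and we expect any proof to proceed through a variant of Conjecture \ref{conj-reg} restricted to the relevant dual complexes.
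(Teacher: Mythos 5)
The statement you are addressing is stated in the paper as a \emph{conjecture}, and the paper does not prove it: it proves the ``only if'' direction (attributed to Hisamoto via the Darvas--Rubinstein principle) and reduces the ``if'' direction to the Boucksom--Jonsson regularization conjecture (Conjecture \ref{conj-reg}); see Proposition \ref{prop-YTDconj}. Your plan follows essentially the same route as the paper's reduction --- destabilizing geodesic ray from failure of coercivity, normalization $\inf_\xi \bfJ'^\infty(\Phi_\xi)=1$, maximality via Theorem \ref{thm-maximal}, the slope identities of Theorems \ref{thm-Echislope} and \ref{thm-grslope}, and the observation that the only missing ingredient is strong continuity of $\bfH^\NA$ along a regularizing sequence --- and you correctly identify that this last step is exactly Conjecture \ref{conj-reg} and is open. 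So your proposal is an accurate reconstruction of the paper's conditional argument, not a proof of the conjecture, and you are right to flag it as such.

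One refinement in the paper that your plan misses: you invoke the full Conjecture \ref{conj-reg} for the limit metric $\Phi_\NA\in\cE^{1,\NA}$, whereas the paper first applies Proposition \ref{prop-regularize} (built on the non-Archimedean Calabi--Yau theorem of Boucksom--Favre--Jonsson and Boucksom--Jonsson) to replace $\Phi_\NA$ by metrics in $\PSH^{\fM,\NA}$ whose Monge--Amp\`ere measures are supported on a fixed dual complex, with both $\bfE^{*\NA}$ and $\int A_X\,\MA^\NA$ converging. This unconditional intermediate step means only Conjecture \ref{conj-reg} \emph{for model filtrations} is needed, which is strictly weaker than the general statement; it is also what yields the unconditional existence result Theorem \ref{thm-existmodel} and, since toric model filtrations come from toric test configurations, the toric case Theorem \ref{thm-toric}. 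If you intend your sketch to match the strongest conditional statement in the paper, you should route the approximation through this two-step reduction rather than appealing to the full regularization conjecture directly.
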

Based on Darvas-Rubinstein's principle (\cite{DR17}) and his slope formula for $\bfJ_\bT(\vphi)=\inf_{\sigma\in \bT}\bfJ(\sigma^*\vphi)$ (see \eqref{eq-defJ}), Hisamoto in \cite[Theorem 3.3]{His16b} proved the ``only if" part.


In this paper, we will also consider  the following stronger stability condition.
\begin{defn}\label{def-cJstable}
$(X, L)$ is called uniformly $\cJ^{K_X}$-stable (resp. $\cJ^{K_X}$-semistable) if there exists $\gamma>0$ such that for any test configuration $\phi=(\mcX, \mcL)$, we have:
\begin{equation}\label{eq-cJst}
\mcJ^\NA(\phi) \ge \gamma\; \bfJ^\NA(\phi) \quad (\text{resp. } \ge 0). 
\end{equation}
\end{defn}
\subsection{Geodesic rays}\label{sec-rays}

\subsubsection{Finite energy rays}\label{sec-Archrays}
We refer to the papers \cite{BBEGZ, BBJ18, Dar15, GZ17} for precise meanings of the notations in the following discussion. Denote by $\mcH(L)$ the space of smooth Hermitian metrics on $L$ with K\"{a}hler curvature forms. Fix a smooth reference metric $\psi\in \mcH(L)$.
For any $\vphi\in \mcH(L)$, define
\begin{equation}
\bfE(\vphi):=\bfE_\psi(\vphi)=\qV \frac{1}{n+1} \sum_{k=0}^n\int_X (\vphi-\psi)(\ddc\vphi)^k\wedge (\ddc\psi)^{n-k}.
\end{equation}
For any $\vphi\in \PSH(L)$, define:
\begin{equation}
\bfE(\vphi):=\bfE_\psi(\vphi)=\inf\{ \bfE(\tilde{\vphi}); \tilde{\vphi}\ge \vphi, \tilde{\vphi}\in \mcH(L) \}
\end{equation}
and set:
\begin{eqnarray*}
\cE^1&:=&\cE^1(L)=\{\vphi\in \PSH(L); \bfE_\psi(\vphi)>-\infty\} \\
\cE^1_0&:=&\cE^1_0(L)=\{\vphi\in \cE^1; \bfE_\psi(\vphi)=0\}. 
\end{eqnarray*}
For any $\vphi_i \in \cE^1, i=1,2$, we have the following important functionals:
\begin{eqnarray}
\bfE_{\vphi_1}(\vphi_2)&=&\qV \frac{1}{n+1} \sum_{k=0}^n\int_X (\vphi_2-\vphi_1)(\ddc\vphi_2)^k\wedge (\ddc\vphi_1)^{n-k} \label{eq-EE1}\\
\Lam_{\vphi_1}(\vphi_2)&=&\qV \int_X (\vphi_2-\vphi_1)(\ddc\vphi_1)^n \label{eq-LamE1}\\
\bfJ_{\vphi_1}(\vphi_2)&:=&\Lam_{\vphi_1}(\vphi_2)-\bfE_{\vphi_1}(\vphi_2),\label{eq-defJ}\\
\bfI(\vphi_1, \vphi_2)&=&\qV \int_X (\vphi_2-\vphi_1)\left((\ddc\vphi_1)^n-(\ddc\vphi_2)^n\right). \label{eq-defI} 
\end{eqnarray}
The inequalities in the following lemma will be useful to us. 
\begin{lem}
Assume $\vphi_1, \vphi_2, \vphi_3\in \cE^1(L)$. 
\begin{enumerate}
\item The following inequalities hold true:
\begin{equation}\label{eq-IJineq}
\frac{n}{n+1}\bfI(\vphi_1, \vphi_2) \ge\bfJ_{\vphi_1}(\vphi_2) \ge \frac{1}{n+1}\bfI(\vphi_1, \vphi_2) \ge 0.
\end{equation} 
\begin{equation}\label{eq-Jge0}
\bfE_{\vphi_1}(\vphi_2)\le  \qV \int_X (\vphi_2-\vphi_1)(\ddc\vphi_1)^n.
\end{equation}
\item

$\bfJ$ is convex: for any $t\in [0, 1]$, we have:
\begin{equation}\label{eq-Jconvex}
\bfJ_{\vphi_1}((1-t)\vphi_2+t \vphi_3 )\le (1-t)\bfJ_{\vphi_1}(\vphi_2)+ t\; \bfJ_{\vphi_1}(\vphi_3).
\end{equation}
As a consequence, there exists $C_n>0$ such that for any $t\in [0,1]$
\begin{equation}\label{eq-Iconvex}
\bfI(\vphi_1, (1-t)\vphi_2+t \vphi_3)\le C_n \left((1-t) \bfI(\vphi_1, \vphi_2)+t\; \bfI(\vphi_1, \vphi_3)\right).
\end{equation}
\item
If $\vphi_1\le \vphi_2$, then 
\begin{equation}\label{eq-Isimple2}
\bfI(\vphi_1,\vphi_2)\le \qV \int_X (\vphi_2-\vphi_1)((\ddc\vphi_1)^n+(\ddc\vphi_2)^n)\le (n+1) \bfE_{\vphi_1}(\vphi_2).
\end{equation}
\end{enumerate}
\end{lem}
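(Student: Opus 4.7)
The strategy is to express both $\bfI$ and $\bfJ_{\vphi_1}$ as telescoping integration-by-parts sums with manifestly non-negative integrands, from which (1) and (2) drop out algebraically, and to handle (3) by direct monotonicity. I first reduce to the smooth case $\vphi_1,\vphi_2,\vphi_3\in\mcH(L)$; the extension to $\cE^1(L)$ is then done by approximating each $\vphi_i$ from above by a smooth decreasing sequence and invoking the standard continuity of $\bfI$, $\bfJ_{\vphi_1}$ and $\bfE_{\vphi_1}$ along decreasing nets in $\cE^1(L)$ (see \cite{BBEGZ, Dar15, GZ17}).

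For (1), inserting the telescoping identity $(\ddc\vphi_1)^k-(\ddc\vphi_2)^k=\sum_{j=0}^{k-1}(\ddc\vphi_1)^j\wedge\ddc(\vphi_1-\vphi_2)\wedge(\ddc\vphi_2)^{k-1-j}$ into the definitions and integrating by parts twice produces
\begin{align*}
\bfI(\vphi_1,\vphi_2) &= \qV\sum_{i=0}^{n-1}\int_X d(\vphi_2-\vphi_1)\wedge d^c(\vphi_2-\vphi_1)\wedge(\ddc\vphi_1)^i\wedge(\ddc\vphi_2)^{n-1-i},\\
\bfJ_{\vphi_1}(\vphi_2) &= \qV\sum_{i=0}^{n-1}\frac{i+1}{n+1}\int_X d(\vphi_2-\vphi_1)\wedge d^c(\vphi_2-\vphi_1)\wedge(\ddc\vphi_1)^i\wedge(\ddc\vphi_2)^{n-1-i}.
\end{align*}
The coefficient $(i+1)/(n+1)$ for $\bfJ_{\vphi_1}$ arises by counting, for each fixed $i\in\{0,\dots,n-1\}$, the pairs $(k,j)$ with $0\le j\le k-1\le n-1$ and $j+n-k=i$. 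Each integrand is a positive $(n,n)$-form, and the coefficients range precisely over $[\tfrac{1}{n+1},\tfrac{n}{n+1}]$; term-by-term comparison yields \eqref{eq-IJineq}, while \eqref{eq-Jge0} amounts to $\Lam_{\vphi_1}(\vphi_2)-\bfE_{\vphi_1}(\vphi_2)=\bfJ_{\vphi_1}(\vphi_2)\ge 0$.

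For (2), linearity of $\Lam_{\vphi_1}$ in its second argument reduces convexity of $\bfJ_{\vphi_1}$ along affine paths to concavity of $\bfE$. For $\vphi_t=(1-t)\vphi_2+t\vphi_3$, the standard first/second-variation computation gives
\begin{equation*}
\frac{d^2}{dt^2}\bfE(\vphi_t)=-n\qV\int_X d(\vphi_3-\vphi_2)\wedge d^c(\vphi_3-\vphi_2)\wedge(\ddc\vphi_t)^{n-1}\le 0,
\end{equation*}
proving \eqref{eq-Jconvex}; combining this with the two-sided bound $\tfrac{n+1}{n}\bfJ_{\vphi_1}\le\bfI\le(n+1)\bfJ_{\vphi_1}$ from (1) gives \eqref{eq-Iconvex} with $C_n=n$.

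For (3), when $\vphi_1\le\vphi_2$, writing $\bfI(\vphi_1,\vphi_2)=\qV\int_X(\vphi_2-\vphi_1)(\ddc\vphi_1)^n-\qV\int_X(\vphi_2-\vphi_1)(\ddc\vphi_2)^n$ and discarding the non-negative subtracted term gives the first inequality of \eqref{eq-Isimple2}; the second follows because $(n+1)\bfE_{\vphi_1}(\vphi_2)=\qV\sum_{k=0}^n\int_X(\vphi_2-\vphi_1)(\ddc\vphi_2)^k\wedge(\ddc\vphi_1)^{n-k}$ is a sum of $n+1$ non-negative integrals which contains the extremal $k=0$ and $k=n$ pieces that make up $\qV\int_X(\vphi_2-\vphi_1)((\ddc\vphi_1)^n+(\ddc\vphi_2)^n)$. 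No serious obstacle is anticipated; only the smooth-approximation step at the passage to $\cE^1(L)$ requires standard care in Archimedean pluripotential theory.
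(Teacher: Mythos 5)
Your proof is correct and follows essentially the same route as the paper, which simply cites \cite{Tia00} and \cite{BBGZ13} for \eqref{eq-IJineq}, derives \eqref{eq-Jconvex} from the concavity of $t\mapsto\bfE_{\vphi_1}((1-t)\vphi_2+t\vphi_3)$, and declares \eqref{eq-Isimple2} immediate; your telescoping integration-by-parts identities with coefficients $(i+1)/(n+1)$ are precisely the classical computations behind those citations. The only addition worth noting is that you make the constant in \eqref{eq-Iconvex} explicit ($C_n=n$), and the smooth-approximation step you defer to the standard continuity results is indeed routine.
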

\begin{proof}[Sketch of proof]
\eqref{eq-IJineq} is well-known (see \cite{Tia00} and \cite{BBGZ13}). \eqref{eq-Jge0} just says that $\bfJ_{\vphi_1}(\vphi_2)\ge 0$. 
\eqref{eq-Jconvex} follows from the concavity of the function $t\mapsto \bfE_{\vphi_1}((1-t)\vphi_2+t \vphi_3)$. \eqref{eq-Iconvex} follows from \eqref{eq-Jconvex}.
\eqref{eq-Isimple2} is immediate.
\end{proof}

In this paper, $C_n$ will denote any constant depending only on the dimension $n$.
We will use the following important estimates 
\begin{prop}
\begin{enumerate}
\item {\cite[Theorem 1.8]{BBEGZ}} $\bfI$ satisfies a quasi-triangle inequality: for any $\vphi_i\in \cE^1, i=1,2,3$, we have:
\begin{equation}\label{eq-Itri}
\bfI(\vphi_1, \vphi_3)\le C_n( \bfI(\vphi_1, \vphi_2)+\bfI(\vphi_2, \vphi_3)).
\end{equation} 
\item {\cite[Appendix A]{BBJ18}} For any $\vphi_i\in \cE^1, 1\le i\le 4$, we have the estimate:
\begin{equation}\label{eq-BBJest}
\int_X (\vphi_2-\vphi_1)((\ddc\vphi_3)^n-(\ddc\vphi_4)^n)\le C_n \bfI(\vphi_2, \vphi_1)^{\frac{1}{2^n}} \bfI(\vphi_3, \vphi_4)^{\frac{1}{2^n}} M^{1-\frac{1}{2^{n-1}}},
\end{equation}
where $M=\max_{1\le i\le 4}\{\bfI(\psi, \vphi_i)\}$.
\end{enumerate}
\end{prop}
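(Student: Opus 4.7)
The plan is to first reduce both parts to smooth potentials $\vphi_i\in\mcH(L)$ via the standard BBEGZ monotone approximation in $\cE^1$ (using continuity of both sides along suitable decreasing sequences), so that all integrations by parts are justified, and then to exploit the common starting identity
$$(\ddc\vphi_1)^n-(\ddc\vphi_3)^n=\ddc(\vphi_1-\vphi_3)\wedge T_{13},\qquad T_{13}:=\sum_{k=0}^{n-1}(\ddc\vphi_1)^k\wedge(\ddc\vphi_3)^{n-1-k}.$$
After integration by parts this converts $\bfI(\vphi_1,\vphi_3)$ into the Dirichlet-type energy $V^{-1}\int_X d(\vphi_1-\vphi_3)\wedge d^c(\vphi_1-\vphi_3)\wedge T_{13}$, while the integral appearing in part (2) analogously becomes a bilinear expression $V^{-1}\int_X d(\vphi_1-\vphi_2)\wedge d^c(\vphi_3-\vphi_4)\wedge T_{34}'$ for a positive closed $(n-1,n-1)$-current $T_{34}'$ built from $\ddc\vphi_3,\ddc\vphi_4$.

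For part (1), I would split $\vphi_1-\vphi_3=(\vphi_1-\vphi_2)+(\vphi_2-\vphi_3)$, expand the quadratic form, and use the pointwise bound $2|d\alpha\wedge d^c\beta|\le d\alpha\wedge d^c\alpha+d\beta\wedge d^c\beta$ against the positive current $T_{13}$. This bounds $\bfI(\vphi_1,\vphi_3)$ by a constant multiple of two $T_{13}$-weighted Dirichlet energies, one with $\alpha=\vphi_1-\vphi_2$ and the other with $\alpha=\vphi_2-\vphi_3$. The main obstacle is the mismatch that $T_{13}$ still carries $\vphi_3$ (respectively $\vphi_1$) rather than $\vphi_2$; to swap, I would expand $T_{13}$ factor by factor, use integration by parts to exchange the offending $\ddc\vphi_3$'s for $\ddc\vphi_2$'s, and control the resulting cross terms by the basic estimate $V^{-1}\int_X d\alpha\wedge d^c\alpha\wedge S\le C_n\bfI(\vphi_i,\vphi_j)$ for $\alpha=\vphi_i-\vphi_j$ and positive closed $S$ of the right bidegree, together with $\int_X(\vphi_i-\vphi_j)(\ddc\vphi_k)^n\le C_n\bfI(\vphi_i,\vphi_j)$ once the potentials are properly normalized.

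For part (2), the approach is iterated Cauchy--Schwarz on the bilinear expression above. A single Cauchy--Schwarz step bounds it by the geometric mean of two $T_{34}'$-weighted Dirichlet energies: the one with $d(\vphi_3-\vphi_4)$ on both sides is immediately $\le C_n\bfI(\vphi_3,\vphi_4)^{1/2}$, while the one with $d(\vphi_1-\vphi_2)$ on both sides still has the wrong weighting. Expanding $T_{34}'$ as a telescoping sum in $\ddc\vphi_3$ versus $\ddc\vphi_4$ and iterating Cauchy--Schwarz $n-1$ more times peels off one weight factor per step, yielding at each iteration $j$ a fresh $\bfI(\vphi_3,\vphi_4)^{1/2^j}$ factor and a remaining Dirichlet energy with halved exponent on $\vphi_1-\vphi_2$. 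After $n$ iterations the weight disappears, leaving $\bfI(\vphi_1,\vphi_2)^{1/2^n}\cdot \bfI(\vphi_3,\vphi_4)^{1/2^n}$ together with the accumulated earlier factors $\bfI(\vphi_3,\vphi_4)^{1/2^j}$ for $j=1,\dots,n-1$, each of which is dominated by $C_nM^{1/2^j}$ via the trivial bound $\bfI(\vphi_3,\vphi_4)\le C_nM$. The product $\prod_{j=1}^{n-1}M^{1/2^j}=M^{1-2^{1-n}}$ gives the advertised exponent. The main difficulty is the combinatorial bookkeeping required to verify that the iteration genuinely reduces the degree of the mismatched weight by exactly one at each step without producing interference terms; expanding $T_{34}'$ into a pure monomial in $\ddc\vphi_3,\ddc\vphi_4$ before each Cauchy--Schwarz ensures this.
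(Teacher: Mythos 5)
The paper does not prove this proposition: both parts are quoted as black boxes from the literature, part (1) from \cite[Theorem 1.8]{BBEGZ} and part (2) from \cite[Appendix A]{BBJ18} (itself a refinement of \cite[Lemma 5.8]{BBGZ13}). So there is no internal proof to compare against; what follows is an assessment of your sketch on its own terms. Your outline of part (2) does match the actual argument of \cite{BBJ18}: one Cauchy--Schwarz to separate $d(\vphi_2-\vphi_1)$ from $d^c(\vphi_3-\vphi_4)$, then $n-1$ further Cauchy--Schwarz steps to trade the mismatched weights in the positive current one factor at a time, each step halving the exponent on the "good" energy and paying a factor controlled by $M$. The exponent arithmetic is consistent: $\tfrac{1}{2^n}+\tfrac{1}{2^n}+\bigl(1-\tfrac{1}{2^{n-1}}\bigr)=1$, and $\prod_{j=1}^{n-1}M^{1/2^j}=M^{1-2^{1-n}}$. (The intermediate factors produced by the iteration are mixed energies bounded by $M$ rather than literal powers of $\bfI(\vphi_3,\vphi_4)$, but since you dominate them by $M$ anyway this is only a cosmetic imprecision.)

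Part (1) as sketched has a genuine gap. Your plan rests on the ``basic estimate'' $V^{-1}\int_X d\alpha\wedge d^c\alpha\wedge S\le C_n\,\bfI(\vphi_i,\vphi_j)$ for $\alpha=\vphi_i-\vphi_j$ and an \emph{arbitrary} positive closed $(n-1,n-1)$-current $S$ in the class $[\omega]^{n-1}$. This is not available: it holds (essentially by definition of $\bfI$ after integration by parts) when $S$ is built only from $\ddc\vphi_i$ and $\ddc\vphi_j$, but once $S$ carries factors $\ddc\vphi_k$ of a third potential the best known bounds lose exponents --- indeed, if the estimate held for general $S$, a single Cauchy--Schwarz would prove part (2) with exponents $\tfrac12$ instead of $\tfrac1{2^n}$, which is precisely the improvement that \cite{BBGZ13, BBJ18} cannot achieve. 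Concretely, when you integrate by parts to trade a factor $\ddc\vphi_3$ in $T_{13}$ for $\ddc\vphi_2$, the error term has the shape $\int_X(\vphi_2-\vphi_3)\,\bigl(\ddc(\vphi_1-\vphi_2)\bigr)^{2}\wedge S'$, which mixes all three potentials; controlling it again requires Cauchy--Schwarz, and each such step converts the target bound into one of the form $\bfI(\vphi_1,\vphi_3)\le C_n\bigl(\bfI(\vphi_1,\vphi_2)^{\theta}+\bfI(\vphi_2,\vphi_3)^{\theta}\bigr)M^{1-\theta}$ with $\theta=2^{-(n-1)}$ and $M=\max_i\bfI(\psi,\vphi_i)$. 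That is strictly weaker than the stated quasi-triangle inequality (take $\bfI(\vphi_1,\vphi_2),\bfI(\vphi_2,\vphi_3)\ll M$) and does not imply it. The proof of \cite[Theorem 1.8]{BBEGZ} avoids this loss by a different mechanism, not by weight-swapping; to repair your argument you would need to either reproduce that route or supply a linear (exponent-preserving) substitute for the swapping step.
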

By general theory of metric topological spaces, $\bfI$
defines a metrizable structure on $\cE^1_0(L)$ (note that $\bfI$ is translation invariant). Darvas \cite{Dar15} defined a Finsler-type $d_1$-distance on $\cE^1(L)$ and proved that $(\cE^1(L), d_1)$ is the metric completion of $(\mcH(L), d_1)$ whose metric topology coincides with the strong topology introduced in \cite{BBEGZ}.
Moreover, he proved 
\begin{thm}[{\cite[Theorem 3]{Dar15}}]\label{thm-Darv}
There exists a universal constant $C>0$, such that for any $\vphi_1, \vphi_2\in \cE^1(L)$,
\begin{equation}\label{eq-d1toI1}
C^{-1} \bfI_1(\vphi_1, \vphi_2)\le d_1(\vphi_1, \vphi_2)\le C \bfI_1(\vphi_1, \vphi_2).
\end{equation} 
where 
\begin{equation}
\bfI_1(\vphi_1, \vphi_2):=\qV \int_X |\vphi_2-\vphi_1|((\ddc\vphi_1)^n+(\ddc\vphi_2)^n).
\end{equation}
\end{thm}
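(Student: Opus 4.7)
The plan is to follow Darvas' strategy, combining a rooftop-envelope formula for $d_1$ with a reduction to the ordered case. The key intermediate identity is
\begin{equation*}
d_1(\vphi_1, \vphi_2) = \bfE(\vphi_1) + \bfE(\vphi_2) - 2\bfE(P(\vphi_1, \vphi_2)),
\end{equation*}
where $P(\vphi_1, \vphi_2) := \sup\{\psi \in \PSH(L) : \psi \le \min(\vphi_1, \vphi_2)\}$ is the rooftop envelope. I would first prove this for smooth $\vphi_1, \vphi_2 \in \mcH(L)$ by identifying the piecewise weak $C^{1,\bar{1}}$-geodesic through $P(\vphi_1,\vphi_2)$ as a $d_1$-length-minimizing candidate for the Finsler length $d_1(\vphi,\psi) = \inf \int_0^1 \qV \int_X |\dot{\vphi}_t|(\ddc\vphi_t)^n\, dt$, and then extend to all of $\cE^1(L)$ by density together with the strong-topology continuity of $\bfE$ and of the rooftop operation.

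Setting $\vphi_0 := P(\vphi_1,\vphi_2) \le \vphi_i$, the rooftop formula yields the additivity
\begin{equation*}
d_1(\vphi_1,\vphi_2) = d_1(\vphi_0,\vphi_1) + d_1(\vphi_0,\vphi_2),
\end{equation*}
which reduces the comparison to the monotone case $\psi \le \vphi$. There, writing $\bfE(\vphi)-\bfE(\psi) = \frac{1}{n+1}\qV\sum_{k=0}^n I_k$ with $I_k := \int_X (\vphi-\psi)(\ddc\vphi)^k\wedge(\ddc\psi)^{n-k}$, the integration-by-parts identity $I_k - I_{k+1} = \int_X d(\vphi-\psi)\wedge d^c(\vphi-\psi)\wedge(\ddc\vphi)^k\wedge(\ddc\psi)^{n-1-k} \ge 0$ gives $I_0 \ge I_1 \ge \cdots \ge I_n \ge 0$. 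Since $\qV I_0 = \int_X(\vphi-\psi)\MA(\psi)$ and $\qV I_n = \int_X(\vphi-\psi)\MA(\vphi)$ are exactly the two summands of $\bfI_1(\psi,\vphi)$, extracting these out of the average yields the two-sided bound $d_1(\psi,\vphi) \le \bfI_1(\psi,\vphi) \le (n+2)\,d_1(\psi,\vphi)$.

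To pass from the monotone to the general case, I would combine the additivity above with a quasi-triangle estimate for $\bfI_1$ in the spirit of \eqref{eq-Itri},
\begin{equation*}
\bfI_1(\vphi_1,\vphi_2) \le C_n\bigl(\bfI_1(\vphi_0,\vphi_1) + \bfI_1(\vphi_0,\vphi_2)\bigr),
\end{equation*}
to get the bound $\bfI_1 \le C\, d_1$. For the reverse bound $d_1 \le C\, \bfI_1$, the orthogonality property of the rooftop envelope -- namely that $\MA(\vphi_0)$ concentrates on the contact set $\{\vphi_0 = \vphi_1\} \cup \{\vphi_0 = \vphi_2\}$ -- allows the cross-term integrals $\int(\vphi_i-\vphi_0)\MA(\vphi_0)$ to be rewritten as integrals of $|\vphi_1-\vphi_2|$ against $\MA(\vphi_0)$, and then bounded by $\bfI_1(\vphi_1,\vphi_2)$ via a further pluripotential estimate.

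The hard part will be establishing the rooftop formula for $d_1$ together with the orthogonality relation for $\MA(P(\vphi_1,\vphi_2))$. Both constitute the heart of Darvas' analysis and rely on delicate obstacle-problem arguments for the non-pluripolar Monge-Amp\`ere operator in the Bedford--Taylor / Guedj--Zeriahi framework, together with careful approximation schemes to pass from smooth potentials to the full energy class $\cE^1(L)$.
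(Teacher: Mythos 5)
This statement is quoted by the paper directly from \cite[Theorem 3]{Dar15}; the paper gives no proof of its own, so there is nothing internal to compare against. Your sketch is a faithful reconstruction of Darvas' actual argument: the Pythagorean/rooftop formula $d_1(\vphi_1,\vphi_2)=\bfE(\vphi_1)+\bfE(\vphi_2)-2\bfE(P(\vphi_1,\vphi_2))$, the reduction to the ordered case where $d_1=\bfE(\vphi)-\bfE(\psi)$ is squeezed between $\frac{1}{n+1}(I_0+I_n)$ and $I_0+I_n$ via the monotonicity $I_0\ge\cdots\ge I_n\ge 0$, and the orthogonality of $\MA(P(\vphi_1,\vphi_2))$ on the contact set to handle the cross terms are exactly the ingredients of \cite{Dar15}; you also correctly identify where the remaining technical weight lies (the envelope formula itself and the BBGZ-type estimates needed to compare integrals against three different Monge--Amp\`ere measures).
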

Note that in general we then have:
\begin{equation}\label{eq-IvsI1}
\bfI(\vphi_1, \vphi_2)\le \bfI_1(\vphi_1, \vphi_2)\le C d_1(\vphi_1, \vphi_2).
\end{equation}

In this note, we use the following notions in \cite{BBJ18} (see \cite[Corollary 1.8]{BBJ18}). A {\it psh ray} (resp. {\it psh path}) is a continuous map $$\Phi=\{\vphi(s)\}: \bR_{\ge 0}\rightarrow \cE^1 \quad (\text{resp. } \Phi: [a,b]\rightarrow \cE^1)$$ such that the $S^1$-invariant Hermitian metric $\Phi=\{\vphi(-\log|t|^2)\}$ on $p_1^*L$ over $X\times\{t\in \bC^*; |t|\le 1\}$ (resp. $X\times \{t\in \bC^*; |t|\in [e^{-b/2}, e^{-a/2}]\})$ is a psh Hermitian metric on $p_1^*L$, i.e. it is locally represented by plurisubharmonic potential functions. 
A (finite energy) geodesic joining $\vphi_i, i=1,2\in \cE^1(L)$ is by definition the largest psh path dominated by $\vphi_0$ and $\vphi_1$.  Any (finite energy) {\it geodesic ray} in this paper is a finite energy geodesic ray emanating from the fixed reference metric $\psi$. We say that $\Phi$ is {\it sup-normalized} if $\sup(\vphi(s)-\psi)=0$ for any $s\in \bR_{\ge 0}$. For the construction of such geodesic rays, we refer to \cite{Dar17a, PS07, RWN14} and references therein.

By the work of Berman-Darvas-Lu and Chen-Cheng, we have the important:
\begin{thm}[{\cite[Proposition 5.1]{BDL17},\cite[Corollary 5.6]{CC18b}}]\label{thm-d1convex}
Let $\Phi_i=\{\vphi_i(s)\}: \bR_{\ge 0}\rightarrow \cE^1, i=1,2$ be two geodesic rays emanating from $\psi$. Then the function $s\mapsto d_1(\vphi_1(s), \vphi_2(s))$ is convex on $[0, \infty)$. As a consequence, the following limit exists, which may be $+\infty$:
\begin{equation}
d^c_1(\Phi_1, \Phi_2):=\lim_{s\rightarrow+\infty}\frac{d_1(\vphi_1(s), \vphi_2(s))}{s}.
\end{equation}
Moreover exactly one of the two alternatives holds: either $d_1^c(\Phi_1,\Phi_2)> 0$ or $\Phi_1=\Phi_2$.
\end{thm}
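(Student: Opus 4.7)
The statement has two parts: (i) convexity of $s\mapsto d_1(\vphi_1(s),\vphi_2(s))$ on $[0,\infty)$, and (ii) the dichotomy that either $d_1^c(\Phi_1,\Phi_2)>0$ or $\Phi_1=\Phi_2$. Part (ii) will be an essentially formal consequence of (i) together with the fact that both rays emanate from the common basepoint $\psi$. So the content is in (i), which I would reduce to the analogous statement for geodesic \emph{segments} (this is the BDL17 input) and prove by a $C^{1,\bar{1}}$-approximation plus the Darvas comparison $d_1\sim \bfI_1$ (Theorem \ref{thm-Darv}).

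\textbf{Step 1 (reduction to segments).} For any $T>0$, the restrictions $\Phi_i|_{[0,T]}$ are finite energy geodesic segments joining $\psi$ to $\vphi_i(T)$. It suffices to show: if $\alpha,\beta:[0,1]\to \cE^1(L)$ are any two finite energy geodesic segments, then $t\mapsto d_1(\alpha_t,\beta_t)$ is convex on $[0,1]$. Granted this, rescaling to $[0,T]$ and letting $T\to\infty$ gives convexity on $[0,\infty)$.

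\textbf{Step 2 (convexity along pairs of geodesic segments).} I would prove this by a regularization argument. By the construction in \cite{Dar17a, BDL17}, any finite energy geodesic segment $\alpha$ can be written as a decreasing limit $\alpha^{(k)}\searrow \alpha$ of $C^{1,\bar{1}}$-geodesics with smooth boundary data (obtained by solving the homogeneous complex Monge--Amp\`ere equation on $X\times(\text{annulus})$ with smooth $S^1$-invariant boundary data decreasing to those of $\alpha$); similarly regularize $\beta$. For $C^{1,\bar{1}}$-geodesic segments, one first verifies convexity of $t\mapsto d_1(\alpha^{(k)}_t,\beta^{(k)}_t)$ using Darvas' equivalence $d_1\sim \bfI_1$ together with an explicit Monge--Amp\`ere computation: along such geodesics the Monge--Amp\`ere measure satisfies $(dd^c\alpha^{(k)}_t)^n\to$ a linear interpolation in a suitable weak sense, and the integrand $|\alpha^{(k)}_t-\beta^{(k)}_t|$ is convex in $t$ pointwise (psh on the product), so a direct calculation shows $\bfI_1(\alpha^{(k)}_t,\beta^{(k)}_t)$ is convex in $t$, whence so is $d_1$ up to the universal constant. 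Then one passes to the limit using the lower semicontinuity of $\bfI_1$ under decreasing limits and the continuity of $d_1$ under the strong topology (by Darvas' theorem $d_1$ is the metric completion metric, and $\alpha^{(k)}_t\to \alpha_t$ strongly for each $t$ by the finite-energy convergence $\bfE(\alpha^{(k)}_t)\to \bfE(\alpha_t)$). The limiting inequality is still a convexity statement (convexity is preserved under pointwise limits).

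\textbf{Step 3 (existence of $d_1^c$ and the dichotomy).} Set $f(s):=d_1(\vphi_1(s),\vphi_2(s))$. By Step 1--2, $f$ is convex and nonnegative on $[0,\infty)$, and $f(0)=d_1(\psi,\psi)=0$. Any convex function on $[0,\infty)$ vanishing at $0$ satisfies that $s\mapsto f(s)/s$ is nondecreasing on $(0,\infty)$; indeed, for $0<s_1<s_2$ one writes $s_1 = (s_1/s_2)s_2 + (1-s_1/s_2)\cdot 0$ and applies convexity. Therefore the limit
\begin{equation*}
d_1^c(\Phi_1,\Phi_2)=\lim_{s\to\infty}\frac{f(s)}{s}=\sup_{s>0}\frac{f(s)}{s}\in[0,+\infty]
\end{equation*}
exists. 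If $d_1^c=0$, then $f(s)/s\le 0$ for all $s>0$, combined with $f\ge 0$ this forces $f\equiv 0$; hence $\vphi_1(s)=\vphi_2(s)$ in $(\cE^1,d_1)$ for every $s$, i.e. $\Phi_1=\Phi_2$. Otherwise $d_1^c>0$, completing the dichotomy.

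\textbf{Main obstacle.} The technical heart is Step 2: establishing convexity of $d_1$ along pairs of finite energy geodesic segments. The difficulty is that $d_1$ is a Finsler (not Riemannian) distance and $(\cE^1,d_1)$ is only known to have a weak form of non-positive curvature (it is not $CAT(0)$). The regularization scheme has to control both the Monge--Amp\`ere masses $(dd^c\alpha^{(k)}_t)^n$ and the $L^1$-differences $|\alpha^{(k)}_t-\beta^{(k)}_t|$ uniformly in $k$, and the convexity of $\bfI_1$ along product geodesics must be upgraded to a statement about a possibly singular limit. This is exactly where the BBJ-style estimates of the form \eqref{eq-BBJest} enter, which is why the authors invoke \cite[Prop.~5.1]{BDL17} and \cite[Cor.~5.6]{CC18b} rather than reproducing the argument.
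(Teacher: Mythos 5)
The paper offers no proof of this statement: it is imported wholesale from \cite[Proposition 5.1]{BDL17} and \cite[Corollary 5.6]{CC18b}, so there is nothing internal to compare against. Your Steps 1 and 3 are correct and routine: once convexity of $f(s)=d_1(\vphi_1(s),\vphi_2(s))$ is granted, $f\ge 0$ and $f(0)=d_1(\psi,\psi)=0$ give that $f(s)/s$ is nondecreasing, the limit exists in $[0,+\infty]$, and $d_1^c=0$ forces $f\equiv 0$, hence $\Phi_1=\Phi_2$ because $d_1$ is a genuine metric on $\cE^1$.

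The gap is in Step 2, and it is not just an honest deferral to the references: the mechanism you sketch cannot produce convexity of $d_1$. Two concrete problems. First, you prove (at best) convexity of $\bfI_1(\alpha_t,\beta_t)$ and then pass to $d_1$ via Theorem \ref{thm-Darv}; but $C^{-1}\bfI_1\le d_1\le C\bfI_1$ is a two-sided comparison, not an identity, and convexity does not survive such a comparison. ``Convex up to a universal constant'' is not convexity, and Step 3 genuinely needs convexity of $f$ itself: a function squeezed between $C^{-1}g$ and $Cg$ with $g$ convex and $g(0)=0$ need not even have a limit of $f(s)/s$. Second, the pointwise claim that $|\alpha_t(x)-\beta_t(x)|$ is convex in $t$ is false: along a geodesic, $t\mapsto\alpha_t(x)$ is convex for each fixed $x$ (subharmonicity on disc slices plus $S^1$-invariance), so $\alpha_t(x)-\beta_t(x)$ is a difference of convex functions and its absolute value has no convexity in general; moreover the reference measures $(\ddc\alpha_t)^n+(\ddc\beta_t)^n$ also move with $t$, so even a convex integrand would not give convexity of $\bfI_1$. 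The actual proof in \cite{BDL17} avoids $\bfI_1$ entirely. It rests on Darvas' exact formula $d_1(u,v)=\bfE(u)+\bfE(v)-2\bfE(P(u,v))$ from \cite{Dar15}, where $P(u,v)$ denotes the largest psh metric dominated by $\min(u,v)$. Since $\bfE$ is affine along each geodesic, convexity of $d_1(\alpha_t,\beta_t)$ reduces to concavity of $t\mapsto\bfE(P(\alpha_t,\beta_t))$; this follows from the comparison principle, because the geodesic joining $P(\alpha_0,\beta_0)$ to $P(\alpha_1,\beta_1)$ lies below both $\alpha_t$ and $\beta_t$, hence below $P(\alpha_t,\beta_t)$, and $\bfE$ is monotone and affine along that geodesic. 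If you want a self-contained argument, that is the route to take; the regularization scheme you outline does not close.
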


It is known that $\bfE=\bfE_\psi$ is affine along any geodesic. In particular, $\bfE$ is linear along any geodesic ray (emanating from $\psi$).
For any geodesic ray $\Phi=\{\phi(s)\}$, we will set:
\begin{equation}
\bfE'^\infty(\Phi)=\lim_{s\rightarrow+\infty}\frac{\bfE(\vphi(s))}{s}.
\end{equation}
\begin{lem}\label{lem-altern}
Let $\Phi_i=\{\vphi_i(s)\}, i=1,2$ be two geodesic rays emanating from $\psi$. Assume that $\Phi_2\ge \Phi_1$. Then either $\bfE'^\infty(\Phi_2)>\bfE'^\infty(\Phi_1)$ or $\Phi_1=\Phi_2$.
\end{lem}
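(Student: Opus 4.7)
The plan is to show the dichotomy by proving that if the slopes are \emph{equal} (rather than strict), then the rays must coincide. The monotonicity direction $\bfE'^\infty(\Phi_2)\ge \bfE'^\infty(\Phi_1)$ is immediate: since $\vphi_2(s)\ge \vphi_1(s)$ for every $s\ge 0$, monotonicity of $\bfE$ gives $\bfE(\vphi_2(s))\ge \bfE(\vphi_1(s))$, and dividing by $s$ and letting $s\to\infty$ yields the inequality of slopes.

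Next I would exploit the linearity of $\bfE$ along the geodesic rays $\Phi_i$ emanating from $\psi$ (with $\bfE(\psi)=0$), as recalled just before the lemma. Assuming $\bfE'^\infty(\Phi_2)=\bfE'^\infty(\Phi_1)$, linearity upgrades the limit equality to a pointwise one:
\begin{equation*}
\bfE(\vphi_2(s))=\bfE(\vphi_1(s))\quad \text{for every }s\ge 0.
\end{equation*}
By the cocycle identity, this means $\bfE_{\vphi_1(s)}(\vphi_2(s))=0$ for every $s$.

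The key step is then to invoke the inequality \eqref{eq-Isimple2}: because $\vphi_1(s)\le \vphi_2(s)$, we have
\begin{equation*}
\bfI(\vphi_1(s),\vphi_2(s))\ \le\ \qV\int_X(\vphi_2(s)-\vphi_1(s))\bigl((\ddc\vphi_1(s))^n+(\ddc\vphi_2(s))^n\bigr)\ \le\ (n+1)\,\bfE_{\vphi_1(s)}(\vphi_2(s))=0.
\end{equation*}
Under the hypothesis $\vphi_1(s)\le \vphi_2(s)$ the middle integral equals $\bfI_1(\vphi_1(s),\vphi_2(s))$, so we actually obtain $\bfI_1(\vphi_1(s),\vphi_2(s))=0$.

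Finally, applying Darvas's equivalence \eqref{eq-d1toI1} of Theorem \ref{thm-Darv}, we conclude $d_1(\vphi_1(s),\vphi_2(s))=0$ for every $s\ge 0$, and since $d_1$ is a genuine metric on $\cE^1(L)$, this forces $\vphi_1(s)=\vphi_2(s)$ for all $s$, i.e.\ $\Phi_1=\Phi_2$. I do not anticipate a real obstacle here: the argument is just a careful application of the linearity of $\bfE$ along geodesics (to convert slope equality into pointwise energy equality), followed by the domination estimate \eqref{eq-Isimple2} (to turn energy equality under domination into vanishing of $\bfI$/$\bfI_1$), and the metric property of $d_1$. Alternatively, one could finish using the second alternative in Theorem \ref{thm-d1convex}: $\bfI(\vphi_1(s),\vphi_2(s))=0$ for some (equivalently every) $s>0$ forces $d_1^c(\Phi_1,\Phi_2)=0$, hence $\Phi_1=\Phi_2$ by that theorem.
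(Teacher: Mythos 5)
Your argument is correct, and its skeleton is the same as the paper's: use linearity of $\bfE$ along rays emanating from $\psi$ to upgrade equality of slopes to $\bfE(\vphi_1(s))=\bfE(\vphi_2(s))$ for every $s$, then the cocycle property and the domination estimate \eqref{eq-Isimple2}. The two proofs diverge only in the final black box. The paper extracts from \eqref{eq-Isimple2} that $\int_X(\vphi_2(s)-\vphi_1(s))(\ddc\vphi_2(s))^n=0$ and invokes the domination principle of \cite[Proposition 4.2]{BDL17}, which reduces to Dinew's domination principle \cite[Proposition 5.9]{BL12}; you instead note that the middle term of \eqref{eq-Isimple2} is exactly $\bfI_1(\vphi_1(s),\vphi_2(s))$ and conclude via Darvas's equivalence \eqref{eq-d1toI1} together with the non-degeneracy of $d_1$ on $\cE^1(L)$. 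Both endings are valid. The paper is explicitly aware of your route --- it remarks immediately after the lemma that the statement is also a corollary of Darvas's results and of Theorem \ref{thm-d1convex} --- but prefers the Dinew route because the fact that $d_1$ is a genuine (non-degenerate) metric on $\cE^1(L)$ is itself a substantial theorem resting on similar uniqueness/domination inputs, so reducing directly to Dinew keeps the logical dependencies lighter. Your proof is therefore complete but marginally less economical in what it assumes.
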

\begin{proof}
Because $\bfE(\vphi_i(s))$ is linear in $s$, 
this follows easily from the following domination principle in \cite[Proposition 4.2]{BDL17}: if $\{\vphi_1,\vphi_2\}\subset \cE^1(L)$ satisfies $\vphi_2\ge \vphi_1$, then $\bfE(\vphi_2)\ge \bfE(\vphi_1)$ and the equality holds if and only if $\vphi_1=\vphi_2$. Because $\bfE(\vphi_2)-\bfE(\vphi_1)=\bfE_{\vphi_1}(\vphi_2)\ge C \int_X (\vphi_2-\vphi_1)(\ddc\vphi_2)^n$ by \eqref{eq-Isimple2}, the latter domination principle is reduced to S.Dinew's domination principle in \cite[Proposition 5.9]{BL12} which in turn depends on his uniqueness result. 
\end{proof}
Darvas (\cite{Dar15}) proved that if $\vphi_2\ge \vphi_1$, then $d_1(\vphi_1, \vphi_2)=\bfE(\vphi_2)-\bfE(\vphi_1)$. So the above lemma is a corollary of Theorem \ref{thm-d1convex}. However we state it separately since its proof is in some sense simpler and it is enough for proving Theorem \ref{thm-maximal}.

\subsubsection{Maximal geodesic rays and finite energy non-Archimedean metrics}

For more details of the following definition, we refer to \cite[6]{BBJ18}. 
\begin{defn}
\begin{enumerate}

\item A psh ray $\Psi=\{\psi(s)\}: \bR_{\ge 0}\rightarrow \cE^1$ is of linear growth if 
\begin{equation}
\sup_{s>0}\; s^{-1}\sup(\psi(s)-\psi)<+\infty.
\end{equation}
Any psh ray $\Psi$ of linear growth defines a non-Archimedean metric $\Psi_\NA\in \PSH^\NA(L)$ which is represented by the following function on $X^{\rm div}_\bQ$: for any $v\in X^{\rm div}_\bQ$, let $G(v)$ be its Gauss extension and set
\begin{equation}
\Psi_\NA(v)=-G(v)(\Psi).\quad  (\text{the generic Lelong number w.r.t. } G(v))
\end{equation}
\item
A geodesic ray $\Phi=\{\vphi(s)\}: \bR_{\ge 0}\rightarrow \cE^1$ is maximal if any psh ray of linear growth $\Psi=\{\psi(s)\}: \bR_{\ge 0}\rightarrow \cE^1$ with $\lim_{s\rightarrow 0}\psi(s)\le \vphi(0)$ and 
$\Psi_\NA\le \Phi_\NA$ satisfies $\Psi\le \Phi$.
\end{enumerate}

\end{defn}
The following result says that there is a one-to-one correspondence between $\mcE^{1,\NA}$ and the set of maximal geodesic rays emanating from the fixed reference metric $\psi$.
\begin{thm}[{\cite[6]{BBJ18}}]\label{thm-BBJ}
\begin{enumerate}
\item
For any psh ray $\Phi: \bR_{\ge 0}\rightarrow \cE^1$ of linear growth, the associated non-Archimedean metric $\Phi_\NA$ belongs to $\cE^{1,\NA}(L)$ (see Definition \ref{def-cE1NA}), and
\begin{equation}\label{eq-NAvSlope}
\bfE^\NA(\Phi_\NA)\ge \bfE'^\infty(\Phi)>-\infty.
\end{equation}
Any geodesic ray is of linear growth and hence defines a finite energy non-Archimedean metric.
\item
For any $\phi\in \cE^{1,\NA}$, there exists a unique maximal geodesic ray $\Phi: \bR_{\ge 0}\rightarrow \cE^1$ emanating from $\psi$ satisfying $\Phi_\NA=\phi$. We will also denote this maximal geodesic ray by $\rho^\phi$. In particular, for any test configuration there is a unique maximal geodesic ray emanating from $\psi$ which coincides with the geodesic ray constructed by Phong-Sturm (in \cite{PS07}).
\item 
A geodesic ray $\Phi=\{\vphi(s)\}: \bR_{\ge 0}\rightarrow \cE^1$ (emanating from $\psi$) is maximal if and only if equality holds in \eqref{eq-NAvSlope}, or equivalently $\bfE(\vphi(s))=s \cdot \bfE^\NA(\Phi_\NA)$. Moreover, in this case, let $\{\phi_m\}\subset \mcH^\NA(L)$ be any decreasing sequence converging to $\Phi_\NA$, the following limit identity holds true:
\begin{equation}
\bfE'^\infty(\Phi)=\lim_{m\rightarrow+\infty}\bfE^\NA(\phi_m).
\end{equation}

\end{enumerate}
\end{thm}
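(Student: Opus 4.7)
The plan proceeds in three stages matching the three parts of the statement.

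For part (1), I would first verify that $\Phi_\NA : X^{\rm div}_\bQ \to \bR$ defined by $\Phi_\NA(v) = -G(v)(\Psi)$ extends to a psh non-Archimedean metric on $L^\NA$. Linear growth of $\Psi$ ensures that generic Lelong numbers along Gauss extensions are finite, while $S^1$-invariance and positive curvature of the associated metric on $p_1^*L$ over $X \times \bD^*$ transfer to the right convexity/monotonicity properties on the dual complex side. To exhibit $\Phi_\NA$ as a decreasing limit from $\mcH^\NA$, I would approximate $\Psi$ from above by $S^1$-invariant Bergman-kernel metrics $\Psi_m$ built from $L^2(e^{-m\Psi})$ bases of $H^0(X \times \bD^*, mL)$; each $\Psi_m$ induces a finitely generated multiplier/base ideal filtration, hence $(\Psi_m)_\NA \in \mcH^\NA(L)$, and Demailly-type regularization together with Siu semi-continuity of Lelong numbers forces $(\Psi_m)_\NA \searrow \Phi_\NA$. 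For the slope inequality $\bfE^\NA(\Phi_\NA) \ge \bfE'^\infty(\Phi) > -\infty$, compare $\Phi$ pointwise with the Phong-Sturm geodesic rays of $(\Psi_m)_\NA$: since $\Psi \le \Psi_m$ fiberwise, convexity of $\bfE$ along geodesics and the test-configuration slope formula give $\bfE'^\infty(\Phi) \le \bfE^\NA((\Psi_m)_\NA)$, and the right-hand side decreases to $\bfE^\NA(\Phi_\NA)$ by continuity of $\bfE^\NA$ along decreasing sequences of smooth positive NA metrics.

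For part (2), fix $\phi \in \cE^{1,\NA}(L)$ and pick a decreasing sequence $\{\phi_m\} \subset \mcH^\NA(L)$ with $\phi_m \searrow \phi$, available by Definition \ref{def-psh}. Let $\Phi_m = \rho^{\phi_m}$ be the Phong-Sturm geodesic ray emanating from $\psi$; it is a smooth subgeodesic of linear growth satisfying $(\Phi_m)_\NA = \phi_m$. Monotonicity of geodesic rays in their data at infinity gives $\Phi_m \ge \Phi_{m+1}$, so I define $\Phi$ to be the upper semicontinuous regularization of $\lim_{m\to\infty} \Phi_m$. The slope bound from part (1) combined with $\bfE^\NA(\phi) > -\infty$ ensures $\Phi(s) \in \cE^1(L)$ for all $s$, and a decreasing limit of psh subgeodesics is a geodesic by standard compactness. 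By construction $\Phi_\NA = \phi$. Maximality is then checked directly: any linear-growth psh ray $\Psi$ with $\lim_{s \to 0}\psi(s) \le \vphi(0)$ and $\Psi_\NA \le \phi$ satisfies $\Psi_\NA \le \phi_m$, which forces $\Psi \le \Phi_m$ via a Demailly-type comparison of generic Lelong numbers with the explicit growth of a test-configuration ray, and passing to the limit gives $\Psi \le \Phi$. Uniqueness of the maximal representative is immediate from the defining property.

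For part (3), suppose $\Phi$ is the maximal ray from part (2). Each test-configuration approximation satisfies the slope formula $\bfE(\vphi_m(s)) = s \, \bfE^\NA(\phi_m)$. Sending $m \to \infty$ and using the continuity of $\bfE$ along decreasing $\cE^1$-sequences together with continuity of $\bfE^\NA$ along decreasing sequences in $\mcH^\NA$ (the last cited as part of Theorem \ref{thm-BoJ}/Proposition \ref{prop-strongcont}) yields $\bfE(\vphi(s)) = s \, \bfE^\NA(\Phi_\NA)$ and the limit identity $\bfE'^\infty(\Phi) = \lim_{m \to \infty} \bfE^\NA(\phi_m)$. Conversely, if an arbitrary geodesic ray $\Phi$ saturates the inequality in \eqref{eq-NAvSlope}, compare with the maximal ray $\tilde\Phi := \rho^{\Phi_\NA}$ built above: maximality of $\tilde\Phi$ gives $\Phi \le \tilde\Phi$, while the hypothesis together with what we just proved gives $\bfE'^\infty(\Phi) = \bfE^\NA(\Phi_\NA) = \bfE'^\infty(\tilde\Phi)$, so Lemma \ref{lem-altern} forces $\Phi = \tilde\Phi$.

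The main obstacle is the construction in part (1): realizing $\Phi_\NA$ as a genuine non-Archimedean psh metric of finite energy and matching the analytic slope $\bfE'^\infty(\Phi)$ with the non-Archimedean energy via an explicit approximation. Controlling that the Bergman-kernel/multiplier-ideal regularization commutes with taking generic Lelong numbers along divisorial valuations on $X \times \bC$ is delicate, and relies on combining Demailly's $L^2$ extension theorem with the structural results of \cite{BFJ16} on non-Archimedean pluripotential theory. Once part (1) and the Phong-Sturm construction are granted, parts (2) and (3) reduce to passing to monotone limits and applying the domination principle already recorded in the excerpt.
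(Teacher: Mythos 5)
The paper does not prove this statement: Theorem \ref{thm-BBJ} is imported wholesale from \cite[Section 6]{BBJ18}, and the only proof-related material the paper records is the ``Multiplier Approximation'' construction stated immediately afterwards. So there is no in-paper argument to compare against; what you have written is a reconstruction of the Berman--Boucksom--Jonsson strategy, and in broad outline it matches the construction the paper sketches (approximate $\Phi$ from above by rays attached to the multiplier-ideal test configurations $(\mcX_m,\mcL_m)$, pass to decreasing limits, and use the slope formula for test configurations plus Lemma \ref{lem-altern} for the converse in part (3)).

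That said, your sketch elides the two points that carry most of the weight in \cite{BBJ18}, and as written they are genuine gaps. First, the approximating sequence $(\Psi_m)_\NA$ is \emph{not} decreasing in $m$: subadditivity of multiplier ideals $\mcJ((m_1+m_2)\Phi)\subseteq \mcJ(m_1\Phi)\cdot\mcJ(m_2\Phi)$ only yields monotonicity along the subsequence $m=2^k$, which is exactly why the paper passes to $\check{\Phi}_m=\Phi_{2^m}$; your appeal to ``Demailly-type regularization together with Siu semi-continuity'' does not produce a decreasing sequence. Second, and more seriously, in part (2) the identity $\Phi_\NA=\phi$ for the limit ray is not ``by construction''. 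From $\Phi\le\Phi_m$ you only get $\Phi_\NA\le\phi_m$, hence $\Phi_\NA\le\phi$; the reverse inequality fails for general decreasing limits because generic Lelong numbers can jump up in the limit. Recovering $\Phi_\NA\ge\phi$ is one of the substantive steps of \cite[Section 6]{BBJ18} and goes through the energy comparison $\bfE^\NA(\Phi_\NA)\ge\bfE'^\infty(\Phi)=\lim_m\bfE^\NA(\phi_m)=\bfE^\NA(\phi)$ together with a non-Archimedean domination principle; without this your verification of maximality and of part (3) has nothing to anchor it. Finally, the step ``$\Psi_\NA\le\phi_m$ forces $\Psi\le\Phi_m$'' (maximality of test-configuration rays) is the crux of the whole theorem; you correctly point at a Lelong-number comparison, but this rests on the valuative characterization of multiplier ideals and an Izumi-type estimate, and should be isolated as the key lemma rather than folded into one clause.
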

See the recent preprint in \cite{DX20} for more equivalent characterizations of maximal geodesic rays.
The proof of Theorem \ref{thm-BBJ} hinges on the following important construction by Berman-Boucksom-Jonsson, which in particular shows that any $\phi\in \cE^{1,\NA}(L)$ can be approximated by a decreasing sequence $\{\phi_m\}\subset \mcH^\NA$. 
We call refer to such a construction by the name of Multiplier Approximation. This kind of construction goes back to \cite{DEL00} and also in the study of non-Archimedean Monge-Amp\`{e}re equations in \cite{BFJ15, BFJ09}.
\vskip 1mm

\noindent
{\bf Multiplier Approximation:}
Let $\Phi:\bR_{\ge 0}\rightarrow \cE^1$ be a geodesic ray such that $\sup(\vphi(t)-\psi)=0$. We extend $\Phi$ to be a singular positively curved Hermitian metric on $p_1^*L\rightarrow X\times\bC$.
Denote by $\mcJ(m\Phi)$ the multiplier ideal sheaf of $m\Phi$. Let $\mu_m: \mcX_m\rightarrow X\times\bC$ be the normalized blow-up of $X\times \bC$ along $\mcJ(m\Phi)$ with exceptional divisor denoted by $E_m$. Set $\mcL_m=\mu_m^*p_1^*L-\frac{1}{m+m_0}E_m$. Then by Castelnuovo-Mumford criterion and Nadel vanishing, for $m_0$ sufficiently large and any $m\gg 1$, $(\mcX_m, \mcL_m)$ is a semiample test configuration of $(X, L)$. Let $\Phi_m$ be the geodesic ray associated to $(\mcX_m, \mcL_m)$ (as constructed in \cite{PS07} or in \cite[Theorem 6.6]{BBJ18}). Then we have 
$\Phi_m\ge \Phi, \quad \Phi_{m,\NA}\ge \Phi_\NA$ and $\lim_{m\rightarrow+\infty}\Phi_{m,\NA}=\Phi_\NA$.

Moreover, we can get a decreasing approximating sequence by setting 
$\check{\Phi}_m=\Phi_{2^m}$. Then $\check{\Phi}_{m,\NA}$ decreases to $\Phi_\NA$ and $\check{\Phi}_m$ decreases to the maximal geodesic ray $\rho^{\Phi_\NA}$. Because of this minor change of subscripts, by abuse of notations, we will also write $\Phi_m$ (resp. $\phi_m:=\Phi_{m,\NA}$) for $\check{\Phi}_m$ (resp. $\check{\Phi}_{m,\NA}$).

If $\Phi$ is not sup-normalized, then we know that $\sup(\vphi(s)-\psi)=c s$ for some $c\in \bR$ by \cite[Proposition 1.10]{BBJ18}. We set $\Phi':=\Phi+c\log|t|^2$ and apply Multiplier Approximation to $\Phi'$ and get $\Phi'_m$. Then we set $\Phi_m=\Phi_m+c \log|t|^2$.



\subsection{More functionals}\label{sec-Echi}
For any smooth closed (1,1)-form $\chi$ and $\vphi_1, \vphi_2\in \cE^1(L)$, define
\begin{equation}\label{eq-Echi}
\bfE^\chi_{\vphi_1}(\vphi_2)=\qV \int_X (\vphi_2-\vphi_1)\chi\wedge \sum_{k=0}^{n-1}(\ddc\vphi_2)^k\wedge (\ddc\vphi_1)^{n-1-k}.
\end{equation}
We write $\bfE^\chi(\vphi)=\bfE_{\psi}^\chi(\vphi)$ for any $\vphi\in \cE^1(L)$. An easy calculation shows the identity:
\begin{equation}
\bfI(\vphi_1, \vphi_2)=\bfE^{\ddc \vphi_1}_{\vphi_1}(\vphi_2)+\Lam_{\vphi_1}(\vphi_2).
\end{equation}
Moreover if $[\chi']=[\chi]$, then by the $\partial\bar{\partial}$-lemma $\chi'-\chi=\ddc f$ for some $f\in C^{\infty}(X)$ and for any $\vphi_1, \vphi_2\in \mcE^1(L)$ we have the identity:
\begin{equation}\label{eq-E2chi}
\bfE^{\chi'}_{\vphi_1}(\vphi_2)-\bfE^{\chi}_{\vphi_1}(\vphi_2)=\qV  \int_X f \left[(\ddc\vphi_2)^n-(\ddc\vphi_1)^n\right].
\end{equation}

\begin{lem}\label{lem-Echislope}
Let $\Phi: \bR_{\ge 0}\rightarrow \cE^1$ be a geodesic ray. Then the limit 
\begin{equation}
\lim_{s\rightarrow+\infty} \frac{\bfE^\chi(\vphi(s))}{s}=: (\bfE^\chi)'^\infty(\Phi)
\end{equation}
exists and is finite.
\end{lem}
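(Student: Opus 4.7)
The plan is to reduce to the case of a semipositive $\chi$, establish convexity of $s\mapsto \bfE^\chi(\vphi(s))$ together with a matching linear growth bound, and then conclude by a standard convex-function argument. First I would reduce to semipositive $\chi$: let $\omega=\ddc\psi$ and pick $A>0$ large enough that $\chi+A\omega\ge 0$; then writing $\chi=(\chi+A\omega)-A\omega$ expresses $\chi$ as a difference of two smooth closed semipositive $(1,1)$-forms. Since the functional $\chi\mapsto \bfE^\chi_{\vphi_1}(\vphi_2)$ is $\bR$-linear by \eqref{eq-Echi}, it suffices to prove existence and finiteness of the slope when $\chi\ge 0$.

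For the convexity of $f(s):=\bfE^\chi(\vphi(s))$ under the assumption $\chi\ge 0$, I would rely on Berman-Berndtsson's positivity of twisted Monge-Amp\`ere energies \cite{BB17}: their argument gives convexity of $f$ along smooth subgeodesics and $C^{1,\bar 1}$-geodesic segments. The extension to finite-energy geodesics proceeds by decreasing approximation, following the scheme used in \cite{BDL17} for the Mabuchi functional: a finite-energy geodesic is a decreasing $L^1$-limit of $C^{1,\bar 1}$-geodesics, and the continuity of the mixed Monge-Amp\`ere currents in the definition of $\bfE^\chi$ under such decreasing sequences in $\cE^1$ transfers convexity to $f$.

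For the linear growth, fix $B>0$ with $-B\omega\le\chi\le B\omega$ (applied to each piece of the decomposition above). Each term in the sum \eqref{eq-Echi} is then dominated by $B$ times a mixed Monge-Amp\`ere integral of $|\vphi(s)-\psi|$ against $\omega\wedge(\ddc\vphi(s))^k\wedge(\ddc\psi)^{n-1-k}$. The standard inequalities comparing such mixed integrals with pure ones (see \cite{BBEGZ}) give
\[
|\bfE^\chi(\vphi(s))|\le C\,\bfI_1(\psi,\vphi(s))\le C\,d_1(\psi,\vphi(s)),
\]
where the second step uses \eqref{eq-IvsI1}. Applying Theorem \ref{thm-d1convex} to the ray $\Phi$ and the constant ray at $\psi$ shows that $s\mapsto d_1(\psi,\vphi(s))$ is convex with value $0$ at $s=0$, hence grows at most linearly, producing the bound $|f(s)|\le C(1+s)$. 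Together with convexity, this forces $f(s)/s$ to converge as $s\to+\infty$ to $\sup_{s>0}(f(s)-f(0))/s\in\bR$.

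The main obstacle is the convexity step: the Berman-Berndtsson convexity theorem is typically stated for curvatures of smooth Hermitian metrics on $\bQ$-line bundles, while our $\chi$ is a general smooth closed semipositive $(1,1)$-form. The underlying infinitesimal computation is local on $X\times\bD$ and should extend without essential change, but one has to control the mixed Monge-Amp\`ere currents entering $\bfE^\chi$ uniformly through the passage from $C^{1,\bar 1}$-geodesics to finite-energy geodesics, which is the technical heart of the argument.
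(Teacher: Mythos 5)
Your proposal is correct and follows essentially the same route as the paper: reduce to a positive form by writing $\chi$ as a difference of K\"ahler forms, obtain convexity of $s\mapsto\bfE^\chi(\vphi(s))$ from the known $C^{1,\bar 1}$ case (the paper cites \cite[4.1, Proposition 2]{Che00} rather than \cite{BB17}) via approximation of geodesic segments and $d_1$-continuity of $\bfE^\chi$ (\cite[Lemma 5.23]{DR17}), and combine this with the at most linear growth of $\bfE^\chi(\vphi(s))$ coming from the linear growth of the ray. Your more explicit bound through $\bfI_1$ and $d_1$ is a fleshed-out version of the linear-growth step the paper only asserts.
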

\begin{proof}
Any smooth $(1,1)$-form $\chi$ can written as $\chi=\chi_1-\chi_2$ with $\chi_i, i=1,2$ being smooth K\"{a}hler forms. So we can assume $\chi$ is smooth and K\"{a}hler. 
Then we know that $\bfE^\chi(\vphi(s))$ is convex with respect to $s$. Indeed, this is well known if $\Phi$ is $C^{1,\bar{1}}$ (see \cite[4.1, Proposition 2]{Che00}). In general, we can approximate any segment of $\Phi$ by $C^{1,\bar{1}}$-geodesic segments and use the continuity of $\bfE^\chi$ under $d_1$-convergence (see \cite[Lemma 5.23]{DR17}) to get the convexity.  Moreover because $\Phi$ has linear growth, $\bfE^\chi(\vphi(s))$ also grows at most linearly with respect to $s$. So the slope $(\bfE^\chi)'^\infty(\Phi)$ indeed exists and is finite. 
\end{proof}
The constant scalar curvature K\"{a}hler metrics are global minimizers of Mabuchi functional $\bfM(\vphi)$.
We now have the following important analytic criterion for the existence of cscK metrics:
\begin{thm}[{\cite{CC18a, CC18b}, \cite{DR17, His16b}}]
$(X, L)$ admits a cscK metric if and only if the Mabuchi functional is $\Aut(X, L)_0$-coercive. 
\end{thm}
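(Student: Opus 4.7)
The plan is to attack the two directions separately, since they require very different techniques. The easier direction is that coercivity of $\bfM$ on $\cE^1(L)/\Aut(X,L)_0$ implies the existence of a cscK metric; the harder (or at least more classically analytical) direction is the reverse.

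For the ``coercivity implies existence'' direction I would set up a continuity path joining a known solution (e.g.\ a K\"ahler metric with prescribed scalar curvature, obtained by solving a twisted cscK equation at $t=0$) to the cscK equation at $t=1$. Openness follows from the standard implicit function theorem after quotienting by holomorphic vector fields (invertibility of the Lichnerowicz operator modulo $\mathrm{Lie}(\Aut(X,L)_0)$). Closedness is the key point and would invoke the Chen--Cheng a priori estimates: given a uniform upper bound for the entropy $\bfH(\vphi_t)$ plus a uniform $L^\infty$ bound for a properly normalized $\vphi_t$, one obtains a uniform $C^{1,\bar 1}$ (and then $C^k$) bound. The coercivity hypothesis $\bfM(\vphi)\geq \gamma\,\bfJ_\bT(\vphi)-C$, combined with the Chen--Tian decomposition $\bfM=\bfH+\bfR+\tfrac{\underline{S}}{n+1}\bfE$ and standard comparisons between $\bfJ_\bT$ and $d_1$ modulo $\Aut(X,L)_0$ (equation \eqref{eq-d1toI1}), is precisely what is needed to feed Chen--Cheng's machinery along the continuity path.

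For the ``cscK implies coercivity'' direction I would apply the Darvas--Rubinstein abstract principle in the metric completion $(\cE^1(L),d_1)$. The ingredients are: (i) $\bfM$ is convex along finite energy geodesics (the Berman--Berndtsson / Berman--Darvas--Lu result used to define the Mabuchi slope), (ii) $\bfM$ is $d_1$-lower semicontinuous and $\Aut(X,L)_0$-invariant, (iii) $\bfM$ attains its global minimum at any cscK potential $\vphi_{\KE}$. In this abstract setting, Darvas--Rubinstein show coercivity modulo the isometry group provided that minimizing sequences, modulo the group, stay $d_1$-bounded---this uses the $d_1$-convexity of $\Aut$-orbits and the properness of the stabilizer action. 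Hisamoto's equivariant slope formula $\bfJ_\bT(\vphi)=\inf_{\sigma\in\bT}\bfJ(\sigma^*\vphi)$ then upgrades the abstract statement to the algebraic-looking coercivity inequality $\bfM\geq \gamma\,\bfJ_\bT-C$.

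The main obstacle is the closedness step of the continuity argument. One must pass from weak information (entropy bound, $L^\infty$ bound, $d_1$-compactness of sublevel sets of $\bfM$) to genuine smooth solutions; this is the analytically deepest input and rests on treating the fourth-order cscK equation as a coupled system of a complex Monge--Amp\`ere equation (with density controlled by the entropy) and a Poisson-type equation, then running Chen--Cheng's integral-estimate bootstrap. The Darvas--Rubinstein direction, by contrast, is essentially a soft argument once the convexity and lower semicontinuity of $\bfM$ on $\cE^1(L)$ are available.
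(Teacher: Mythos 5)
The paper does not prove this statement: it is quoted as a known analytic criterion, with the ``coercive $\Rightarrow$ cscK'' direction attributed to Chen--Cheng \cite{CC18a, CC18b} and the ``cscK $\Rightarrow$ coercive'' direction to Darvas--Rubinstein \cite{DR17} and Hisamoto \cite{His16b} (together with Berman--Darvas--Lu). Your division of labor between a Chen--Cheng continuity-path argument and the Darvas--Rubinstein abstract properness principle is exactly how those references establish the result, so as an outline of the cited proof your proposal is on target.

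There is, however, one substantive ingredient missing from your ``cscK implies coercive'' direction. The Darvas--Rubinstein principle does not deliver coercivity from your ingredients (i)--(iii) alone: besides convexity, $d_1$-lower semicontinuity, and the existence of a minimizer, it requires that the set of minimizers of the extended functional $\bfM$ on the completion $\cE^1(L)$ be exactly one $\Aut(X,L)_0$-orbit of smooth cscK potentials. This is the ``regularity of weak minimizers'' theorem of Berman--Darvas--Lu \cite{BDL18} (itself resting on Chen--Cheng's estimates) combined with the Berman--Berndtsson uniqueness theorem. Without it, a $d_1$-bounded minimizing sequence could converge to a finite-energy minimizer outside the orbit of $\vphi_{\KE}$, and the argument producing the quantitative inequality $\bfM \ge \gamma\, \bfJ_\bT - C$ breaks down; this is precisely why the statement could not be proved before Chen--Cheng's work even though the soft convexity inputs were available earlier. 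Two smaller points: in the existence direction the continuity path starts at $t=0$ from the twisted equation solved by $\vphi=0$ (not from a prescribed-scalar-curvature metric), and openness at the endpoint $t=0$ is itself a nontrivial step; and one must check that coercivity of $\bfM$ transfers to coercivity of the twisted Mabuchi functionals along the path, which is where the entropy and $L^\infty$ bounds feeding the a priori estimates actually come from.
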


 We will use the following Chen-Tian's formula for Mabuchi energy and other related functionals. 
\begin{defn}
For any smooth volume forms $\Omega$ and $\nu$, and any $\vphi\in \cE^1(L)$, define:
\begin{eqnarray}
\bfH_\Omega(\vphi)&:=&\qV \int_X\log \frac{(\ddc\vphi)^n}{\Omega}(\ddc\vphi)^n; \label{eq-HOmega} \\
\bfR(\vphi)&:=&\bfE^{-Ric(\Omega)}(\vphi)\\
\bfS(\vphi)&:=&\bfS_\psi(\vphi)=\bfH_{\Omega}(\vphi)-\bfH_{\Omega}(\psi)+\bfE^{-Ric(\Omega)}_\psi(\vphi)\\
\cJ(\vphi)&:=&\mcJ^{-Ric(\Omega)}(\vphi)=\bfE^{-Ric(\Omega)}(\vphi)+\ud{S}\bfE(\vphi) \label{eq-cJ}\\
\bfM(\vphi)&:=&\bfM_\psi(\vphi)=\bfH_\Omega(\vphi)-\bfH_\Omega(\psi)+\bfE^{-Ric(\Omega)}(\vphi)+\ud{S}\bfE(\vphi)\nonumber  \\
&=&\bfS(\vphi)+\ud{S}\bfE(\vphi)=\bfH_\Omega(\vphi)-\bfH_\Omega(\psi)+\cJ(\vphi)\nonumber \\
&=&\qV \int_X \log \frac{(\ddc\vphi)^n}{\Omega}(\ddc\vphi)^n-\qV \int_X \log\frac{(\ddc\psi)^n}{\Omega}(\ddc\psi)^n\nonumber \\
&&\hskip 1cm+\bfE^{-Ric(\Omega)}_\psi(\vphi)+\udS \bfE_\psi(\vphi) \label{eq-Mabuchi}\\
\bfS(\vphi; \nu)&:=&\qV \int_X \log \frac{\nu}{\Omega}(\ddc\vphi)^n+\bfE^{-Ric(\Omega)}_\psi(\vphi)-\bfH_\Omega(\psi) \label{eq-Gphinu}.
\end{eqnarray}
\end{defn}
In the above formula, by replacing $\psi$, $\vphi$ by $\vphi_1$, $\vphi_2$ respectively, we define $\bfS_{\vphi_1}(\vphi_2)$ and $\bfM_{\vphi_1}(\vphi_2)$ and so on.
It is easy to verify that the functionals $\bfS_{\vphi_1}(\vphi_2)$ and $\bfM_{\vphi_1}(\vphi_2)$ depend only on $\vphi_i, i=1,2$  but  not on the volume form $\Omega$.
We will use the following simple but important co-cycle property: for any $\vphi_i\in \cE^1, i=1,2,3$, and for $\bfF\in \{\bfE, \bfE^\chi, \bfS, \bfM\}$, 
\begin{equation}\label{eq-cocycle}
\bfF_{\vphi_1}(\vphi_3)=\bfF_{\vphi_1}(\vphi_2)+\bfF_{\vphi_2}(\vphi_3).
\end{equation}
This is well-known and can be verified by using integration by parts if $\vphi_i$ are smooth. For general $\vphi_i$, it can be verified using approximation argument.

The following result connects the Archimedean and non-Archimedean functionals :
\begin{prop}[see \cite{BHJ17, BHJ19} and references therein]\label{prop-ARvsNA}
With the notations as above, for any $\phi=(\mcX, \mcL)\in \mcH^\NA(L)$, let $\Phi$ be any locally bounded $S^1$-invariant Hermitian metric on $\mcL$. 
Then for any $\bfF\in \{\bfE, \Lam, \bfJ,  \bfI,  \cJ, \bfR\}$, we have the identity:
\begin{equation}\label{eq-slope1}
\bfF'^\infty(\Phi)=\bfF^\NA(\phi).
\end{equation}
In particular, the identity is true if $\Phi$ is the maximal geodesic ray associated to $\phi$. 

Moreover, for $\bfF\in \{\bfH_\Omega, \bfM\}$, the identity \eqref{eq-slope1} holds true if $\Phi$ is a smooth positively curved Hermitian metric on $\mcL$.
\end{prop}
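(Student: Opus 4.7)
The plan is to treat $\bfE$ first as the model case, and then derive the remaining slope formulas from it by linearity or by a one-parameter perturbation trick.

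\textbf{Step 1 (Energy functional $\bfE$).} Let $\bar{\Phi}$ denote the $S^1$-invariant extension of $\Phi$ to a locally bounded Hermitian metric on $\bar{\mcL}$ over $\bar{\mcX}$, glued to $p_1^*\psi$ near $\infty \in \bP^1$. The key identity, which goes back to Tian and is proved in the locally bounded setting in \cite[Section 3]{BHJ17}, is
\begin{equation*}
\bar{\mcL}^{\cdot n+1} = \int_{\bar{\mcX}} (\ddc\bar{\Phi})^{n+1} = (n+1)\,\lim_{s\to+\infty}\frac{1}{s}\Bigl(\tfrac{1}{n+1}\sum_{k=0}^{n}\int_X (\vphi(s)-\psi)(\ddc\vphi(s))^k\wedge(\ddc\psi)^{n-k}\Bigr).
\end{equation*}
This is obtained by computing $\int_{\bar{\mcX}} (\ddc\bar{\Phi})^{n+1}$ via a fibration argument over $\bP^1$: on the open piece $\bC^*\subset \bP^1$ the integral factors as $\int_{\bC^*}\ddc\bigl(\tfrac{1}{n+1}\sum_k\!\int_X(\vphi-\psi)(\ddc\vphi)^k(\ddc\psi)^{n-k}\bigr)$, which by Stokes is controlled by the linear growth at $s=+\infty$. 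Dividing by $V$ yields $\bfE'^\infty(\Phi)=\bfE^\NA(\phi)$.

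\textbf{Step 2 (Functionals $\Lam$, $\bfJ$, $\bfI$, $\bfE^\chi$ and hence $\bfR$, $\cJ$).} For $\Lam$ the analogous calculation replaces $\bar{\mcL}^{n+1}$ by $\bar{\mcL}\cdot L_{\bP^1}^n$ coming from integrating $\bar{\Phi}\cdot(\ddc p_1^*\psi)^n$; this matches $\Lam^\NA(\phi)$. Subtracting gives $\bfJ^\NA$, and the analogous identity for $\bfI$ follows by linear combination. For the twisted energy $\bfE^\chi$ with $\chi$ a smooth closed $(1,1)$-form representing a class $[Q]$, one picks a smooth $S^1$-invariant Hermitian metric $\psi_Q$ on $Q$ with curvature $\chi$, extends it trivially over $\bC$ to $p_1^*\psi_Q$, and repeats the fiber-integration: the slope becomes the intersection $\overline{p_1^*Q}\cdot \bar{\mcL}^{\cdot n}$, which by \eqref{eq-Qdotphi0} equals $(\bfE^{Q})^\NA(\phi)$. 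Specializing to $\chi=-\mathrm{Ric}(\Omega)$ gives the formula for $\bfR$, and the formula for $\cJ$ then follows from $\cJ=\bfR+\ud{S}\bfE$.

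\textbf{Step 3 (Passage from smooth to locally bounded $\Phi$).} When $\Phi$ is only locally bounded, approximate it by a decreasing sequence $\Phi_j$ of smooth $S^1$-invariant positively curved Hermitian metrics on $\mcL$. The associated Archimedean functionals in $\{\bfE,\Lam,\bfJ,\bfI,\cJ,\bfR\}$ are continuous under $d_1$-convergence (combine \eqref{eq-E2chi}, the continuity of $\bfE^\chi$ from \cite[Lemma 5.23]{DR17}, and the monotonicity of $\bfE$), while the right-hand side $\bfF^\NA(\phi)$ only depends on the equivalence class of $(\mcX,\mcL)$. Passing to the limit in Step~1--2 yields the identity for locally bounded $\Phi$.

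\textbf{Step 4 (Entropy $\bfH$ and Mabuchi $\bfM$).} For these we require $\Phi$ smooth so that $(\ddc\vphi(s))^n$ has a smooth density with respect to $\Omega$. Writing Chen--Tian's decomposition \eqref{eq-Mabuchi}, it suffices to compute $\bfH'^\infty(\Phi)$. Using the smooth positive metric on $\mcL$ one expresses $\int_X\log\tfrac{(\ddc\vphi(s))^n}{\Omega}(\ddc\vphi(s))^n$ fiberwise and integrates by parts on $\bar{\mcX}$: the Archimedean fiber-integral transforms into the intersection of $\log|s_{\mcX_{0,\red}}|^2$ (accounting for the discrepancy along the reduced central fiber) with $(\ddc\bar{\Phi})^{n}$, producing the relative log canonical term $K^{\log}_{\bar{\mcX}/\bP^1}\cdot\bar{\mcL}^{\cdot n}$ as in \cite[Theorem 3.6]{BHJ19}. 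Combined with Steps~1--2 this yields $\bfM'^\infty(\Phi)=\bfM^\NA(\phi)$.

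\textbf{Main obstacle.} The most delicate step is Step~4: the slope formula for $\bfH$ requires handling the boundary behavior of $\log\tfrac{(\ddc\vphi(s))^n}{\Omega}$ near $\mcX_0$, since the reference metric on $\mcL$ is only continuous (not smooth) across the central fiber in general. This is why the statement restricts $\Phi$ to be smooth positively curved on $\mcL$, exactly as in the Boucksom--Hisamoto--Jonsson slope formula; the computation matches the generic log discrepancies $A_X(v_j)$ at the Shilov points $v_j$ coming from the components $E_j$ of $\mcX_0$.
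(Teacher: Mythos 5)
Your plan is correct and is essentially the same argument the paper relies on: the paper does not reprove this proposition but cites \cite{BHJ17, BHJ19} and remarks that the identity "can be proved using the Deligne pairing (see [BHJ19, Lemma 3.9] or [PRS08, Lemma 6])", and your fiber-integration over $\bP^1$ is exactly the explicit form of the Deligne-pairing change-of-metric computation, with the entropy/Mabuchi case handled via the $K^{\log}_{\bar{\mcX}/\bP^1}$ term as in BHJ. (Only trivial quibbles: with the paper's conventions there is no extra division by $V$, and in Step 3 the cleanest justification is that two locally bounded metrics on the same $\bar{\mcL}$ over the compact $\bar{\mcX}$ differ by a globally bounded function, hence have identical slopes.)
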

 As an application of Proposition \ref{prop-ARvsNA}, we get an estimate of the non-Archimedean entropy.
First recall Tian's $\alpha$-invariant:
\begin{eqnarray*}
\alpha(X, L)&=&\sup\left\{\alpha>0; \int_X e^{\alpha\left(\sup(\vphi-\psi)-(\vphi-\psi)\right)}\omega^n<C<+\infty\quad \forall \vphi\in \PSH(L)\right\}\\
&=&\inf\; \left\{ \lct(X, D); m D\in |mL| \text{ for some } m\in \bN\right\}.
\end{eqnarray*}
The following lemma is well-known.
\begin{lem}[{\cite[Lemma 3.17]{Der16}, \cite[Theorem 1.6]{DK19}, \cite[Proposition 9.16]{BHJ17}}, \cite{Oda12}]\label{lem-J2Kst}
We have the inequality $\bfH^\NA(\phi)\ge \alpha(X, L) \bfI^\NA(\phi)$ for any $\phi\in \mcH^\NA(L)$. As a consequence,
if $(X, L)$ is $\cJ^{K_X}$-semistable then $(X, L)$ is uniformly K-stable.
\end{lem}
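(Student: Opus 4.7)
The plan is to reduce the non-Archimedean inequality to an Archimedean one via the slope formulas of Proposition \ref{prop-ARvsNA}. Given $\phi = \phi_{(\mcX,\mcL)} \in \mcH^\NA(L)$, I would pick any smooth $S^1$-invariant positively curved Hermitian metric $\Phi$ on $\mcL$ and consider the associated smooth subgeodesic ray $\vphi(s) = \Phi|_{t = e^{-s/2}}$. By Proposition \ref{prop-ARvsNA}, both $\bfH'^\infty(\Phi) = \bfH^\NA(\phi)$ (since $\bfH \in \{\bfH,\bfM\}$ is covered when $\Phi$ is smoothly positively curved) and $\bfI'^\infty(\Phi) = \bfI^\NA(\phi)$ hold. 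Thus it suffices to establish an Archimedean lower bound $\bfH_{\omega^n}(\vphi(s)) \geq \alpha(X,L)\bfI(\psi,\vphi(s)) - C$ with $C$ independent of $s$, then divide by $s$ and pass to the limit.

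For the Archimedean bound, I would invoke the Legendre (Donsker--Varadhan) duality for relative entropy: for any $u \in C^0(X)$,
\[
\bfH_{\omega^n}(\vphi) \geq \qV \int_X u\, \MA(\vphi) - \log \qV \int_X e^{u}\, \omega^n.
\]
Plug in $u = \alpha(X,L)\bigl(\sup(\vphi-\psi) - (\vphi-\psi)\bigr)$. The definition of Tian's $\alpha$-invariant then gives $\int_X e^u \omega^n \leq C_0$ uniformly, so
\[
\bfH_{\omega^n}(\vphi) \geq \alpha(X,L)\left(\sup(\vphi-\psi) - \qV \int_X(\vphi-\psi)\MA(\vphi)\right) - C_1.
\]
Since $\Lam_\psi(\vphi) = \qV\int (\vphi-\psi)\omega^n \leq \sup(\vphi-\psi)$ and $\bfI(\psi,\vphi) = \Lam_\psi(\vphi) - \qV\int(\vphi-\psi)\MA(\vphi)$, the bracketed quantity dominates $\bfI(\psi,\vphi)$, so $\bfH_{\omega^n}(\vphi) \geq \alpha(X,L)\,\bfI(\psi,\vphi) - C_1$. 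Applying this to $\vphi = \vphi(s)$, dividing by $s$, and sending $s \to +\infty$, the constant $C_1$ is washed out and (using that $\bfH_{\omega^n}$ and $\bfH_{\Omega}$ differ only by a bounded term independent of $\vphi$) one obtains
\[
\bfH^\NA(\phi) \;\geq\; \alpha(X,L)\,\bfI^\NA(\phi).
\]

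For the consequence, assume $(X,L)$ is $\mcJ^{K_X}$-semistable, so $\cJ^\NA(\phi) = \bfR^\NA(\phi) + \udS \bfE^\NA(\phi) \geq 0$ for every $\phi \in \mcH^\NA(L)$. Combining with the just-proved inequality and with $\bfI \geq \tfrac{n+1}{n}\bfJ$ (which follows from \eqref{eq-IJineq}):
\[
\bfM^\NA(\phi) = \bfH^\NA(\phi) + \cJ^\NA(\phi) \;\geq\; \bfH^\NA(\phi) \;\geq\; \alpha(X,L)\,\bfI^\NA(\phi) \;\geq\; \alpha(X,L)\,\bfJ^\NA(\phi),
\]
which is exactly uniform K-stability with constant $\gamma = \alpha(X,L)>0$.

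The main step that requires care is matching the Archimedean slopes to the non-Archimedean functionals when $\Phi$ is only a smooth subgeodesic ray (not the maximal geodesic ray), but this is precisely what Proposition \ref{prop-ARvsNA} provides for $\bfH$, $\bfI$, and the auxiliary functionals appearing in the computation; every other ingredient (Jensen/Legendre duality, the $\alpha$-invariant estimate, and the elementary inequality $\Lam_\psi(\vphi) \leq \sup(\vphi-\psi)$) is textbook.
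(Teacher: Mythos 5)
Your proof is correct and follows essentially the same route as the paper: Jensen's inequality combined with Tian's $\alpha$-invariant gives the Archimedean bound $\bfH(\vphi)\ge \alpha\,\bfI(\psi,\vphi)-C$ for a smooth positively curved metric on $\mcL$, and taking slopes via Proposition \ref{prop-ARvsNA} transfers this to the non-Archimedean side, after which the consequence follows from $\bfM^\NA=\bfH^\NA+\cJ^\NA$ and \eqref{eq-IJineq} exactly as you write. The only point to tidy up is that, since $\alpha(X,L)$ is defined as a supremum, the uniform integrability bound is only guaranteed for exponents $\alpha-\epsilon$ with $\epsilon>0$, so the argument should be run with $\alpha-\epsilon$ and one lets $\epsilon\to 0$ at the very end, as the paper does.
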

\begin{proof}
For any test configuration $(\mcX, \mcL)$, let $\Phi=\{\vphi(s)\}$ be a smooth positively curved Hermitian metric on $\mcL$. 
Following \cite{Tia00}, for any $\epsilon>0$ by Jensen's inequality we get the inequality:
\begin{eqnarray*}
\int_X \log\frac{(\ddc\vphi)^n}{(\ddc\psi)^n}(\ddc\vphi)^n&\ge& (\alpha-\epsilon) \left(\sup(\vphi-\psi)-\qV \int_X (\vphi-\psi)(\ddc\vphi)^n\right)-C\\
&\ge& (\alpha-\epsilon)\left( \qV \int_X (\vphi-\psi)((\ddc\psi)^n-(\ddc\vphi)^n)\right)-C \\
&=& (\alpha-\epsilon) \bfI(\psi,\vphi)-C.
\end{eqnarray*}
Taking slopes on both sides and using Proposition \ref{prop-ARvsNA} we get $\bfH^\NA(\phi)\ge (\alpha-\epsilon) \bfI^\NA(\phi)$. Letting $\epsilon\rightarrow 0$, we get the conclusion.
\end{proof}

The identity \eqref{eq-slope1} can be proved using the Deligne pairing (see \cite[Lemma 3.9]{BHJ19} or \cite[Lemma 6]{PRS08}). The same method of proof gives the following result.
\begin{prop}\label{prop-Echislope2}
Consider the data:
\begin{enumerate}
\item[(1)] 
Let $(\mcX, \mcL)$ be a test configuration of $(X, L)$ and $\Phi=\{\vphi(s)\}$ be a locally bounded $S^1$-invariant Hermitian metric on $\mcL$. 
\item[(2)]  $\pi_{\mcY}: \mcY\rightarrow \bC$ is a model of $X$ (see Definition \ref{def-TC}). Let $\mcQ$ be a $\bC^*$-equivariant $\bQ$-line bundle over $\mcY$, and $\Psi_\mcQ=\{\psi_\mcQ(s)\}$ be a smooth $S^1$-invariant Hermitian metric on $\mcQ$. 
\end{enumerate}
Then we have the slope formula
\begin{equation}\label{eq-EchiAR2NA}
\lim_{s\rightarrow+\infty} \frac{\bfE^{\ddc\psi_\mcQ (s)}_\psi(\vphi(s))}{s}=(\bfE^\cQ)^\NA(\phi_{(\mcX, \mcL)})
\end{equation}
where $(\bfE^\cQ)^\NA$ was defined in \eqref{eq-EcQNA} or \eqref{eq-intEcQNA}. In particular, we have:
\begin{equation}
\lim_{s\rightarrow+\infty} \frac{\bfE^{-Ric(\omega)}(\vphi(s))}{s}=\qV {K^{\log}_{X_{\bP^1}/\bP^1}}\cdot \bar{\mcL}^n=\bfR^\NA(\phi_{(\mcX, \mcL)}).
\end{equation}
\end{prop}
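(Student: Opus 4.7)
The plan is to extend the Deligne-pairing proof of \cite[Lemma 3.9]{BHJ19}, which establishes the slope identity \eqref{eq-slope1} for $\bfF \in \{\bfE, \Lam, \bfJ, \bfI, \cJ, \bfR\}$, to the twisted setting. By bilinearity in $(\mcQ, \Psi_\mcQ)$ and the decomposition $\mcQ = \mcQ_1 - \mcQ_2$ as a difference of $\pi_\mcY$-semiample $\bQ$-line bundles, I may assume that $\mcQ$ is relatively semiample. I would then choose a $\bC^*$-equivariant smooth projective model $\mcZ \to \bP^1$ dominating both $\bar{\mcX}$ and $\bar{\mcY}$, and pull back $\bar{\mcL}$, $\bar{\mcQ}$, $L_{\bP^1}$ together with their metrics to $\mcZ$; the left-hand side of \eqref{eq-EchiAR2NA} is unaffected because the fiberwise integrals are computed on $X$, and the right-hand side is unaffected by the projection formula for intersection products.

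Next I would introduce the virtual Hermitian $\bQ$-line bundle on $\bP^1$
\[
\mcD := \la \bar{\mcQ}, \bar{\mcL}^{\otimes n}\ra_{\mcZ/\bP^1} \otimes \la \bar{\mcQ}, L_{\bP^1}^{\otimes n}\ra^{\vee}_{\mcZ/\bP^1},
\]
whose degree equals $\bar{\mcQ}\cdot(\bar{\mcL}^n - L_{\bP^1}^n) = V\cdot (\bfE^\cQ)^\NA(\phi_{(\mcX,\mcL)})$ by \eqref{eq-intEcQNA}. The $\bC^*$-equivariant trivialization identifies $\bar{\mcL}$ with $L_{\bP^1}$ over $\mcZ\setminus\mcZ_0$, yielding a canonical non-vanishing section $\sigma$ of $\mcD$ on $\bP^1\setminus\{0\}$, and the Deligne metric on $\mcD$ induced by $\Psi_\mcQ$, $\Phi$, $\psi$ is smooth there. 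Applying the Bott-Chern anomaly formula for Deligne pairings iteratively (interpolating one factor at a time between $\psi$ and $\Phi$ in the $n$ entries of $\la \bar{\mcQ}, -, \ldots, -\ra$) gives, for $t\in \bP^1\setminus\{0\}$ and $s = -\log|t|^2$,
\[
-\log\|\sigma(t)\|^2_\mcD = \sum_{k=0}^{n-1}\int_{X}(\vphi(s) - \psi)\, \ddc\psi_\mcQ(s)\wedge (\ddc\vphi(s))^k\wedge (\ddc\psi)^{n-1-k} = V\cdot \bfE^{\ddc\psi_\mcQ(s)}_\psi(\vphi(s)).
\]

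For the slope identification, because $\Phi$ is locally bounded on $\bar{\mcL}$ and $\Psi_\mcQ$, $\psi$ are smooth, the metric on $\mcD$ extends to a locally bounded metric on $\bP^1$, so $\|\sigma(t)\|^2_\mcD$ is comparable to $|t|^{2\deg_{\bP^1}\mcD}$ up to a bounded factor near $t=0$. Hence
\[
\lim_{s\to+\infty}\frac{-\log\|\sigma(e^{-s/2})\|^2_\mcD}{s} = \deg_{\bP^1}\mcD = V\cdot (\bfE^\cQ)^\NA(\phi_{(\mcX,\mcL)}),
\]
which combined with the previous display and division by $V$ yields \eqref{eq-EchiAR2NA}. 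The second assertion follows by specializing to $(\mcY, \mcQ) = (X_\bC, p_1^*K_X)$ with a smooth $S^1$-invariant metric whose fiberwise curvature equals $-\mathrm{Ric}(\omega)$, together with \eqref{eq-RNA2}.

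The main technical obstacle is justifying both the Bott-Chern anomaly formula and the slope-equals-degree step when $\Phi$ is only locally bounded rather than smooth. For smooth $\Phi$, all computations are classical and the argument runs parallel to \cite[Lemma 3.9]{BHJ19}. In the locally bounded case, the mixed Monge-Amp\`{e}re products $(\ddc\vphi(s))^k\wedge (\ddc\psi)^{n-1-k}$ must be interpreted in the sense of Bedford-Taylor, and I would approximate $\Phi$ by a decreasing sequence of smooth $S^1$-invariant positively curved metrics $\Phi_j\searrow \Phi$ via multiplier approximation (cf.\ Theorem \ref{thm-BBJ}). Continuity of both sides of \eqref{eq-EchiAR2NA} under such approximation --- the right side by continuity of intersection numbers under pullback, and the left side by Bedford-Taylor continuity of mixed Monge-Amp\`{e}re products along decreasing sequences combined with the quantitative estimate \eqref{eq-BBJest} --- then upgrades the smooth case to the locally bounded case.
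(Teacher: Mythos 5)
Your proposal is correct and follows essentially the same route as the paper, which proves this proposition by interpreting $\bfE^{\ddc\psi_\mcQ(s)}_\psi(\vphi(s))$ as (minus the log of) the norm of the canonical section of the Deligne pairing $\la \mcQ, \mcL,\dots,\mcL\ra\otimes\la\mcQ,\pi^*L,\dots,\pi^*L\ra^{-1}$ over $\bP^1$ and reading off the slope as its degree, exactly as in \cite[Lemma 3.9]{BHJ19} and \cite[Lemma 6]{PRS08}. Your write-up simply supplies the details (Bott--Chern telescoping, degree computation, reduction to semiample $\mcQ$, and the bounded-perturbation argument for merely locally bounded $\Phi$) that the paper leaves implicit; the stray factor of $V$ you carry through cancels and does not affect the conclusion.
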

Indeed, this follows from the fact that
$\bfE^{\ddc\psi_\mcQ(s)}(\vphi(s))$ can be considered as an Hermitian metric on the tensor product of line bundles obtained by using Deligne pairings:
\begin{equation}
\la \mcQ, \overbrace{\mcL, \cdots, \mcL}^{n}\ra\otimes \la \mcQ, \overbrace{\pi^*L, \cdots, \pi^*L}^n\ra^{-1}.
\end{equation}

The following lemma says that for any maximal geodesic ray $\Phi$, the identity \eqref{eq-slope1} holds true for the $\Lam$ functional.
\begin{lem}\label{lem-energyslope}
For any $\phi\in \cE^{1,\NA}(L)$, let $\Phi=\{\vphi(s)\}$ be the associated maximal geodesic ray. Then we have the identity:
\begin{equation}\label{eq-Lamslope}
\Lam^\NA(\phi)=\sup(\phi-\phi_{\triv})=\lim_{s\rightarrow+\infty} \frac{\sup(\vphi(s)-\psi)}{s}=\Lam'^\infty(\Phi).
\end{equation}
As a consequence we have the identity:
\begin{equation}
\bfJ^\NA(\phi)=\bfJ'^\infty(\Phi).
\end{equation}

\end{lem}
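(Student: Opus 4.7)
The plan is to handle the three equalities in the claimed chain
\[
\Lam^\NA(\phi)=\sup(\phi-\phi_\triv)=\lim_{s\to+\infty}\sup_X(\vphi(s)-\psi)/s=\Lam'^\infty(\Phi)
\]
one at a time. For $\Lam^\NA(\phi)=\sup(\phi-\phi_\triv)$, I would apply Theorem \ref{thm-BoJ}(i) to the trivial test configuration $(X_\bC,L_\bC)$, whose central fiber is $X\times\{0\}$ with multiplicity one and relevant self-intersection $L^n=V$: this forces $\MA^\NA(\phi_\triv)$ to be concentrated at the trivial valuation $v_\triv$, and hence $\Lam^\NA(\phi)=(\phi-\phi_\triv)(v_\triv)$. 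The identification with the supremum is the standard non-Archimedean maximum principle, which one checks first on $\mcH^\NA(L)$ from the explicit formula \eqref{eq-phiH} and then extends to $\PSH^\NA(L)$ via decreasing regularization (Definition \ref{def-psh}). For the middle equality, since $\Phi$ is a geodesic ray, \cite[Proposition 1.10]{BBJ18}, quoted in the Multiplier Approximation paragraph, gives the exact identity $\sup_X(\vphi(s)-\psi)=c\cdot s$ for all $s\ge 0$, and the definition of $\Phi_\NA$ as a (signed) generic Lelong number identifies $c$ with $\Phi_\NA(v_\triv)=(\phi-\phi_\triv)(v_\triv)$.

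The substantial identity is $\Lam'^\infty(\Phi)=\Lam^\NA(\phi)$. The strategy is approximation from above. Apply the Multiplier Approximation to $\Phi=\rho^\phi$ to produce a decreasing sequence $\Phi_m\searrow\Phi$ of maximal geodesic rays of semi-ample test configurations with $\phi_m:=\Phi_{m,\NA}\searrow\phi$ in the strong topology on $\cE^{1,\NA}(L)$. At the test-configuration level, Proposition \ref{prop-ARvsNA} gives the slope formula $\Lam'^\infty(\Phi_m)=\Lam^\NA(\phi_m)$, and the strong continuity of Proposition \ref{prop-strongcont} gives $\Lam^\NA(\phi_m)\to\Lam^\NA(\phi)$. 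The identity thus reduces to interchanging the double limit $\lim_{m}\lim_{s}\Lam(\vphi_m(s))/s$.

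The main obstacle is the uniform-in-$s$ estimate of the difference $\Lam(\vphi_m(s))-\Lam(\vphi(s))$. I would exploit the maximality of both rays: Theorem \ref{thm-BBJ}(3) gives $\bfE$ linear along each, so $\bfE(\vphi_m(s))-\bfE(\vphi(s))=s\epsilon_m$ with $\epsilon_m:=\bfE^\NA(\phi_m)-\bfE^\NA(\phi)\to 0$; together with $\vphi_m(s)\ge\vphi(s)$, inequality \eqref{eq-Isimple2} then yields $\bfI(\vphi(s),\vphi_m(s))\le (n+1)s\epsilon_m$. Decomposing
\[
\Lam(\vphi_m(s))-\Lam(\vphi(s))=\qV\int_X(\vphi_m(s)-\vphi(s))\bigl[(\ddc\psi)^n-(\ddc\vphi(s))^n\bigr]+\qV\int_X(\vphi_m(s)-\vphi(s))(\ddc\vphi(s))^n,
\]
I would bound the second integral again by \eqref{eq-Isimple2} and the first by the BBJ inequality \eqref{eq-BBJest} with $M=O(s)$ along rays of linear growth. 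A short power-counting in the exponents $1/2^n$ and $1-1/2^{n-1}$ produces an overall factor of $s$, giving $|\Lam(\vphi_m(s))-\Lam(\vphi(s))|\le C_n\,s\,\epsilon_m^{1/2^n}$. Dividing by $s$ yields a bound uniform in $s$, and letting $s\to+\infty$ and then $m\to+\infty$ produces $\Lam'^\infty(\Phi)=\Lam^\NA(\phi)$. Finally, the corollary $\bfJ'^\infty(\Phi)=\bfJ^\NA(\phi)$ is immediate from $\bfJ=\Lam-\bfE$, $\bfJ^\NA=\Lam^\NA-\bfE^\NA$, and the maximality identity $\bfE'^\infty(\Phi)=\bfE^\NA(\phi)$ of Theorem \ref{thm-BBJ}(3).
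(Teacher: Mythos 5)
Your proposal is correct, and its overall skeleton (reduce to $\mcH^\NA$ via Multiplier Approximation, use the known slope formula for test configurations, use strong continuity of $\Lam^\NA$, then interchange the two limits) matches the paper's. Where you genuinely diverge is in how the limit interchange is justified. The paper's proof is soft: it observes that $s\mapsto\sup_X(\vphi_m(s)-\psi)$ is \emph{exactly affine} along each geodesic ray (by \cite[Proposition 1.10]{BBJ18}), that a decreasing sequence of linear functions converges to a linear function whose slope is the limit of the slopes, and that $\sup(\phi_m-\phi_\triv)\to\sup(\phi-\phi_\triv)$ by \cite[Theorem 7.10]{BFJ16}; since $\Lam(\vphi)$ differs from $\sup(\vphi-\psi)$ by a bounded quantity, no quantitative estimate is needed at all. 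You instead run the full $\bfI$-machinery: linearity of $\bfE$ along maximal rays to get $\bfI(\vphi(s),\vphi_m(s))\le (n+1)s\epsilon_m$ via \eqref{eq-Isimple2}, then the decomposition of $\Lam(\vphi_m(s))-\Lam(\vphi(s))$ and the estimate \eqref{eq-BBJest} with $M=O(s)$, whose power counting indeed yields $C\,\epsilon_m^{1/2^n}s$. This is precisely the argument the paper reserves for the harder Theorem \ref{thm-Echislope} (convergence of twisted Monge--Amp\`ere slopes), specialized to $\chi=\ddc\psi$; so your route is heavier than necessary for $\Lam$ but has the virtue of being the general-purpose method that also handles $\bfE^\chi$, whereas the paper's shortcut exploits the special feature that $\Lam$ is controlled, up to a universal constant, by the sup, which is affine along rays. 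Your treatments of the two ``easy'' equalities (concentration of $\MA^\NA(\phi_\triv)$ at $v_\triv$ plus the maximum principle, and the identification of the sup-slope $c$ with $(\phi-\phi_\triv)(v_\triv)$ via generic Lelong numbers) are correct, though the latter is stated somewhat informally; the paper sidesteps this by deducing it from the test-configuration case in the limit.
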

\begin{proof}
If $\phi\in \mcH^\NA(L)$, \eqref{eq-Lamslope} holds true by the work \cite{BHJ19, BBJ18}. For general $\phi$, we can use Multiplier Approximation to find $\{\phi_m\}\subset \mcH^\NA(\phi)$, which decreases to $\phi$ and the corresponding $\Phi_m$ decreases to $\Phi$. As $m\rightarrow+\infty$, $\sup(\phi_m-\phi_{\triv})$ converges to $\sup(\phi-\phi_{\triv})$ by \cite[Theorem 7.10]{BFJ16}. Moreover $\sup(\vphi_m(s)-\psi)$ is affine and converges to $\sup(\vphi-\psi)$ as $m\rightarrow+\infty$. So \eqref{eq-Lamslope} holds true for $\phi$ by letting $m\rightarrow+\infty$
\end{proof}
We will prove in Theorem \ref{thm-grslope}.2 that identity \eqref{eq-slope1} holds true for $\bfF\in\{\bfH, \bfM\}$ if $\Phi$ is the (maximal) geodesic ray associated to any test configuration. We summarize the results about the identity $\bfF'^\infty=\bfF^\NA$ for various functionals in the following table.
\begin{center}
\begin{tabular}{ |l|l|l| } 
 \hline
Truth of & \multirow{2}{*}{Metric $\Phi$ on $\mcL\rightarrow \mcX$} & Maximal Geodesic Ray \\
$\bfF'^\infty=\bfF^\NA$ & & (including destabilizing geodesic rays) \\
  \hline
$\bfE, \Lam, \bfJ$ & locally bounded (Proposition \ref{prop-ARvsNA}) & Theorem \ref{thm-BBJ}, Lemma \ref{lem-energyslope}  \\
\hline
$\bfE^\chi$, $\bfI$ & locally bounded (Proposition \ref{prop-Echislope2}) & Theorem \ref{thm-Echislope} \\ 
\hline 
\multirow{2}{*}{$\bfH$, $\bfM$} &  smooth positive metric (\cite{Tia97, Tia14}, \cite{BHJ19}) & $``\ge"$ (Theorem \ref{thm-grslope}.1) \\ 
& associated geodesic ray (Theorem \ref{thm-grslope}.2) &  $``\le "$ (open, Conjecture \ref{conj-Hslope})   \\ 
 \hline
\end{tabular}
\end{center}

\vskip 2mm

\section{Geodesic rays with finite Mabuchi slopes}\label{sec-maximal}
Let $\Omega$ be a fixed smooth volume form on $X$ satisfying $\int_X \Omega=\int_X (\ddc\vphi)^n= V$. Denote by $\bB=\{t\in \bC; |t|\le 1\}$ the unit disc in $\bC$. We will interchangeably use the variable $t$ to denote coordinate on $\bC$ and the variable $s=-\log|t|^2\in [0, +\infty)$ for any $t\in \bB\setminus \{0\}$. 

The following equisingular lemma is similar to \cite[Lemma 2]{Dem15}.
\begin{lem}\label{lem-integrable}
Let $\Phi: \bR_{\ge 0}\rightarrow \cE^1$ be a finite energy geodesic ray. Let $\hat{\Phi}=\rho^{\Phi_\NA}$ be the associated maximal geodesic ray.  Then 
 for any $\alpha>0$, we have:
\begin{equation}\label{eq-intfinite}
\int_{X\times\bB}e^{\alpha(\hat{\Phi}-{\Phi})} \Omega\wedge \sqrt{-1}dt\wedge d\bar{t}<+\infty. 
\end{equation}
\end{lem}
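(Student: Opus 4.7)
The plan is to use the Multiplier Approximation recalled after Theorem \ref{thm-BBJ} to dominate $\hat\Phi$ by a sequence of $p_1^*L$-psh metrics with explicit analytic singularities, and then exploit the defining property of multiplier ideals via a H\"older-type integrability estimate in the spirit of Demailly's approximation argument in \cite{Dem15}. I view $\Phi$ and $\hat\Phi = \rho^{\Phi_\NA}$ as $S^1$-invariant singular positively curved metrics on $p_1^* L \to X \times \bB$, and, after the standard replacement $\Phi \mapsto \Phi + c\log|t|^2$, assume $\sup(\vphi(s) - \psi) = 0$. Applying Multiplier Approximation to $\Phi$ produces a decreasing sequence $\check\Phi_m$ of $p_1^*L$-psh metrics coming from semi-ample test configurations $(\mcX_m, \mcL_m)$, where $\mu_m \colon \mcX_m \to X \times \bC$ is the normalized blowup along $\mcJ(2^m \Phi)$, $\mcL_m = \mu_m^* p_1^* L - (2^m+m_0)^{-1} E_m$, and $\check\Phi_m \searrow \hat\Phi$. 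Since $\check\Phi_m \geq \hat\Phi$, it suffices to produce a single $m = m(\alpha)$ with $\int_{X \times \bB} e^{\alpha(\check\Phi_m - \Phi)}\,\Omega \wedge \sqrt{-1}\, dt \wedge d\bar t < \infty$.

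The key local estimate is that the Phong--Sturm ray associated to the semi-ample $(\mcX_m, \mcL_m)$ satisfies
\[
\check\Phi_m \leq \tfrac{1}{2(2^m+m_0)}\log\Big(\sum_j |h_{m,j}|^2\Big) + O(1)
\]
on any coordinate chart of $X \times \bB$, where $\{h_{m,j}\}$ is a local system of generators of $\mcJ(2^m \Phi)$. Setting $a = \alpha/(2(2^m+m_0))$ and $\beta_m = \alpha - 2a\cdot 2^m = \alpha m_0/(2^m+m_0)$, this yields the pointwise bound
\[
e^{\alpha(\check\Phi_m - \Phi)} \leq C \left[\Big(\sum_j |h_{m,j}|^2\Big) e^{-2^{m+1}\Phi}\right]^{a} e^{-\beta_m \Phi}.
\]
I would then apply H\"older's inequality with conjugate exponents $1/a$ and $1/(1-a)$: the integral of the first factor raised to the power $1/a$ is finite by the very definition of $\mcJ(2^m\Phi)$, so local integrability reduces to that of $e^{-\beta_m \Phi/(1-a)}$.

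To close the argument, I rely on uniform control of the log canonical threshold. Since $\Phi$ is a finite-valued psh metric on the compact set $X \times \bB$, its Lelong numbers are bounded above (by upper semicontinuity plus the standard covering argument for USC functions on compact sets); Skoda's integrability theorem then provides a uniform positive lower bound $c_0 > 0$ for $\lct(\Phi, \cdot)$ on $X\times \bB$. Because $\beta_m/(1-a) \to 0$ as $m \to \infty$, one can pick $m$ large enough so that $\beta_m/(1-a) < c_0$; then $e^{-\beta_m \Phi/(1-a)}$ is locally integrable with uniform bounds, and a partition-of-unity argument over a finite cover of $X \times \bB$ globalizes the estimate.

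The main obstacle is the careful normalization in the H\"older step so that the residual exponent $\beta_m/(1-a)$ decays to zero in $m$ while the multiplier-ideal factor carries the entire singular part. A secondary technical point is the uniformity of the local $O(1)$ constants in the upper bound for $\check\Phi_m$ across a finite cover of $X\times \bB$; this is handled by the smoothness of the chosen metric on the semi-ample bundle $\mcL_m$ together with the explicit description of the blowup $\mu_m$. The overall philosophy mirrors \cite[Lemma 2]{Dem15}: an equisingular approximation trading a tiny loss in singularity for uniform integrability of exponentials.
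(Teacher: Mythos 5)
Your proposal is correct and follows essentially the same route as the paper's proof: Multiplier Approximation of $\Phi$, the domination $\check{\Phi}_m\ge\hat{\Phi}$, the local comparison of $\check{\Phi}_m$ with $\frac{1}{2^m+m_0}\log\sum_j|h_{m,j}|^2$ for generators of $\mcJ(2^m\Phi)$, and the defining integrability of the multiplier ideal. The only divergence is the endgame: the paper sidesteps your H\"{o}lder--Skoda step by taking $\alpha=m$, so the exponents match exactly and the leftover factor is $e^{-m_0\Psi_{\triv}}$ with $\Psi_{\triv}=p_1^*\psi$ smooth (general $\alpha$ then follows from $e^{\alpha(\hat{\Phi}-\Phi)}\le e^{m(\hat{\Phi}-\Phi)}$ for $m\ge\alpha$ since $\hat{\Phi}\ge\Phi$), whereas your fixed-$\alpha$ bookkeeping leaves the residual exponent $\beta_m$, which you correctly, if somewhat more laboriously, absorb via H\"{o}lder together with the uniform Lelong-number/lct bound on the compact set $X\times\bB$.
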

\begin{proof}
By \cite[Proposition 1.10]{BBJ18}, $s\mapsto \sup(\vphi(s)-\psi)$ is affine. So by subtracting an affine function, we can assume $\sup(\vphi(s)-\psi)=0$. We then carry out Multiplier Approximation (see the end of section \ref{sec-rays}) to get test configurations $(\mcX_m, \mcL_m)$. Let $\Phi_m$ be the geodesic ray associated to $(\mcX_m, \mcL_m)$.  
Locally the singularity of $\Phi_m$ is comparable to $\frac{1}{m+m_0}\log\sum_i |f_i|^2$ where $\{f_i\}$ are generators of $\mcJ(m\Phi)$.
By Demailly's regularization theorem which depends on the Ohsawa-Takegoshi extension theorem, locally $(m+m_0)\Phi_m$ is less singular than $m \Phi$. 
Let $\Psi_\triv=p_1^*\psi$ be the trivial geodesic ray. Then there exists $C=C_m$ such that:
\begin{equation}
(m+m_0)\Phi_m\ge m \Phi+m_0 \Psi_{\triv}-C.
\end{equation}
Because $\Phi_\NA=\hat{\Phi}_\NA$, we have $\mcJ(m\Phi)=\mcJ(m\hat{\Phi})$ by the valuative description of multiplier ideal sheaves (\cite[Theorem 5.5]{BFJ08}). So similarly we can assume that $$(m+m_0)\Phi_m\ge m \hat{\Phi}+m_0\Psi_{\triv}-C.$$
Then we have:
\begin{eqnarray*}
e^{\alpha(\hat{\Phi}-{\Phi})}&=&e^{\alpha(\hat{\Phi}+\frac{m_0}{m}\Psi_{\triv}-\frac{m+m_0}{m}\Phi_m)} e^{\alpha(\frac{m+m_0}{m}\Phi_m-\Phi-\frac{m_0}{m}\Psi_\triv)}\\
&\le& e^{\frac{\alpha C}{m}} e^{\alpha(\frac{m+m_0}{m}\Phi_m-\Phi-\frac{m_0}{m}\Psi_{\triv})}.
\end{eqnarray*}

When $\alpha=m$, we get:
\begin{equation}
e^{m(\hat{\Phi}-{\Phi})}\le e^{C} e^{(m+m_0)\Phi_m-m\Phi-m_0 \Psi_{\triv}}.
\end{equation}
This is integrable by the definition of multiplier ideal sheaf $\mcJ(m\Phi)$ (see \cite[Proof of Lemma 2]{Dem15}). Since $m$ can be arbitrarily big, we get the conclusion.
\end{proof}
\begin{rem}
As pointed out by Boucksom, this lemma is essentially a local result about psh functions and related questions have been extensively studied in \cite{BFJ08, FJ05}.
\end{rem}


\begin{proof}[Proof of Theorem \ref{thm-maximal}]
Recall that we have the formula:
$$\bfM(\vphi)=\bfM_\psi(\vphi)=\bfH_{\Omega}(\vphi)-\bfH_{\Omega}(\psi)+\bfE^{-Ric(\Omega)}_\psi(\vphi)+\ud{S}\bfE_\psi(\vphi).$$ 
By Lemma \ref{lem-Echislope}, we know that the slope
$(\bfE^{-Ric(\Omega)})'^\infty(\Phi)$ exists and is finite.
Moreover $\bfE'^\infty(\Phi)$ is also finite (since $\Phi$ has a linear growth). So we know that $\bfH'^\infty$ exists and is finite.
So we just need to show that if $\Phi$ is not maximal, then $\bfH'^\infty_\Omega(\Phi)$ is arbitrary large, which would contradict the assumption of finite Mabuchi slope. 

In general we have $\Phi\le \hat{\Phi}$. So $U=\hat{\Phi}-\Phi=\{u(s)=\hat{\vphi}(s)-\vphi(s)\}$ satisfies $U\ge 0$. By Lemma \ref{lem-integrable}, for any $\alpha>0$ we have:
\begin{equation}
\int_{X\times \bB} e^{\alpha U}\Omega\wedge \sqrt{-1} dt\wedge \bar{dt}<+\infty.
\end{equation}
By the $S^1$-invariance, we use the variable $s=-\log|t|^2$ to re-write the integral as
\begin{equation}
\int_0^{+\infty}\left(\int_X e^{\alpha u(s)}\Omega\right) e^{-s}ds<+\infty.
\end{equation}
So there exist $s_j\rightarrow+\infty$ such that
\begin{equation}
e^{-s_j}\int_X e^{\alpha u(s_j)}\Omega \rightarrow 0.
\end{equation}
So we can assume that 
\begin{equation}
\qV \int_X e^{\alpha u(s_j)}\Omega\le e^{s_j}.
\end{equation}
Re-write the above inequality as:
\begin{equation}
\qV \int_X e^{\alpha u(s_j)-\log \frac{(\ddc\vphi(s_j))^n}{\Omega}}(\ddc\vphi(s_j))^n\le e^{s_j}.
\end{equation}
By Jensen's inequality for the probability measure $V^{-1} (\ddc\vphi(s_j))^n$, we get:
\begin{eqnarray}\label{eq-jensen1}
&&\qV \int_X \log\frac{(\ddc\vphi(s_j))^n}{\Omega}(\ddc\vphi(s_j))^n\nonumber \\
&&\hskip 2cm \ge \alpha \cdot\left[\qV \int_{X}(\hat{\vphi}(s_j)-{\vphi}(s_j))(\ddc\vphi(s_j))^n\right]-s_j V. 
\end{eqnarray}
The above inequality is valid when $\ddc\vphi(s_j)$ is a smooth K\"{a}hler form. We claim that it is still true when $\vphi(s_j)\in \cE^1$. To see this, we first use the fact that the entropy of $\ddc\vphi(s_j)$ can be approximated by entropies of smooth K\"{a}hler forms (\cite[Lemma 3.1]{BDL17}). More precisely, there exist $\vphi_k\in \mcH(L)$ such that
\begin{equation}\label{eq-jensen1b}
\bfH_\Omega(\vphi_k)\rightarrow \bfH_\Omega(\vphi(s_j)), \quad \bfI(\vphi_k, \vphi(s_j))\rightarrow 0.
\end{equation} 
Then the same application of Jensen's inequality gives:
\begin{equation}
\bfH_\Omega(\vphi_k)\ge \alpha\left[\qV \int_X (\hat{\vphi}(s_j)-{\vphi}(s_j))(\ddc\vphi_k)^n\right]-s_j V.
\end{equation}
As $k\rightarrow+\infty$, the left hand side converges to $\bfH_\Omega(\vphi)$. By \cite[Proposition 5.6]{BBGZ13}, $\int_X u (\ddc\vphi_k)^n\rightarrow \int_X u (\ddc\vphi(s_j))^n$ uniformly with respect to $u\in \cE^1_C=\{\psi+u \in \cE^1; \bfE_\psi(\psi+u)\ge -C\}$ for fixed $C$. It is then easy to 
show that the right-hand-side of \eqref{eq-jensen1b} indeed converges to the right-hand-side of \eqref{eq-jensen1} (for fixed $s_j$). So the claim follows.

To continue the estimate of the right-hand-side of \eqref{eq-jensen1}, we use \eqref{eq-Jge0} to get:
\begin{eqnarray}\label{eq-ent2dist}
\bfH_\Omega(\vphi)
&\ge& \alpha\bfE_{\vphi(s_j)}(\hat{\vphi}(s_j))-s_j\nonumber \\
&=&\alpha\left(\bfE_{\psi}(\hat{\vphi}(s_j))-\bfE_{\psi}(\vphi(s_j))\right)-s_j V.
\end{eqnarray}
Now dividing both sides by $s_j$ and letting $s_j\rightarrow+\infty$, we get:
\begin{eqnarray*}
\bfH'^\infty(\Phi)\ge \alpha\cdot (\bfE'^\infty(\hat{\Phi})-\bfE'^\infty(\Phi))-V.
\end{eqnarray*}
If $\Phi$ is not maximal, then ${\Phi}\neq {\hat{\Phi}}$ but $\hat{\Phi}\ge \Phi$. Hence by Lemma \ref{lem-altern}, $\bfE'^\infty(\hat{\Phi})-\bfE'^\infty(\Phi)$ is positive. 
But $\alpha$ can be arbitrary large. This is impossible if $\bfH'^\infty(\Phi)$ is finite.
\end{proof}

\section{Convergence of twisted Monge-Amp\`{e}re slopes}\label{sec-twistMA}


Choose a sequence $\{\phi_m\}\subset \mcH^\NA(L)$ that decreases to $\phi$. 
By Theorem \ref{prop-strongcont}, $(\bfE^{Q_\bC})^\NA$ is continuous for decreasing converging sequences. So we have the identity:
\begin{equation}
\lim_{m\rightarrow+\infty} (\bfE^{Q_\bC})^\NA(\phi_m)=(\bfE^{Q_\bC})^\NA(\phi).
\end{equation}
For simplicity of notations, set $\chi=\ddc \psi_Q$. 
If $\Phi_m=\{\vphi_m(s)\}$ is the maximal geodesic ray associated to $\phi_m$, then by the identity \eqref{eq-EchiAR2NA}, we have:
\begin{equation}
 (\bfE^\chi)'^\infty(\Phi_m)=\lim_{s\rightarrow+\infty}\frac{\bfE^\chi(\vphi_m(s))}{s}=(\bfE^{Q_\bC})^\NA(\phi_m).
\end{equation}
Combining the above two identities, proving \eqref{eq-EQslope} reduces to proving:
\begin{equation}\label{eq-Echiconv}
\lim_{m\rightarrow+\infty}(\bfE^\chi)'^\infty(\Phi_m)=(\bfE^\chi)'^\infty(\Phi).
\end{equation}
By writing $\chi$ as $\chi_2-\chi_1$, we can assume that $\chi$ is K\"{a}hler so that $\bfE^\chi$ is monotone increasing. Because $\Phi_m\ge \Phi$, it is immediate that $(\bfE^\chi)'^\infty(\Phi_m)\ge (\bfE^\chi)'^\infty(\Phi)$. 
On the other hand, there exists a constant $C>0$ such that $\chi\le C (\ddc\psi)$. For simplicity of notation, we denote the K\"{a}hler form $\ddc\psi$ by $\omega$. By using the co-cycle condition \eqref{eq-cocycle} and $\Phi_m\ge \Phi$, we easily get:
\begin{equation}
0\le (\bfE^\chi)'^\infty(\Phi_m)-(\bfE^\chi)'^\infty(\Phi)\le C\left[ (\bfE^\omega)'^\infty(\Phi_m)-(\bfE^\omega)'^\infty(\Phi)\right].
\end{equation}
If the right-hand-side converges to 0 as $m\rightarrow +\infty$, so does the left hand side. 
So from now on we just assume that $\chi=\ddc\psi=\omega$. To prove the convergence of slopes, it suffices to show the following estimate: there exists a sequence $\delta_m\rightarrow 0$ as $m\rightarrow+\infty$ such that for any $s>0$,
\begin{equation}\label{eq-Ephimdecay}
\bfE^\omega_\psi(\vphi_m(s))\le \bfE^\omega_\psi(\vphi)+ \delta_m s.
\end{equation} 
In order to estimate the quantity:
\begin{eqnarray*}
&&\bfE^\omega_\psi(\vphi_m)-\bfE^\omega_\psi(\vphi)=\bfE^\omega_{\vphi}(\vphi_m)\\
&=&\qV \sum_{k=0}^{n-1} \int_X (\vphi_m-\vphi)(\ddc\psi)\wedge (\ddc\vphi_m)^k\wedge (\ddc\vphi)^{n-1-k},
\end{eqnarray*}
we will adapt the idea of proof of $d_1$-continuity for $\bfE^\omega$ in \cite[Lemma 5.23]{DR17} which in turn depends on the estimate from \cite[Lemma 5.8]{BBGZ13}. Here we will use directly the refinement of this estimate as proved in \cite[Appendix A]{BBJ18}, 
which allows us to use the simpler $\bfI$ distance to carry out our argument \footnote{In the 1st version the author used an original estimate from \cite{BBGZ13}, the simplification via \cite[Appendix A]{BBJ18} is suggested by S. Boucksom.}. 
First note that we have the identity: 
\begin{equation}\label{eq-d1phim}
\bfE(\vphi_m(s))-\bfE(\vphi(s))=(\bfE'^\infty(\Phi_m)-\bfE'^\infty(\Phi))s=:\epsilon_ms.
\end{equation}
Because $\Phi$ is maximal, the linear functions $\bfE(\vphi_m(s))$ decrease to the linear function $\bfE(\vphi(s))$ as $m\rightarrow+\infty$. So we get:
\begin{equation}
\lim_{m\rightarrow+\infty} \epsilon_m= 0.
\end{equation}


Now set $\tilde{\vphi}_m=\frac{1}{3}\left(\psi+\vphi_m+\vphi\right)$. Because $\vphi_m\ge \vphi$, we easily get:
\begin{eqnarray}\label{eq-Ephiphim}
\bfE^\omega_\vphi(\vphi_m)&\le& C_n \int_X (\vphi_m-\vphi)(\ddc\tilde{\vphi}_m)^n\nonumber \\
&=&C_n \int_X (\vphi_m-\vphi)((\ddc\tilde{\vphi}_m)^n-(\ddc\vphi)^n)+C_n\int_X (\vphi_m-\vphi)(\ddc\vphi)^n\nonumber \\
&=: &\mathfrak{A}+C_n \int_X (\vphi_m-\vphi)(\ddc\vphi)^n.
\end{eqnarray}
Because $\vphi_m\ge \vphi$, the second term is easy to estimate by using \eqref{eq-Isimple2} and \eqref{eq-d1phim}:
\begin{equation}\label{eq-2ndterm}
\bfI(\vphi_m, \vphi)\le \int_X (\vphi_m-\vphi)(\ddc\vphi)^n \le \bfE_\vphi(\vphi_m)\le \epsilon_m s.
\end{equation}
To estimate the first term, we use \eqref{eq-BBJest} to get:
\begin{equation}\label{eq-estA}
\mathfrak{A}\le C_n \bfI(\vphi, \vphi_m)^{\frac{1}{2^n}} \bfI(\vphi, \tilde{\vphi}_m)^{\frac{1}{2^n}} M^{1-\frac{1}{2^{n-1}}},
\end{equation}
where $M=\max\{\bfI(\psi, \vphi_m), \bfI(\psi, \vphi), \bfI(\psi, \tilde{\vphi}_m), \bfI(\psi, \vphi)\}$. Now we have the estimate:
\begin{eqnarray}
\bfI(\psi, \vphi)&\le& C \bfJ_\psi(\vphi)\le C\left(\sup(\vphi-\psi)-\bfE(\psi, \vphi)\right)\le Cs \label{eq-Ilinear}\\
\bfI(\psi, \vphi_m)&\le& C(\bfI(\psi, \vphi)+\bfI(\vphi, \vphi_m))\le C(Cs+\epsilon_m s)\le Cs \nonumber \\
\bfI(\psi, \tilde{\vphi}_m)&=& \bfI\left(\psi, \frac{1}{3}(\psi+\vphi_m+\vphi)\right) \le  C(\bfI(\psi, \vphi_m)+\bfI(\psi, \vphi))\le Cs \nonumber \\
\bfI(\vphi, \tilde{\vphi}_m)&\le& C (\bfI(\psi, \vphi)+\bfI(\psi, \tilde{\vphi}_m))\le C s. \nonumber
\end{eqnarray}
Here we used the inequality \eqref{eq-2ndterm}, quasi-triangle inequality \eqref{eq-Itri} and the quasi-convexity estimate \eqref{eq-Iconvex}.
Plugging these estimates into \eqref{eq-estA}, we get:
\begin{equation}\label{eq-estA2}
\mathfrak{A}\le C \epsilon_m^{\frac{1}{2^n}}s.
\end{equation}
By \eqref{eq-Ephiphim}, this together with \eqref{eq-2ndterm} verifies \eqref{eq-Ephimdecay} and hence finishes the proof of the wanted convergence \eqref{eq-Echiconv}.

By extending the above proof, one can prove the following result which will be used in the next section. 
\begin{thm}\label{thm-EcQslopeconv}
Assume $\phi\in \cE^{1,\NA}(L)$ and let $\Phi$ be the associated maximal geodesic ray emanating from $\psi$.  Let $\pi_{\mcY}: \mcY\rightarrow \bC$ is a model of $X$ (see Definition \ref{def-TC}). Let $\mcQ$ be a $\bC^*$-equivariant $\bQ$-line bundle over $\mcY$, and $\Psi_\mcQ=\{\psi_\mcQ(s)\}$ be a smooth $S^1$-invariant Hermitian metric on $\mcQ$. If $\{\phi_m\}\subset\mcH^\NA(L)$ is a sequence converging strongly to $\phi$ and $\Phi_m=\{\vphi_m(s)\}$ are the associated maximal geodesic rays, then we have
\begin{equation}\label{eq-EQslopeconv}
\lim_{s\rightarrow+\infty}\frac{\bfE^{\ddc\psi_\mcQ (s)}(\vphi(s))}{s}=\lim_{m\rightarrow+\infty}\lim_{s\rightarrow+\infty} \frac{\bfE^{\ddc\psi_\mcQ (s)}(\vphi_m(s))}{s}.
\end{equation}
Moreover we have the identity:
\begin{equation}\label{eq-f'inf}
\lim_{s\rightarrow+\infty} \frac{\bfE^{\ddc\psi_\mcQ (s)}(\vphi(s))}{s}=(\bfE^\cQ)^\NA(\phi),
\end{equation}

\end{thm}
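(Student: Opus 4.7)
The plan is to combine the slope formula of Proposition~\ref{prop-Echislope2} for smooth Hermitian metrics on test configurations with the strong-continuity argument of Theorem~\ref{thm-Echislope}. Writing $\mcQ=\mcQ_1-\mcQ_2$ with each $\mcQ_i$ being $\pi_\mcY$-semiample and using bilinearity of $\bfE^{\mcQ}$ and $(\bfE^\cQ)^\NA$ in $\mcQ$, one may assume $\mcQ$ is $\pi_\mcY$-semiample with $\Psi_\mcQ$ having semi-positive curvature along fibres. For each $\phi_m\in\mcH^\NA(L)$, Proposition~\ref{prop-Echislope2} yields
\[
\lim_{s\to+\infty}\frac{\bfE^{\ddc\psi_\mcQ(s)}(\vphi_m(s))}{s}=(\bfE^\cQ)^\NA(\phi_m),
\]
while Proposition~\ref{prop-strongcont} gives $(\bfE^\cQ)^\NA(\phi_m)\to(\bfE^\cQ)^\NA(\phi)$. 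Consequently, both \eqref{eq-EQslopeconv} and \eqref{eq-f'inf} reduce to the single identity
\[
\lim_{s\to+\infty}\frac{\bfE^{\ddc\psi_\mcQ(s)}(\vphi(s))}{s}=\lim_{m\to+\infty}(\bfE^\cQ)^\NA(\phi_m).
\]

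To deal with the $s$-dependence of $\chi(s):=\ddc\psi_\mcQ(s)$, one uses that $\Psi_\mcQ$ extends $S^1$-invariantly and smoothly to the compact space $\bar\mcY$. Via the equivariant isomorphism $(\mcY,\mcQ)\times_{\bC}\bC^*\cong (X,Q)\times\bC^*$, one can decompose $\psi_\mcQ(s)=s\,h+\rho(s)$ on $X$, where $h\in C^\infty(X)$ is the Hamiltonian (restricted to the central fibre and transported to $X$) of the generating vector field of the $\bC^*$-action and $\{\rho(s)\}_{s\ge 0}$ is uniformly bounded in $C^2$-norm. Thus $\chi(s)=s\,\ddc h+\ddc\rho(s)$ with $-C\omega\le\ddc\rho(s)\le C\omega$ for a reference K\"ahler form $\omega=\ddc\psi$ on $X$. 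Via the change-of-reference identity \eqref{eq-E2chi} and the cocycle property \eqref{eq-cocycle}, the required slope formula then splits into controlling $s^{-1}\bfE^{\ddc h}_{\vphi(s)}(\vphi_m(s))$ (the dominant $s\cdot\ddc h$ part) and $s^{-1}\int_X\rho(s)\bigl[(\ddc\vphi(s))^n-(\ddc\vphi_m(s))^n\bigr]$ (the $\ddc\rho(s)$ part). Both are bounded uniformly in $s\ge 1$ by applying the refined BBGZ/BBJ estimate \eqref{eq-BBJest} exactly as in the proof of Theorem~\ref{thm-Echislope}, with the midpoint $\tilde\vphi_m:=\tfrac{1}{3}(\psi+\vphi(s)+\vphi_m(s))$ mediating the comparison, and with $\ddc h$ (respectively $\rho(s)/s$) absorbing the role played by the fixed K\"ahler form there.

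The whole scheme then rests on the uniform decay
\[
\lim_{m\to+\infty}\sup_{s\ge 1}\frac{\bfI(\vphi_m(s),\vphi(s))}{s}=0,
\]
which is the principal new input. Strong convergence $\phi_m\to\phi$ in $\cE^{1,\NA}(L)$ gives $\bfE^\NA(\phi_m)\to\bfE^\NA(\phi)$ together with quasi-monomial pointwise convergence; combining this with Lemma~\ref{lem-energyslope} (so that $\Lam^\NA(\phi_m)\to\Lam^\NA(\phi)$) and the expression of $\bfI^\NA$ in terms of $\Lam^\NA$ and $\bfE^\NA$, one deduces $\bfI^\NA(\phi_m,\phi)\to 0$. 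Since $\Phi$ and $\Phi_m$ are maximal, the slope identity for $\bfI$ already established in Theorem~\ref{thm-Echislope} (applied to the pair of rays) translates this non-Archimedean decay into the desired uniform Archimedean bound.

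The main obstacle, in contrast to Theorem~\ref{thm-Echislope}, is the absence of monotonicity: strong convergence does not provide $\vphi_m(s)\ge\vphi(s)$ or any pointwise sign for the difference. Consequently the one-sided simplification \eqref{eq-2ndterm} is no longer available, and one must rely entirely on the symmetric BBGZ/BBJ estimate \eqref{eq-BBJest}, propagating the strong topology on $\cE^{1,\NA}(L)$ to uniform $\bfI$-control along the associated maximal geodesic rays. Once the above uniform $\bfI$-decay is in place, the argument closes in the same manner as the proof of Theorem~\ref{thm-Echislope}.
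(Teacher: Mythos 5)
Your overall strategy (use Proposition \ref{prop-Echislope2} plus Proposition \ref{prop-strongcont} to identify the right-hand side with $(\bfE^\cQ)^\NA(\phi)$, then adapt the argument of Theorem \ref{thm-Echislope}) matches the paper's, but two steps in your execution do not hold up. The central one is the decomposition $\psi_\mcQ(s)=s\,h+\rho(s)$ with $h\in C^\infty(X)$ and $\{\rho(s)\}$ uniformly bounded in $C^2$. This is false for a general (non-product) model $\mcY$: when $\mcY$ is, say, a blow-up of $X_\bC$ along a subvariety of the central fibre and $\mcQ$ involves the exceptional divisor, the restriction of a smooth metric on $\mcQ$ to the fibre $X_t\cong X$ degenerates in a spatially non-uniform way as $s\to+\infty$ (its asymptotics are encoded by the non-Archimedean metric $\phi_\mcQ$, which is typically not smooth), so $\ddc\psi_\mcQ(s)$ is not $s\,\ddc h+O(1)$ in any $C^0$ sense. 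Everything downstream of this decomposition collapses. The paper circumvents the issue differently: it takes an equivariant resolution $\mcY'$ carrying a relatively ample $\mcL'$ restricting to $L$ on the generic fibre, writes $\mcQ=(q^*\mcQ+\ell\mcL')-\ell\mcL'$ with both summands relatively ample and with $2\ell\mcL'-(q^*\mcQ+\ell\mcL')$ also ample (so the positive curvature form of $q^*\mcQ+\ell\mcL'$ is dominated by that of $2\ell\mcL'$), and then reruns the proof of Theorem \ref{thm-Echislope} with the fixed reference $\psi$ replaced by the $s$-dependent $\psi_{\mcL'}(s)\in\mcH(L)$; the only new ingredient needed is $\bfI(\psi_{\mcL'}(s),\vphi(s))\le C(\bfI(\psi,\psi_{\mcL'}(s))+\bfI(\psi,\vphi(s)))\le Cs$, which follows from Proposition \ref{prop-ARvsNA}.

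The second gap is that you discard the monotone reduction and then need the uniform decay $\sup_{s\ge1}\bfI(\vphi_m(s),\vphi(s))/s\to0$ for an \emph{arbitrary} strongly convergent sequence. This is unnecessary: once you have shown that the right-hand side of \eqref{eq-EQslopeconv} equals $(\bfE^\cQ)^\NA(\phi)$ for every strongly convergent sequence, you are free to prove the left-hand identity using a \emph{decreasing} sequence (e.g.\ from Multiplier Approximation), which restores $\vphi_m\ge\vphi$ and the one-sided estimate \eqref{eq-2ndterm}; this is exactly what the paper does. Moreover your proposed justification of the uniform decay is not valid as stated: $\bfI^\NA(\phi_m,\phi)$ is a genuinely two-variable quantity and does not follow from convergence of $\Lam^\NA(\phi_m)$ and $\bfE^\NA(\phi_m)$ separately, and no ``slope identity for $\bfI$ along a pair of rays'' is established in the paper (the table only records $\bfI'^\infty(\Phi)=\bfI^\NA(\Phi_\NA)$ for a single ray against the base point). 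Upgrading a limit to a $\sup_s$ bound would further require convexity of $s\mapsto\bfI(\vphi_m(s),\vphi(s))$, which is only known for $d_1$ (Theorem \ref{thm-d1convex}) and transfers to $\bfI$ only up to constants and only if one additionally proves $d_1^c(\Phi_m,\Phi)\to0$ from strong convergence of $\phi_m$ --- a nontrivial input you neither prove nor cite.
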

\begin{proof}
Since the method of proof is the same as the proof of Theorem \ref{thm-Echislope}, we just sketch the proof. As in the previous proof, because \eqref{eq-f'inf} holds when $\phi\in \mcH^\NA$ by Proposition \ref{prop-Echislope2}, it suffices to prove the convergence \eqref{eq-EQslopeconv}
when $\{\phi_m\}$ is a decreasing sequence converging to $\phi$.

Using the functorial embedded resolution of singularities by a sequence of blowing-ups along smooth subvarieties, we can find a model $\pi_{\mcY'}: \mcY'\rightarrow \bC$ with $\bC^*$-equivariant birational morphisms $p: \mcY'\rightarrow X_\bC$ and $q: \mcY'\rightarrow \mcY$, and moreover there is a $\pi_{\mcY'}$-ample line bundle $\mcL'$ such that $(\mcY', \mcL')$ becomes an ample test configuration for $(X, L)$. There exists $\ell \gg 1$ such that  
$q^*\mcQ+\ell \mcL'$ is $\pi_{\mcY'}$-ample. As a consequence, there is a smooth Hermitian metric $\Phi_{\mcL'}$ such that both $\Phi_{\mcL'}=\{\vphi_{\mcL'}(s)\}$ and $\Psi_{\mcQ}+\ell \cdot \Phi_{\mcL'}$ have positive curvature forms. We just need to prove the convergence when $(\mcQ, \Psi_\mcQ)$ is replaced by the $\pi_{\mcY'}$-positive Hermitian $\bQ$-line bundles $$(\mcL', \Phi_{\mcL'}), \quad (q^*\mcQ+ \ell \mcL', \Psi_{\mcQ}+\ell \cdot \Phi_{\mcL'}).$$ Moreover when  $\ell \gg 1$, $2\ell \mcL'-(q^*\mcQ+\ell \mcL')$ is also $\pi_{\mcY'}$-ample. By using the assumption $\Phi_m\ge \Phi$ and co-cycle property as before, it suffices to prove the convergence for the $\pi_{\mcY'}$-ample Hermitian $\bQ$-line bundle $(\mcL', \Phi_{\mcL'})$. 
Now we can carry out exactly same arguments as the proof of Theorem \ref{thm-Echislope}, replacing $\psi(s)$ by $\psi_{\mcL'}(s)$. The only place we need to modify is replace \eqref{eq-Ilinear} by the estimate:
\begin{equation}
\bfI(\psi_{\mcL'}(s), \vphi(s))\le C(\bfI(\psi, \psi_{\mcL'}(s))+\bfI(\psi, \vphi(s)))\le Cs,
\end{equation} 
which follows from the asymptotic expansion of the $\bfI$ functional stated in Proposition \ref{prop-ARvsNA}.
\end{proof}



\section{Slope of entropy and non-Archimedean entropy }\label{sec-YTD1}

In order the compare the slope $\bfH$ along a maximal geodesic ray with $\bfH^\NA$, we first reformulate the $\bfH^\NA$ functional.

\begin{defn}\label{def-MNA}
Assume $(\mcX, \mcL)$ is a semi-ample test configuration and let $\phi_{(\mcX, \mcL)}\in \mcH^\NA$ be the associated non-Archimedean metric (see \eqref{eq-phiH}).
For any $\mcY\in \fMO$ (see Definition \ref{def-TC}), let $\mcZ$ be a common refinement (see Definition \ref{def-TC}) of $X\times\bC$, $\mcX$ and $\mcY$ with dominating morphism as shown in the following diagram:
\begin{equation}\label{eq-common}
\xymatrix{
& \ar_{p_1}[ld] \mcZ \ar^{p_0}[d]   \ar^{p_2}[rd] & \\
\mcX   & \ar@{-->}[l]  X_\bC  \ar@{-->}[r] &  \mcY.  
}
\end{equation}
Set:
\begin{eqnarray}
\bfH^\NA(\phi_{(\mcX, \mcL)}; \mcY)&:=&\bfH^\NA(\mcX, \mcL; \mcY):=\qV  (\bar{p}_2^*K^{\log}_{\bar{\mcY}/\bP^1}-\bar{p}_0^*K^{\log}_{X_{\bP^1}/\bP^1})\cdot (\bar{p}_1^*\bar{\mcL})^{\cdot n} \label{eq-HNAcY} \\
\bfS^\NA(\phi_{(\mcX, \mcL)}; \mcY)&=&\bfS^\NA(\mcX, \mcL; \mcY):=\qV  \bar{p}_2^*K^{\log}_{\bar{\mcY}/\bP^1}\cdot \bar{p}_1^*\bar{\mcL}^{\cdot n}.
\end{eqnarray}
More generally, for any $\phi\in \cE^{1,\NA}(L)$, let $\{\phi_m\}\subset \mcH^\NA(L)$ be a decreasing sequence converging to $\phi$ and define:
\begin{eqnarray}\label{eq-SNAphicY}
\bfH^\NA(\phi; \mcY)&:=&\qV K^{\log}_{\bar{\mcY}/X_{\bP^1}}\cdot \phi^{\cdot n}:=\lim_{m\rightarrow+\infty} \bfH^\NA(\phi_m; \mcY),\\
\bfS^\NA(\phi; \mcY)&:=&\qV K^{\log}_{\bar{\mcY}/\bP^1}\cdot \phi^{\cdot n}:=\lim_{m\rightarrow+\infty} \bfS^\NA(\phi_m; \mcY).
\end{eqnarray}

\end{defn}
Note that the convergence of limit follows from Proposition \ref{prop-strongcont}. 
Next we give a reformulation of $\bfH^\NA(\phi)$.
\begin{prop}\label{prop-intA}
For any $\phi\in \cE^{1,\NA}(L)$. 
we have the formula:
\begin{eqnarray}
\bfH^\NA(\phi)&=&\sup_{\mcY\in \fSN}\int_{X^\NA}A_X(r_{\mcY}(x))\MA^\NA(\phi)(x)\\
&=&\sup_{\mcY\in \fSN}\bfH^\NA(\phi; \mcY)=\sup_{\mcY\in \fSN} \qV  K^{\log}_{\bar{\mcY}/X_{\bP^1}}\cdot \phi^{\cdot n}. \label{eq-supHcY}
\end{eqnarray}

\end{prop}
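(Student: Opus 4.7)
The strategy is to prove the two equalities separately: first, that $\sup_{\mcY}\int_{X^\NA}A_X(r_\mcY(x))\MA^\NA(\phi)$ equals the integral $\qV\int A_X\MA^\NA(\phi)$ defining $\bfH^\NA(\phi)$; second, that for each fixed SNC model $\mcY$ the integral $\int A_X\circ r_\mcY\,\MA^\NA(\phi)$ equals the intersection-product quantity $\bfH^\NA(\phi;\mcY)$ from Definition~\ref{def-MNA}.

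For the first equality, I would invoke \eqref{eq-AXsup}, which expresses $A_X$ as the supremum of the net $(A_X\circ r_{\mcY})_{\mcY\in\fSN}$. The key point is that this net is \emph{increasing}: whenever $\mcY'\ge\mcY$ in $\fSN$, one has $r_\mcY=r_\mcY\circ r_{\mcY'}$ on $X^\NA$, and the log discrepancy of a quasi-monomial valuation only grows under refinement of the SNC model supporting it. Since $A_X\circ r_\mcY$ is continuous on $X^\NA$ for each fixed $\mcY$ (it is affine on each simplex of $\Delta_\mcY$ and extended via $r_\mcY$), the monotone convergence theorem applied to the Radon probability measure $V^{-1}\MA^\NA(\phi)$ yields
\begin{equation*}
\bfH^\NA(\phi)=\qV\int_{X^\NA}A_X(x)\MA^\NA(\phi)=\sup_{\mcY\in\fSN}\qV\int_{X^\NA}A_X(r_\mcY(x))\MA^\NA(\phi).
\end{equation*}

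For the second equality, I would first reduce to the case $\phi=\phi_{(\mcX,\mcL)}\in\mcH^\NA(L)$ by approximation. Given such $\phi$, choose a common SNC refinement $\mcZ$ dominating both $\mcX$ and $\mcY$ (and $X_\bC$) as in diagram~\eqref{eq-common}, and use Theorem~\ref{thm-BoJ}(i) to write $\MA^\NA(\phi)=\sum_j b_j(\mcL|_{E_j})^n\delta_{x_j}$, where $x_j=b_j^{-1}r(\ord_{E_j})$ is the Shilov point of a component $E_j$ of $\mcX_0$. The retraction $r_\mcY(x_j)$ sits inside the simplex of $\Delta_\mcY$ spanned by the components of $\mcY_{0,\red}$ whose strict transforms meet the strict transform of $E_j$ in $\mcZ$, and its log discrepancy is computed as a weighted sum of the discrepancies $1+a_i$ of those components $F_i\subset\mcY_{0,\red}$ (where $a_i$ is the coefficient of $F_i$ in $K^{\log}_{\bar\mcY/X_{\bP^1}}$). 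Translating this through the Gauss extension, the weights match exactly the multiplicities coming from the intersection $\bar p_2^*F_i\cdot\bar p_1^*\bar\mcL^{\cdot n}$ on $\bar\mcZ$. Summing over $j$ and $i$ yields
\begin{equation*}
\qV\int_{X^\NA}A_X(r_\mcY(x))\MA^\NA(\phi_{(\mcX,\mcL)})=\qV(\bar p_2^*K^{\log}_{\bar\mcY/\bP^1}-\bar p_0^*K^{\log}_{X_{\bP^1}/\bP^1})\cdot\bar p_1^*\bar\mcL^{\cdot n}=\bfH^\NA(\phi;\mcY).
\end{equation*}

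For general $\phi\in\cE^{1,\NA}(L)$, I would take a decreasing sequence $\{\phi_m\}\subset\mcH^\NA(L)$ converging to $\phi$, obtained from Multiplier Approximation. The right-hand side converges to $\bfH^\NA(\phi;\mcY)$ by the very Definition~\eqref{eq-SNAphicY}. For the left-hand side, $A_X\circ r_\mcY$ is bounded and continuous on $X^\NA$, so Theorem~\ref{thm-BoJ}(iii) (applied after subtracting $\phi_\triv$ and using that $A_X\circ r_\mcY$ is a difference of continuous psh-envelopes of model functions up to an additive constant, or directly via the weak convergence of $\MA^\NA(\phi_m)$ to $\MA^\NA(\phi)$ paired with bounded continuous functions as established in \cite{BoJ18a}) gives the desired convergence. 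The main obstacle is the intersection-theoretic identification in the second step: the precise match between the quasi-monomial coordinates of $r_\mcY(x_j)$ and the intersection multiplicities of the divisors $F_i$ with $\bar\mcL^{\cdot n}$, which requires a careful unpacking of the Gauss extension on the refinement $\mcZ$, and a verification that the contribution of $-\bar p_0^*K^{\log}_{X_{\bP^1}/\bP^1}$ is exactly the constant term $-1$ from the shift between $A_X$ and the relative log discrepancy $a_i$ (together with the fact that this term integrates to $\qV K^{\log}_{X_{\bP^1}/\bP^1}\cdot\bar\mcL^{\cdot n}$ against $\MA^\NA(\phi)$).
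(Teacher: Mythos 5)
Your proposal is correct and follows essentially the same route as the paper: the first equality via \eqref{eq-AXsup} and monotone convergence for the increasing net $(A_X\circ r_\mcY)_{\mcY\in\fSN}$, and the second by reducing to $\phi\in\mcH^\NA$, identifying $K^{\log}_{\bar\mcY/X_{\bP^1}}$ with the model function whose value at a divisorial point $x$ is $G(x)(K^{\log}_{\bar\mcY/X_{\bP^1}})=A_X(r_\mcY(x))$, pairing it against the atomic Monge--Amp\`{e}re measure at the Shilov points, and passing to the limit by weak convergence against the continuous function $A_X\circ r_\mcY$. The ``obstacle'' you flag at the end is handled in the paper exactly by that one-line Gauss-extension identity $f_D(x)=A_{X_{\bP^1}}({\rm ev}_\mcY(G(x)))-1=A_X(r_\mcY(x))$, so no extra work is needed beyond what you describe.
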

\begin{proof}
The first identity follows from \eqref{eq-AXsup} and the monotone convergence theorem which is valid for increasing net and Radon measure (see \cite[7.12]{Fol99}). 

To prove \eqref{eq-supHcY}, we will prove the identity:
\begin{equation}\label{eq-HcY}
\bfH^\NA(\phi; \mcY)=\qV \int_{X^{\NA}}A_X(r_\mcY(v))\MA^\NA(\phi)(v).
\end{equation}
Let $\{\phi_m\}\subset\mcH^\NA(L)$ be a sequence decreasing to $\phi$.
For any $\mcY\in \fSN$ we have:
\begin{eqnarray*}
\bfH^\NA(\phi; \mcY)&=&\qV K^{\log}_{\bar{\mcY}/X_{\bP^1}}\cdot \phi^{\cdot n}=\lim_{m\rightarrow+\infty} \qV K^{\log}_{\bar{\mcY}/X_{\bP^1}}\cdot \phi_m^{\cdot n}.
\end{eqnarray*}
We claim that there is an identity:
\begin{eqnarray}\label{eq-intint}
\qV K^{\log}_{\bar{\mcY}/X_{\bP^1}}\cdot \phi_m^{\cdot n} &=&\qV \int_{X^\NA}A_X(r_\mcY (v))\MA^\NA(\phi_m)(v),
\end{eqnarray}
where $r_\mcY: X^\NA\rightarrow \Delta_{\mcY}$ is the retraction map (see \cite[4]{BoJ18a}). 

Assuming this claim, we let $m\rightarrow+\infty$ in \eqref{eq-intint} to get the identity \eqref{eq-HcY}. Indeed, because $v\mapsto A_X(r_\mcY(v))$ is continuous (see \cite[Proof of Lemma 5.7]{JM12}) and $\MA^\NA(\phi_m)$ converges to $\MA^\NA(\phi)$ weakly (\cite[Corollary 6.12]{BoJ18a}), the right-hand-side of \eqref{eq-intint} converges to the right-hand-side of \eqref{eq-HcY}.

Finally we verify the identity \eqref{eq-intint}. Note that $D:=K^{\log}_{\bar{\mcY}/X_{\bP^1}}$ corresponds to a function 
$f: X^\NA\rightarrow \bR$ whose value at any divisorial point $x\in X^{\rm div}_\bQ$ is given by:
\begin{eqnarray}\label{eq-fD}
f_D(x)&=&G(x)(D)=G(x)\left(\sum_i A_{X_{\bP^1}}(E_i) E_i+X'_0-\mcY_0\right)\nonumber \\
&=&A_{X_{\bP^1}}({\rm ev }_{\mcY}(G(x)))-1=A_X(r_{\mcY}(x)).
\end{eqnarray}
On the other hand, we have the identity:
\begin{eqnarray}\label{eq-intMA}
D\cdot \mcL^{\cdot n}&=& \sum_j \ord_{F_j}(D) F_j \cdot \mcL^{\cdot n}=\sum_j \ord_{F_j}(D) \left(\mcL|_{F_j}\right)^{\cdot n}\nonumber \\
&=&\sum_j b_j G(r(b_j^{-1}\ord_{F_j}))(D) \left(\mcL|_{F_j}\right)^{\cdot n}=\sum_j b_j f_D(x_{F_j}) \left(\mcL|_{F_j}\right)^{\cdot n}\nonumber \\
&=&\int_{X^\NA} f_D(x) \MA^\NA(\phi)(x).
\end{eqnarray}
Combining \eqref{eq-fD} and \eqref{eq-intMA}, we indeed get the identity \eqref{eq-intint}.
\end{proof}

Note that the identity $\bfH^\NA$ in \eqref{eq-supHcY} is a non-Archimedean analogue of the well-known supremum characterization of Archimedean entropy:
\begin{equation}
\bfH_\Omega(\vphi)=\sup\left\{\int_X \log \frac{\nu}{\Omega}(\ddc\vphi)^n; \nu \text{ a probability measure s.t.} \log\frac{\nu}{\Omega}\in C^0(X) \right\}.
\end{equation}
In particular, the following lower semi-continuity is always true: for any $\{\phi_m\}\subset \mcH^\NA$ converging strongly to $\phi\in \cE^{1,\NA}$, we have:
\begin{equation}
\bfH^\NA(\phi)\le \liminf_{m\rightarrow+\infty} \bfH^\NA(\phi_m).
\end{equation}
Now we can prove Theorem \ref{thm-grslope}.
\begin{thm}\label{thm-HNA}
\begin{enumerate}
\item
For any $\phi\in \mcE^{1,\NA}(L)$, let $\Phi=\{\vphi(s)\}$ be the associated maximal geodesic ray. We have the following inequalities:
\begin{equation}\label{eq-HslopeNA}
\bfH'^\infty(\Phi)\ge \bfH^\NA(\phi), \quad \bfM'^\infty(\Phi)\ge \bfM^\NA(\phi).
\end{equation}
\item
If $\phi=\phi_{(\mcX, \mcL)}\in \mcH^\NA(L)$, then we have:
\begin{eqnarray}
\bfH'^\infty(\Phi)&=&\bfH^\NA(\phi)=\bfH^\NA(\phi; \mcX) \label{eq-HslopeH} \\
\bfM'^\infty(\Phi)&=&\bfM^\NA(\phi).  \label{eq-MslopeH}
\end{eqnarray}
\end{enumerate}
\end{thm}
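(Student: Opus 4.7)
The plan is to prove Part 1 first by reducing the inequality $\bfH'^\infty(\Phi) \ge \bfH^\NA(\phi)$ to a Jensen-type comparison against an adapted volume form on an SNC model, and then to deduce Part 2 by specializing to $\mcY = \mcX$ and upgrading the inequality to equality using the extra regularity of the Phong-Sturm geodesic.

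By Chen-Tian's decomposition (Archimedean \eqref{eq-Mabuchi} and non-Archimedean \eqref{eq-MNAE1}), $\bfM = \bfH + \bfR + \ud{S}\,\bfE$. Theorem \ref{thm-maximal} gives $\bfE'^\infty(\Phi) = \bfE^\NA(\phi)$ for any maximal $\Phi$, and Theorem \ref{thm-Echislope} applied to $Q = K_X$ gives $\bfR'^\infty(\Phi) = \bfR^\NA(\phi)$. Hence the two inequalities in \eqref{eq-HslopeNA} are equivalent, and it suffices to prove $\bfH'^\infty(\Phi) \ge \bfH^\NA(\phi)$. By Proposition \ref{prop-intA}, $\bfH^\NA(\phi) = \sup_{\mcY \in \fSN} \bfH^\NA(\phi;\mcY)$, so the problem reduces to proving $\bfH'^\infty(\Phi) \ge \bfH^\NA(\phi;\mcY)$ for each dominating SNC model $\mcY \in \fDS$.

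Fix $\mcY \in \fDS$, let $\rho : \mcY \to X$ denote the composition of the dominating morphism with the first projection, and let $\mcQ$ be the $\bC^*$-equivariant $\bQ$-line bundle on $\mcY$ whose compactification represents $K^{\log}_{\bar{\mcY}/\bP^1} - \rho^*K_X$. Choose an $S^1$-invariant smooth Hermitian metric $e^{-\psi_\mcY(s)}$ on $\mcQ$; restricted to the fibers $\mcY_t \cong X$ it defines, together with the reference metric on $-K_X$, a family $\Omega_\mcY(s)$ of smooth volume forms on $X$ whose total masses are bounded uniformly in $s$. Applying Jensen's inequality to the probability measure $V^{-1}(\ddc\vphi(s))^n$ gives
\begin{equation*}
\qV\int_X \log\frac{(\ddc\vphi(s))^n}{\Omega_\mcY(s)}(\ddc\vphi(s))^n \;\ge\; -C
\end{equation*}
for an $s$-independent constant $C$, hence after splitting the logarithm:
\begin{equation*}
\bfH_\Omega(\vphi(s)) \;\ge\; -C + \qV\int_X \log\frac{\Omega_\mcY(s)}{\Omega}\,(\ddc\vphi(s))^n.
\end{equation*}
The right-hand integral is of the form $\bfE^{\chi(s)}(\vphi(s))$ for a family of smooth forms $\chi(s)$ coming from the Hermitian metric on $\mcQ$, so Theorem \ref{thm-EcQslopeconv} identifies its slope with $(\bfE^\mcQ)^\NA(\phi)$. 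Using \eqref{eq-Qdotphi1} together with the identity $K^{\log}_{\bar{\mcY}/\bP^1}\cdot L_{\bP^1}^{\cdot n} = K^{\log}_{X_{\bP^1}/\bP^1}\cdot L_{\bP^1}^{\cdot n}$ (the boundary/correction terms vanish because $\MA^\NA(\phi_\triv)$ is concentrated at the trivial valuation where $A_X = 0$), one recovers $(\bfE^\mcQ)^\NA(\phi) = \bfH^\NA(\phi;\mcY)$. Dividing by $s$ and sending $s\to\infty$ yields Part 1.

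For Part 2 with $\phi = \phi_{(\mcX,\mcL)}$, the equality $\bfH^\NA(\phi) = \bfH^\NA(\phi;\mcX)$ is automatic because $\MA^\NA(\phi)$ is supported on the Shilov points $x_j = b_j^{-1}r(\ord_{E_j})$ of $\Delta_\mcX$, which are fixed by $r_\mcX$. Together with Part 1 this already gives $\bfH'^\infty(\Phi) \ge \bfH^\NA(\phi;\mcX)$. For the reverse inequality, specialize the construction above to $\mcY = \mcX$ (after an equivariant log resolution if needed) and exploit the fact that the Phong-Sturm geodesic $\Phi$ extends to a locally bounded positively curved Hermitian metric on $\bar{\mcL}$. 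The key point is that the fiberwise Monge-Ampère measures $(\ddc\vphi(s))^n$ are comparable to $\Omega_\mcX(s)$ uniformly in $s$, so the residual entropy satisfies $\bfH_{\Omega_\mcX(s)}(\vphi(s)) = o(s)$ as $s\to\infty$. Plugged into the splitting identity above, this upgrades Jensen's inequality to an asymptotic equality, yielding $\bfH'^\infty(\Phi) = \bfH^\NA(\phi;\mcX)$. The Mabuchi slope formula \eqref{eq-MslopeH} then follows from Chen-Tian.

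The main obstacle is the sublinear bound $\bfH_{\Omega_\mcX(s)}(\vphi(s)) = o(s)$ in Part 2, which extends the BHJ/Tian slope formula from smooth $S^1$-invariant positively curved metrics on $\bar{\mcL}$ to the weak Phong-Sturm geodesic ray (only $C^{1,\bar{1}}$ in general). Technically, this requires controlling $(\ddc\vphi(s))^n$ uniformly against the adapted volume form near the central fiber $\mcX_0$, via pluri-potential estimates reflecting the SNC geometry of an equivariant resolution of $\mcX$.
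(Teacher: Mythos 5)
Your Part 1 follows essentially the same route as the paper: reduce to the entropy inequality via Theorems \ref{thm-maximal} and \ref{thm-Echislope}, reduce further to a single SNC model $\mcY$ via Proposition \ref{prop-intA}, split the entropy against an adapted volume form coming from a metric on (a twist of) $K^{\log}_{\bar{\mcY}/\bP^1}$, apply Jensen, and identify the remaining term's slope with $(\bfE^{\mcQ})^{\NA}(\phi)$ by Theorem \ref{thm-EcQslopeconv}. One inaccuracy: the fiberwise masses $\int_{X_t}\Omega_\mcY(s)$ are \emph{not} uniformly bounded in $s$; by \cite[Lemma 3.11]{BHJ19} they grow like $O(s^d)$ where $d$ is the dimension of the dual complex of $\mcY_0$. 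This does not damage the argument, since Jensen then gives a lower bound $-V\log(O(s^d))=O(\log s)$, which is still sublinear, but the constant $-C$ you write is wrong as stated.

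Part 2 contains a genuine gap. Your strategy for the reverse inequality rests on the claim that the fiberwise Monge--Amp\`{e}re measures $(\ddc\vphi(s))^n$ of the Phong--Sturm geodesic are comparable to the adapted volume form $\Omega_\mcX(s)$ uniformly in $s$, from which you deduce $\bfH_{\Omega_\mcX(s)}(\vphi(s))=o(s)$. No such two-sided comparability holds for a weak geodesic ray: $\Phi$ is only $C^{1,\bar 1}$, and for a nontrivial test configuration the measures $(\ddc\vphi(s))^n$ typically degenerate (the complex Hessian can vanish on large sets), so their densities against any smooth family of volume forms are not bounded below, and the upper bound is equally unclear. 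You flag this as ``the main obstacle'' but offer no actual argument for it; this sublinear entropy bound is precisely the hard content of the statement, so asserting it is circular. The paper avoids the issue entirely: it proves $\bfM'^\infty(\Phi)\le \bfM^\NA(\mcX,\mcL)$ (following \cite[Proposition 5.1]{Xia19}) by taking a smooth $S^1$-invariant positively curved metric $\tilde{\Phi}$ on $\mcL$, for which the slope formula is already known by \cite{BHJ19, Tia14}, connecting $\psi$ to $\tilde{\vphi}(s)$ by $C^{1,\bar 1}$ geodesic segments, and using the convexity of $\bfM$ along geodesics together with its lower semicontinuity and the $d_1$-convergence $c_s(r)\to\vphi(r)$ guaranteed by $|\Phi-\tilde{\Phi}|\le C$ and Theorem \ref{thm-d1convex}. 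This yields $\bfM(\vphi(r))\le r\,\bfM^\NA(\mcX,\mcL)$ with no control whatsoever on the Monge--Amp\`{e}re measure of the weak geodesic. The equality for $\bfH$ then follows by subtracting the energy terms, whose slopes are already identified. You should replace your Part 2 argument with this convexity/semicontinuity scheme, or else supply an actual proof of the sublinear residual entropy bound, which is not available by the methods you cite.
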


\begin{proof}
The second inequality in \eqref{eq-HslopeNA} follows from the first one and the convergence results in Theorem \ref{thm-maximal} and Theorem \ref{thm-Echislope}. So we just need to prove the inequality for the entropy part. 

Choose any $\mcY\in \fDS$ with projection $\pi: \mcY\rightarrow \bC$.
Let $\nu$ be a smooth Hermitian metric on $-K^{\log}_{\bar{\mcY}/\bP^1}$.  Set $X_t=\pi^{-1}(\{t\})$. Away from $\mcY_0$, $\nu|_{X_t}$ is a smooth volume form on $X_t$. The entropy part can be re-written as:
\begin{eqnarray}
\bfH_\Omega(\vphi)&=&\qV \int_X \log \frac{(\ddc\vphi)^n}{\Omega}(\ddc\vphi)^n\nonumber \\
&=&\qV \int_X\log \frac{(\ddc\vphi)^n}{\nu}(\ddc\vphi)^n+\int_X \log \frac{\nu}{\Omega}(\ddc\psi)^n\nonumber \\
&&\hskip 2cm +\qV \int_X\log\frac{\nu}{\Omega}\left[(\ddc\vphi)^n-(\ddc\psi)^n\right].
\end{eqnarray}
We deal with each term as follows. 
\begin{itemize}
\item With $s=-\log|t|^2$, by Jensen's inequality, 
\begin{equation}
\int_X \log\frac{(\ddc\vphi)^n}{\nu}(\ddc\vphi)^n\ge -V \cdot \log\int_{X_t}\frac{\nu}{V}.
\end{equation}
Moreover, by \cite[Lemma 3.11]{BHJ19} we know that $\int_{X}\nu|_{X_t}=O(s^d)$ where $d$ is the dimension of the dual complex of $\mcX_0$. 
\item We assume that $\mcY$ dominates $X_\bC$ via the birational morphism $\rho: \mcY\rightarrow X_\bC$. If $E_i$ denotes the exceptional divisor of $\rho$, then we have
\begin{equation}
K^{\log}_{\mcY/\bC}=\rho^*K^{\log}_{X_\bC/\bC}+\sum_i A_i E_i
\end{equation}
with $A_i\ge 0$ since $(X_\bC, X_0)$ is plt.
If $\sigma$ is a local generator of $K^{\log}_{X_\bC/\bC}$ which is nothing but a holomorphic $n$-form on $X$ pulled back to $X_\bC$, then, under a local holomorphic chart $\{z_i\}$, $\rho^*\sigma=\sigma' \cdot \prod_i z_i^{A_i}$ is a local section of $K^{\log}_{\mcY/\bC}$, where $\sigma'$ is a local generator of $K^{\log}_{\mcY/\bC}$ and $E_i=\{z_i=0\}$. So we get:
\begin{equation}
\Omega\sim |\rho^*\sigma|^2=|\sigma'|^2 \prod_{i}|z_i|^{2A_i}\le C \nu
\end{equation}
where $C$ is a constant independent of $s>0$.
So we get:
\begin{equation}
\int_X \log \frac{\nu}{\Omega}(\ddc\psi)^n\ge -V\cdot \log C 
\end{equation}
\item We use the identity \eqref{eq-E2chi} to get:
\begin{equation}
\int_X\log \frac{\nu}{\Omega}[(\ddc\vphi)^n-(\ddc\psi)^n]=\bfE^{-Ric(\nu)}_{\psi}(\vphi)-\bfE^{-Ric(\Omega)}_\psi(\vphi).
\end{equation}
\end{itemize}
In summary, we get the estimate:
\begin{equation}
\bfH_\Omega(\vphi(s))\ge -V \cdot \log (O(s^d))-V\cdot \log C+\bfE^{-Ric(\nu)}_{\psi}(\vphi)-\bfE^{-Ric(\Omega)}_\psi(\vphi). \label{eq-Jensen}
\end{equation}
Now take slopes on both sides of \eqref{eq-Jensen} and use Theorem \ref{thm-EcQslopeconv} together with \eqref{eq-intEcQNA} to get:
\begin{eqnarray*}
\bfH'^\infty(\Phi)&\ge& (\bfE^{K^{\log}_{\mcY/\bC}})^\NA(\phi)-(\bfE^{K_X})^\NA(\phi)\\
&=& (K^{\log}_{\bar{\mcY}/\bP^1}-K^{\log}_{X_{\bP^1}/\bP^1})\cdot (\phi^{\cdot n}-\phi_\triv^{\cdot n})\\
&=&K^{\log}_{\bar{\mcY}/X_{\bP^1}}\cdot \phi^{\cdot n}=\bfH^\NA(\phi; \mcY),
\end{eqnarray*}
where we used the vanishing identity:
$$K^{\log}_{\bar{\mcY}/X_{\bP^1}}\cdot \phi_{\triv}^{\cdot n}=K^{\log}_{\bar{\mcY}/X_{\bP^1}}\cdot \pi^*L^{\cdot n}=0.
$$ 
So, by using \eqref{eq-supHcY} we get the inequality \eqref{eq-HslopeNA}:
\begin{equation}
\bfH'^\infty(\Phi)\ge \sup_{\mcY\in \fSN}\bfH^\NA(\phi; \mcY)=\bfH^\NA(\phi).
\end{equation}


Finally, to prove \eqref{eq-MslopeH} and hence \eqref{eq-HslopeH}, we just need to show $\bfM'^\infty(\Phi)\le \bfM^\NA(\mcX, \mcL)$. But this has been proved in \cite[Proposition 5.1]{Xia19}. We sketch the proof there for the reader's convenience. Let $\tilde{\Phi}=\{\tilde{\vphi}(s)\}$ be a smooth $S^1$-invariant metric on $\mcL$ with $\tilde{\vphi}(0)=\psi$. Let $c_s(r), r\in [0, s]$ be the $C^{1,1}$ geodesic segment connecting $\psi$ and $\tilde{\vphi}(s)$.  By the convexity of Mabuchi energy:
\begin{equation}\label{eq-MslopeNA}
\bfM(c_s(r))\le \frac{r}{s}\bfM(\tilde{\vphi}(s)).
\end{equation} 
For any compact interval of $r$, $\{c_s(r)\}$ converges uniformly in strong topology to $\Phi=\{\vphi(r)\}$ which is the geodesic ray associated to $(\mcX, \mcL)$. Indeed by the convexity in Theorem \ref{thm-d1convex}, we have:
\begin{equation}
d_1(c_s(r), \vphi(r))\le \frac{r}{s} d_1(\tilde{\vphi}(s),\vphi(s))\le C \frac{r}{s}.
\end{equation}
The last inequality uses the fact that $|\Phi-\tilde{\Phi}|\le C$ and hence $d_1(\tilde{\vphi}(s), \vphi(s))\le C \bfI_1(\tilde{\vphi}(s), \vphi(s))$ is also uniformly bounded independent of $s$.
The claimed convergence then follows by letting $s\rightarrow+\infty$.
In the inequality \eqref{eq-MslopeNA}, by letting $s\rightarrow+\infty$ and using the lower semicontinuity of Mabuchi energy, we get:
\begin{equation}
\bfM(\vphi(r))\le r \bfM'^\infty(\tilde{\Phi})=r \bfM^\NA(\mcX, \mcL).
\end{equation}
The last identity used the slope formula from \cite{BHJ17}.
Dividing both sides by $r$ and letting $r\rightarrow+\infty$, we get the inequality.

\end{proof}


\begin{rem}
Here we give a different proof of the inequality $\bfM'^\infty(\Phi)\ge \bfM^\NA(\Phi_\NA)$ in \eqref{eq-HslopeNA} when $\Phi=\Phi_{(\mcX, \mcL)}$.
We first construct a (special) $S^1$-invariant smooth subgeodesic ray $\tilde{\Phi}$ as follows.
Note that by \cite[Theorem 1.4]{CC18b}, we are free to choose the initial point of $\Phi$ without changing $\bfM'^\infty(\Phi)$.
Since $\mcL$ is assumed to be ample, it is well-known that the test configuration $(\mcX, \mcL)$ is associated to a one-parameter $\bC^*$-action on $\bP^{N_{p}-1}$ where $N_{p}=\dim H^0(X, p L)$ for $p\gg 1$. In other words, we can assume that there is a $\bC^*$-equivariant embedding $\iota: \mcX\rightarrow \bP^{N_{p}-1}\times\bC$ such that $\mcX=\overline{\{(\sigma_\eta(t)(X),t); t\in \bC\}}\subset \bP^{N_{p}-1}\times\bC$ where $\eta\in \mathfrak{gl}(N_{p},\bC)$ generates a $\bC^*$-action and $\sigma_\eta(t)=\exp(-(\log t) \eta)\in GL(N_{p},\bC)$. Let $\psi_{\FS}$ be the standard Fubini-Study metric on the hyperplane bundle over $\bP^{N_{p}-1}$. Set $\tilde{\vphi}(t)=\sigma_\eta(t)^*\vphi^{1/p}_{\FS}|_{\sigma_\eta(t)(X)}$.

Now let $\Phi$ be the geodesic ray associated to $(\mcX, \mcL)$. It is known that $\Phi$ is $C^{1,1}$ on $X\times \bC^*$ (\cite{CTW18, PS10}). Moreover $\Phi$ and $\tilde{\Phi}$ is $L^\infty$ comparable: there exists $C>0$ such that $|\Phi-\tilde{\Phi}|\le C$ over $X\times(\bB\setminus\{0\})$. 
By the co-cycle property of $\bfM$ (see \eqref{eq-cocycle}), we have $\bfM_\psi(\vphi)=\bfM_\psi(\tilde{\vphi})+\bfM_{\tilde{\vphi}}(\vphi)$. So it is enough to show that $\bfM_{\tilde{\vphi}}(\vphi)$ is bounded from below. By substituting $\psi$ and $\Omega$ by $\tilde{\vphi}$ and $(\ddc\tilde{\vphi})^n$ respectively in \eqref{eq-Mabuchi}, we get:
\begin{eqnarray*}
\bfM_{\tilde{\vphi}}(\vphi)&=&\qV \int_X \log\frac{(\ddc\vphi)^n}{(\ddc\tilde{\vphi})^n}(\ddc\vphi)^n+ \bfE^{-Ric(\ddc\tilde{\vphi})}_{\tilde{\vphi}}(\vphi)+\frac{\udS}{n+1} \bfE_{\tilde{\vphi}}(\vphi).
\end{eqnarray*}
We estimate each term separately. By Jensen's formula applied to the probability measure $((2\pi)^nV)^{-1}(\ddc\vphi)^n$, the entropy part is nonnegative. 
The (negative) Ricci energy part can be rewritten as:
\begin{eqnarray*}
&& \bfE^{-Ric(\ddc\tilde{\vphi})}_{\tilde{\vphi}}(\vphi)\\
&=&\sum_{k=0}^{n-1}\int_X (\vphi-\tilde{\vphi})(-Ric(\ddc\tilde{\vphi}))\wedge (\ddc\vphi)^k\wedge (\ddc \tilde{\vphi})^{n-1-k}\\
&=&-\sum_{k=0}^{n-1}\int_X (\tilde{\vphi}-\vphi)\left[-Ric(\ddc\tilde{\vphi})+p (n+1)(\ddc\tilde{\vphi})\right]\wedge (\ddc\vphi)^k\wedge (\ddc\tilde{\vphi})^{n-1-k}\\
&&+p(n+1)\sum_{k=0}^{n-1}\int_X (\tilde{\vphi}-{\vphi}) (\ddc\vphi)^{k}\wedge (\ddc\tilde{\vphi})^{n-k}.
\end{eqnarray*}

Now the observation is that $\ddc\tilde{\vphi}$, which is the restriction of Fubini-Study metric, satisfies $Ric(\ddc\tilde{\vphi})\le p (n+1) \ddc\tilde{\vphi}$ (by Gauss-Codazzi equation). Because $|\vphi- \tilde{\vphi}|\le C$, we easily get that $\bfE^{-Ric(\ddc\tilde{\vphi})}_{\tilde{\vphi}}(\vphi)$ is uniformly bounded and $\bfE_{\tilde{\vphi}}(\vphi)$ is also easily bounded. 

\end{rem}

\section{Existence results for cscK metrics}\label{sec-YTD}

\subsection{Uniform stability for model filtrations}\label{sec-YTD2}

In the following discussion, we will use the notations from section \ref{sec-twist}.
A main goal in this section is to prove Theorem \ref{thm-existfiltr}. We will first prove a weaker statement which says that uniform K-stability over $\cE^{1,\NA}$ implies the cscK (Proposition \ref{prop-exist1}).
This is indeed straightforward given the results obtained so far. To improve this result, we will resort to the works on non-Archimedean Monge-Amp\`{e}re equations by Boucksom-Favre-Jonsson and Boucksom-Jonsson.

We also note that the proof is along the similar line as the proof in \cite{Li19} of $\bG$-uniform version of Yau-Tian-Donaldson conjecture for all (possibly singular) Fano varieties.
The proof in \cite{Li19} depends on earlier works of Berman-Boucksom-Jonsson (\cite{BBJ18}), Hisamoto (\cite{His16b}) and our perturbative approach in \cite{LTW19}, and uses a new valuative criterion for $\bG$-uniform stability. 
The following proof will also slightly streamline the argument in \cite{Li19} in the smooth case and shows that, when $X$ is smooth Fano, one could avoid the use of valuative criterion by a direct estimate of slope of $\bfJ$ (see \eqref{eq-Jslopeuni}).

We first highlight the identities/inequalities needed  in the proof of the following existence results. They are
corollaries of Theorem \ref{thm-maximal}, Theorem \ref{thm-Echislope} and Theorem \ref{thm-grslope}:
\begin{cor}\label{thm-maxYTD}
Let $\Phi$ be a geodesic ray of finite Mabuchi slope. Then we have the identities:
\begin{equation}\label{eq-energyslope}
\bfE'^\infty(\Phi)=\bfE^\NA(\Phi_\NA), \quad \bfR'^\infty(\Phi)=\bfR^\NA(\Phi_\NA), \quad \inf_{\xi\in N_\bR}\bfJ'^\infty(\Phi_\xi)=\bfJ^\NA_\bT(\Phi_\NA),
\end{equation}
and the inequality:
\begin{equation}\label{eq-entropyslope}
\bfM'^\infty(\Phi)\ge \bfM^\NA(\Phi_\NA).
\end{equation}

\end{cor}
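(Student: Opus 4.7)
The plan is to read off each of the three identities and the inequality directly from the three earlier theorems; the hypothesis $\bfM'^\infty(\Phi)<\infty$ is used only once, to invoke Theorem \ref{thm-maximal} and deduce that $\Phi$ is maximal. Once maximality is in hand, Theorem \ref{thm-maximal} itself gives $\bfE'^\infty(\Phi)=\bfE^\NA(\Phi_\NA)$. For $\bfR$, I would take $Q=K_X$ and choose a smooth Hermitian metric $e^{-\psi_Q}$ on $K_X$ with $\ddc\psi_Q=-\mathrm{Ric}(\Omega)$, so that by definition $\bfR=\bfE^{\ddc\psi_Q}$ on the Archimedean side; then \eqref{eq-EQslope} of Theorem \ref{thm-Echislope} applied to the maximal ray $\Phi$ gives
\[
\bfR'^\infty(\Phi)=(\bfE^{K_X})^\NA(\Phi_\NA)=\bfR^\NA(\Phi_\NA),
\]
where the last equality is just the definitions \eqref{eq-RNA2} and \eqref{eq-Qdotphi0}.

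Next, for the twisted $\bfJ$-slope, I would combine the $\bfE$-identity above with Lemma \ref{lem-energyslope}, which gives $\Lam'^\infty(\Phi)=\Lam^\NA(\Phi_\NA)$ for any maximal ray, to conclude $\bfJ'^\infty(\Phi)=\bfJ^\NA(\Phi_\NA)$. For each $\xi\in N_\bR$, I would interpret $\Phi_\xi$ as the (maximal) geodesic ray associated via Theorem \ref{thm-BBJ} to the twisted non-Archimedean metric $(\Phi_\NA)_\xi\in\cE^{1,\NA}$. Applying the same two slope results to the maximal ray $\Phi_\xi$ gives $\bfJ'^\infty(\Phi_\xi)=\bfJ^\NA((\Phi_\NA)_\xi)$, and taking the infimum over $\xi\in N_\bR$ yields the third identity in \eqref{eq-energyslope}.

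Finally, for the inequality \eqref{eq-entropyslope}, I would use Chen–Tian's decomposition \eqref{eq-Mabuchi}, which reads
\[
\bfM(\vphi)=\bigl(\bfH_\Omega(\vphi)-\bfH_\Omega(\psi)\bigr)+\bfE^{-\mathrm{Ric}(\Omega)}_\psi(\vphi)+\udS\,\bfE_\psi(\vphi),
\]
and take slopes term by term. The entropy piece contributes $\bfH'^\infty(\Phi)\geq \bfH^\NA(\Phi_\NA)$ by Theorem \ref{thm-grslope}(1); the other two pieces contribute the exact identities $\bfR^\NA(\Phi_\NA)$ and $\udS\,\bfE^\NA(\Phi_\NA)$ just established. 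Adding produces
\[
\bfM'^\infty(\Phi)\geq \bfH^\NA(\Phi_\NA)+\bfR^\NA(\Phi_\NA)+\udS\,\bfE^\NA(\Phi_\NA)=\bfM^\NA(\Phi_\NA),
\]
using the decomposition $\bfM^\NA=\bfH^\NA+\bfR^\NA+\udS\,\bfE^\NA$ implicit in \eqref{eq-MNAH}.

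The only step requiring a moment's care is the interpretation of the Archimedean twist $\Phi_\xi$ and its maximality; this should be handled by defining $\Phi_\xi$ via Theorem \ref{thm-BBJ} as the unique maximal representative of $(\Phi_\NA)_\xi$, so that Lemma \ref{lem-energyslope} applies verbatim. No step in the argument uses the still-open Conjecture \ref{conj-Hslope}, since for the Mabuchi slope inequality only the direction $\bfH'^\infty\geq\bfH^\NA$ from Theorem \ref{thm-grslope}(1) is needed.
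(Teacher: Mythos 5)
Your proposal is correct and follows exactly the route the paper intends: the paper offers no separate proof of this corollary beyond declaring it a consequence of Theorem \ref{thm-maximal}, Theorem \ref{thm-Echislope} and Theorem \ref{thm-grslope}, and your argument fills in precisely those deductions (maximality from finite Mabuchi slope, the choice $Q=K_X$ with $\ddc\psi_Q=-{\rm Ric}(\Omega)$ for $\bfR$, Lemma \ref{lem-energyslope} for $\Lam$ and hence $\bfJ$, and the term-by-term slope comparison in the Chen--Tian decomposition for $\bfM$). Your flagged point about interpreting $\Phi_\xi$ as the maximal ray attached to $(\Phi_\NA)_\xi$ is the same implicit convention the paper uses in the proof of Proposition \ref{prop-exist1}, so there is no gap.
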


\begin{prop}\label{prop-exist1}
If $(X, L)$ is $\bG$-uniformly K-stable over $(\mcE^{1,\NA})^\bK$ (see Definition \ref{def-Kst}), then $(X, L)$ admits a cscK metric.
\end{prop}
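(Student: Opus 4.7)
The plan is a proof by contradiction along the Berman-Boucksom-Jonsson variational scheme, made effective by the slope identities developed in Theorems \ref{thm-maximal}, \ref{thm-Echislope}, and \ref{thm-grslope}. Suppose $(X,L)$ admits no cscK metric. By the Chen-Cheng theorem combined with the Darvas-Rubinstein/Hisamoto reformulation, the Mabuchi functional $\bfM$ fails to be $\bG$-coercive on $\cE^1(L)$. Applying the Darvas-Rubinstein compactness on $(\cE^1_0(L), d_1)$ together with the $d_1$-convexity of $\bfM$ (Theorem \ref{thm-d1convex}), one extracts a $\bK$-invariant, sup-normalized, finite-energy geodesic ray $\Phi=\{\vphi(s)\}$ emanating from $\psi$ that is destabilizing in the $\bT$-reduced sense:
\begin{equation*}
\bfM'^\infty(\Phi)\le 0,\qquad \liminf_{s\to\infty}\, s^{-1}\bfJ_\bT(\vphi(s))\ge 1,
\end{equation*}
where $\bfJ_\bT(\vphi)=\inf_{\sigma\in\bT}\bfJ(\sigma^*\vphi)$. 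In particular $\bfM'^\infty(\Phi)<\infty$, so Theorem \ref{thm-maximal} ensures that $\Phi$ is maximal.

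Next I would promote the ray to the non-Archimedean side. Since $\Phi$ is maximal, $\Phi_\NA\in\cE^{1,\NA}(L)$. The Multiplier Approximation recalled at the end of section \ref{sec-rays} can be carried out $\bK$-equivariantly: the multiplier ideal sheaves $\mcJ(m\Phi)$ are $\bK$-invariant because $\Phi$ is, so the normalized blow-ups $\mcX_m\to X\times\bC$ and the line bundles $\mcL_m=\mu_m^*p_1^*L-\frac{1}{m+m_0}E_m$ are $\bK$-equivariant. Thus the decreasing sequence $\check{\Phi}_{m,\NA}\in (\mcH^\NA)^\bK$ converges strongly to $\Phi_\NA$, placing $\Phi_\NA\in(\cE^{1,\NA})^\bK$.

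Now I invoke Corollary \ref{thm-maxYTD}. The inequality in \eqref{eq-entropyslope} gives $\bfM^\NA(\Phi_\NA)\le \bfM'^\infty(\Phi)\le 0$, while the identity $\inf_{\xi\in N_\bR}\bfJ'^\infty(\Phi_\xi)=\bfJ_\bT^\NA(\Phi_\NA)$ in \eqref{eq-energyslope}, combined with the Archimedean lower bound on the $\bT$-reduced slope of $\bfJ$ displayed above, forces $\bfJ_\bT^\NA(\Phi_\NA)>0$. Applying the hypothesis of $\bG$-uniform K-stability over $(\cE^{1,\NA})^\bK$ to $\Phi_\NA$ yields
\begin{equation*}
0\ge \bfM^\NA(\Phi_\NA)\ge \gamma\,\bfJ_\bT^\NA(\Phi_\NA)>0,
\end{equation*}
a contradiction.

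The main obstacle I expect is the extraction of the destabilizing ray $\Phi$ with the correct normalization and, more subtly, the identification of its Archimedean $\bT$-reduced $\bfJ$-slope with the non-Archimedean quantity $\bfJ_\bT^\NA(\Phi_\NA)$. The first input is Hisamoto's refinement of the Darvas-Rubinstein principle, which says that non-$\bG$-coercivity of $\bfM$ produces a $\bK$-invariant destabilizing geodesic ray whose $\bT$-reduced $\bfJ$-slope is bounded below by a positive constant; this is delicate because one must simultaneously normalize by the $\bT$-action and pass to a $d_1$-limit. The second input is the slope identity \eqref{eq-energyslope} in Corollary \ref{thm-maxYTD}: it is what allows the commutation of the infimum over $\sigma\in\bT$ at finite time $s$ with the slope at $s=\infty$, rephrased as an infimum over twists $\xi\in N_\bR$ on the non-Archimedean side. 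Once these two inputs are granted, all remaining steps reduce to bookkeeping in the framework of sections \ref{sec-NA}--\ref{sec-Echi}.
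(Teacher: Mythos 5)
Your proposal is correct and follows essentially the same route as the paper: argue by contradiction from non-coercivity, extract a $\bK$-invariant destabilizing geodesic ray, deduce maximality from Theorem \ref{thm-maximal}, and combine \eqref{eq-entropyslope}, \eqref{eq-energyslope} with the stability hypothesis to reach $0\ge \bfM^\NA(\Phi_\NA)\ge\gamma\,\bfJ^\NA_\bT(\Phi_\NA)>0$. The one input you treat as a black box --- the lower bound $\liminf_s s^{-1}\bfJ_\bT(\vphi(s))\ge 1$ for the limit ray --- is exactly the claim \eqref{eq-Jslopeuni} that the paper proves in detail, using the normalization $\bfJ(\vphi_j)=\inf_{\sigma\in\bT}\bfJ(\sigma^*\vphi_j)$ of the approximating sequence together with the near-linearity \eqref{eq-Jlinear} of $\bfJ$ along geodesic segments; your explicit $\bK$-equivariant multiplier approximation placing $\Phi_\NA$ in $(\cE^{1,\NA})^\bK$ is a point the paper leaves implicit.
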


\begin{proof}
By the previous works \cite{BDL17, BDL18, CC18a, CC18b, DR17} on analytic criterions for the existence of cscK, we just need to show that $\bfM$ is $\bG$-coercive, which means that there exist $\gamma>0$ and $C>0$ such that for any $\vphi\in \mcH(L)^{\bK}$, 
\begin{equation}
\bfM(\vphi)\ge \gamma\cdot \inf_{\sigma\in \bT}\bfJ(\sigma^*\vphi)-C.
\end{equation}
(see the beginning of this subsection for some notations.)

Assume that this is not true. 
There exists $\gamma_j\rightarrow 0^+$ and $\vphi_j\in \mcH(L)^\bK$ such that:
\begin{equation}
\bfM(\vphi_j)\le \gamma_j \bfJ(\vphi_j)-j, \quad \sup(\vphi_j-\psi)=0, 
\end{equation}
and (see \cite[Lemma 1.9]{His16b})
\begin{equation}\label{eq-vphijmin}
\bfJ(\vphi_j)=\inf_{\sigma\in \bT}\bfJ(\sigma^*\vphi_j). 
\end{equation}

We argue as in \cite{BBJ18} and \cite{DH17}.
Connect $\psi$ and $\vphi_j$ by a geodesic ray $\Phi_j=\{\vphi_j(s); 0\le s\le S_j\}$ with $S_j=-\bfE(\vphi_j)$. Then
by Lemma \ref{lem-cJgrow} we have the inequality $\bfM\ge -C-\delta \bfJ$ which gives us:
\begin{equation}\label{eq-Jdiverge}
\bfJ(\vphi_j)=\bfJ(\vphi_j(S_j))\ge \frac{j-C}{\delta+\gamma_j}\rightarrow+\infty.
\end{equation}
We then know that 
\begin{equation}
-\bfE(\vphi_j)=\bfJ(\vphi_j)+O(1)=S_j\rightarrow+\infty.
\end{equation}

Using the convexity of $\bfM$ we get:
\begin{equation}
\bfM(\vphi_j(s))\le \frac{s}{S_j}\bfM(\vphi_j)\le \gamma_j s. 
\end{equation}
and by using \eqref{eq-cJgrow} we get
\begin{equation}
\bfH(\vphi_j(s))=\bfM(\vphi_j(s))-\cJ(\vphi_j(s))\le \gamma_j s+C+\delta \bfJ(\vphi_j(s))\le Cs.
\end{equation}

So by the compactness result (\cite[Theorem 2.17]{BBEGZ}) in $\cE^1$, $\Phi_j=\{\vphi_j(s)\}$ converges locally uniformly to a finite energy geodesic ray $\Phi=\{\vphi(s)\}\subset (\cE^1)^\bK$ satisfying:
\begin{enumerate}
\item The Mabuchi energy is decreasing along $\Phi$:
\begin{equation}\label{eq-Mabdec}
\bfM'^\infty(\Phi)\le 0.
\end{equation}

\item We have a normalization:
\begin{equation}
\bfE(\vphi(s))=-s, \quad \sup (\vphi(s)-\psi)=0.
\end{equation}
\end{enumerate}

Moreover we claim that the following inequality holds true:
\begin{equation}\label{eq-Jslopeuni}
\inf_{\xi\in N_\bR}\bfJ'^\infty(\Phi_\xi)= 1.
\end{equation}
Assuming this claim, we can prove Proposition \ref{prop-exist1}. Indeed we then have the inequality that contradicts \eqref{eq-Mabdec}.
\begin{equation}
\bfM'^\infty(\Phi)\ge \bfM^\NA(\phi)\ge \gamma \inf_{\xi\in N_\bR} \bfJ^\NA(\phi_\xi)=\gamma \inf_{\xi \in N_\bR} \bfJ'^\infty(\Phi_\xi)\ge \gamma>0.
\end{equation}
Here the first inequality follows from Proposition \eqref{thm-HNA}. The first identity is the assumption of stability over $\mcE^{1,\NA}$ and the second identity follows from the maximality of $\Phi$.

To verify the claim \eqref{eq-Jslopeuni}, we use the fact that there exists a universal constant $\mathfrak{C}=\mathfrak{C}(\psi)$ such that for any $\vphi\in \cE^1$:
\begin{equation}
\sup(\vphi-\psi)-\bfE(\vphi) \ge \bfJ(\vphi)=\Lam(\vphi)-\bfE(\vphi) \ge \sup(\vphi-\psi)-\bfE(\vphi)-\mathfrak{C}.
\end{equation}
In our case, $\sup(\vphi(s)-\psi)-\bfE(\vphi(s))=-\bfE(\vphi(s))=s$ is linear which implies $\bfJ'^\infty(\Phi)=1$.
Moreover for any $s_1, s_2\in \bR_{>0}$, we have:
\begin{equation}\label{eq-Jlinear}
\bfJ(\vphi(s_1))\ge s_1-\mathfrak{C}=\frac{s_1}{s_2}(s_2)-\mathfrak{C}\ge \frac{s_1}{s_2}\bfJ(\vphi(s_2))-\mathfrak{C}.
\end{equation}
Now we apply this inequality to $(\Phi_j)_\xi=\{\sigma_\xi(s)^*\vphi_j(s)\}$ for any $\xi\in N_\bR$ to get:
\begin{eqnarray*}
\bfJ(\sigma_\xi(s)^*\vphi_j(s))&\ge& \frac{s}{S_j}\bfJ(\sigma_\xi(S_j)^*\vphi_j(S_j))-\mathfrak{C}\\
&\ge& \frac{s}{S_j}\bfJ(\vphi_j(S_j))-\mathfrak{C} \quad\quad\quad  (\text{by } \eqref{eq-vphijmin})\\
&=& \frac{s}{S_j}(S_j+O(1))-\mathfrak{C}= s-\mathfrak{C}+\frac{s}{S_j}O(1) .
\end{eqnarray*}
Because, for a fixed $s$, as $j\rightarrow+\infty$, $\vphi_j(s)$ converges strongly to $\vphi(s)$, it is easy to see that $\sigma_\xi(s)^*\vphi_j(s)$ also converges strongly to $\sigma_\xi(s)^*\vphi(s)$. So for fixed $s$, letting $j\rightarrow+\infty$, we get:
\begin{equation}
\bfJ(\sigma_\xi(s)^*\vphi(s))\ge s-\mathfrak{C}.
\end{equation}
Dividing $s$ on both sides and letting $s\rightarrow+\infty$, we indeed get \eqref{eq-Jslopeuni}.

\end{proof}

To improve our result, we will show the following result which depends on the fundamental works on non-Archimedean Monge-Amp\`{e}re equations in \cite{BFJ15, BoJ18a}.
We point out that the technical inputs for this result all come from \cite{BFJ15, BoJ18a} and the argument here shows some of their powers.
\begin{prop}\label{prop-regularize}
For any $\phi\in (\cE^{1,\NA})^\bK$, there exists a sequence $\{\phi_j\}\subset (\PSH^{\fM,\NA})^\bK$ such that $\phi_j$ converges strongly to $\phi$ and $\bfH^\NA(\phi_j)\rightarrow \bfH^\NA(\phi)$.
\end{prop}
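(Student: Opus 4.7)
The plan is to approximate the Monge-Amp\`{e}re measure $\mu := \MA^\NA(\phi)$ by pushforwards onto dual complexes of $\bK$-equivariant SNC models of $X$, solve the non-Archimedean Monge-Amp\`{e}re equation for each approximation via Theorem \ref{thm-NACY}, and use the envelope description of the resulting continuous solutions together with equation \eqref{eq-AXsup} to gain entropy convergence. This puts the main technical weight on the Boucksom-Favre-Jonsson / Boucksom-Jonsson theory cited at the end of the preceding paragraph.

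First, I would invoke $\bK$-equivariant embedded resolution of singularities to fix an exhaustive cofinal system of $\bK$-equivariant SNC models $\{\mcY_j\}_j \subset \fSN$, so that the retractions $r_{\mcY_j}: X^\NA\to \Delta_{\mcY_j}$ are $\bK$-equivariant and, as $j\to\infty$, $A_X\circ r_{\mcY_j}\nearrow A_X$ pointwise by \eqref{eq-AXsup}. Next, since $\phi\in (\cE^{1,\NA})^\bK$, the probability measure (of mass $V$) $\mu$ is $\bK$-invariant and has finite energy, so $\mu_j := (r_{\mcY_j})_*\mu$ is $\bK$-invariant, supported on $\Delta_{\mcY_j}$, and converges to $\mu$ strongly in $\mcM^{1,\NA}$; the strong convergence is precisely the regularization of finite-energy measures by SNC pushforward established in \cite{BoJ18a}. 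Applying the non-Archimedean Calabi-Yau theorem (Theorem \ref{thm-NACY}), let $\phi_j := \MA^{-1}(\mu_j)$, normalized by averaging over $\bK$ and translating so that, say, $\sup(\phi_j-\phi_\triv)$ is fixed. Then $\phi_j$ is continuous, $\bK$-invariant, and converges strongly to $\phi$ in $\cE^{1,\NA}/\bR$. Finally, since each $\mu_j$ is supported on $\Delta_{\mcY_j}$ and $r_{\mcY_j}$ restricts to the identity on $\Delta_{\mcY_j}$, we obtain
\begin{equation*}
\bfH^\NA(\phi_j)=\qV \int_{X^\NA}A_X\, d\mu_j=\qV \int_{X^\NA}A_X\circ r_{\mcY_j}\, d\mu \;\longrightarrow\; \qV\int_{X^\NA}A_X\, d\mu=\bfH^\NA(\phi)
\end{equation*}
by monotone convergence.

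The main obstacle is to ensure that $\phi_j$ actually lies in $(\PSH^{\fM,\NA})^\bK$, i.e.\ is a model filtration metric $\phi_{(\mcX_j,\mcL_j)}$ rather than merely a continuous psh metric. The strategy will be to exploit the envelope representation $\phi_j-\phi_\triv=P(f_j)$ guaranteed by \cite[Theorem 8.5]{BFJ16} together with the characterization of solutions to the NA Monge-Amp\`{e}re equation supported on a dual complex given in \cite{BFJ15, BoJ18a}, which identifies such $f_j$ with a model function on a $\bK$-equivariant blowup $\mcX_j$ of $\mcY_j$. Concretely, one argues that the orthogonality relation associated to the MA equation forces $P(f_j)=P(f_{\mcL_j})$ for a $\bQ$-line bundle $\mcL_j$ on $\mcX_j$, after possibly replacing the approximating sequence by a strongly convergent diagonal subsequence drawn from finite sums of divisorial Dirac masses on $\Delta_{\mcY_j}$ (whose NA MA preimages lie in $\mcH^\NA\subset \PSH^{\fM,\NA}$ directly). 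If the direct envelope identification is not available off the shelf, then this diagonal approximation through Dirac-mass measures, combined with the continuity of $A_X\circ r_{\mcY_j}$ and the strong continuity of $\bfE^{*\NA}$, will still deliver both strong convergence and entropy convergence, thereby completing the proof.
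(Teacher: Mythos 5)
Your first half is essentially the paper's Step 1: push $\mu=\MA^\NA(\phi)$ forward to dual complexes of SNC models, solve the Monge--Amp\`ere equation via Theorem \ref{thm-NACY}, and get entropy convergence from \eqref{eq-AXsup} by monotone convergence. That part is sound (the paper is slightly more careful, choosing one sequence of models that simultaneously realizes the suprema for $\bfE^{*\NA}$ and for $\int A_X$, but monotonicity along the net makes your version work too). The genuine gap is in how you dispose of the obstacle you correctly identify, namely that $\phi_j=(\MA^\NA)^{-1}(\mu_j)$ is only known to be continuous, not a model filtration metric. First, your parenthetical claim that the $\MA^\NA$-preimages of finite sums of divisorial Dirac masses lie in $\mcH^\NA$ is false: by \cite[Lemma 8.5, Proposition 8.6]{BFJ15} such a preimage is $\phi_\triv+P(f_{\mcL'})$ for $\mcL'=\rho'^*L+D'$ with $D'$ an $\bR$-divisor supported on the divisors corresponding to the atoms; this is an envelope, i.e.\ an element of $\PSH^{\fM,\NA}$ after a rational perturbation of the coefficients, but it is in general not the metric of a semiample test configuration. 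If it were, Conjecture \ref{conj-reg} would be immediate for these measures and Theorem \ref{thm-existmodel} would upgrade to the full uniform YTD statement, which is precisely what the paper cannot prove. Second, your ``orthogonality relation'' argument for identifying $P(f_j)$ with $P(f_{\mcL_j})$ is not a proof; the identification used in the paper is exactly the BFJ result just quoted, applied not to $\mu_j$ itself but to atomic approximations of it.

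Concretely, what is missing is the mechanism for producing a \emph{strongly convergent} sequence of atomic (divisorial Dirac mass) measures on a fixed dual complex $\Delta_\mcX$ approximating $\mu_j$. One cannot take arbitrary weak-$*$ atomic approximations: one needs divisorial atoms and convergence of $\bfE^{*\NA}$. The paper's device is to apply Multiplier Approximation to the continuous solution $\phi_j$ from Step 1, obtaining $\phi_{j,m}\in\mcH^\NA$ decreasing (in fact converging uniformly, by \cite[8.3]{BFJ15}) to $\phi_j$, so that $\MA^\NA(\phi_{j,m})$ is already atomic by \eqref{eq-MAmeas} and converges strongly to $\mu_j$; one then pushes these measures forward by $r_\mcX$ to land on $\Delta_\mcX$ and verifies (using $(r_\mcX)_*\mu_j=\mu_j$, Theorem \ref{thm-phidec} and formula \eqref{eq-E*NA}) that the pushforwards still converge strongly. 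Only then does one invoke \cite[Lemma 8.5, Proposition 8.6]{BFJ15} to see that the corresponding solutions are envelopes of model functions, and perturb the $\bR$-coefficients to rational ones without leaving $\Delta_\mcX$ or moving far in the strong topology. Your sketch gestures at a ``diagonal subsequence drawn from finite sums of divisorial Dirac masses'' but supplies neither the source of these measures nor the strong convergence, so as written the second half of the argument does not close.
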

\begin{proof}
We will prove this in two steps. For simplicity of notations, we assume that $\bG=\{e\}$, since the following argument can be easily carried out in the $\bG$-equivariant manner.

\noindent
{\bf Step 1:} We first show that there exists a sequence $\{\phi_j\}_j\subset \PSH^{0, \NA}$ s.t. $\bfH^\NA(\phi_j)\rightarrow \bfH^\NA(\phi)$ and moreover for each $j$,  $\MA^\NA(\phi_j)$ is supported on the dual complex of some SNC model.

We set $\nu=\MA^\NA(\phi)$ and use the regularization process as in \cite{BoJ18a}.
For any SNC model $\mcX$ we set $\nu_\mcX=(r_{\mcX})_*\nu$. 
By \cite[Corollary 7.20]{BoJ18a} we have:
\begin{equation}
\bfE^{*\NA}(\nu)=\sup_{\mcX\in \fDS} \bfE^{*\NA}(\nu_{\mcX}).
\end{equation}
Choose a sequence $\mcX'_j$ such that $\nu'_j:=\nu_{\mcX'_j}$ satisfies $\bfE^{*\NA}(\nu'_j)\rightarrow \bfE^{*\NA}(\nu)$. 

On the other hand, by \cite[Lemma 2.3]{BoJ18b}, we have the identity:
\begin{equation}
\bfH^\NA(\phi)=\int_{X^\NA} A_X(x)\MA(\phi)(x)=\sup_{\mcX\in \fSN}\int_{X^\NA}A_X \nu_{\mcX}.
\end{equation}
So we can choose a sequence of SNC models $\mcX''_j$ such that $\nu''_j:=(r_{\mcX''_j})_*\nu$ satisfies:
\begin{equation}
\bfH^\NA(\phi)=\lim_{j\rightarrow+\infty} \int_{X^\NA}A_X \nu''_j.
\end{equation}
Let $\mcX_j$ be an SNC model satisfying $\mcX_j\ge \mcX'_j$ and $\mcX_j\ge \mcX''_j$ and set $\nu_j=(r_{\mcX_j})_*\nu$. Recall that $(A_X\circ r_{\mcX})_{\mcX\in \fDS}$ is increasing, and $((\phi-\phi_\triv)\circ r_{\mcX})_{\mcX\in \fDS})$ is decreasing (see Theorem \ref{thm-phidec}) which by formula \eqref{eq-E*NA} implies that $\bfE^{*\NA}(\nu_j)\ge \bfE^{*\NA}(\nu'_j)$. So we easily get:
\begin{equation}
\bfE^{*\NA}(\nu)=\lim_{j\rightarrow+\infty} \bfE^{*\NA}(\nu_j), \quad  \bfH^\NA(\phi)=\lim_{j\rightarrow+\infty} \int_{X^\NA}A_X \nu_j.
\end{equation}
Now set $\phi_j=(\MA^\NA)^{-1}(\nu_j)$ (see Theorem \ref{thm-NACY}).  By \cite[Lemma 7.23]{BoJ18a}, we have $\phi_j\rightarrow \phi$ in the strong topology. 
Because $\nu_j$ is supported on a dual complex, by \cite[Theorem A]{BFJ15}, $\phi_j$ is a continuous metric.

\noindent
{\bf Step 2:}
The above step reduces the problem to the situation where $\nu$ is a Radon measure supported on a dual complex of a SNC model $\mcX$. Fix such a measure, set $\phi=\MA^{-1}(\nu)\in \PSH^{0, \NA}$. By Multiplier Approximation, there exists a decreasing sequence $\{\phi_m\}_m\subset \mcH^\NA$ converging to $\phi$. In particular $\phi_m$ converges to $\phi$ strongly. Actually by \cite[8.3]{BFJ15}, $\phi_m$ converges to $\phi$ uniformly. As a consequence $\nu_m:=\MA^\NA(\phi_m)$ converges to $\MA^\NA(\phi)$ strongly, which means that (see Definition \ref{def-M1NA}):
\begin{equation}\label{eq-E*conv1}
\nu_m\overset{w}{\longrightarrow} \nu, \quad \bfE^{*\NA}(\nu_m)\rightarrow \bfE^{*\NA}(\nu).
\end{equation}

Now set $\nu'_m=(r_{\mcX})_*\nu_m$. Then the measure $\nu'_m$ is supported on finitely many divisorial points $\{x^{(m)}_1,\dots, x^{(m)}_{p_m}\}$ contained in $\Delta_\mcX$.
Moreover we claim that  
$\nu'_m$ converges to $\nu$ strongly. 

Assuming this claim, we can set $\phi'_m=(\MA^\NA)^{-1}(\nu'_m)$. Then by \cite[Lemma 8.5, Proposition 8.6]{BFJ15}, $\phi'_m=\phi_\triv+P(f_{\mcL'_m})$ (see \eqref{eq-upenv}) where $f_{\mcL'_m}$ is a model function on a SNC model $\mcX'_m$ and $\mcL'_m=\rho'^*_m L+D'_m$ is $D'_m=\sum_{k=1}^{p_m} t_i E_i$ an $\bR$-divisor supported on the divisors $E_i$ that corresponds to the divisorial point $x^{(m)}_i$. By perturbing and decreasing the coefficients of $E_i$, we can assume that $f_{\mcL'_m}$ has rational values at vertices. Note that this perturbation will also perturb the Monge-Amp\`{e}re measure in the strong topology (see \cite[Lemma 5.24]{BoJ18a}) and does not change the property that $\MA^\NA(\phi'_m)$ is supported on the fixed dual complex $\Delta_\mcX$ (see \cite[Lemma 8.5]{BFJ15} or \cite{Li20b}). So we can assume $\phi'_m$ is a sequence of envelopes of rational model functions 
and satisfies  $\phi'_m\rightarrow \phi$ strongly. Because the log discrepancy function $A_X$ is continuous on $\Delta_\mcX$, we get the convergence:
\begin{equation}
\int_{X^\NA} A_X(x)\MA^\NA(\phi'_m)(x)=\int_{\Delta_{\mcX}}A_X \nu'_m\overset{m\rightarrow+\infty}{\longrightarrow} \int_{\Delta_{\mcX}}A_X \nu=\int_{X^\NA}A_X(x)\MA^\NA(\phi)(x).
\end{equation}
Now we verify the claim. First we verify that $\nu'_m=(r_\mcX)_*\nu_m$ converges to $\nu$ weakly.  
Indeed, for any continuous function $u\in C^0(X^\NA)$, we have:
\begin{equation}
\int_{X^\NA} u \nu'_m=\int_{X^\NA} u\circ r_{\mcX} \nu_m\longrightarrow \int_{X^\NA} u\circ r_\mcX \nu=\int_{X^\NA} u \nu,
\end{equation}
where we used $(r_\mcX)_*\nu=\nu$.
Finally we show that $\bfE^{*\NA}(\nu'_m)\rightarrow \bfE^{*\NA}(\nu)$. To see this, note that $\bfE^{*\NA}(\nu'_m)=\bfE^{*\NA}((r_\mcX)_*\nu_m)\le \bfE^{*\NA}(\nu_m)$, because $\phi-\phi_\triv\le (\phi-\phi_\triv)\circ r_{\mcX}$ (see Theorem \ref{thm-phidec}). So by \eqref{eq-E*conv1}, we have:
\begin{equation}
\limsup_{m\rightarrow+\infty} \bfE^{*\NA}(\nu'_m)\le \limsup_{m\rightarrow+\infty}\bfE^{*\NA}(\nu_m)=\bfE^{*\NA}(\nu).
\end{equation}

On the other hand, by using the defining formula \eqref{eq-E*NA} of $\bfE^{*\NA}(\nu'_m)$
\begin{eqnarray*}
\bfE^{*\NA}(\nu'_m)&=&\sup_{\tilde{\phi} \in \cE^{1,\NA}}\left\{\bfE^\NA(\tilde{\phi})-\int_{X^\NA}(\tilde{\phi}-\phi_\triv)\nu'_m\right\}\\
&=&\sup_{\tilde{\phi}\in \cE^{1,\NA}}\left\{\bfE^\NA(\tilde{\phi})-\int_{X^\NA}(\tilde{\phi}-\phi_\triv)\circ r_\mcX \nu_m\right\},
\end{eqnarray*}
we use the continuity of $(\tilde{\phi}-\phi_\triv)\circ r_\mcX$ and the convergence of $\nu_m$ to $\nu$ to easily get that:
\begin{eqnarray*}
\liminf_{m\rightarrow+\infty} \bfE^{*\NA}(\nu'_m)&\ge& \sup_{\tilde{\phi}\in \mcE^{1,\NA}}\left\{\bfE^\NA(\tilde{\phi})-\int_{X^\NA}(\tilde{\phi}-\phi_\triv)\circ r_\mcX \nu\right\}\\
&=&\sup_{\tilde{\phi}\in \cE^{1,\NA}}\left\{\bfE^\NA(\tilde{\phi})-\int_{X^\NA}(\tilde{\phi}-\phi_\triv)\nu \right\}\\
&=&\bfE^{*\NA}(\nu).
\end{eqnarray*}
So we get the conclusion.

\end{proof}

We now state a useful lemma.
\begin{lem}\label{lem-JTNAconv}
If $\phi_j\rightarrow \phi$ in the strong topology, then $\bfJ^\NA_\bT(\phi_j)\rightarrow \bfJ^\NA_\bT(\phi)$.
\end{lem}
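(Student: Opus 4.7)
The plan is to use the transformation formula $\bfE^\NA(\phi_\xi) = \bfE^\NA(\phi) + \chw_L(\xi)$ to rewrite
\[
\bfJ^\NA(\phi_\xi) = \Lam^\NA(\phi_\xi) - \bfE^\NA(\phi) - \chw_L(\xi).
\]
Since strong convergence gives $\bfE^\NA(\phi_j) \to \bfE^\NA(\phi)$ by definition, the problem reduces to proving
\[
g(\phi_j) \longrightarrow g(\phi), \qquad g(\phi) := \inf_{\xi \in N_\bR}\bigl[\Lam^\NA(\phi_\xi) - \chw_L(\xi)\bigr].
\]
I would then prove the two inequalities $\limsup_j g(\phi_j) \le g(\phi)$ and $\liminf_j g(\phi_j) \ge g(\phi)$ separately.

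For the upper bound, I would fix an arbitrary $\xi_0 \in N_\bR$ and first verify that $\phi_{j,\xi_0} \to \phi_{\xi_0}$ in the strong topology. At quasi-monomial points the twist formula \eqref{eq-phixi} gives $(\phi_{j,\xi_0} - \phi_\triv)(v) = (\phi_j - \phi_\triv)(v_{\xi_0}) + \theta_{\xi_0}(v)$, and since $v_{\xi_0}$ is again quasi-monomial, the pointwise convergence follows from the corresponding property of $\phi_j \to \phi$; the energies agree up to the common constant $\chw_L(\xi_0)$, so they converge as well. By strong continuity of $\Lam^\NA$ (Proposition \ref{prop-strongcont}), $\Lam^\NA(\phi_{j,\xi_0}) \to \Lam^\NA(\phi_{\xi_0})$, hence $\limsup_j g(\phi_j) \le \Lam^\NA(\phi_{\xi_0}) - \chw_L(\xi_0)$, and taking the infimum over $\xi_0$ yields the upper bound.

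For the lower bound, I would pick near-minimizers $\xi_j \in N_\bR$ with $\bfJ^\NA(\phi_{j,\xi_j}) \le \bfJ^\NA_\bT(\phi_j) + 1/j$. The key step is to show that the sequence $\{\xi_j\}$ is bounded in $N_\bR$. Once such boundedness is known, a diagonal extraction gives a subsequence $\xi_j \to \xi_\infty$, and the same strong-continuity argument used above (applied now to the convergent twists $\phi_{j,\xi_j} \to \phi_{\xi_\infty}$) yields
\[
\bfJ^\NA_\bT(\phi) \le \bfJ^\NA(\phi_{\xi_\infty}) = \lim_j \bfJ^\NA(\phi_{j,\xi_j}) = \liminf_j \bfJ^\NA_\bT(\phi_j).
\]

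The main obstacle is the uniform boundedness of the minimizing sequence $\{\xi_j\}$, which amounts to a coercivity estimate of the form $\bfJ^\NA(\phi_{j,\xi}) \ge c\,|\xi| - C$ that is uniform in $j$, with $c > 0$ and $C$ depending only on quantities controlled along the strongly convergent sequence. The linear-in-$|\xi|$ growth is dictated by the support function of the $\bT$-weight polytope of $L$ (via $\Lam^\NA(\phi_{\triv,\xi}) - \chw_L(\xi)$ which is coercive in $\xi$), while the $|\xi|$-independent error can be controlled in terms of $\bfI^\NA(\phi_j, \phi_\triv)$, a quantity that stays bounded along $\{\phi_j\}$ by the quasi-triangle inequality \eqref{eq-Itri} combined with the strong continuity of $\Lam^\NA$ and $\bfE^\NA$ (Proposition \ref{prop-strongcont}). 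This is essentially the non-Archimedean analogue of the argument used by Hisamoto and Darvas--Rubinstein in the Archimedean setting, and completing this estimate is where the bulk of the technical work lies.
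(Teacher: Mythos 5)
Your overall strategy is the same as the paper's: split into a $\limsup$ and a $\liminf$ inequality, get the upper bound by fixing $\xi_0$ and using strong continuity of $\Lam^\NA$ under twisting, and reduce the lower bound to showing that the infimum over $\xi$ can be restricted to a bounded subset of $N_\bR$. You correctly identify that the uniform boundedness of the near-minimizers $\xi_j$ is the crux, but you leave it as an acknowledged open step ("where the bulk of the technical work lies"), so as written the proof is not complete. The paper closes exactly this gap with a short argument you should be able to reproduce: by the quasi-triangle inequality for $\bfI^\NA$ and the twist-invariance $\bfI^\NA(\phi_{1,\xi},\phi_{2,\xi})=\bfI^\NA(\phi_1,\phi_2)$ from \eqref{eq-Ixisame}, one has
\begin{equation*}
\bfI^\NA(\phi_\triv,\phi_{\triv,\xi}) \le C_n \max\bigl\{\bfI^\NA(\phi_\triv,\phi_{j,\xi}),\; \bfI^\NA(\phi_j,\phi_\triv)\bigr\},
\end{equation*}
where the second entry is uniformly bounded along the strongly convergent sequence and the first is controlled by $\bfJ^\NA(\phi_{j,\xi})$ via \eqref{eq-IJineq}; since $\xi\mapsto \bfI^\NA(\phi_\triv,\phi_{\triv,\xi})$ is proper on $N_\bR$, a bound on $\bfJ^\NA(\phi_{j,\xi})$ forces a bound on $\xi$, uniformly in $j$. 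This is the same Hisamoto-type coercivity you gesture at, just run through $\bfI^\NA$ rather than through the support function of the weight polytope.

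One further difference in the final step: rather than extracting a convergent subsequence $\xi_j\to\xi_\infty$ and arguing that $\phi_{j,\xi_j}\to\phi_{\xi_\infty}$ strongly (which requires a joint continuity in $(\phi,\xi)$ that you do not justify), the paper invokes the quantitative estimate
\begin{equation*}
\bigl|\bfJ^\NA(\phi_{j,\xi})-\bfJ^\NA(\phi_\xi)\bigr|\le C_n\, \bfI^\NA(\phi_{j,\xi},\phi_\xi)\, \max\bigl(\bfJ^\NA(\phi_\xi),\bfJ^\NA(\phi_{j,\xi})\bigr)^{1-2^{-n}}
\end{equation*}
from \cite[Corollary 3.21]{BoJ18a} to get \emph{uniform} convergence of $\xi\mapsto\bfJ^\NA(\phi_{j,\xi})$ on the compact set $\{|\xi|\le C\}$, from which the convergence of the infima is immediate. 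I would recommend adopting this route, as it sidesteps the joint-continuity issue entirely.
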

\begin{proof}
Because $\bfE^\NA(\phi_{j,\xi})-\bfE^\NA(\phi_\xi)=\bfE^\NA(\phi_j)-\bfE^\NA(\phi)$ for any $\xi\in N_\bR$, we know that $\phi_{j,\xi}\rightarrow \phi_\xi$ strongly. Note that
\begin{equation}
\bfJ^\NA_\bT(\phi_j)=\inf_{\xi\in N_\bR} (\Lam^\NA(\phi_{j,\xi})-\chw(\xi))-\bfE(\phi_j).
\end{equation}
From this it is easy to see that $\limsup_{j\rightarrow+\infty} \bfJ^\NA_\bT(\phi_j)\le \bfJ^\NA_\bT(\phi)$. To prove the other direction of inequality, we first claim that
there exists $C>0$ such that:
\begin{equation}\label{eq-xibd}
\bfJ^\NA_\bT(\phi_j)=\inf_{|\xi|\le C}\bfJ^\NA(\phi_{j,\xi}), \quad \bfJ^\NA_\bT(\phi)=\inf_{|\xi|\le C}\bfJ^\NA(\phi_\xi).
\end{equation}
 To see this, we first use an idea of Hisamoto (see \cite{His16b, His19}) which uses the quasi-triangle inequality (see \cite[Theorem 1.8]{BBEGZ} or \cite[Lemma 3.16]{BoJ18a}) to estimate
 for every $\xi\in N_\bR$:
\begin{eqnarray*}
\bfI^\NA(\phi_\triv, \phi_{\triv, \xi})&\le& C_n \max\{\bfI^\NA(\phi_\triv, \phi_{j,\xi}), \bfI^\NA(\phi_{j,\xi}, \phi_{\triv, \xi})\}.
\end{eqnarray*}
Here we used the identity \eqref{eq-Ixisame}. This easily implies that bounded $\bfJ(\phi_{j,\xi})$ implies bounded $\xi$, which implies \eqref{eq-xibd}.

Now we use the estimate from \cite[Corollary 3.21]{BoJ18a}:
\begin{eqnarray*}
\left|\bfJ^\NA(\phi_{j,\xi})-\bfJ^\NA(\phi_\xi)\right|\le C_n \bfI^\NA(\phi_{j,\xi}, \phi_\xi) \max\left(\bfJ^\NA(\phi_\xi), \bfJ^\NA(\phi_{j,\xi})\right)^{1-2^{-n}}
\end{eqnarray*}
So the functions $\xi\mapsto \bfJ^\NA(\phi_{j,\xi})$ converge to $\xi\mapsto \bfJ^\NA(\phi_\xi)$ uniformly over $\{|\xi|\le C\}$. From this we easily get the convergence.
\end{proof}

Combining the above discussion, we get the following existence result which says it is enough to check uniform K-stability for a special class of filtrations to get cscK:
\begin{thm}[=Theorem \ref{thm-existmodel}]\label{thm-existfiltr}
If $(X, L)$ is $\bG$-uniformly $K$-stable for model filtrations (see Definition \ref{def-Kst}), then the Mabuchi functional of $(X, L)$ is proper and hence there is a cscK metric. 
\end{thm}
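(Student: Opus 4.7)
The plan is to deduce the theorem from Proposition \ref{prop-exist1}, which already establishes that $\bG$-uniform K-stability over the full space $(\cE^{1,\NA})^\bK$ implies the existence of a cscK metric (and hence, by the analytic criterion of Chen-Cheng together with the Darvas-Rubinstein/Hisamoto $\bG$-equivariant refinement, properness of the Mabuchi functional). It therefore suffices to upgrade the hypothesis of $\bG$-uniform K-stability for model filtrations, i.e.\ the inequality $\bfM^\NA(\phi)\ge\gamma\,\bfJ^\NA_\bT(\phi)$ for $\phi\in(\PSH^{\fM,\NA})^\bK$, to the same inequality for every $\phi\in(\cE^{1,\NA})^\bK$.

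Fix an arbitrary $\phi\in(\cE^{1,\NA})^\bK$. The key step is to invoke Proposition \ref{prop-regularize} to produce a sequence $\{\phi_j\}\subset(\PSH^{\fM,\NA})^\bK$ converging strongly to $\phi$ with the crucial entropy convergence $\bfH^\NA(\phi_j)\to\bfH^\NA(\phi)$. The remaining pieces of the decomposition $\bfM^\NA=\bfH^\NA+\bfR^\NA+\ud{S}\bfE^\NA$ are automatically continuous along strong convergence: $\bfE^\NA(\phi_j)\to\bfE^\NA(\phi)$ directly from the definition of the strong topology (Definition \ref{def-cE1NA}), and $\bfR^\NA(\phi_j)=(\bfE^{K_X})^\NA(\phi_j)\to\bfR^\NA(\phi)$ via Proposition \ref{prop-strongcont} applied to $\mcQ=K^{\log}_{X_\bC/\bC}$. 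Hence $\bfM^\NA(\phi_j)\to\bfM^\NA(\phi)$. For the right-hand side, Lemma \ref{lem-JTNAconv} gives $\bfJ^\NA_\bT(\phi_j)\to\bfJ^\NA_\bT(\phi)$. Passing to the limit in $\bfM^\NA(\phi_j)\ge\gamma\,\bfJ^\NA_\bT(\phi_j)$ then yields $\bfM^\NA(\phi)\ge\gamma\,\bfJ^\NA_\bT(\phi)$, so that Proposition \ref{prop-exist1} applies and closes the argument.

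The real difficulty is concentrated in Proposition \ref{prop-regularize}: since $\bfH^\NA$ is in general only lower semi-continuous in the strong topology, a naive approximation is not enough, and the $\bG$-equivariant construction of an approximating sequence with exact entropy convergence is precisely what requires the Boucksom-Favre-Jonsson resolution of the non-Archimedean Monge-Amp\`ere equation, the pushforward regularization $\nu\mapsto(r_\mcX)_*\nu$ along SNC dual complexes, and the model-filtration description of envelopes of rational model functions. With that technical input in hand, the proof of Theorem \ref{thm-existfiltr} itself is the soft three-step limiting argument sketched above: regularize by model filtrations preserving the entropy in the limit, use strong continuity of $\bfE^\NA$, $\bfR^\NA$, $\bfJ^\NA_\bT$, and then quote Proposition \ref{prop-exist1}.
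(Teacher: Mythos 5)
Your proposal is correct and follows essentially the same route as the paper: the paper's proof likewise rests on Proposition \ref{prop-regularize}, the strong continuity of $\bfE^\NA$ and $\bfR^\NA$ (Proposition \ref{prop-strongcont}), Lemma \ref{lem-JTNAconv}, and the destabilizing-geodesic-ray argument of Proposition \ref{prop-exist1}. The only cosmetic difference is in packaging: the paper applies the model-filtration regularization directly to $\phi=\Phi_\NA$ for the destabilizing ray inside the contradiction argument, whereas you first upgrade the stability inequality to all of $(\cE^{1,\NA})^\bK$ and then invoke Proposition \ref{prop-exist1} as a black box.
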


\begin{proof}



Let $\Phi$ be the destabilizing geodesic ray from the proof of Theorem \ref{prop-exist1}. In particular, $\Phi$ satisfies 
\begin{equation}
\bfM'^\infty(\Phi)\le 0, \quad \inf_{\xi\in N_\bR} \bfJ'^\infty(\Phi_\xi)=1.
\end{equation}
Let $\phi=\Phi_\NA\in (\cE^{1,\NA})^\bK$ denote the non-Archimedean metric associated to $\Phi$ and let $\nu=\MA^\NA(\phi)$ be its Monge-Amp\`{e}re measure.  
By Proposition \ref{prop-regularize}, we can find $\phi_j\in (\PSH^{\fM,\NA})^\bK$ such that $\phi_j\rightarrow \phi$ strongly and $\bfH^\NA(\phi_j)\rightarrow \bfH^\NA(\phi)$. 
By Proposition \ref{prop-strongcont}, we also get $\bfM^\NA(\phi_j)\rightarrow \bfM^\NA(\phi)$. Now assume that $(X, L)$ is $\bG$-uniformly K-stable for model filtrations. Then we get:
\begin{eqnarray*}
\bfM'^\infty(\Phi)&\ge& \bfM^\NA(\phi)\quad \quad (\text{\eqref{eq-entropyslope} or Theorem \ref{thm-grslope}})\\
&=&\lim_{j\rightarrow+\infty} \bfM^\NA(\phi_j) \\
&\ge& \lim_{j\rightarrow+\infty}  \gamma \cdot \bfJ^\NA_\bT(\phi_j) \quad (\text{$\bG$-uniform stability })\\
&=&\gamma \cdot \bfJ^\NA_\bT(\phi)\quad (\text{Lemma } \ref{lem-JTNAconv}) \\
&=&\gamma \cdot \inf_{\xi\in N_\bR}\bfJ'^\infty(\Phi_\xi)=\gamma>0.  \quad (\text{\eqref{eq-energyslope} and \eqref{eq-Jslopeuni}})
\end{eqnarray*}
But this contradicts $\bfM'^\infty(\Phi)\le 0$.
\end{proof}
For comparison, we state and sketch the proof of a result that is known by the works \cite{His16b, Li19, Sze15} (see also \cite{BoJ18b}).
\begin{lem}
\label{lem-HisSze}
With the same notations as before, assume that $\bG$ contains a maximal torus of $\Aut(X, L)_0$. 
If $(X, L)$ admits a cscK metric, then for any filtration $\mcF=\mcF R_\bullet$ we have (with $\phi_m=\FS(\mcF R_m)$)
\begin{equation}\label{eq-Futproper}
\liminf_{m\rightarrow+\infty} \Fut(\phi_m) \ge \gamma\cdot \bfJ^\NA_\bT (\phi_\mcF).
\end{equation}
\end{lem}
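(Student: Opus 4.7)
The plan is to apply the $\bG$-coercivity of the Mabuchi energy along each Phong-Sturm geodesic ray associated to the finitely generated pieces $\mcF R_m$, and then pass to the limit $m\to+\infty$. Since $(X,L)$ admits a cscK metric and $\bG$ contains a maximal torus of $\Aut(X,L)_0$, by \cite{BDL17,CC18a,CC18b,DR17,His16b} there exist $\gamma>0$ and $C>0$ with
\begin{equation*}
\bfM(\vphi)\ge \gamma\cdot \inf_{\sigma\in\bT}\bfJ(\sigma^*\vphi)-C
\end{equation*}
for every $\vphi\in \mcH(L)$ (after $\bK$-averaging to reduce to the $\bK$-invariant case if necessary). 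Each $\phi_m=\FS(\mcF R_m)$ corresponds to a semiample test configuration $(\mcX_m,\mcL_m)$, so Theorem \ref{thm-BBJ} produces the associated maximal geodesic ray $\Phi_m=\{\vphi_m(s)\}$ emanating from $\psi$.

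I would plug $\vphi_m(s)$ into the coercivity inequality, divide by $s$, and let $s\to+\infty$. On the left, Theorem \ref{thm-grslope}.2 yields the slope formula $\bfM'^\infty(\Phi_m)=\bfM^\NA(\phi_m)$. On the right, Hisamoto's slope formula (\cite[Theorem 3.3]{His16b}) combined with Lemma \ref{lem-energyslope} applied to each twist $\Phi_{m,\xi}$ gives
\begin{equation*}
\lim_{s\to+\infty}\frac{1}{s}\inf_{\sigma\in\bT}\bfJ(\sigma^*\vphi_m(s))=\inf_{\xi\in N_\bR}\bfJ^\NA(\phi_{m,\xi})=\bfJ^\NA_\bT(\phi_m),
\end{equation*}
and hence $\bfM^\NA(\phi_m)\ge \gamma\,\bfJ^\NA_\bT(\phi_m)$. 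To replace $\bfM^\NA$ by $\Fut$, I would write the central fibre as $\mcX_{m,0}=\sum_i b_i E_i$ and use the formulas \eqref{eq-MNAH} and \eqref{eq-FutNA} to compute
\begin{equation*}
\Fut(\phi_m)-\bfM^\NA(\phi_m)=\bigl(K_{\bar{\mcX}_m/\bP^1}-K^{\log}_{\bar{\mcX}_m/\bP^1}\bigr)\cdot \bar{\mcL}_m^{\cdot n}=\sum_i (b_i-1)\,(\bar{\mcL}_m|_{E_i})^{\cdot n}\ge 0,
\end{equation*}
which is non-negative since $\bar{\mcL}_m$ is semiample. Thus $\Fut(\phi_m)\ge \gamma\,\bfJ^\NA_\bT(\phi_m)$.

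Finally, $\{\phi_m\}$ is the increasing sequence in $\mcH^\NA(L)$ converging to the bounded metric $\phi_\mcF$ (Theorem \ref{thm-lowerreg}), and continuity of $\bfE^\NA$ along such increasing limits promotes this to strong convergence in $\cE^{1,\NA}(L)$. Lemma \ref{lem-JTNAconv} then yields $\bfJ^\NA_\bT(\phi_m)\to \bfJ^\NA_\bT(\phi_\mcF)$, and taking $\liminf_{m\to+\infty}$ in the inequality above gives \eqref{eq-Futproper}. The main technical ingredient, beyond the analytic coercivity furnished by the cscK hypothesis, is Hisamoto's slope formula for the torus-averaged $\bfJ$, which is precisely what bridges the Archimedean infimum $\inf_{\sigma\in\bT}\bfJ(\sigma^*\cdot)$ and its non-Archimedean counterpart $\inf_{\xi\in N_\bR}\bfJ^\NA(\cdot_\xi)$; the rest is a packaging of the slope formulas already established in Theorem \ref{thm-grslope}.2 and Lemma \ref{lem-energyslope}.
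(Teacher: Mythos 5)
Your proposal is correct and follows essentially the same route as the paper: cscK implies $\bG$-coercivity of $\bfM$ (Darvas--Rubinstein/Hisamoto/\cite{Li19}), taking slopes along the rays associated to the test configurations $\phi_m=\FS(\mcF R_m)$ gives $\bfM^\NA(\phi_m)\ge\gamma\,\bfJ^\NA_\bT(\phi_m)$, one upgrades $\bfM^\NA$ to $\Fut$, and one concludes by strong convergence $\phi_m\to\phi_\mcF$ and Lemma \ref{lem-JTNAconv}. The only cosmetic difference is in the step $\Fut(\phi_m)\ge\bfM^\NA(\phi_m)$: the paper invokes base change to reduce the central fibre, while you compute directly that $\Fut-\bfM^\NA=\sum_i(b_i-1)(\bar{\mcL}_m|_{E_i})^{\cdot n}\ge 0$ by semiampleness, which is a valid (and arguably cleaner) justification of the same inequality.
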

\begin{proof}
Applying Darvas-Rubinstein's principle \cite{DR17} as in the way as \cite[Theorem 3.3]{His16b} and \cite[Theorem 2.15]{Li19}, we know that Mabuchi energy is $\bG$-coercive. By Hisamoto's slope formula \cite{His16b}, we then know that there exists $\gamma>0$ such that 
\begin{equation}
\bfM^\NA(\phi_m)\ge \gamma \cdot  \bfJ^\NA_\bT(\phi_m).
\end{equation}
Using base change, we get:
\begin{eqnarray*}
\Fut(\phi_m)\ge \gamma \cdot \bfJ^\NA_\bT(\phi_m).
\end{eqnarray*}
The result now follows by letting $m\rightarrow+\infty$ and Lemma \ref{lem-JTNAconv}.
\end{proof}
In \cite{Sze15}, the left-hand-side of \eqref{eq-Futproper} was defined to be $\Fut(\mcF)$. In general, by the lower semi-continuity of non-Archimedean entropy, we have the inequality
\begin{equation}
\bfM^\NA(\phi_\mcF)\le \liminf_{m\rightarrow+\infty} \bfM^\NA(\phi_m)\le \liminf_{m\rightarrow+\infty} \Fut(\phi_m)=\Fut(\mcF).
\end{equation}
Note that the second inequality is in general strict even for fixed metric in $\mcH^\NA$ if the central fibre of a corresponding test configuration has non-reduced components.
On the other hand,  the Conjecture \ref{conj-reg} implies that $\bfM^\NA(\phi_\mcF)=\lim_{m\rightarrow+\infty} \Fut(\tilde{\phi}_m)$ with some sequence $\tilde{\phi}_m\in \mcH^\NA(L)$.


\begin{prop}\label{prop-YTDconj}
If the Conjecture \ref{conj-reg} is true for $\phi\in \PSH^{\fM,\NA}(L)$ (see Definition \ref{def-modfil}), then $(X, L)$ admits a cscK metric if (and only if) $(X, L)$ is $\Aut(X, L)_0$-uniformly K-stable.
\end{prop}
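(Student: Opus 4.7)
The ``only if'' direction is essentially Hisamoto's result recalled after Conjecture \ref{conj-YTD} (see also Lemma \ref{lem-HisSze}): existence of a cscK metric forces the Mabuchi functional to be $\bG$-coercive with $\bG=\Aut(X,L)_0$, and Hisamoto's slope formula then gives $\bG$-uniform K-stability over $\mcH^\NA$. So the content of the proposition is the converse, and my plan is to use Conjecture \ref{conj-reg} for model filtrations to \emph{upgrade} $\bG$-uniform K-stability over $\mcH^\NA$ to $\bG$-uniform K-stability for model filtrations, and then quote Theorem \ref{thm-existfiltr}.

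Fix an arbitrary $\phi \in (\PSH^{\fM,\NA})^\bK$. By the assumption that Conjecture \ref{conj-reg} holds for model filtrations (applied in a $\bK$-equivariant fashion, which can be arranged exactly as in Proposition \ref{prop-regularize} by averaging the Multiplier Approximation of the associated maximal geodesic ray over the compact group $\bK$), I obtain a sequence $\{\phi_m\}\subset (\mcH^\NA)^\bK$ with $\phi_m \to \phi$ strongly and the entropy convergence
\begin{equation*}
\bfH^\NA(\phi_m) = \qV\int_{X^\NA}A_X(v)\,\MA^\NA(\phi_m) \;\longrightarrow\; \qV\int_{X^\NA}A_X(v)\,\MA^\NA(\phi) = \bfH^\NA(\phi).
\end{equation*}

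Next I invoke strong continuity of the remaining non-Archimedean functionals: by Proposition \ref{prop-strongcont}, $\bfE^\NA$ and $\bfR^\NA$ are continuous in the strong topology on $\cE^{1,\NA}$, and by Lemma \ref{lem-JTNAconv}, so is $\bfJ^\NA_\bT$. Combining this with the entropy convergence above and the decomposition \eqref{eq-MNAE1} of $\bfM^\NA$ yields $\bfM^\NA(\phi_m)\to \bfM^\NA(\phi)$ and $\bfJ^\NA_\bT(\phi_m)\to \bfJ^\NA_\bT(\phi)$. Since each $\phi_m$ lies in $(\mcH^\NA)^\bK$ and $(X,L)$ is $\bG$-uniformly K-stable over $\mcH^\NA$ by hypothesis, there is a uniform $\gamma>0$ with $\bfM^\NA(\phi_m)\ge \gamma\cdot \bfJ^\NA_\bT(\phi_m)$; passing to the limit gives $\bfM^\NA(\phi)\ge \gamma\cdot \bfJ^\NA_\bT(\phi)$. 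As $\phi \in (\PSH^{\fM,\NA})^\bK$ was arbitrary, this is precisely $\bG$-uniform K-stability for model filtrations, and Theorem \ref{thm-existfiltr} produces the cscK metric.

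The real obstacle is packaged into the hypothesis: Conjecture \ref{conj-reg} for model filtrations is the deep open input, and it is the only thing that controls the generically \emph{lower semicontinuous} (but not continuous) functional $\bfH^\NA$. Once that convergence is granted, everything else is either strong continuity of ``smooth'' functionals already in the paper or a direct appeal to Theorem \ref{thm-existfiltr}. The only minor technicality is making the regularizing sequence $\bK$-equivariant, but this is automatic from the canonical nature of the envelopes and the non-Archimedean Calabi--Yau inverse used in Proposition \ref{prop-regularize}, together with $\bK$-averaging.
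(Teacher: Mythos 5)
Your proposal is correct and follows essentially the same route as the paper: both use Conjecture \ref{conj-reg} for model filtrations together with the strong continuity of $\bfE^\NA$, $\bfR^\NA$ and $\bfJ^\NA_\bT$ (Proposition \ref{prop-strongcont}, Lemma \ref{lem-JTNAconv}) to pass the inequality $\bfM^\NA\ge\gamma\,\bfJ^\NA_\bT$ from $\mcH^\NA$ to $\PSH^{\fM,\NA}$, and then conclude via the destabilizing-ray machinery. The only difference is organizational — you cleanly factor the argument as ``upgrade the stability notion, then quote Theorem \ref{thm-existfiltr}'' rather than rerunning the double-limit contradiction argument, and your treatment of the $\bK$-equivariance of the regularizing sequence is no less careful than the paper's own.
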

\begin{proof}
Let $\Phi$ be the distabilizing geodesic ray satisfying:
\begin{equation}
\bfM'^\infty(\Phi)\le 0, \quad \inf_{\xi\in N_\bR}\bfJ'^\infty(\Phi_\xi)=1.
\end{equation} 
Set $\phi=\Phi_\NA$. Then by the proof Proposition \ref{prop-regularize}, there exists a sequence of $\phi_j\in \PSH^{\fM,\NA}$ such that 
\begin{equation}
\bfM'^\infty(\Phi)\ge \bfM^\NA(\phi)= \lim_{j\rightarrow+\infty} \bfM^\NA(\phi_j).
\end{equation}
Now assume that Conjecture \ref{conj-reg} is true for $\PSH^{\fM,\NA}(L)$. Then for each fixed $j$, there exists a sequence $\{\phi_{j,m}\}_m \subset \mcH^\NA(L)$ that converges strongly to $\phi_j$ and satisfies:
\begin{equation}
\lim_{m\rightarrow+\infty} \bfM^\NA(\phi_{j,m})=\bfM^\NA(\phi_j).
\end{equation} 
By using the $\bG$-uniform K-stability, there exists $\gamma>0$ such that 
\begin{eqnarray}\label{eq-dest2}
\bfM^\NA(\phi_{j,m})&\ge&
\gamma\cdot \bfJ^\NA_\bT(\phi_{j,m}).
\end{eqnarray}
Letting $m\rightarrow+\infty$, we get:
\begin{eqnarray}
\bfM^\NA(\phi_j)\ge \gamma\cdot \bfJ^\NA_\bT(\phi_j).
\end{eqnarray}

Letting $j\rightarrow+\infty$ and using \eqref{eq-entropyslope} and \eqref{eq-Jslopeuni}, we get:
\begin{eqnarray*}
\bfM'^\infty(\Phi)&\ge& \bfM^\NA(\phi)\ge \gamma\; \bfJ^\NA_\bT(\phi)=\gamma\inf_{\xi\in N_\bR} \bfJ'^\infty(\Phi_\xi) \ge \gamma. 
\end{eqnarray*}
But this contradicts \eqref{eq-Mabdec}.
\end{proof}
\begin{proof}[Proof of Theorem \ref{thm-toric}]
When $(X, L)$ is toric, set $\bG=\bT=(\bC^*)^r=((S^1)^r)^\bC=\bK^\bC$. 

Let $\Delta$ be the moment polytope and set $g_\Delta=\sup_{u\in \Delta}u$ defined on $N_\bR$. Then  by \cite[section 4]{BPS14}, the metrics in $(\PSH)^\bK\cap C^0(X^\NA)$ are in one-to-one correspondence to the continuous and convex functions $\phi$ on $N_\bR$ such that $\phi-g_\Delta$ extends to a continuous function on a compactification $\overline{N_\bR}$ that  is homeomorphic to the moment polytope $\Delta$. 
In this correspondence, the envelopes of psh metrics correspond to the usual upper envelopes of convex functions.

Because the upper envelope of family of piecewise $\bQ$-affine (meaning that the coefficients defining the affine functions are rational numbers) functions is still piecewise $\bQ$-affine, by the same argument \cite[Proof of Proposition 9.2]{BFJ15}, we know that $\cF\in (\PSH^{\fM,\NA})^\bK=(\mcH^\NA)^\bK$. The conclusion follows easily by using the above proposition. 
\end{proof}

\begin{rem}\label{rem-spherical}
It is well known that the above argument amounts to the algebraic fact that toric divisors on toric varieties always admit a rational Zariski decomposition (see \cite{BFJ09}). It has been pointed out to me by Y. Odaka that similar result as in Theorem \ref{thm-toric} hold for all smooth spherical varieties by using the fact that the total spaces of test configurations of spherical varieties are Mori Dream Spaces which indeed implies that all divisors admit rational Zariski decompositions.
\end{rem}

We end this section by showing the following relation between Conjectures in the introduction, which is suggested by S. Boucksom. 
\begin{lem}\label{lem-conjrel}
Conjecture \ref{conj-reg} implies Conjecture \ref{conj-Hslope} and Conjecture \ref{conj-entropy}.
\end{lem}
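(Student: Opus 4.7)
The plan is to handle Conjecture \ref{conj-entropy} almost for free and then work harder on Conjecture \ref{conj-Hslope}.

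\medskip
\textbf{Conjecture \ref{conj-entropy} from Conjecture \ref{conj-reg}.} Given a maximal geodesic ray $\Phi$ with $\phi:=\Phi_\NA\in\mcE^{1,\NA}(L)$, Conjecture \ref{conj-reg} produces $\{\phi_m\}\subset\mcH^\NA(L)$ converging strongly to $\phi$ with
\[
\lim_{m\to\infty}\bfH^\NA(\phi_m)=\qV\int_{X^\NA}A_X(v)\MA^\NA(\phi)=\bfH^\NA(\phi).
\]
The unconditional lower bound $\bfH'^\infty(\Phi)\ge\bfH^\NA(\phi)$ from Theorem \ref{thm-grslope}.1 then gives \eqref{eq-conjH} immediately.

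\medskip
\textbf{Conjecture \ref{conj-Hslope} from Conjecture \ref{conj-reg}.} Again Theorem \ref{thm-grslope}.1 supplies $\bfH'^\infty(\Phi)\ge\bfH^\NA(\phi)$, so the task is the reverse inequality. I would reduce it to the analogous statement for $\bfM$, namely $\bfM'^\infty(\Phi)\le\bfM^\NA(\phi)$: since $\bfE'^\infty(\Phi)=\bfE^\NA(\phi)$ by Theorem \ref{thm-BBJ} and $\bfR'^\infty(\Phi)=\bfR^\NA(\phi)$ by Theorem \ref{thm-Echislope}, the decomposition $\bfM=\bfH+\bfR+\tfrac{\udS}{n+1}\bfE$ lets us freely subtract these continuous pieces. (We may assume $\bfH^\NA(\phi)<+\infty$, hence $\bfM^\NA(\phi)<+\infty$, as otherwise Theorem \ref{thm-grslope}.1 already gives equality.)

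\medskip
\textbf{Upper Mabuchi slope bound.} Feeding $\phi$ into Conjecture \ref{conj-reg} gives $\{\phi_m\}\subset\mcH^\NA(L)$ converging strongly to $\phi$ with $\bfH^\NA(\phi_m)\to\bfH^\NA(\phi)$; Proposition \ref{prop-strongcont} upgrades this to $\bfM^\NA(\phi_m)\to\bfM^\NA(\phi)$. Let $\Phi_m=\{\vphi_m(s)\}$ be the maximal geodesic ray associated to $\phi_m$. By Theorem \ref{thm-grslope}.2, $\bfM'^\infty(\Phi_m)=\bfM^\NA(\phi_m)$, and convexity of $s\mapsto\bfM(\vphi_m(s))$ with $\bfM(\vphi_m(0))=0$ implies that $\bfM(\vphi_m(s))/s$ is monotone nondecreasing, so
\[
\bfM(\vphi_m(s))\le s\,\bfM^\NA(\phi_m)\qquad\forall s\ge 0.
\]
Invoking $d_1$-lower semicontinuity of $\bfM$ together with the $d_1$-convergence $\vphi_m(s)\to\vphi(s)$ at each fixed $s$ yields
\[
\bfM(\vphi(s))\le\liminf_{m\to\infty}\bfM(\vphi_m(s))\le s\,\bfM^\NA(\phi).
\]
Dividing by $s$ and letting $s\to\infty$, using monotonicity of $\bfM(\vphi(s))/s$ along $\Phi$, gives $\bfM'^\infty(\Phi)\le\bfM^\NA(\phi)$, which is exactly what is needed.

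\medskip
\textbf{Main obstacle.} The delicate technical input is the $d_1$-convergence $\vphi_m(s)\to\vphi(s)$ at each fixed $s$, obtained from strong convergence of $\phi_m$ to $\phi$ in $\mcE^{1,\NA}(L)$. By Theorem \ref{thm-d1convex}, $s\mapsto d_1(\vphi_m(s),\vphi(s))$ is convex and vanishes at $s=0$, so the conclusion reduces to showing that the asymptotic slope $d_1^c(\Phi_m,\Phi)$ tends to zero under strong non-Archimedean convergence. This is part of the refined Berman--Boucksom--Jonsson correspondence established in \cite{DX20}, where strong convergence in $(\mcE^{1,\NA}(L)/\bR,\mathrm{strong})$ is matched with $d_1^c$-convergence on the space of maximal geodesic rays. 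Once this bridge is granted, the convexity/lower-semicontinuity package assembled in the previous sections closes the argument.
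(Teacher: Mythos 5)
Your proof is correct and follows essentially the same route as the paper: Conjecture \ref{conj-entropy} falls out of Theorem \ref{thm-grslope}.1 plus the regularization sequence, and Conjecture \ref{conj-Hslope} is reduced via the decomposition of $\bfM$ to the upper bound $\bfM'^\infty(\Phi)\le\bfM^\NA(\phi)$, obtained from convexity and lower semicontinuity of $\bfM$ exactly as in the paper's chain \eqref{eq-3Mslope}. The only difference is that you explicitly flag, and outsource to \cite{DX20}, the fixed-$s$ convergence $\vphi_m(s)\to\vphi(s)$ needed for the lower semicontinuity step, a point the paper's proof uses implicitly.
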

\begin{proof}
If $\phi\in \mcH^\NA$ and $\Phi$ is the associated geodesic ray, then by Theorem \ref{thm-grslope}.2 we have $\bfH'^\infty(\Phi)=\bfH^\NA(\phi)$. For a general $\phi\in \cE^{1,\NA}$, choose any sequence $\{\phi_m\}\subset \mcH^\NA$ that converges strongly to $\phi$. Let $\Phi_m$ be the associated geodesic ray. Then by the lower semicontinuity and convexity of Mabuchi energy, we have the inequality:
\begin{equation}
\frac{\bfM(\vphi(s))}{s}\le \liminf_{m\rightarrow+\infty} \frac{\bfM(\vphi_m(s))}{s}\le \liminf_{m\rightarrow+\infty} \bfM'^\infty(\Phi_m).
\end{equation} 
Letting $s\rightarrow+\infty$ and using Theorem \ref{thm-grslope}.1, we get the inequality:
\begin{equation}\label{eq-3Mslope}
\bfM^\NA(\phi)\le \bfM'^\infty(\Phi) \le \liminf_{m\rightarrow+\infty} \bfM^\NA(\phi_m).
\end{equation}
If Conjecture \ref{conj-reg} is true, then we can find a sequence $\{\phi_m\}$ satisfying $\bfM^\NA(\phi)\ge \lim_{m\rightarrow+\infty} \bfM^\NA(\phi_m)$. So for such a sequence, we indeed know that both inequalities in \eqref{eq-3Mslope} are identities.
\end{proof}

\subsection{$\cJ^\NA$-stability implies cscK metrics}\label{sec-Jst}

In the above section, we use the following estimate for the energy part of the Mabuchi functional.
\begin{lem}\label{lem-cJgrow}
There exists a constant $\delta>0$ and $C>0$ such that for any $\vphi\in \mcH$, we have:
\begin{equation}\label{eq-cJgrow}
-\delta \bfJ(\vphi)-C \le \cJ(\vphi) \le \delta \bfJ(\vphi)+C.
\end{equation}
\end{lem}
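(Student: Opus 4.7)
The plan is to use the translation invariance of both functionals to reduce to sup-normalized potentials, where $\bfJ$ becomes comparable to $-\bfE$, and then to bound $\cJ$ in terms of $-\bfE$. For the translation invariance of $\cJ = \bfE^{-Ric(\Omega)} + \ud{S}\bfE$, one computes
\begin{equation*}
\bfE^\chi(\vphi+c) - \bfE^\chi(\vphi) = c\cdot n[\chi]\cdot [L]^{n-1}/V,
\end{equation*}
which for $\chi = -Ric(\Omega)$ (representing $K_X$ in cohomology) gives $n K_X\cdot L^{n-1}/V = -\ud{S}$, exactly cancelling the contribution $\ud{S}\cdot c$ coming from $\ud{S}\bfE(\vphi+c)-\ud{S}\bfE(\vphi)$. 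Invariance of $\bfJ = \Lam-\bfE$ is immediate. Hence we may assume $\sup(\vphi-\psi) = 0$. For such $\vphi$ a standard uniform $L^1$ estimate for sup-normalized $\PSH(L)$ functions gives $-C \le \Lam(\vphi) \le 0$, and therefore
\begin{equation*}
-\bfE(\vphi) - C \le \bfJ(\vphi) \le -\bfE(\vphi),
\end{equation*}
so $\bfJ(\vphi)$ and $-\bfE(\vphi)\ge 0$ agree up to an additive constant.

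The main step is to bound $\bfE^{-Ric(\Omega)}(\vphi)$ by $-\bfE(\vphi)$. Choose $C_0 > 0$ large enough that both $\chi_1 := -Ric(\Omega) + C_0\omega$ and $\chi_2 := C_0\omega$ (where $\omega = \ddc\psi$) are smooth semi-positive forms. Using the linearity of $\chi \mapsto \bfE^\chi(\vphi)$ in $\chi$, we have $\bfE^{-Ric(\Omega)}(\vphi) = \bfE^{\chi_1}(\vphi) - \bfE^{\chi_2}(\vphi)$, so it suffices to estimate $\bfE^\chi(\vphi)$ for smooth semi-positive $\chi$ satisfying $\chi \le C'\omega$. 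Under sup-normalization $\psi-\vphi \ge 0$, so each summand of
\begin{equation*}
-\bfE^\chi(\vphi) = \qV\sum_{k=0}^{n-1}\int_X(\psi-\vphi)\,\chi\wedge(\ddc\vphi)^k\wedge(\ddc\psi)^{n-1-k}
\end{equation*}
is a nonnegative integral; bounding $\chi \le C'\ddc\psi$ and observing that each resulting mixed term $\qV\int_X(\psi-\vphi)(\ddc\vphi)^k\wedge(\ddc\psi)^{n-k}$ is one of the nonnegative summands of $(n+1)(-\bfE(\vphi))$, we obtain $0\le -\bfE^\chi(\vphi)\le C''(-\bfE(\vphi))$. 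Therefore $|\bfE^{-Ric(\Omega)}(\vphi)|\le C(-\bfE(\vphi))$.

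Combined with the trivial estimate $|\ud{S}\bfE(\vphi)| = |\ud{S}|(-\bfE(\vphi))$, we obtain $|\cJ(\vphi)| \le \delta(-\bfE(\vphi))$, which by the first paragraph gives $|\cJ(\vphi)| \le \delta\bfJ(\vphi) + \delta C$, i.e., both inequalities of \eqref{eq-cJgrow}. No serious obstacle arises; the argument is purely a positivity-based manipulation of mixed Monge--Amp\`ere currents combined with the translation-invariance reduction. The only conceptual point worth care is verifying translation invariance of $\cJ$, which rests on the topological identity $nK_X\cdot L^{n-1}/V = -\ud{S}$, and choosing $C_0$ so that the decomposition $-Ric(\Omega) = \chi_1-\chi_2$ into smooth semi-positive forms is valid.
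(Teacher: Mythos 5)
Your proof is correct, but it takes a genuinely different route from the paper's. The paper proves the two-sided bound by rewriting $\cJ \pm \delta(\bfI-\bfJ)$ as a twisted functional $\cJ^{\chi}=\bfE^{\chi}-b_{[\chi]}\bfE$ with $[\chi]=\mp c_1(X)+\delta[\omega]$, and then invoking the properness (hence lower boundedness) of $\cJ^\chi$ under the cohomological positivity conditions $[\chi]>0$, $b_{[\chi]}[\omega]-(n-1)[\chi]>0$ of Song--Weinkove and Collins--Sz\'ekelyhidi, which hold for $\delta\gg 1$; it then converts $\bfI-\bfJ$ back to $\bfJ$ via \eqref{eq-IJineq}. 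You instead reduce to sup-normalized potentials by the translation invariance of $\cJ$ (resting on $nK_X\cdot L^{n-1}+\ud{S}V=0$) and of $\bfJ$, identify $\bfJ$ with $-\bfE$ up to a bounded error via the uniform $L^1$ bound on $\Lam$, and control $\bfE^{-Ric(\Omega)}$ by writing $-Ric(\Omega)=\chi_1-\chi_2$ with $0\le\chi_i\le C'\omega$ and dominating each nonnegative mixed term by a summand of $(n+1)(-\bfE)$. Your argument is entirely elementary and self-contained, and in fact yields the cleaner statement $|\cJ(\vphi)|\le \delta\bfJ(\vphi)+C$; the paper's route is heavier for this lemma alone, but the same $\cJ^\chi$ computation is reused in section \ref{sec-Jst} to discuss $\cJ$-stability, which is presumably why it is organized that way. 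One cosmetic remark: your translation formula for $\bfE^\chi$ carries a factor $1/V$ that is absent in the paper's (unnormalized) conventions; the cancellation with $\ud{S}\,\bfE$ holds under either consistent normalization, so this does not affect the argument.
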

Note that the above estimates hold true for any $\vphi\in \cE^1(L)$ because $\bfJ$ and $\cJ$ are continuous under strong convergence of potentials (see \cite{BBEGZ}).
\begin{proof}
Because $\bfJ$ and $\bfI-\bfJ$ is comparable, in the above inequality we can use the latter to compare. Then it is easy to check that, for any $a\in \bR$,
\begin{eqnarray*}
\bfI-\bfJ&=&\bfI_\psi(\vphi)-\bfJ_\psi(\vphi)=\bfE_{\psi}^{\ddc \psi}(\vphi)-n \bfE_\psi(\vphi)\\
\mcJ+a \bf(I-\bfJ)&=&\bfE^{-Ric(\Omega)+a \omega}(\vphi)+(\ud{S}-n a)\bfE(\vphi).
\end{eqnarray*}
For any closed $(1,1)$-form, it is convenient to introduce:
\begin{equation}
\cJ^\chi(\vphi)=\bfE^{\chi}(\vphi)-b \bfE(\vphi), \text{ where } b:=b_{[\chi]}=\frac{n [\chi]\cdot [\omega]^{\cdot n-1}}{[\omega]^{\cdot n}}
\end{equation}
It is known by \cite{SW08, CS17} that if the class of $\chi$ satisfies the inequality:
\begin{equation}
[\chi]>0, \quad b [\omega]-(n-1)[\chi]>0
\end{equation}
then $\bfJ^\chi(\vphi)$ is proper, in particular bounded from below. 

Letting $[\chi]=-c_1(X)+\delta [\omega]$, then $b=-\ud{S}+n\delta $. For $\delta \gg 1$, $[\chi]>0$ and
\begin{eqnarray*}
b [\omega]-(n-1)[\chi]&=&(-\ud{S}+n\delta)[\omega]-(n-1)(-c_1(X)+\delta[\omega])\\
&=&(\delta-\ud{S}) [\omega]+(n-1)c_1(X)>0.
\end{eqnarray*}
In this case, we get $\cJ+\delta(\bfI-\bfJ)=\cJ^{-Ric(\Omega)+\delta \omega}\ge -C$. 

On the other hand, letting $[\chi]=\delta [\omega]+c_1(X)$, then $b=\ud{S}+n \delta$. For $\delta\gg 1$, $[\chi]>0$ and 
\begin{eqnarray*}
b [\omega]-(n-1)[\chi]&=&(\ud{S}+n\delta)[\omega]-(n-1)(c_1(X)+\delta[\omega])\\
&=&(\delta+\ud{S}) [\omega]-(n-1)c_1(X)>0.
\end{eqnarray*}
So in this case, we get $-\cJ+\delta (\bfI-\bfJ))=\cJ^{Ric(\Omega)+\delta \omega}$ is bounded from below. 

It is then easy to see that the wanted inequality easily follows.
\end{proof}

Recall that by Lemma \ref{lem-J2Kst}, $\cJ^{K_X}$-semistability implies uniform $K$-stability.
Similar argument as above proves Theorem \ref{thm-cJstable}, which says that  $\cJ^{K_X}$-semistability implies the existence of cscK metric. We indeed get a slightly stronger result as follows. 
\begin{thm}\label{thm-cJstable2}
If there exists $\delta<\alpha(X, L)$ such that 
\begin{equation}\label{eq-Jwst}
\cJ^\NA(\phi)\ge -\delta \bfI^\NA \text{ for all } \phi\in \mcH^\NA(L),
\end{equation}
then there exists a cscK metric on $(X, L)$.
\end{thm}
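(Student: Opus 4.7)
\textbf{Proof proposal for Theorem \ref{thm-cJstable2}.} The plan is to run the variational argument of Proposition \ref{prop-exist1} with $\bG=\{e\}$ (so $N_\bR=0$ and $\bfJ^\NA_\bT=\bfJ^\NA$), extracting a destabilizing maximal geodesic ray and then contradicting its existence by upgrading the hypothesis \eqref{eq-Jwst} from $\mcH^\NA$ to $\cE^{1,\NA}$ and combining it with an $\alpha$-invariant slope estimate for $\bfH'^\infty$.

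Arguing by contradiction, assume no cscK metric exists. As in the proof of Proposition \ref{prop-exist1}, the failure of $\bG$-coercivity of $\bfM$ together with Lemma \ref{lem-cJgrow} produces a finite-energy geodesic ray $\Phi=\{\vphi(s)\}$ emanating from $\psi$ with
\[
\bfM'^\infty(\Phi)\le 0,\qquad \bfE(\vphi(s))=-s,\qquad \sup(\vphi(s)-\psi)=0,
\]
so in particular $\bfJ'^\infty(\Phi)=1$. Set $\phi:=\Phi_\NA\in\cE^{1,\NA}(L)$; by Theorem \ref{thm-maximal} (applied in view of finiteness of $\bfM'^\infty$), $\Phi$ is maximal.

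The first key step is an $\alpha$-invariant slope estimate along $\Phi$. Applying Jensen's inequality as in the proof of Lemma \ref{lem-J2Kst}, for any $\epsilon>0$ and any smooth $\vphi\in\mcH(L)$,
\[
\bfH_\Omega(\vphi)\ge (\alpha-\epsilon)\bfI(\psi,\vphi)-C.
\]
The approximation argument of \cite[Lemma 3.1]{BDL17} used in the proof of Theorem \ref{thm-maximal} extends this inequality to all $\vphi\in\cE^1(L)$. Specializing to $\vphi(s)$, dividing by $s$, and using Theorem \ref{thm-Echislope} (together with Lemma \ref{lem-energyslope}) to identify $\bfI'^\infty(\Phi)=\bfI^\NA(\phi)$, we obtain after letting $\epsilon\to 0$
\[
\bfH'^\infty(\Phi)\ \ge\ \alpha(X,L)\,\bfI^\NA(\phi).
\]

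The second step extends the hypothesis \eqref{eq-Jwst} from $\mcH^\NA(L)$ to $\cE^{1,\NA}(L)$. Since $\cJ^\NA=\bfR^\NA+\ud{S}\bfE^\NA$ and $\bfI^\NA$ are both continuous on $\cE^{1,\NA}$ with respect to the strong topology (Proposition \ref{prop-strongcont}), and any $\phi\in\cE^{1,\NA}$ admits a decreasing approximation $\{\phi_m\}\subset\mcH^\NA$ (by Multiplier Approximation, hence strongly convergent), the inequality $\cJ^\NA(\phi_m)\ge -\delta\,\bfI^\NA(\phi_m)$ passes to the limit to give
\[
\cJ^\NA(\phi)\ge -\delta\,\bfI^\NA(\phi).
\]
Combining with Theorem \ref{thm-Echislope} (which yields $\cJ'^\infty(\Phi)=\cJ^\NA(\phi)$) and the decomposition $\bfM=\bfH+\cJ$, the two estimates give
\[
\bfM'^\infty(\Phi)=\bfH'^\infty(\Phi)+\cJ^\NA(\phi)\ \ge\ \bigl(\alpha(X,L)-\delta\bigr)\,\bfI^\NA(\phi).
\]
Finally, $\bfJ^\NA(\phi)=\bfJ'^\infty(\Phi)=1$, and \eqref{eq-IJineq} (transferred to the non-Archimedean side by decreasing approximation and continuity of $\bfI^\NA,\bfJ^\NA$) yields $\bfI^\NA(\phi)\ge\frac{n+1}{n}\bfJ^\NA(\phi)>0$, hence $\bfM'^\infty(\Phi)>0$, contradicting $\bfM'^\infty(\Phi)\le 0$.

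The main obstacle is the extension of the two inputs (the $\alpha$-invariant inequality and the hypothesis \eqref{eq-Jwst}) from the smooth/algebraic setting to the finite energy ray $\Phi$, but both reduce to the strong continuity results of Proposition \ref{prop-strongcont} and the slope convergence of Theorem \ref{thm-Echislope} applied along a Multiplier Approximation of $\phi$; no new analytic input beyond what has been established is required.
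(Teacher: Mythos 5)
Your proposal is correct and follows essentially the same route as the paper: extract the destabilizing maximal geodesic ray from the failure of coercivity, apply the $\alpha$-invariant/Jensen estimate to bound $\bfH'^\infty(\Phi)\ge\alpha\,\bfI'^\infty(\Phi)$, and use Theorem \ref{thm-maximal} together with Theorem \ref{thm-Echislope} to identify $\cJ'^\infty(\Phi)$ and $\bfI'^\infty(\Phi)$ with their non-Archimedean counterparts so that the hypothesis \eqref{eq-Jwst} forces $\bfM'^\infty(\Phi)\ge(\alpha-\delta)\bfI'^\infty(\Phi)>0$, a contradiction. The only cosmetic difference is that you first extend \eqref{eq-Jwst} to $\cE^{1,\NA}$ by strong continuity and evaluate at $\Phi_\NA$, whereas the paper applies the hypothesis to a decreasing sequence $\phi_m\in\mcH^\NA$ and passes to the limit; these are equivalent.
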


\begin{proof}
By the work of Chen-Cheng \cite{CC18a}, we just need to show that $\bfM$ is coercive.
Assume that $\bfM$ is not coercive. Then there exists a destabilizing geodesic ray $\Phi$ as in the proof of Proposition \ref{prop-exist1}. By using Jensen's inequality and Tian's $\alpha$-invariant (see \cite{Tia00} or the proof of Lemma \ref{lem-J2Kst}), we know that:
\begin{equation}
0\ge \bfM'^\infty(\Phi)\ge \alpha \bfI'^\infty(\Phi)+ \mcJ'^\infty(\Phi).
\end{equation}
Recall that $\cJ=\bfE^{-Ric(\Omega)}+\ud{S}\bfE$ in \eqref{eq-cJ}.
By the maximality of $\Phi$ (Theorem \ref{thm-maximal}) and convergence result in Theorem \ref{thm-Echislope}, for any $\{\phi_m\}\subset \mcH^\NA(L)$ decreasing to $\Phi_\NA$, we have
\begin{equation}
0\ge \lim_{m\rightarrow+\infty}(\mcJ^\NA(\phi_m)+\alpha \bfI^\NA(\phi_m)).
\end{equation}
But this contradicts the assumption \eqref{eq-Jwst} if $\delta<\alpha$, taking into account that $\lim_{m\rightarrow+\infty} \bfI^\NA(\phi_m)=\bfI'^\infty(\Phi)>0$ by \eqref{eq-Jslopeuni}.
\end{proof}

Regarding the condition \eqref{eq-Jwst}, we have the K\"{a}hler-Einstein cases and the cases essentially given by Dervan (\cite{Der16}) and Li-Shi (\cite{LS15}):
\begin{prop}
The condition \eqref{eq-Jwst} holds true in the following situations:
\begin{enumerate}
\item[(i)] (\cite{Oda12, BHJ17}) $K_X\equiv \lambda L$ with $\lambda\le 0$, and $\delta=\frac{\lambda}{n+1}\le 0$.
\item[(i)] (\cite{Oda12, BHJ17}) $K_X\equiv \lambda L$ with $\lambda>0$,  and $\delta=\lambda \frac{n}{n+1}$.

\item[(ii)] (\cite{Der16a}) $-(K_X+\frac{\ud{S}}{n+1}L)$ is nef, $\delta=\frac{\ud{S}}{n+1}+\epsilon$ for any $\epsilon>0$.

\item[(iii)] (\cite{LS15}) There exists $\epsilon>0$ such that both $K_X+\epsilon L$ and $(-\ud{S}+\epsilon)L-(n-1)K_X$ are ample, $\delta=\epsilon \frac{n}{n+1}$.

\end{enumerate}
\end{prop}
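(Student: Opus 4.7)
My plan is to reduce each case to an intersection-theoretic calculation on the compactified test configuration $(\bar{\mcX}, \bar{\mcL})$, starting from the identity
$$\cJ^\NA(\phi) = \frac{1}{V}\left(K_X\cdot\bar{\mcL}^n + \frac{\ud{S}}{n+1}\bar{\mcL}^{n+1}\right),$$
which follows directly from \eqref{eq-cJNA}, \eqref{eq-RNA}, \eqref{eq-ENAH} together with the observation that $K^{\log}_{X_{\bP^1}/\bP^1}=p_1^*K_X$ on the trivial family (the central fiber $X\times\{0\}$ being already reduced, the discrepancy terms cancel).

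For the proportional cases (i) and (i'), I would substitute $K_X=\lambda L$ and $\ud{S}=-n\lambda$ into this identity. A direct expansion gives
$$\cJ^\NA(\phi)=\lambda\bigl(\bfI^\NA(\phi)-\bfJ^\NA(\phi)\bigr),$$
after which the non-Archimedean analogue of \eqref{eq-IJineq}, namely $\frac{1}{n+1}\bfI^\NA\le \bfJ^\NA\le \frac{n}{n+1}\bfI^\NA$, yields the announced $\delta$, depending on the sign of $\lambda$. This reproduces the Odaka/BHJ calculation in \cite{Oda12, BHJ17}.

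For cases (ii) and (iii), I would mirror the archimedean proof of Lemma \ref{lem-cJgrow} on the non-Archimedean side. Pick $[\chi]=-c_1(X)+\delta[\omega]=K_X+\delta L$ as an auxiliary $(1,1)$-class and form $\cJ^\NA(\phi)+\delta(\bfI^\NA(\phi)-\bfJ^\NA(\phi))$. The same algebraic manipulation as in Lemma \ref{lem-cJgrow} expresses this combination as an intersection number pairing the pullback of $[\chi]$ against a semiample cycle built from $\bar{\mcL}$ and $L_{\bP^1}$, and the two hypotheses of the proposition are rigged to match precisely the two positivity conditions $[\chi]>0$ and $b[\omega]-(n-1)[\chi]>0$ (with $b=-\ud{S}+n\delta$) appearing there. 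Concretely, in case (iii) the choice $\delta=\epsilon$ turns these into the ampleness of $K_X+\epsilon L$ and $(-\ud{S}+\epsilon)L-(n-1)K_X$; in case (ii), setting $\delta=\frac{\ud{S}}{n+1}+\epsilon$ makes the first condition read $\bigl(K_X+\frac{\ud{S}}{n+1}L\bigr)+\epsilon L$, which is ample by adding the $\epsilon$-perturbation to the given nef class. Once nonnegativity of the intersection is established, combining with $\bfI^\NA-\bfJ^\NA\le \frac{n}{n+1}\bfI^\NA$ delivers the bound in the form $\cJ^\NA\ge -\frac{n\delta}{n+1}\bfI^\NA$, matching the proposition's value.

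The step I expect to be the main obstacle is the algebraic bookkeeping in case (ii), where only nefness (not ampleness) of the borderline class is given, so one has to carefully track how the $\epsilon$-perturbation enters and verify that the resulting intersection on the semiample model $\bar{\mcX}$ is truly nonnegative after the rearrangement. The other cases are either direct inequalities (cases i and i') or a transcription of the archimedean argument of Lemma \ref{lem-cJgrow} via the slope correspondence in Proposition \ref{prop-ARvsNA}, both of which should be routine.
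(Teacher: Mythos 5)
Your treatment of the two proportional cases $K_X\equiv\lambda L$ coincides with the paper's: reduce to $\cJ^\NA=\lambda(\bfI-\bfJ)^\NA$ and apply $\tfrac{1}{n+1}\bfI^\NA\le(\bfI-\bfJ)^\NA\le\tfrac{n}{n+1}\bfI^\NA$. For case (iii) your matching of the two ampleness hypotheses with the Song--Weinkove cone conditions for $[\chi]=K_X+\epsilon L$ is correct, but the mechanism you describe --- that the rearrangement ``expresses this combination as an intersection number pairing the pullback of $[\chi]$ against a semiample cycle'' whose nonnegativity one then verifies directly --- is not how the implication works. The needed inequality $(\cJ^{K_X+\epsilon L})^\NA\ge 0$ is a genuine semistability statement, not a manifest positivity of intersection numbers on $\bar{\mcX}$; the paper obtains it from the analytic input of \cite{SW08} (solvability of the $J$-equation under those cone conditions) and \cite{CS17} (properness of the Archimedean $\cJ^{K_X+\epsilon L}$-energy), and only then passes to slopes via Proposition \ref{prop-Echislope2}. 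If what you intend is to quote the lower bound of Lemma \ref{lem-cJgrow} and take slopes, that is essentially the paper's route, but you should make the PDE input explicit rather than presenting it as algebraic bookkeeping.

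The genuine gap is in case (ii). Under the hypothesis that $-(K_X+\frac{\ud{S}}{n+1}L)$ is merely nef, the class you propose to feed into the Song--Weinkove machinery, $[\chi]=K_X+\delta L$ with $\delta=\frac{\ud{S}}{n+1}+\epsilon$, equals $(K_X+\frac{\ud{S}}{n+1}L)+\epsilon L$, i.e.\ the \emph{negative} of a nef class plus a small ample one, which is not positive in general: already for $L=-K_X$ on a Fano manifold one has $\ud{S}=n$ and $[\chi]=(\epsilon-\frac{1}{n+1})L$, negative for small $\epsilon$, while the hypothesis of (ii) is satisfied since $-(K_X+\frac{n}{n+1}L)=\frac{1}{n+1}L$ is ample. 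So your reduction of (ii) to the positivity conditions of Lemma \ref{lem-cJgrow} collapses at the first condition. The paper's proof of (ii) is genuinely different and does not pass through the $J$-equation at all: it expands $\cJ^\NA+\delta\bfI^\NA$ into $(K_X+\frac{\ud{S}-\epsilon}{n+1}L)\cdot\bar{\mcL}^{\cdot n}+\frac{\ud{S}+n\epsilon}{n+1}\bar{\mcL}\cdot L^{\cdot n}+\epsilon\bigl(L\cdot\bar{\mcL}^{\cdot n}-\frac{n}{n+1}\bar{\mcL}^{\cdot n+1}\bigr)$, isolates the \emph{ample} class $Q=-(K_X+\frac{\ud{S}}{n+1}L)+\frac{\epsilon}{n+1}L$ (nef plus ample), and bounds the expression below by $-(\bfE^{Q})^\NA(\phi)+\frac{\ud{S}+n\epsilon}{n+1}\sup(\phi-\phi_\triv)$, which is nonnegative by Dervan's argument. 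You would need to replace your step for (ii) by an argument of this kind.
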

Since it is not our purpose to get the most general sufficient conditions, we just sketch the proof.
\begin{proof}[Sketch of proof]
For convenience of notations, we will use the following notation generalizing $(\cJ^{K_X})^\NA$. For any fixed $\bR$-line bundle $Q$ over $X$, we set $c_Q=\frac{n Q\cdot L^{\cdot n-1}}{L^{\cdot n}}$ and
\begin{equation}
(\cJ^Q)^\NA(\mcX, \mcL)=\qV Q\cdot \bar{\mcL}^{\cdot n}-\frac{c_Q}{n+1}\bar{\mcL}^{\cdot n+1}.
\end{equation}
Then we have the identity
\begin{equation}
(\bfI-\bfJ)^\NA=\qV \bar{\mcL}^{\cdot n}\cdot L_{\bP^1}-\frac{n}{n+1} \bar{\mcL}^{\cdot n+1}=(\cJ^L)^\NA(\mcX, \mcL).
\end{equation}
For the first two cases, we write:
\begin{eqnarray*}
\cJ^{\NA}&=&\qV  K_X\cdot \bar{\mcL}^{\cdot n}+\frac{\ud{S}}{n+1} \bar{\mcL}^{\cdot n+1}\\
&=&\frac{\ud{S}}{n}\left(\frac{n}{n+1}\bar{\mcL}^{\cdot n+1}-L\cdot \bar{\mcL}^{\cdot n}\right)+(\frac{\ud{S}}{n}L+K_X)\cdot \bar{\mcL}^{\cdot n}\\
&=&-\frac{\ud{S}}{n} (\bfI-\bfJ)^\NA+\bar{\mcL}^{\cdot n}\cdot \left(\frac{\ud{S}}{n}L+K_X\right).
\end{eqnarray*}
If $K_X\equiv \lambda L$, then $\ud{S}=n\lambda$ and we get $\cJ^\NA=-\lambda (\bfI-\bfJ)^\NA$. The first two cases follows from the inequality:
\begin{equation}
\frac{1}{n+1} \bfI^\NA \le (\bfI-\bfJ)^\NA\le \frac{n}{n+1}\bfI^\NA
\end{equation}
To explain Dervan's condition, first recall that
\begin{equation}
\bfI^\NA(\phi):=\qV \bar{\mcL}\cdot L_{\bP^1}^{\cdot n}-\frac{1}{n}\bar{\mcL}^{\cdot n+1}+\qV \bar{\mcL}^{\cdot n}\cdot L_{\bP^1}
\end{equation}
Next we set $\delta=\frac{\ud{S}+n\epsilon}{n+1}$ and calculate:
\begin{eqnarray}
\cJ^\NA+\delta\bfI^\NA&=&K_X\cdot \bar{\mcL}^{\cdot n}+\frac{\ud{S}}{n+1} \bar{\mcL}^{\cdot n+1}+\frac{\ud{S}+n\epsilon}{n+1}\left(\bar{\mcL}^{\cdot n}\cdot L+\bar{\mcL}\cdot L^{\cdot n}-\bar{\mcL}^{\cdot n+1}\right)\nonumber \\
&=&(K_X+\frac{\ud{S}-\epsilon}{n+1}L)\cdot \bar{\mcL}^{\cdot n}+\frac{\ud{S}+n\epsilon}{n+1}\bar{\mcL}\cdot L^n+\epsilon(L\cdot \bar{\mcL}^n-\frac{n}{n+1} \bar{\mcL}^{\cdot n+1})\nonumber \\
&\ge& -(\bfE^{Q})^\NA+\frac{\ud{S}+n\epsilon}{n+1}\sup(\phi-\phi_\triv) \label{eq-Dervest}
\end{eqnarray}
where $Q=-(K_X+\frac{\ud{S}}{n+1}L)+\frac{\epsilon}{n+1}L$ is ample for any $\epsilon>0$. By the argument of Dervan, we know that the right-hand-side of \eqref{eq-Dervest} is non-negative. So we get $\mcJ^\NA\ge -\delta \bfI^\NA$ for any $\delta>\frac{\ud{S}}{n+1}$ which implies the statement.

For the last case, we note that $(\cJ^Q)^\NA$ is linear in $Q$ so that
\begin{eqnarray*}
(\cJ^{K_X})^\NA&=&-\epsilon (\cJ^{L})^\NA+(\cJ^{K_X+\epsilon L})^\NA\\
&=&-\epsilon (\bfI-\bfJ)^\NA+(\cJ^{K_X+\epsilon L})^\NA.
\end{eqnarray*}
It is a result by Song-Weinkove \cite{SW08} that if the two conditions are satisfied, the corresponding $J$-equation is solvable, and by \cite{CS17} the Archimedean $\cJ^{K_X+\epsilon L}$-energy is proper, which implies the last statement.

\end{proof}


\begin{rem}
Gao Chen claimed that uniform slope-$\mathcal{J}^{K_X}$-stability (as defined by Lejmi-Sz\'{e}kelyhidi)
implies properness of $\mathcal{J}^{-Ric(\Omega)}$-energy,  which together with Chen-Cheng's result would also imply the the existence of cscK metric. 
\end{rem}

\begin{rem}\label{rem-XuOda}
In the first version of this paper, the author introduced some stability condition called $\tilde{K}$-stability. However as later pointed to me by C. Xu, Y. Odaka and Y. Gongyo \cite{OdG20}, this condition coincides with the $\mcJ^{K_X}$-stability and hence does not give us more information. 
\end{rem}

\vskip 3mm
\noindent
Department of Mathematics, Purdue University, West Lafayette, IN, 47907-2067.

\noindent
{\it Current address:} Department of Mathematics, Rutgers University, Piscataway, NJ 08854-8019.

\noindent
{\it E-mail address:} chi.li@rutgers.edu


\begin{thebibliography}{999999}




\bibitem
{AHS08}
K. Altmann, J. Hausen, H. S\"{u}ss, Gluing affine torus actions via divisorial fans. Transform. Groups {\bf 13} (2008), no. 2, 215-242.


\bibitem
{BB17}
R. Berman, R. Berndtsson, Convexity of the K-energy on the space of K\"{a}hler metrics, J. Amer. Math. Soc. {\bf 30} (2017), 1165-1196.

\bibitem
{BBEGZ}
R. Berman, S. Boucksom, P. Eyssidieux, V. Guedj, A. Zeriahi, K\"{a}hler-Einstein metrics and the K\"{a}hler-Ricci flow on log Fano varieties, J. Reine Angew. Math. {\bf 751} (2019), 27-89.

\bibitem
{BBJ15}
R. Berman, S. Boucksom, M. Jonsson, A variational approach to the Yau-Tian-Donaldson conjecture, arXiv:1509.04561v1.

\bibitem
{BBJ18}
R. Berman, S. Boucksom, M. Jonsson, A variational approach to the Yau-Tian-Donaldson conjecture, arXiv:1509.04561v2.

\bibitem
{BDL17}
R. Berman, T. Darvas, and C. H. Lu, Convexity of the extended K-energy and the large time behaviour of the weak Calabi flow, Geom. Topol. 21 (2017), 2945-2988..

\bibitem
{BDL18}
R. Berman, T. Darvas, and C. H. Lu,
Regularity of weak minimizers of the K-energy and applications to properness and K-stability, arXiv:1602.03114.

\bibitem
{BBGZ13}
R. Berman, S. Boucksom, V. Guedj, A. Zeriahi, A variational approach to complex Monge-Amp\`{e}re equations. Publ. Math. Inst. Hautes Etudes Sci. {\bf 117} (2013), 179-245.





\bibitem
{BL12}
T. Bloom, N. Levenberg, Pluripotential energy, Potential Analysis, Volume 36, Issue 1 (2012), 155-176.

\bibitem
{Bo18}
S. Boucksom, Variational and non-Archimedean aspects of Yau-Tian-Donaldson conjecture, arXiv:1805.03289.

\bibitem
{BDPP13}
S. Boucksom, J.-P. Demailly, M. P\v{a}un, and T. Peternell, The pseudo-effective cone of a compact K\"{a}hler manifold and varieties of negative Kodaira dimension, J. Algebraic Geom. {\bf 22} (2013), no. 2, 201-248.

\bibitem
{BFJ08}
S. Boucksom, C. Favre, and M. Jonsson, Valuations and plurisubharmonic singularities. Publ. RIMS {\bf 44} (2008), 449-494.

\bibitem
{BFJ09}
S. Boucksom, C. Favre, and M. Jonsson, Differentiability of volumes of divisors and a problem of Teissier, J. Algebraic Geom. {\bf 18} (2009), no. 2, 279-308.


\bibitem
{BFJ15}
S. Boucksom, C. Favre, and M. Jonsson, Solution to a non-Archimedean Monge-Amp\`{e}re equation. J. Amer. Math. Soc. {\bf 28} (2015), 617-667.

\bibitem
{BFJ16}
S. Boucksom, C. Favre, and M. Jonsson, Singular semipositive metrics in non-Archimedean geometry. J. Algebraic Geom. {\bf 25} (2016), no.1, 77-139.

\bibitem
{BHJ17}
S. Boucksom, T. Hisamoto and M. Jonsson, Uniform K-stability, Duistermaat-Heckman measures and singularities of pairs, Ann. Inst. Fourier (Grenoble) 67 (2017), 87-139. arXiv:1504.06568.

\bibitem
{BHJ19}
S. Boucksom, T. Hisamoto and M. Jonsson, Uniform K-stability and asymptotics of energy functionals in K\"{a}hler geometry, to appear in J. Eur. Math. Soc., arXiv:1603.01026.

\bibitem
{BoJ16}
S. Boucksom, M. Jonsson, Tropical and non-Archimedean limits of degenerating families of volume forms, arXiv:1605.05277.

\bibitem
{BoJ18a}
S. Boucksom, M. Jonsson, Singular semipositive metrics on line bundles on varieties over trvially valued fields. arXiv:1801.08229.


\bibitem
{BoJ18b}
S. Boucksom, M. Jonsson. A non-Archimedean approach to K-stability. arXiv:1805.11160v1.

\bibitem
{BPS14}
J. I. Burgos, P. Philippon, and M. Sombra, Arithmetic geometry of toric varieties: Metrics, measures, and heights, 
Ast\'{e}risque, Vol. 360, Soci\'{e}t\'{e} Math\'{e}matique de France, Paris, 2014.


\bibitem
{CLS14}
B. Chen, A. M. Li, and S. Sheng, Uniform K-stability for extremal metrics on toric varieties. J. Differential equations {\bf 257} (2014), 1487-1500.

\bibitem
{Che00}
X. Chen, On the lower bound of the Mabuchi energy and its application, Inter. Math. Research Notices {\bf 12} (2000) 607-623.

\bibitem
{CC17}
X. Chen, J. Cheng, On the constant scalar curvature K\"{a}hler metrics (I): A priori estimates, arXiv:1712.06697

\bibitem
{CC18a}
X. Chen, J. Cheng, On the constant scalar curvature K\"{a}hler metrics (II): Existence results, arXiv:1801.000656.

\bibitem
{CC18b}
X. Chen, J. Cheng, on the constant scalar curvature K\"{a}hler metrics (III) - general automorphism group, arXiv:1801.05907.

\bibitem
{CDS15}
X. Chen, S. K. Donaldson and S. Sun, K\"{a}hler-Einstein metrics on Fano manifolds, I-III, J. Amer. Math. Soc. {\bf 28} (2015), 183-197, 199-234, 235-278.

\bibitem
{CS17}
T. Collins, G. Sz\'{e}kelyhidi, Convergence of the J-Flow on Toric manifolds. J.Differential Geom. {\bf 107} (2017), no. 1, 47-81.

\bibitem
{CTW18}
J. Chu, V. Tosatti and B. Weinkove, $C^{1,1}$ regularity for degenerate complex Monge-Amp\`{e}re equations and geodesic rays. Comm. Partial Differential Equations {\bf 43} (2018), 292-312.


\bibitem
{DaS16}
V. Datar, G. Sz\'{e}kelyhidi, K\"{a}hler-Einstein metrics along the smooth continuity method, Geom. Funct. Anal. {\bf 26} (2016), no.4, 975-1010.

\bibitem
{Dar15}
T. Darvas, The Mabuchi geometry of finite energy classes. Adv. Math. {\bf 285} (2015), 182-219.


\bibitem
{DH17}
T. Darvas, W. He, Geodesic rays and K\"{a}hler-Ricci trajectories on Fano manifolds. Trans. Amer. Math. Soc. {\bf 369} (2017), 5069-5085.

\bibitem
{Dar17a}
T. Darvas, Weak geodesic rays in the space of K\"{a}hler potentials and the class $\cE(X, \omega)$, J. Inst. Math. Jussieu {\bf 16} (2017), no.4, 837-858.



\bibitem
{DL19}
T. Darvas, C. Lu, Geodesic stability, the space of rays, and the uniform convexity in Mabuchi geometry, arXiv:1810.04661.

\bibitem
{DR17}
T. Darvas, Y. Rubinstein, Tian's properness conjectures and Finsler geometry of the space of K\"{a}hler metrics. J. Amer. Math. Soc. {\bf 30} (2017), 347-387.

\bibitem
{DX20}
T. Darvas, M. Xia, The closures of test configurations and algebraic singularity types, arXiv:2003.04818.


\bibitem
{Dem92}
J.-P. Demailly, Regularization of closed positive currents and intersection Theory. J. Alg. Geom. {\bf 1} (1992), 361-409.



\bibitem
{Dem15}
J.-P. Demailly, On the cohomology of pseudoeffective line bundles, J.E. Forn\ae ss et al. (eds.), Complex Geometry and Dynamics, Abel Symposia 10.

\bibitem
{DEL00}
J.-P. Demailly, L. Ein, and R. Lazarsfeld, A subadditivity property of multiplier ideas, Michigan Math. J. {\bf 48} (2000), 137-156.


\bibitem
{Der16a}
R. Dervan, Alpha invariants and coercivity of the Mabuchi functional on Fano manifolds, Ann. Fac. Sci. Toulouse Math. (6) 25 (2016), no. 4, 919-934.

\bibitem
{Der16}
R. Dervan, Uniform stability of twisted constant scalar curvature K\"{a}hler metrics, Int. Math. Res. Not. 2016, no 15, 4728-4783.

\bibitem
{DK19}
R. Dervan, J. Keller, A finite dimensional approach to Donaldson's J-flow. Comm. Anal. Geom. {\bf 27} (2019), no. 5, 1025-1085.




\bibitem
{Don02}
S. Donaldson, Scalar curvature and stability of toric varieties, J. Differential Geom. {\bf 62} (2002), no.2, 289-349.


\bibitem
{FJ05}
C. Favre and M. Jonsson, Valuations and multiplier ideals. J. Amer. Math. Soc. {\bf 18}
(2005), no. 3, 655-684.


\bibitem{Fol99}
G. B. Folland, Real analysis: modern techniques and their applications, second edition. Pure and applied mathematics (New York). A Wiley-Interscience Publication. John Wiley \& Sons, Inc., New York, 1999.

\bibitem{Fuj18}
K. Fujita, Optimal bounds for the volumes of K\"{a}hler-Einstein Fano manifolds.
Amer. J. Math. {\bf 140} (2018), no. 2, 391-414.

\bibitem{GZ17}
V. Guedj, A. Zeriahi, Degenerate complex Monge-Amp\`{e}re equations, book, 470pp, EMS Tracts in Mathematics (2017).

\bibitem
{GZ07}
V. Guedj, A. Zeriahi, The weighted Monge-Amp\`{e}re energy of quasiplurisubharmonic functions, J. Funct. Anal. {\bf 250} (2007), no. 2, 442-482.


\bibitem
{His16b}
T. Hisamoto, Stability and coercivity for toric polarizations. arXiv:1610.07998.

\bibitem
{His19}
T. Hisamoto, Mabuchi's soliton metric and relative D-stability. arXiv:1905.05948.


\bibitem
{JM12}
M. Jonsson, M. Musta\c{t}\u{a}, Valuations and asymptotic invarians for sequences of ideals. Ann. Inst. Fourier {\bf 62} (2012), no.6, 2145-2209.








\bibitem
{Li19}
C. Li, G-uniform stability and K\"{a}hler-Einstein metrics on Fano varieties, arXiv:1907.09399v3.


\bibitem
{Li20b}
C. Li, K-stability and Fujita approximation, arXiv:2102.09457.

\bibitem
{LTW19}
C. Li, G. Tian and F. Wang, The uniform version of Yau-Tian-Donaldson conjecture for singular Fano varieties. arXiv:1903.01215.



\bibitem
{LS15}
H. Li, Y. Shi, A criterion for the properness of the K-energy in a general K\"{a}hler class (II), Commun. Contemp. Math., 18(6):1550071, 2016.

\bibitem
{Mab13a}
T. Mabuchi,
The Donaldson-Futaki invariant for sequences of test configurations, arXiv:1307.1957.



\bibitem
{Oda12}
Y. Odaka, The Calabi conjecture and K-stability. Int. Math. Res. Not. 2012, no. 10,
2272-2288.

\bibitem
{OdG20}
Y. Odaka, Y. Gongyo, Note through email communication.

\bibitem
{PS07}
D.H. Phong, J. Sturm, Test configurations for K-stability and geodesic rays, J. Symplectic Geom. {\bf 5}(2) (2007), 221-247.

\bibitem
{PS10}
D.H. Phong, J. Sturm, Regularity of geodesic rays and Monge-Amp\`{e}re equations. Proc. Amer. Math. Soc. {\bf 138}(10), 3637-3650.

\bibitem
{PRS08}
D. H. Phong, J. Ross, and J. Sturm, Deligne pairings and the Knudsen-Mumford expansion. J. Differential Geom. {\bf 78} (2008), 475-496.

\bibitem
{RWN14}
J. Ross, D. Witt Nystr\"{o}m, Analytic test configurations and geodesic rays. J. Symplectic Geom. 12 (2014), 125-169.






\bibitem
{Sze15}
G. Sz\'{e}kelyhidi, Filtrations and test configurations, with an appendix by S. Boucksom, Math. Ann. {\bf 362} (2015), no. 1-2, 451-484.

\bibitem
{SW08}
J. Song and B. Weinkove, On the convergence and singularities of the $J$-flow with applications to the Mabuchi energy.
Comm. Pure Appl. Math. {\bf 61} (2008), no.2, 210-229.




\bibitem
{SD17}
Z. Sj\"{o}str\"{o}m Dyrefelt, K-semiestability of cscK manifolds with transcendental cohomology class, J. Geom. Anal. (2017).



\bibitem
{Tia97}
G. Tian, K\"{a}hler-Einstein metrics with positive scalar curvature. Invent. Math. {\bf 130} (1997), 239-265.

\bibitem
{Tia00}
G. Tian, Canonical metrics in K\"{a}hler geometry, Lect. Mat. ETH Zurich. Birkh\"{a}user, Verlag, Basel, vi-101, (2000)


\bibitem
{Tia14}
G. Tian, K-stability implies CM-stability, Geometry, analysis and probability, 245-261, 
Progr. Math., 310, Birkh\"{a}user, Springer, Cham, 2017.

\bibitem
{Tia15}
G. Tian, K-stability and K\"{a}hler-Einstein metrics. Comm.
Pure Appl. Math. {\bf 68} (7) (2015), 1085-1156. 




\bibitem
{WN12}
D. Witt Nystr\"{o}m, Test configuration and Okounkov bodies, Compos. Math. {\bf 148} (2012), no. 6, 1736-1756.

\bibitem
{Xia19}
M. Xia, On sharp lower bounds for Calabi type functionals and destabilizing properties of gradient flows, arXiv:1901.07889.

\bibitem
{ZZ08}
B. Zhou, X. Zhu, Relative K-stability and modified K-energy on toric manifolds. Adv. in Math. {\bf 219} (2008), 1327-1362.


\end{thebibliography}
\end{document}